\newcommand{\hb}[1]{\textcolor{blue}{#1}}
\newcommand{\hw}{\hat{w}}
\newcommand{\ii}{\imath}
\newcommand{\tcf}{\tilde{\mathcal{F}}}
\newcommand{\tu}{\tilde{U}}
\newcommand{\defeq}{\mathrel{\vcenter{\baselineskip0.5ex\lineskiplimit0pt\hbox{\scriptsize.}\hbox{\scriptsize.}}}=}
 \newcommand{\Heis}{{\mathbf{H}^{n}}}   
 \newcommand{\fHeis}{\tilde{\bf H}^n}
 \newcommand{\bW}{\mathbf{W}}
 \newcommand{\dW}{\dot{\mathbf{W}}}
 \newcommand{\bE}{\mathbf{E}}
 \newcommand{\kB}{\mathfrak{B}}
 \newcommand{\Ff}{\hat{f}}
 \newcommand{\cW}{\mathcal{W}}
 \newcommand{\Id}{\mathrm{Id}}
  \newcommand{\THeis}{\tilde{\bf H}^n}
  \newcommand{\HDelta}{\widehat{\Delta}}
  \newcommand{\HDL}{\widehat{\mathcal{D}}_{\lambda}}
    \newcommand{\UHDL}{\widehat{\underline{\mathcal{D}}}_{\lambda}}
  \newcommand{\nV}{\mathbf{V}^{\zeta,\alpha}}
\newcommand{\rf}{\tilde{f}}
\newcommand{\rg}{\tilde{g}}
\newcommand{\rphi}{\tilde{\phi}}
\newcommand{\supp}{\mathrm{Supp}}
\newcommand{\C}{\mathbb C}
\newcommand{\R}{\mathbb R}
\newcommand{\N}{\mathbb N}
\newcommand{\Z}{\mathbb Z}
\newcommand{\cb}{\mathcal B}
\newcommand{\cac}{\mathcal C}
\newcommand{\cd}{\mathcal D}
\newcommand{\cf}{\mathcal F}
\newcommand{\cg}{\mathcal G}
\newcommand{\ch}{\mathcal H}
\newcommand{\ck}{\mathcal K}
\newcommand{\cs}{\mathcal S}
\newcommand{\ct}{\mathcal T}
\newcommand{\al}{\alpha}
\newcommand{\ep}{\varepsilon}
\newcommand{\ga}{\gamma}
\newcommand{\ka}{\kappa}
\newcommand{\la}{\lambda}
\newcommand{\vp}{\varphi}
\newcommand{\lp}{\left(}
\newcommand{\rp}{\right)}
\newcommand{\lc}{\left[}
\newcommand{\rc}{\right]}
\newcommand{\lla}{\left\langle}
\newcommand{\rra}{\right\rangle}
\numberwithin{equation}{section}
\newtheorem{theorem}{Theorem}[section]
\newtheorem{claim}[theorem]{Claim}
\newtheorem{corollary}[theorem]{Corollary}
\newtheorem{definition}[theorem]{Definition}
\newtheorem{hypothesis}[theorem]{Hypothesis}
\newtheorem{lemma}[theorem]{Lemma}
\newtheorem{proposition}[theorem]{Proposition}
\theoremstyle{remark}
\newtheorem{remark}[theorem]{Remark}
\theoremstyle{remark}
\newcommand{\bean}{\begin{eqnarray*}}
\newcommand{\eean}{\end{eqnarray*}}
\newcommand{\ben}{\begin{enumerate}}
\newcommand{\een}{\end{enumerate}}
\newcommand{\beq}{\begin{equation}}
\newcommand{\eeq}{\end{equation}}
\begin{document}

\title[Besov spaces and PAM on Heisenberg group]
{Weighted Besov spaces on Heisenberg groups\\
and applications to the Parabolic Anderson model}


\author[{F. Baudoin \and L. Chen \and C-H. Huang \and C. Ouyang \and S. Tindel \and J. Wang}]
{Fabrice Baudoin \and Li Chen \and Che-Hung Huang  \and \\ Cheng Ouyang \and Samy Tindel \and Jing Wang}

\address{Fabrice Baudoin: 
Department of Mathematics,
Aarhus University,
Denmark.}
\email{fbaudoin@math.au.dk}
\thanks{F. Baudoin was partly funded by NSF DMS-2247117 when some of the work was performed}

\address{Li Chen: 
Department of Mathematics,
Aarhus University,
Denmark.}
\email{lchen@math.au.dk}
\thanks{L. Chen is partly funded by the Simons Foundation \#853249}

\address{Che-Hung Huang: Department of Mathematics, Purdue University,  United States.}
\email{huan1160@purdue.edu}

\address{Cheng Ouyang: Department of Mathematics, 
Statistics and Computer Science, University of Illinois at Chicago, 
United States.}
\email{couyang@uic.edu}
\thanks{C. Ouyang is partly funded by the Simons Foundation \#851792}

\address{Samy Tindel: Department of Mathematics, Purdue University,  United States.}
\email{stindel@purdue.edu}
\thanks{S. Tindel is partly funded by  NSF DMS-2153915}

\address{Jing Wang: Department of Mathematics, Department of Statistics, Purdue University,  United States.}
\email{jingwang@purdue.edu}
\thanks{J. Wang is partly funded by NSF DMS-2246817}

\begin{abstract}This article aims at a proper definition and resolution of the parabolic Anderson model on  Heisenberg groups $\Heis$. This stochastic PDE is understood in a pathwise (Stratonovich) sense. We consider a noise which is smoother than white noise in time, with a spatial covariance function generated by negative powers $(-\Delta)^{-\alpha}$ of the sub-Laplacian on $\Heis$. We give optimal conditions on the covariance function so that the stochastic PDE is solvable. A large portion of the article is dedicated to a detailed definition of weighted Besov spaces on $\Heis$. This definition, related paraproducts and heat flow smoothing properties, forms a necessary step in the resolution of our main equation. It also appears to be new and of independent interest. It relies on a recent approach, called projective, to Fourier transforms on $\Heis$.
\end{abstract}

\maketitle

\tableofcontents

\section{Introduction}

In \cite{BLOTW,BOTW} we have started a long term project aiming at studying parabolic Anderson models and polymer measures on geometric structures. The broader question for us is to determine if the geometric environment is likely to change the exponents of the model in a significant way. In this context Heisenberg groups (denoted by $\Heis$ in the sequel) are good test examples, due to their sub-Riemannian nature. The study in \cite{BOTW} was centered on stochastic PDEs in the It\^o sense on $\Heis$. We did find that the critical exponents obtained in that case were substantially different from the ones obtained in a flat Euclidean space. See also the wide variety of exponents obtained in fractal settings (\cite{BLOTW}). 

In the current contribution we go back to the Heisenberg group setting, but we now consider pathwise (Stratonovich) equations. This is crucial in order to define polymer measures in a straightforward way. Also notice that the Stratonovich setting relies on a set of tools which is completely distinct from the It\^o setting. Before we turn to a summary of our findings, let us recall some basic geometric notation about $\Heis$.

\subsection{General facts about Heisenberg groups}
For a given $n\ge 1$,
the Heisenberg group $\Heis$ can be identified with $\R^{2n}\times \R$, equipped with the group multiplication defined for $(x,y,z), (x',y',z)\in \R^{2n+1}$: 
\begin{align}\label{eq-groupaction}
(x,y,z)*(x',y',z'):=\left(x+x', y+y', z+z'+2\omega((x,y),(x',y')) \right),
\end{align}
where $\omega:\R^{2n}\times\R^{2n}\to\R$, $\omega((x,y),(x',y'))=\sum_{i=1}^n x_i'y_i-x_iy_i'$ is the standard symplectic form on $\R^{2n}$.
The identity in $\Heis$ is $e=(0,0,0)$ and the inverse is given by $(x,y,z)^{-1}=(-x,-y,-z)$. In the sequel, for $q=(x,y,z)$ and $q'=(x',y',z')$, we will simply write $qq'$ for the product $q* q'$.
Its Lie algebra $\mathfrak{h}$ can be identified with $\R^{2n+1}$ with the Lie bracket given by
\begin{equation*}
[(a,b, c), (a', b', c')]=\left(0, 0, 2 \omega((a,b), (a',b'))\right), \quad (a,b), (a',b')\in \R^{2n}.
\end{equation*}
Note that $\mathfrak{h}$ can also be considered as the tangent space $T_e(\Heis)$. One can then define a basis of left invariant vector fields on $\Heis$ by 
\begin{equation}\label{eq-basis}
X_i(p)=\partial_{x_i}+2y_i \, \partial_{z}, \quad Y_{i}(p)=\partial_{y_i}-2x_i \, \partial_{z}, \quad Z(p)=\partial_{z}, \quad i=1,\dots, n \, ,
\end{equation}
for any $p=(x,y,z)\in \Heis$. The homogeneous norm on $\Heis$ is given by
\begin{equation}\label{eq:def-homogeneous-norm}
\lvert p \rvert_{\mathrm{h}}=(|x|^2+|y|^2+|z|)^{1/2} \, .
\end{equation}
We will also use the family of dilations $\{\delta_\lambda;\lambda>0\}$ defined as follows:
\begin{equation}\label{eq-dilation}
    \delta_\lambda q=(\lambda x, \lambda y, \lambda^2 z) \, ,
    \quad\text{for}\quad q=(x,y,z)\in\Heis.
\end{equation}
The Heisenberg group $\Heis$ appears as a flat model of sub-Riemannian manifold. Its sub-Riemannian structure can be reflected by restricting diffusions on its horizontal distribution $\mathcal{D}:=\{\mathcal{D}_p\}_{p\in \Heis}$ where 
\[
\mathcal{D}_p:=\mathrm{Span}\{X_i(p), Y_i(p), i=1,\dots, n\}.
\] 
A standard choice of sub-Riemannian metric is the inner product such that $\mathcal{H}\equiv\{X_i(p), Y_i(p),\linebreak i=1,\dots, n\}$ is an orthonormal frame at $p$. This orthonormal frame is referenced to as horizontal. Related to this notion is the  Carnot--Carath\'eodory distance on $\Heis$, which will be abundantly used in the remainder of this paper. For $q_1,q_2\in\Heis$, it is defined as \begin{equation}\label{cc-metric}
d_{cc}(q_1,q_2)= \inf
\left\{\int_0^1|\dot{\gamma}|_\mathcal{H}dt;\ \gamma:[0,1]\to\Heis\   \text{horizontal}, \gamma(0)=q_1, \gamma(1)=q_2\right\}.    
\end{equation}
 The  Carnot--Carath\'eodory distance on $\Heis$ is left-invariant i.e. 
 \begin{align}\label{left-invariant property}
 d_{cc}(pq_1,pq_2)=d_{cc}(q_1,q_2)
 \end{align}
 and satisfies the following scaling property
 \[
 d_{cc}(\delta_\lambda q_1,\delta_\lambda q_2)= \lambda d_{cc}( q_1,q_2).
 \]
It is then  an easy consequence of the scaling properties of $d_{cc}$ and $|\cdot|_h$ that the induced norm of $d_{cc}$ is equivalent to $|\cdot|_h$. Namely there exists a $c>1$ such that for all $q\in\Heis$,
\begin{equation}\label{eq-equiv-cc-h}
\frac1c |q|_h\le d_{cc}(e,q) \le c|q|_h.
\end{equation}
The  diffusion operator  related to the sub-Riemannian metric, that is
\begin{equation}\label{eq-Laplacian}
\Delta=\sum_{i=1}^nX_i^2+Y_i^2 \, ,
\end{equation}
is called the sub-Laplacian. It is essentially self-adjoint on the space of smooth and compactly supported functions $\mathcal{C}_0^\infty(\mathbb{R}^{2n+1})$, and has a unique  self-adjoint extension to the $L^2$ space with respect to the Haar measure $\mu$. The sub-Laplacian also admits a heat kernel $p_t$, issued from $e$, which was explicitly computed in \cite{Gaveau}: for any $q=(x,y,z)\in \Heis$ we have
\begin{equation}\label{eq-Heis-kernel}
p_t(q)=\frac{1}{(2\pi t)^{n+1}}\int_\R e^{i\frac{\lambda z}{t}}\left(\frac{2\lambda}{\sinh{2\lambda}}\right)^{n}\exp\left(-\frac{\lambda}{t} |(x,y)|^2\coth (2\lambda) \right)d\lambda,
\end{equation}
where the norm $|(x,y)|$ is defined by
\begin{equation}\label{eq:radial}
|(x,y)|^2= x_1^2+\cdots +x_n^2+y_1^2+\cdots +y_n^2 \, .
\end{equation}
Related to the heat kernel $p_{t}$, consider the heat semi-group
 $\{P_t ; \, t\ge0\}$  generated by the self-adjoint operator $\Delta$. Namely for any $f\in L^2(\R^{2n+1}, \mu)$ we have
\begin{equation}\label{eq-semigroup}
P_tf(p)=\int_{\R^{2n+1}} f(q)p_t(q^{-1}p)d\mu(q) \, ,
\end{equation}
where $\mu$ designates the Haar measure on $\Heis$.
The insight on $P_{t}$ provided by Fourier analysis is recalled and analyzed later in our paper.
For our SPDEs computations in Section~\ref{sec:spdes-existence}, we will also resort to resolvent kernels corresponding to the operator $(-\Delta)^{-\alpha}$ where $0<\alpha<n+1$. Those kernels are given by 
\begin{align}\label{green function}
G_\alpha(p,q)=\frac{1}{\Gamma(\alpha)} \int_0^{+\infty} t^{\alpha-1} p_t(q^{-1}p) \, dt, 
\quad\text{for all}\quad
p,q \in \Heis,
\end{align}
and were extensively studied in \cite{BOTW} to which we refer for further details.

\subsection{Main results}

The scope of our paper is twofold. First we introduce a notion of weighted Besov space on $\Heis$, which is compatible with heat flows. Then we apply this new notion to a parabolic Anderson model on $\Heis$. Let us detail those two points separately.

The notion of weighted Besov space in $\R^d$ is clearly rooted in a proper partition of unity in Fourier modes (see for example \cite{BCD2, MW}). In $\Heis$ this notion is far from being trivial. To start with, Fourier transforms in $\Heis$ are traditionally seen as operator-valued. In order to get a more handy version of the transform, in this paper we resort to the recent approach introduced in \cite{BCD2}. Roughly speaking, this method is based on a series of projections onto rescaled Hermite functions. The Fourier variables are then elements of the finite dimensional set $\fHeis=\N^n\times \N^n\times\R$ (although more complex structures are considered in \cite{BCD2} when taking closures). Once this new setting is accepted it remains to construct a convenient partition of unity, satisfying proper decay and support conditions. This step is based on delicate differential bounds detailed in Sections \ref{sec-FT} and \ref{sec-basic}. Overall our notion of weighted Besov space, although natural once the appropriate setting has been introduced, is an important part of our endeavor. We believe it is new and of independent interest.

Once the Besov space setting is established, we turn our attention to a parabolic Anderson model in $\Heis$. Formally it can be written as the following linear equation over $\R_+\times \Heis$:
\begin{equation}\label{eq:pam}
\partial_{t} u_t(q) = \frac{1}{2}\Delta u_t(q) + u_t(q) \, \dot{\bW}^{\zeta,\alpha}_t(q).
\end{equation}
In equation \eqref{eq:pam}, the time integral has to be understood in the young (or Stratonovich) sense, while the product $q\mapsto u_t(q) \dot{\bW}^{\zeta,\alpha}_t(q)$ is interpreted as a distributional product. Our aim is to handle the case of a centered Gaussian noise $\dot{\bW}^{\zeta,\alpha}$ with a covariance function informally given by 
\begin{align}\label{eq-intro-cov}
\bE\left[ \dot{\bW}_t^{\zeta, \alpha}(q_1)\dot{\bW}_s^{\zeta, \alpha}(q_2)\right]
=
|t-s|^{-\zeta} \, G_{2\alpha}(q_1,q_2),
\end{align}
where $G_{2\alpha}$ is the kernel introduced in \eqref{green function}. Otherwise stated $\dot{\bW}^{\zeta, \alpha}$ behaves in time like the derivative of a fractional Brownian motion type processes, while in space one can write $\dot{\bW}^{\zeta, \alpha}$ as $(-\Delta)^{-\alpha}\dot{W}$ (where $\dot{W}$ stands for a white noise on $\Heis$). A more formal definition is provided in Section \ref{sec-app-Gau}. However, it should be clear from \eqref{eq-intro-cov} that $\dot{\bW}^{\zeta, \alpha}$ is a distribution in both time and space. For this type of noise we prove that equation \eqref{eq:pam} admits a unique solution under the following condition on the coefficients $\zeta$ and $\alpha$:
\begin{equation}\label{eq-intro-cond}
\zeta\in (0,1),\quad \mbox{and}\quad \frac{n+1}2-(1-\zeta)<\alpha<\frac{n+1}2.
\end{equation}
In order to achieve this result, we will solve a mild form of the stochastic heat equation~\eqref{eq:pam}. We resort to Young integration techniques in time and rely heavily on our weighted Besov space framework for the spatial component. Notice that weights are essential in this context, since a noise like $\dot{\bW}^{\zeta, \alpha}$ is spatially unbounded (regardless of the Besov scale $\alpha$). In fact we will solve \eqref{eq:pam} in a space of weighted processes which combines time and space regularity in a subtle way (see Definition \ref{def-D-space} for a full description).

As mentioned above, one of our main goals is to quantify the influence of underlying geometric structures on relevant physical random systems (such as the parabolic Anderson model or directed polymers). Let us add a few comments about the exponents obtained in the current work:
 
\begin{enumerate}[wide, labelwidth=!, labelindent=0pt, label= \textbf{(\roman*)}]
\setlength\itemsep{.05in}

\item In \cite{BOTW} we have established existence and uniqueness results for an equation of the form \eqref{eq:pam}, albeit driven by a white noise in time. In~\cite{BOTW} we also dealt with a stochastic integral interpreted in the It\^o sense, as opposed to the Stratonovich-Young setting considered here. In this context we had established existence and uniqueness of the solution, under the condition 
 \begin{equation}\label{eq-intro-condition}
\frac{n}2<\alpha<\frac{n+1}2.
\end{equation}
If we wish to compare \eqref{eq-intro-condition} with our condition \eqref{eq-intro-cond}, one should recall that (formally) the white noise case in time corresponds to $\zeta=1$ in \eqref{eq-intro-cond}. Hence in the white noise case hypothesis~\eqref{eq-intro-cond} becomes an incompatible condition 
  \begin{equation}\label{eq-intro-cond-incom}
\frac{n+1}2<\alpha<\frac{n+1}2.
\end{equation}
This phenomenon is not surprising. Indeed, we have also seen in \cite[Proposition 3.12]{BOTW} that for  $\alpha>\frac{n+1}2$, one can slightly modify the definition of $\dot{\bW}^{\zeta, \alpha}$ in order to get a function (versus a distribution for $\alpha<\frac{n+1}2$) in space. Hence condition \eqref{eq-intro-cond-incom} just tells us the following: if we wish to consider the stochastic heat equation \eqref{eq:pam} driven by a white noise in time, in the Stratonovich sense, then the noise should be a function in space. This is well known in other contexts, see for instance \cite{TZ}.

\item One can refer e.g to \cite{HHNT} for a comparison with equations on $\R^d$ in the Stratonovich sense. We have argued in \cite{BOTW} that the noise in $\R^d$ which resembles most our noise $\dot{\bW}^{\zeta, \alpha}$ is based on Bessel kernels. This is a noise $\dot{\bW}^{\R^d,\zeta, \alpha}$ whose covariance function is formally similar to \eqref{eq-intro-cov-R}:
\begin{align}\label{eq-intro-cov-R}
\bE\left[ \dot{\bW}_t^{\R^d,\zeta, \alpha}(x) \, \dot{\bW}_s^{\R^d,\zeta, \alpha}(y)\right]
=
|t-s|^{-\zeta} \, G^{\R^d}_{2\alpha}(x,y),
\end{align}
where $G^{\R^d}_{2\alpha}(x,y)$ is the resolvent kernel for $(\mathrm{Id}-\Delta_{\R^d})^{-2\alpha}$. Now let us translate the conditions in \cite[Proposition 5.22 and Hypothesis 4.8]{HHNT} to the case of a Bessel kernel, for which the Fourier transform of the spatial covariance is $(1+|\zeta|^2)^{-2\alpha}$. We let the reader check that we get
\[
\alpha>\frac{d}4-(1-\zeta).
\]
This is exactly our condition \eqref{eq-intro-cond} in the Heisenberg case, once we realize that the relevant dimension in $\Heis$ is the Hausdorff dimension $Q=2n+2$ (as opposed to the topological dimension $2n+1$). As in \cite{BOTW}, this is a clear manifestation of the sub-Riemannian nature of $\Heis$.

\item
The current article is restricted to equations defined in the Young sense. It would be natural to observe how our exponents are affected by more singular noises, recurring either to regularity structures or paracontrolled techniques. Notice that this type of development has been initiated by~\cite{MS}. However the reference~\cite{MS} focuses on a compact version of $\Heis$, which conveniently avoids the use of weighted spaces. We plan to delve deeper into this direction in future communications.
\end{enumerate}

Our paper is organized as follows: in Section \ref{sec-FT} we recall the basic Fourier analysis setting which will prevail throughout the paper. Then we construct a Gevrey type class of functions in $\Heis$, which is crucial step in the definition of weighted Besov space. This step culminates in a Bernstein type lemma in Section \ref{sec-Berns}. Section \ref{sec-basic} is devoted to define Besov spaces and derive related smoothing effects of the heat semigroup, as well as a nontrivial notion of paraproduct. The application of this theory to stochastic PDEs is contained in Section \ref{sec:spdes-existence}.

\paragraph{\textbf{Notation.}}
Throughout this paper, the set $\N$ stands for the family of integers $\{0,1,2,\ldots\}$.
For any multi-index integer $k=(k_1,\dots, k_n)\in \N^n$, we use the notation  $k!:=k_1!\cdots k_n!$ and $|k|:=k_1+\cdots +k_n$.  We extend the notation $|k|$ to indices $k\in\Z^n$, by letting $|k|=|k_1|+\cdots+|k_n|$.

\section{Fourier transform on Heisenberg group}\label{sec-FT}

In this section we summarize some basic facts about Fourier transforms on $\Heis$ as can be found in \cite{BCD}. We will then study some properties of functions whose Fourier transform is compactly supported. Those properties will be crucial for a proper notion of Besov weighted space. Eventually we will define a notion of fractional Laplacian which is invoked for the description of our noisy inputs, and recall the notion of tempered distribution in $\Heis$.
 
\subsection{Fourier transform}\label{sec:fourier-general}
It is well-known that all irreducible representations of the Heisenberg group $\Heis$ are unitary equivalent to the Schr\"odinger representations $(U^\lambda)_{\lambda \in \mathbb{R}}$, that is the family of group morphisms $q=(x,y,z) \in \Heis \to U_q^\lambda$ between $\Heis$ and the unitary group of $L^2(\R^{n})$ defined by
\begin{equation}\label{eq:representation1}
U_q^\lambda u (\xi) = e^{-i\lambda\left( z+2\langle y,\,\xi- x\rangle \right) } u \left( \xi -2x\right), \quad \xi \in \mathbb R^n.
\end{equation}
Based on those representations, the group Fourier transform of a function $f \in L^1(\Heis)$ is defined for each $\lambda \in \R\setminus\{0\}=:\R^*$ as the operator valued function on $L^2(\mathbb R)$ given by
\begin{equation}\label{eq-Fourier-transf}
\mathcal{F}(f) (\lambda)=\int_{\Heis} f(q)U_{q}^\lambda \,  d\mu(q) .
\end{equation}
Clearly $\mathcal{F}(f) $ takes values in the space of bounded operators on $L^2(\R^n)$. One of the important features of $\cf$ is its compatibility with the Laplace operator. Namely for a generic smooth function $f$ we have
\begin{equation}\label{eq-Ft-Del}
\mathcal{F}(\Delta f)(\lambda)=4 \, \mathcal{F}(f)(\lambda)\circ \Delta^\lambda_{\mathrm{osc}} \, ,
\end{equation}
where the operator $\Delta^\lambda_\mathrm{osc}$ is defined by $\Delta^\lambda_\mathrm{osc}:= \sum_{j=1}^n\partial_j^2-\lambda^2|x|^2$ and is called harmonic oscillator.

In this paper we will consider the Fourier transform from  a projective point of view that was introduced by \cite{BCD}. In order to introduce this notion, let $(\Phi_k)_{k\in \N^n}$ be the family of Hermite functions on $\R^n$ that is given by
\begin{equation}\label{a1}
\Phi_0(x)=\pi^{-\frac{n}{4}}e^{-\frac{|x|^2}{2}},
\quad\text{and}\quad \Phi_k(x)=\left(\frac1{2^{|k|}k!}\right)^{\frac12}C^k\Phi_0(x) \, ,
\quad \text{for } k\in \N^n \, .
\end{equation}
In \eqref{a1}, we have set $C^k=\prod_{j=1}^{|k|}C_j^{k_j}$,  where $C_j:=-\partial_j+M_j$ denotes the creation operator and $M_j$ is the multiplication operator such that for any function $u$ on $\R^n$, $M_ju(x)=x_ju(x)$. The family $(\Phi_k)_{k\in \N^n}$ forms an orthonormal basis of $L^2(\R^n)$  and the functions $\Phi_k$ are the eigenfunctions of the harmonic oscillator $\Delta_\mathrm{osc}:=\Delta^1_\mathrm{osc}=\sum_{j=1}^n\partial_j^2-|x|^2$ appearing in~\eqref{eq-Ft-Del}. Specifically, for $k\in \N^n$ we have
\begin{equation}\label{a11}
\Delta_\mathrm{osc}\Phi_k=-(2|k|+n)\Phi_k.
\end{equation}

We now introduce a scaling factor which will account for the Fourier transform with respect to the $z$-variable in $\Heis$. That is for $\lambda \in \mathbb R^*$ and $k \in \mathbb N^n$, we consider the family of  rescaled Hermite functions
\begin{equation}\label{a2}
\Phi_k^\lambda (x)= |\lambda|^{n/4} \Phi_k \lp \sqrt{|\lambda|} x\rp, \quad x \in \mathbb R^n \, .
\end{equation}
This family again forms an orthonormal basis of $L^2(\mathbb R^n)$, and satisfies that
\[
\Delta^\lambda_\mathrm{osc}\Phi^\lambda_k (x)= \left(\sum_{j=1}^n\partial_j^2-\lambda^2|x|^2\right)\Phi^\lambda_k (x) =-(2|k|+n)|\lambda|\Phi^\lambda_k (x).
\]
We denote by $\|\cdot\|_{\textsc{hs}}$ the Hilbert-Schmidt norm. Note that owing to the fact that $(\Phi^\lambda_k)_{k\in\N^{n}}$ is an orthonormal basis of $L^2(\mathbb R^n)$, we have
\begin{equation}\label{eq:projective-hs-norm}
\| \mathcal{F}(f) (\lambda) \|_{\textsc{hs}}^2=\sum_{k\in \N^n}   \| \mathcal{F}(f) (\lambda)  \Phi_k^\lambda \|_{L^2(\R^n)}^2.
\end{equation}

Identity \eqref{eq:projective-hs-norm} leads us to introduce the following projective definition of the Fourier transform that is convenient for defining the tempered distribution on $\Heis$. 

\begin{definition}\label{def:projective-fourier}
Let $\fHeis:=\mathbb N^n \times  \mathbb N^n  \times \mathbb R^*$, whose generic element is denoted by $\hw:=(m,\ell,\lambda)$. For any $f\in L^2(\Heis)$ we define the (projective) Fourier transform of $f$ as the function $\hat f:\fHeis\to \C$ given by
\begin{equation}\label{eq-F-trans-decomp}
\hat f (m,\ell,\lambda):= \langle \mathcal{F}(f) (\lambda) \Phi_m^\lambda , \Phi_\ell^\lambda \rangle_{L^2(\R^n)} \, ,
\end{equation}
where we recall that the functions $\Phi_{k}^{\la}$ are given by \eqref{a2}.
\end{definition} 

\noindent
Notice that in \eqref{eq-F-trans-decomp}, the Fourier transform is now complex valued as in $\R^{n}$. It can also be written as the following integral representation that is closely related to Wigner transforms of Hermite functions: for any $(m,\ell,\lambda)\in \fHeis$,
\begin{equation}\label{eq-FT-int}
 \hat f (m,\ell,\lambda)=\int_\Heis  \overline{e^{i\lambda z} K_{m,\ell,\lambda} (q)} f(q) \, d\mu(q),
\end{equation}
where for all $q= (x,y,z)\in \Heis$, the kernel $K_{m,\ell,\lambda} (q)$ is given by
\begin{equation}\label{eq-FT-K}
K_{m,\ell,\lambda} (q)=\int_{\R^n}e^{2i\lambda \langle y,\xi\rangle}\Phi^\lambda_m(x+\xi)\Phi^\lambda_\ell(-x+\xi)d\xi.
\end{equation}

We now label some important properties of the projective Fourier transform which will be used in the sequel:
\begin{enumerate}[wide, labelindent=0pt, label= \textbf{(\roman*)}]
\setlength\itemsep{.05in}

\item
The Fourier inversion formula for $q=(x,y,z)$ is  obtained by integrating against the kernel $K_{m,\ell,\lambda} (q)$. Namely recalling that we write $\hw=(m,\ell,\lambda)$, it can be shown that
\begin{equation}\label{eq-FT-int-inv}
f(q) 
=
\frac{2^{n-1}}{\pi^{n+1}} 
\sum_{m,\ell\in \N^n}\int_{\mathbb{R}} e^{i\lambda z} K_{m,\ell,\lambda} (q) \hat f (m,\ell,\lambda)|\lambda|^nd\lambda
=:
\frac{2^{n-1}}{\pi^{n+1}} \int_{\fHeis}e^{i\lambda z} K_{\hat{w}} (q) \hat{f} (\hat{w})d \hat{w},
\end{equation}
where the second identity is a notation which will be used in the sequel without further mention.

In fact, it is shown in \cite{BCD} that the Fourier transform as defined above can be extended as a bi-continuous isomorphism between $L^2(\Heis)$ and $L^2(\fHeis)$, and the following Plancherel formula holds,
\begin{align}\label{eq-planch}
\int_{\Heis} |f(q)|^2 d\mu(q)=
\frac{2^{n-1}}{\pi^{n+1}} \sum_{m,\ell\in \N^n} \int_{\R} \, \hat f (m,\ell,\lambda)^2 |\lambda |^n d\lambda .
\end{align}

\item
The interaction between Laplacian and Fourier transform is easily read in projective mode.
Specifically combining \eqref{eq-Ft-Del} with \eqref{a11} and\eqref{eq-F-trans-decomp},  we can easily find that for any $(m,\ell, \lambda)\in \fHeis$,
\begin{equation}\label{eq-FT-del1}
\widehat{\Delta f}(m,\ell, \lambda)=-4|\lambda|(2|m|+n)\Ff (m,\ell, \lambda).
\end{equation}
Similarly for any $N\ge1$,  we have
\begin{equation}\label{eq-FT-delN}
\widehat{\Delta^N f}(m,\ell, \lambda)=\big(-4|\lambda|(2|m|+n)\big)^N\Ff (m,\ell, \lambda).
\end{equation}
Relation \eqref{eq-FT-del1} also enables to introduce fractional power of the Laplacian. Namely for $\alpha\in\R$ we define $(-\Delta)^\alpha f$ through its Fourier transform:
\begin{equation}\label{eq-FT-delalpha}
\widehat{(-\Delta)^\alpha f}(m,\ell, \lambda)=\big(4|\lambda|(2|m|+n)\big)^\alpha\Ff (m,\ell, \lambda).
\end{equation}
\item
The Fourier transform of the Dirac distribution at the identity $e$ is explicitly computed as:
\[
\hat {\delta_e} (m,\ell,\lambda)=\langle \mathcal{F}(\delta_e) (\lambda) \Phi_m^\lambda , \Phi_\ell^\lambda \rangle_{L^2(\R^n)}= \langle U_e^\lambda (\Phi_m^\lambda ), \Phi_\ell^\lambda \rangle_{L^2(\R^n)}=\mathbbm{1}_{\{m=\ell\}},
\]for all $(m,\ell,\lambda)\in\fHeis$. In the same way, let $p_t$ be the sublaplacian heat kernel defined by \eqref{eq-Heis-kernel}. Then the projective Fourier transform of $p_t$ satisfies
\begin{equation}\label{eq-pt-hat}
    \hat{p}_t(m,\ell,\lambda) = e^{-4t|\lambda|(2|m|+n)}\mathbbm{1}_{\{m=\ell\}}.
\end{equation}

\item
For any integrable functions $f, g$ on $\Heis$,  the following convolution identity holds:
\begin{equation}\label{eq-conv}
\widehat{f\star g}(m,\ell,\lambda)=(\hat{f}\cdot \hat{g})(m, \ell, \lambda)
\end{equation}
where 
\begin{equation}\label{def-dot}
(\hat{f}\cdot \hat{g})(m, \ell, \lambda):=\sum_{j\in \N^n}\hat{f}(m, j, \lambda)  \hat{g}(j, \ell, \lambda).
\end{equation}
\end{enumerate}

 We close this section by recalling from \cite{BCD} that  we can equip $\fHeis$ with a metric $\widehat{d}$. That is for all $\hat{w}=(m,\ell,\lambda)$ and  $\hat{w}'=(m',\ell',\lambda')\in\THeis$, we define
  \begin{equation}\label{dist}
  \widehat{d}(\hat{w}, \hat{w}')=|\lambda(m+\ell)-\lambda'(m'+\ell')|_1+|(m-\ell)-(m'-\ell')|+n|\lambda-\lambda'|
   \end{equation}
  where $|\cdot|_1$ denotes the Manhattan norm  on $\mathbb{R}^n$, i.e. for $m \in \mathbb{R}^n$, $|m|_1=\sum_{i=1}^n |m_i|$. 
 Similarly, recall that the norm $|k|$ for $k\in\Z^n$ is given in the notation at the end of our introduction. 
   Taking limits $\hat{w}'\to(0,0,0)$ in relation~\eqref{dist}, we also set \begin{equation}\label{dist0}
  \widehat{d}_0(\hat{w})=|\lambda|(|m+\ell|+n)+|m-\ell| \, ,
  \end{equation}
  for all $\hat{w}=(m,\ell,\lambda)$ in $\fHeis$.
  
\subsection{Some differential operators on $\fHeis$}\label{sec:diff-on-tilde-H}

It is known  that functions in $\R^n$ with compactly supported Fourier transform have several regularity properties, which can be extended to weighted spaces. In the following subsections we shall study these properties in the Heisenberg group, extending~\cite{MW} to our geometric setting. As a preparation to this task, in this section we introduce some
differentiations on $\fHeis$. Notice that we borrow those differentiation operators  from~\cite{BCD}. They nicely preserve the property of Fourier transform that converts multiplication in direct coordinates into differentiation in Fourier modes.
\begin{definition} As in the general notation given in the introduction, we denote by $\mathbb{N}$ the set $\{0,1,2,\ldots\}$. 
Recall that the space $\fHeis$ is introduced in Definition~\ref{def:projective-fourier}. We define a differential operator $\HDelta$ acting on functions 
$g:\fHeis\to\C$ in the following way: for any $\hat{w}=(m,\ell, \lambda)$ we set~$\widehat{ w}_j^\pm\defeq(m\pm e_j,\ell\pm e_j,\lambda)$, where $e_j$ is the element of $\mathbb{N}^n$  with the $j$-th component $1$ and all the  other components $0$.
Then $\HDelta g$ is given by
\begin{multline}\label{b1}
 \HDelta g(\hat{w}) \defeq \\
  - \frac{ 1}{2|\lambda|} ( |m+\ell| +n)\, g(\hat{w}) 
+\frac{1}{2|\lambda|} \sum_{j=1} ^n \left( \sqrt{(m_j+1) (\ell_j+1)}\, g(\hat{w}_j^+) 
+\sqrt{m_j\ell_j}\, g(\widehat{ w}_j^-)\right).
\end{multline}
If in addition $g$ is differentiable with respect to $\lambda$, we also define a differentiation with respect to the $\lambda$ variable:
 \begin{equation}\label{b2}
 \HDL g(\hat{w})  \defeq  {\partial_\lambda g}(\hat{w})+ \frac{n}{2\lambda} g(\hat{w})
+\frac{1}{2 \lambda}\sum_{j=1}^n \left(
\sqrt{m_j\ell_{j} }\, g(\hat{w}_j^-)-\sqrt{(m_j+1)(\ell_j+1)} \, g({\hat{w}_j}^+) \right).
 \end{equation}
\end{definition}

\begin{remark}
The operator $\HDelta$ should be considered as a differentiation in $\fHeis$. One can justify this interpretation easily in two ways:\begin{enumerate}[wide, labelwidth=!, labelindent=0pt, label= {(\alph*)}]
  \setlength\itemsep{-.1in}  
  \item On the diagonal in $\fHeis$, namely for $\hat{w}=(m,m,\lambda)$, it is readily checked that \eqref{b1} becomes \begin{equation}\label{b1'}
        \HDelta g(m,m,\lambda) =
   \frac{ 1}{2|\lambda|} \sum_{j=1} ^n m_j \big(g(\widehat{ w}_j^-)-g(\hat{w})\big)+(m_j+1) \big(g(\widehat{ w}_j^+)-g(\hat{w})\big),
    \end{equation} and the right hand side of \eqref{b1'} is easily interpreted as a Laplace type operator.\\
    \item A second justification of $\HDelta$ as a Laplacian is the content of the technical Lemma \ref{mult-diff} below. It states that multiplying by  $|x|^2+|y|^2$ in $\Heis$ corresponds to applying $\HDelta$ in $\fHeis$. This is analogous to similar relations in $\mathbb{R}^d$ involving the Laplacian.
\end{enumerate}
Notice that  the same kind of observations are valid for the operator $\HDL$ in \eqref{b2}.
\end{remark}
\begin{remark}
 In \eqref{b1}-\eqref{b2}, we adopt the following convention:   if there exists $j\in\{1,...,n\}$ such that either $m_j=-1$ or $\ell_j=-1 $, then we set $g(m, \ell,\lambda)=0$. This allows us to define quantities such as $\HDelta g(0,0,\lambda)$  or  $ \HDL g(0,0,\lambda)$.
 \end{remark}
 
 We now label some general analytic definitions on $\Heis$ and $\fHeis$, which will be used throughout the paper. We first introduce some multiplication operators on $\Heis$.
 
 \begin{definition}\label{def:schwartz-and-multiplication}
 On the Heisenberg group $\Heis$, we consider the following objects:
 \begin{enumerate}[wide, labelwidth=!, labelindent=0pt, label= \emph{\textbf{(\roman*)}}]
\setlength\itemsep{.05in}

\item
The Schwartz space of smooth functions with rapid decay on $\Heis$ is identified with the Schwartz space on $\R^{2n+1}$. It is denoted by $\cs(\Heis)$.

\item
The multiplication operators $M^2$ and $M_0$ on the Schwartz space $\cs(\Heis)$:  for any $q=(x,y,z)\in\Heis$, we set
 \begin{equation}\label{b3}
 (M^2f)(q):=(|x|^2+|y|^2)f(q),\quad\text{and}\quad (M_0f)(q):=-iz f(q).
 \end{equation}

\end{enumerate}
  \end{definition}
  Next we define what is meant by Schwartz space on the dual set $\fHeis$. As expected, it is related to smoothness and decay properties. 
  \begin{definition}\label{def:Schwartz-space} Let $\fHeis$ be the set introduced in Definition~\ref{def:projective-fourier}. Then $\mathcal{S}(\fHeis)$ is the set of functions $\tilde{\phi}$ on $\fHeis$ such that 
 \begin{enumerate}[wide, labelwidth=!, labelindent=0pt, label= \emph{\textbf{(\roman*)}}]
\setlength\itemsep{-.1in}
      \item For all $(m,\ell)\in\mathbb{N}^{2n}$, the map $\lambda\mapsto \tilde{\phi}(m,\ell,\lambda)$ is smooth on $\mathbb{R}^*$.\\
      \item For all $N\in\mathbb{N}$, the functions $\HDelta^N\tilde{\phi}$ and $\HDL^N \tilde{\phi}$ decay faster than any expression of the form $d_0(\hat{w})^{-\kappa}$ for all $\kappa>0$, where $d_0$ is introduced in \eqref{dist0}.
  \end{enumerate}
      Note that the space $\mathcal{S}(\fHeis)$ can be equipped with a family of semi-norms\begin{equation}\label{semi-norms}
\lVert\tilde{\phi}\rVert_{N,\,\kappa,\,\mathcal{S}(\fHeis)}=\sup_{\hat{w}\in\fHeis}\left(1+d_0(\hat{w})\right)^\kappa\left(\HDelta^N\tilde{\phi}(\hat{w})+\HDL^N\tilde{\phi}(\hat{w})\right).
      \end{equation}
  \end{definition}

  \begin{remark}
      Our Definition \ref{def:Schwartz-space} is not completely exact, and we refer to \cite[Definition 2.5]{BCD} for the complete version. Specifically we have refrained from introducing the operator $\widehat{\Sigma}_0$ and the completion $\widehat{\bf H}^n$, which play a prominent role in~\cite{BCD}, for sake of clarity. For our purposes it will be sufficient to consider the semi-norms \eqref{semi-norms}.
  \end{remark}
  One of the advantages of introducing the space $\mathcal{S}(\fHeis)$ is that we now have an isomorphism property for the Fourier transform. This result is borrowed from \cite[Theorem 2.6]{BCD}.

  \begin{proposition}\label{isomorphism}
      Let $\mathcal{S}(\Heis)$ be the space given in Definition \ref{def:schwartz-and-multiplication} and consider $\mathcal{S}(\fHeis)$ introduced in Definition \ref{def:Schwartz-space}. Then the projective Fourier transform $f \to \hat{f}$ is an isomorphism from $\mathcal{S}(\Heis)$ to $\mathcal{S}(\fHeis)$, with inverse given by \eqref{eq-FT-int-inv}.
  \end{proposition}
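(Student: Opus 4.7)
The plan is to establish the isomorphism in two directions: (i) $\mathcal{F}$ sends $\mathcal{S}(\Heis)$ continuously into $\mathcal{S}(\fHeis)$, and (ii) the inversion formula \eqref{eq-FT-int-inv} defines a continuous map back, with both compositions equal to the identity. The cornerstone is a dictionary, analogous to the Euclidean one $\widehat{x_j f}=i\partial_{\xi_j}\hat f$, which interchanges multiplication on $\Heis$ with the differential operators on $\fHeis$ introduced in Section~\ref{sec:diff-on-tilde-H}. Namely, one first proves (this is Lemma~\ref{mult-diff} alluded to in the second remark after Definition~\ref{def:schwartz-and-multiplication})
\begin{equation*}
\widehat{M^2 f}(\hat w)=\HDelta\hat f(\hat w),\qquad \widehat{M_0 f}(\hat w)=\HDL\hat f(\hat w),
\end{equation*}
by inserting the defining kernel \eqref{eq-FT-K}, recognizing the action of $|x|^2+|y|^2$ and $-iz$ against $\Phi_m^\lambda,\Phi_\ell^\lambda$ through the recurrence and derivative identities of rescaled Hermite functions, and then integrating by parts in $\xi$. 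Together with the Laplacian multiplier identity \eqref{eq-FT-delN}, we have the full set of tools:
\begin{equation*}
\widehat{(M^2)^N f}=\HDelta^N\hat f,\qquad \widehat{(M_0)^N f}=\HDL^N\hat f,\qquad \widehat{\Delta^N f}(\hat w)=(-4|\lambda|(2|m|+n))^N\hat f(\hat w).
\end{equation*}

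For step (i), I would fix $N,\kappa$ and estimate the semi-norm $\lVert\hat f\rVert_{N,\kappa,\mathcal{S}(\fHeis)}$ defined in \eqref{semi-norms}. Applying the identities above, $\HDelta^N\hat f$ equals $\widehat{(M^2)^N f}$, and since $f\in\mathcal{S}(\Heis)$ the function $(M^2)^N f$ is also Schwartz. To extract the weight $(1+d_0(\hat w))^\kappa$, I write $d_0(\hat w)=|\lambda|(|m+\ell|+n)+|m-\ell|$ and absorb the first summand by applying successive powers of the sub-Laplacian $\Delta$ to $(M^2)^N f$ (each power producing a factor $4|\lambda|(2|m|+n)$ on the Fourier side, which dominates $|\lambda|(|m+\ell|+n)$ up to a constant by the convention for $m\neq\ell$), while the $|m-\ell|$ factor is handled by inserting auxiliary operators built from $\partial_z$ and $X_j,Y_j$ (whose Fourier-side action on $\Phi_m^\lambda,\Phi_\ell^\lambda$ shifts indices by $\pm e_j$, so that iterated commutators produce the required polynomial decay in $|m-\ell|$ through the orthogonality of Hermite functions). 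Smoothness in $\lambda$ is obtained by differentiation under the integral in \eqref{eq-FT-int}, justified by the Schwartz decay of $f$. This yields, for each $N,\kappa$, a bound of the form $\lVert\hat f\rVert_{N,\kappa,\mathcal{S}(\fHeis)}\lesssim\lVert f\rVert_{\mathcal{S}(\Heis),N',\kappa'}$ for suitable $N',\kappa'$, giving continuity.

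For step (ii), I would first verify the inversion formula on $\mathcal{S}(\Heis)$: starting from \eqref{eq-FT-int}, plug $\hat f$ into the right-hand side of \eqref{eq-FT-int-inv}, exchange sum/integral (legitimate by the absolute convergence obtained in step (i) together with the growth of $|\lambda|^n$), and collapse the resulting kernel using the Plancherel relation~\eqref{eq-planch} and the closure relation $\sum_{m}\Phi_m^\lambda(\cdot)\Phi_m^\lambda(\cdot')=\delta(\cdot-\cdot')$. Then, for $\tilde\phi\in\mathcal{S}(\fHeis)$, I would show that its inverse transform $f$ defined by \eqref{eq-FT-int-inv} lies in $\mathcal{S}(\Heis)$ by the dual argument: multiplications by $|x|^2+|y|^2$, $-iz$, and applications of $X_j^2+Y_j^2$ on $f$ correspond (by the same dictionary) to $\HDelta,\HDL$, and the multiplier $-4|\lambda|(2|m|+n)$ acting on $\tilde\phi$, so the Schwartz semi-norms of $f$ are controlled by those of $\tilde\phi$. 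The main obstacle is the index $|m-\ell|$ direction in the metric $d_0$: unlike $|\lambda|(2|m|+n)$, it does not correspond to a canonical differential operator on $\Heis$, and its control requires careful use of the off-diagonal kernel identities for Hermite functions together with the structure of $K_{m,\ell,\lambda}$, which is exactly the technical heart of~\cite[Thm.~2.6]{BCD} that we would invoke.
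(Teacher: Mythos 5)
The paper does not actually prove this proposition: it is imported verbatim from \cite[Theorem 2.6]{BCD}, and the remark following Definition~\ref{def:Schwartz-space} even concedes that the paper's version of $\mathcal{S}(\fHeis)$ is deliberately incomplete (the operator $\widehat{\Sigma}_0$ and the completion of $\fHeis$ are suppressed), so a self-contained proof within the paper's framework is not really available. Your outline is a reasonable reconstruction of how the argument goes in the reference, and you correctly identify the dictionary $\widehat{M^2f}=-\HDelta\Ff$, $\widehat{M_0f}=\HDL\Ff$ (note the sign: the paper's Lemma~\ref{mult-diff} has a minus in front of $\HDelta$, which you dropped) together with the multiplier identity \eqref{eq-FT-delN} as the engine of the proof.

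However, as a proof the proposal has genuine gaps. First, the decay in the $|m-\ell|$ direction of $\widehat{d}_0$ --- which you yourself flag as the technical heart --- is never established; the proposal ends by ``invoking'' \cite[Thm.~2.6]{BCD}, i.e.\ the very statement to be proven, which is circular. Second, the claim that iterating $\Delta$ produces factors $4|\lambda|(2|m|+n)$ dominating $|\lambda|(|m+\ell|+n)$ is false off the diagonal: the multiplier in \eqref{eq-FT-delN} involves only $|m|$, and for $\ell$ much larger than $m$ it does not control $|m+\ell|$; one needs a second family of operators (acting on the $\ell$-index, e.g.\ via right-invariant vector fields or the adjoint action) to symmetrize, and this is not supplied. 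Third, the continuity of the inverse map and the verification that the two compositions are the identity on the full spaces (not just on nice dense subsets) require the completed space $\widehat{\mathbf H}^n$ and the operator $\widehat{\Sigma}_0$ from \cite{BCD}, which are absent both from the paper and from your sketch. In short: the skeleton is right, but the load-bearing steps are either asserted or deferred to the citation, so the proposal does not constitute an independent proof.
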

One of the main results contained in \cite{BCD} is that the multiplication operators introduced in Definition \ref{def:schwartz-and-multiplication} are compatible with the differentiations on $\fHeis$. We label this result in the following lemma.

\begin{lemma}\label{mult-diff}
Recall that the operators $\HDelta$ and $\HDL$ are respectively defined by~\eqref{b1} and~\eqref{b2}. 
The multiplications $M^2$ and $M_0$ are given in~\eqref{b3}. Then for any $f\in \cs(\Heis)$, we have
\begin{equation}\label{eq-FT-M}
\widehat{M^2f}=-\HDelta {\Ff},
\quad\text{and}\quad
 \widehat{M_0f}=\HDL {\Ff}.
\end{equation}
\end{lemma}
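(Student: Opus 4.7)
The strategy is a direct computation based on the integral representation \eqref{eq-FT-int}-\eqref{eq-FT-K} of the projective Fourier transform combined with three standard tools: (a) the creation/annihilation recurrences for the rescaled Hermite family $(\Phi_k^\lambda)$, (b) integration by parts in the auxiliary variable $\xi$ to trade powers of $y$ for $\xi$-derivatives of the Hermite factors, and (c) the same technique in $\lambda$ to convert powers of $z$. Concretely, from the classical Hermite recurrences one deduces, for all $u\in\R^n$, $k\in\N^n$ and $\lambda\in\R^*$, the identities
\begin{equation*}
u_j\Phi_k^\lambda(u)=\tfrac{1}{\sqrt{2|\lambda|}}\bigl(\sqrt{k_j+1}\,\Phi_{k+e_j}^\lambda(u)+\sqrt{k_j}\,\Phi_{k-e_j}^\lambda(u)\bigr),
\quad
\partial_j\Phi_k^\lambda(u)=\sqrt{\tfrac{|\lambda|}{2}}\bigl(\sqrt{k_j}\,\Phi_{k-e_j}^\lambda(u)-\sqrt{k_j+1}\,\Phi_{k+e_j}^\lambda(u)\bigr),
\end{equation*}
together with derived expressions for $u_j^2\Phi_k^\lambda$, $\partial_j^2\Phi_k^\lambda$ and $\partial_\lambda\Phi_k^\lambda(u)=\tfrac{n}{4\lambda}\Phi_k^\lambda(u)+\tfrac{1}{2\lambda}\sum_j u_j\partial_j\Phi_k^\lambda(u)$.

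For the first identity, I would start from $\widehat{M^2 f}(m,\ell,\lambda)=\int_{\Heis} f(q)(|x|^2+|y|^2)\,e^{-i\lambda z}\overline{K_{m,\ell,\lambda}(q)}\,d\mu(q)$ and express $(|x|^2+|y|^2)\overline{K_{m,\ell,\lambda}}$ as a linear combination of $\overline{K_{m',\ell',\lambda}}$ for shifted indices. The factor $|x|^2$ is handled inside the integral defining $K$ by writing $x_j=\tfrac12[(x+\xi)_j-(-x+\xi)_j]$ and applying the Hermite recurrence above to each of $\Phi_m^\lambda(x+\xi)$ and $\Phi_\ell^\lambda(-x+\xi)$. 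The factor $|y|^2$ is handled via $y_j^2e^{-2i\lambda\langle y,\xi\rangle}=-\tfrac{1}{4\lambda^2}\partial_{\xi_j}^2e^{-2i\lambda\langle y,\xi\rangle}$ followed by two integrations by parts in $\xi_j$. Expanding and collecting terms, the coefficients of the ``pure'' shifts $\Phi_{m\pm 2e_j}^\lambda\Phi_\ell^\lambda$ (and $\Phi_m^\lambda\Phi_{\ell\pm 2e_j}^\lambda$) produced by $|x|^2$ precisely cancel those produced by $|y|^2$; likewise the mixed shifts $\Phi_{m\pm e_j}\Phi_{\ell\mp e_j}$ cancel. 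What survives is exactly
\begin{equation*}
(|x|^2+|y|^2)\overline{K_{m,\ell,\lambda}}=\tfrac{|m+\ell|+n}{2|\lambda|}\overline{K_{m,\ell,\lambda}}-\tfrac{1}{2|\lambda|}\sum_{j=1}^n\bigl[\sqrt{(m_j+1)(\ell_j+1)}\,\overline{K_{m+e_j,\ell+e_j,\lambda}}+\sqrt{m_j\ell_j}\,\overline{K_{m-e_j,\ell-e_j,\lambda}}\bigr],
\end{equation*}
which, once inserted into the integral for $\widehat{M^2 f}$, reproduces the right-hand side of \eqref{b1} with a minus sign, yielding $\widehat{M^2 f}=-\HDelta\hat f$.

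For the second identity, I would use $-iz\,e^{-i\lambda z}=\partial_\lambda e^{-i\lambda z}$ to rewrite
\begin{equation*}
\widehat{M_0 f}(m,\ell,\lambda)=\partial_\lambda\hat f(m,\ell,\lambda)-\int_{\Heis}f(q)\,e^{-i\lambda z}\,\partial_\lambda\overline{K_{m,\ell,\lambda}(q)}\,d\mu(q),
\end{equation*}
and then compute $\partial_\lambda\overline{K_{m,\ell,\lambda}}$ from its defining integral. Differentiating the exponential produces a factor $-2i\langle y,\xi\rangle$, which, after one integration by parts in $\xi$, contributes the $\tfrac{n}{2\lambda}$ diagonal term of $\HDL$ plus $\xi$-weighted derivative terms; differentiating $\Phi_m^\lambda(x+\xi)\Phi_\ell^\lambda(-x+\xi)$ via the formula for $\partial_\lambda\Phi_k^\lambda$ recalled above adds further shifted Hermite products. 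The same type of cancellation as in Step 2 (between pure $2e_j$-shifts produced by the two sources) leaves only the diagonal term and the simultaneous $\pm e_j$-shifts in $(m,\ell)$, with coefficients matching exactly \eqref{b2}, hence $\widehat{M_0 f}=\HDL\hat f$.

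\textbf{Main obstacle.} The difficulty is entirely combinatorial: one has to track four families of shifted products ($\pm 2e_j$ in $m$ alone, $\pm 2e_j$ in $\ell$ alone, $\pm e_j$ in both with same sign, and $\pm e_j$ with opposite signs), check that the first three families cancel between the $|x|^2$/exponential contributions and the $|y|^2$/Hermite contributions (this cancellation is the whole content of the lemma), and match the remaining coefficients with those of $\HDelta$ and $\HDL$. Good choices of symmetric decompositions (such as $x_j=\tfrac12[(x+\xi)_j-(-x+\xi)_j]$ and $\xi_j=\tfrac12[(x+\xi)_j+(-x+\xi)_j]$) make the bookkeeping manageable.
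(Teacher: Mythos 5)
Your proposal is correct, but note that the paper does not actually prove Lemma \ref{mult-diff}: it imports the statement wholesale from \cite{BCD} (``One of the main results contained in \cite{BCD} is\dots''), so there is no in-paper argument to compare against. What you have written is essentially a from-scratch reconstruction of the proof in \cite{BCD}, and the key identities check out. I verified your claimed formula for $(|x|^2+|y|^2)\overline{K_{m,\ell,\lambda}}$: writing $u=x+\xi$, $v=-x+\xi$, the decomposition $x_j=\tfrac12(u_j-v_j)$ together with $y_j^2e^{-2i\lambda\langle y,\xi\rangle}=-\tfrac{1}{4\lambda^2}\partial_{\xi_j}^2e^{-2i\lambda\langle y,\xi\rangle}$ and two integrations by parts does produce the cancellation of the $\pm2e_j$ shifts and of the opposite-sign shifts $(m\pm e_j,\ell\mp e_j)$, leaving the diagonal coefficient $\tfrac{|m+\ell|+n}{2|\lambda|}$ and the simultaneous shifts with coefficient $-\tfrac{1}{2|\lambda|}\sqrt{(m_j+1)(\ell_j+1)}$, resp.\ $-\tfrac{1}{2|\lambda|}\sqrt{m_j\ell_j}$, which is exactly $-\HDelta$ acting on the indices of $\overline{K}$; the $M_0$ computation likewise closes using $\partial_\lambda\Phi_k^\lambda(u)=\tfrac{n}{4\lambda}\Phi_k^\lambda(u)+\tfrac{1}{2\lambda}\sum_ju_j\partial_j\Phi_k^\lambda(u)$ and one integration by parts in $\xi$. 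One small inaccuracy in your narrative: in the $M_0$ case the terms that cancel are not ``pure $2e_j$-shifts'' (none arise there, since each factor is hit by at most one raising/lowering operation) but the mixed opposite-sign shifts $\Phi_{m\pm e_j}^\lambda\Phi_{\ell\mp e_j}^\lambda$ coming from $v_j\partial_{u_j}\Phi_m^\lambda\,\Phi_\ell^\lambda+u_j\Phi_m^\lambda\,\partial_{v_j}\Phi_\ell^\lambda$. This does not affect the validity of the argument, only the bookkeeping description. If you wanted to shorten the write-up, you could instead argue at the level of the operator-valued transform, using $\mathcal{F}(M^2f)(\lambda)$ expressed through the harmonic oscillator and the creation/annihilation operators acting on $\Phi_m^\lambda$, which avoids manipulating the Wigner-type kernel $K_{m,\ell,\lambda}$ directly; but your kernel-level computation is complete and self-contained.
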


\subsection{Gevrey classes on the Heisenberg group}
With the preliminaries of Sections~\ref{sec:fourier-general} and~\ref{sec:diff-on-tilde-H} in hand, we are now ready to define the Gevrey classes on $(\fHeis, \widehat{d})$ that characterize regularities of functions on $\fHeis$. Recall that the Gevrey class on $\R^n$ with index $s$ is defined as the set of infinitely differentiable functions that satisfies (for any multiindex $k$)
\begin{equation}
\label{g-est}
\sup_K|\partial^k f|\le C^{|k|+1}(k!)^s \, ,
\quad\text{for any compact $K\subset \R^n$.}
\end{equation}
 The Gevrey classes $s\ge1$ interpolate between analytic functions and $C^\infty$ functions. On Heisenberg groups, this kind of property will be quantified through the operators $\HDelta$ and $\HDL$.

 \begin{definition}\label{def-diff}
 Consider the operators $\HDelta$ and $\HDL$ defined in~\eqref{b1} and~\eqref{b2}, and a parameter $s\geq 1$. We define $\mathcal{G}^s(\THeis)$ to be the set of functions $\tilde{g}$ on $\THeis$ such that:
 \begin{enumerate}[wide, labelwidth=!, labelindent=0pt, label= \emph{(\roman*)}]
\setlength\itemsep{.05in}
     \item 
     For any $(m,\ell)$ in $\mathbb{N}^{2n}$, the map $\lambda\mapsto \tilde{g}(m,\ell, \lambda)$ is smooth on $\mathbb{R}^{*}$.
     
     \item 
     For every compact set $K\subset(\fHeis, \widehat{d})$, there exists $C<\infty$ such that for every integer $N\geq 0$, 
     \begin{equation}\label{gevrey-est}
      \sup_K\left( \max\left(|\HDelta^N \tilde{g}|,\,  |\HDL^N \tilde{g}|
     \right)\right)\leq C^{2N+1}[(2N)!]^s.\end{equation}
  \end{enumerate}
 \end{definition}
 
In $\R^{d}$, the exponential decay of a function $f$ can be characterized by the fact that $\cf f$ is in a Gevrey class. We now show that the same is true on Heisenberg groups. We start with
 the proposition below, which shows that decay of a function implies regularity of its Fourier transform in a Gevrey class.
  
 \begin{proposition} For $p= (x,y,z)\in\Heis$, recall from~\eqref{eq:def-homogeneous-norm} that $\lvert p \rvert_{\mathrm{h}}=(|x|^2+|y|^2+|z|)^{1/2}$ is the homogeneous norm of $p$. Consider a parameter $s\ge 1$ and a function $f\in\mathcal{S}(\Heis)$ satisfying the following relation:
\begin{equation}\label{eq-a}
|f(p)|\leq Ce^{-\varepsilon\lvert p \rvert_{\mathrm{h}}^{1/s}} ,
\quad\text{for all}\quad p\in\Heis \, ,
\end{equation}
for some  $C>0$ and $\varepsilon>0$. Then the Fourier transform  $\Ff$ lies in  $\mathcal{G}^s(\THeis)$.
\end{proposition}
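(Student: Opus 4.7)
The plan is to convert the decay hypothesis on $f$ into the required Gevrey bounds on $\HDelta^N \hat f$ and $\HDL^N \hat f$ by iterating the multiplication/differentiation dictionary of Lemma~\ref{mult-diff}. Since $f\in\mathcal{S}(\Heis)$, both $(M^2)^N f$ and $M_0^N f$ also belong to $\mathcal{S}(\Heis)$, so iterating \eqref{eq-FT-M} yields
\begin{equation*}
\HDelta^N \hat f = (-1)^N \widehat{(M^2)^N f}, \qquad \HDL^N \hat f = \widehat{M_0^N f}.
\end{equation*}
The smoothness in $\lambda$ required by condition (i) of Definition~\ref{def-diff} is inherited from the fact that $\hat f \in \mathcal{S}(\fHeis)$ by Proposition~\ref{isomorphism}.

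Next I would establish a trivial $L^\infty$--$L^1$ bound for the projective Fourier transform. Starting from the integral representation \eqref{eq-FT-int}--\eqref{eq-FT-K}, Cauchy--Schwarz applied in the $\xi$ variable together with the fact that $(\Phi_k^\lambda)_{k\in\N^n}$ is orthonormal in $L^2(\R^n)$ gives $|K_{m,\ell,\lambda}(q)|\leq \|\Phi_m^\lambda\|_{L^2}\|\Phi_\ell^\lambda\|_{L^2}=1$ uniformly in $\hat w$ and $q$. Therefore, for any $g\in L^1(\Heis)$,
\begin{equation*}
\sup_{\hat w\in\fHeis} |\hat g(\hat w)| \leq \|g\|_{L^1(\Heis)}.
\end{equation*}
Applying this with $g=(M^2)^N f$ and $g=M_0^N f$, and using $|x|^2+|y|^2\leq |q|_{\mathrm{h}}^2$ and $|z|\leq |q|_{\mathrm{h}}^2$ together with the hypothesis \eqref{eq-a}, I obtain
\begin{equation*}
\sup_{\hat w\in\fHeis}\bigl( |\HDelta^N \hat f(\hat w)| + |\HDL^N \hat f(\hat w)|\bigr)\leq 2C\int_{\Heis} |q|_{\mathrm{h}}^{2N}\, e^{-\varepsilon |q|_{\mathrm{h}}^{1/s}}\, d\mu(q).
\end{equation*}

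The final step is to show that the right hand side is bounded by $C^{2N+1}[(2N)!]^s$, which will yield \eqref{gevrey-est} with a constant independent of the compact $K$ (in fact uniform on $\fHeis$). Using the equivalence \eqref{eq-equiv-cc-h} between $|\cdot|_{\mathrm h}$ and $d_{cc}(e,\cdot)$ and the fact that the Haar measure $\mu$ is $Q$-homogeneous with respect to the dilations \eqref{eq-dilation} where $Q=2n+2$ is the Hausdorff dimension, a polar-type change of variables reduces the integral to
\begin{equation*}
\int_0^\infty r^{2N+Q-1}\, e^{-\varepsilon' r^{1/s}}\, dr
\end{equation*}
up to a multiplicative constant. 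Substituting $u=\varepsilon' r^{1/s}$ yields a value proportional to $\Gamma\bigl(s(2N+Q)\bigr)$, and Stirling's formula (or the classical inequality $\Gamma(sk)\leq K^{k+1}(k!)^s$ for $s\geq 1$) produces the desired bound $C^{2N+1}[(2N)!]^s$.

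The only mildly delicate step is the uniform bound $|K_{m,\ell,\lambda}|\leq 1$, which however follows directly from Cauchy--Schwarz as indicated; everything else is a routine combination of the Fourier-multiplication dictionary with a Stirling estimate. No compactness of $K$ is actually needed, so the Gevrey estimate \eqref{gevrey-est} in fact holds globally on $\fHeis$.
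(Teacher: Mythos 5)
Your proposal is correct and follows essentially the same route as the paper: iterate the dictionary $\widehat{M^2 f}=-\HDelta\hat f$, $\widehat{M_0 f}=\HDL\hat f$ of Lemma~\ref{mult-diff}, use the uniform bound $|K_{m,\ell,\lambda}|\le 1$ to reduce everything to the moment integral $\int_{\Heis}|q|_{\mathrm h}^{2N}e^{-\varepsilon|q|_{\mathrm h}^{1/s}}\,d\mu(q)$, and then show this is $\le C^{2N+1}[(2N)!]^s$. The only (harmless) divergence is in that last estimate, where you compute the integral explicitly via homogeneity of $\mu$ and a Gamma-function/Stirling bound, whereas the paper absorbs the weight pointwise via $t^{2N}e^{-t}\le(2N)!$ applied to $t=\tfrac{\varepsilon}{2s}|q|_{\mathrm h}^{1/s}$; both give the same global (not merely compact-uniform) Gevrey bound.
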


\begin{proof} 
We use the integral representation \eqref{eq-FT-int} of the Fourier transform $\Ff$. First, it is not difficult to see   that the integral kernel $K_{m,\ell, \lambda}$ satisfies \begin{equation}\label{eq-b}
|{K}_{m,\ell, \lambda}(q)|\leq 1,\quad\text{for all $(m,\ell, \lambda)\in \fHeis$ and $q\in\Heis$.}
\end{equation} To this aim, 
recall  relation \eqref{eq-FT-K} defining the integral kernel $K_{m,\ell, \lambda}$. Bounding the complex exponential by $1$, applying the Cauchy-Schwarz inequality in the right hand side of \eqref{eq-FT-K} and recalling that $(\Phi^\lambda_m)_{m\in\N^{n}}$ is an orthonormal family of $L^2(\mathbb R^n)$, we easily get the estimate~\eqref{eq-b}.

Next, let $N\geq 0 $ be an integer. Invoking relation \eqref{eq-FT-M} and the definition \eqref{eq-FT-int} of Fourier transform, we get    
\[
|\HDelta^N \Ff(m,\ell, \lambda)|=\left|\int_{\Heis}\overline{e^{i\lambda z}\, K_{m,\ell,\lambda}(q)}\,(|x|^2+|y|^2)^{N} f(q)\,d\mu(q)\right|.
\]
Plugging \eqref{eq-a} and \eqref{eq-b} into the equality above, we end up with the following relation for all $(m,\ell, \lambda)\in \fHeis$:
\begin{equation}\label{eq-c}
|\HDelta^N \Ff(m,\ell, \lambda)|\leq C\int_{\Heis}(|x|^2+|y|^2)^{N} e^{-\varepsilon\lvert q\rvert_{\mathrm{h}}^{1/s}} d\mu(q).
\end{equation}
Using the exact same arguments, we also have, for all $(m,\ell, \lambda)\in \fHeis$,
\begin{align}
\nonumber|\HDL^N \Ff(m,\ell, \lambda)|&=\left|\int_{\Heis}\overline{e^{i\lambda z}\, K_{m,\ell,\lambda}(q)}\,|z|^{N} f(q)\,d\mu(q)\right|\\\label{eq-d}
&\leq C\int_{\Heis}|z|^{N} e^{-\varepsilon\lvert q\rvert_{\mathrm{h}}^{1/s}} d\mu(q).
\end{align} 
Summarizing \eqref{eq-c} and \eqref{eq-d}, we have thus obtained that 
\begin{equation}
\label{eq-e}
M:=\sup_{\hat{w}\,\in\, \fHeis}\hspace{-0.1cm}\left( \max\left(|\HDelta^N \hat{f}\, (\hat{w})|,\,  |\HDL^N \hat{f}\,(\hat{w})|
\right)\right)\leq C\int_\Heis\lvert q\rvert_{\mathrm{h}}^{2N} e^{-\varepsilon\lvert q\rvert_{\mathrm{h}}^{1/s}} d\mu(q).
\end{equation}
We now resort to the elementary estimate $t^{2N}e^{-t}\leq (2N)!$, applied to $t=\frac{\varepsilon}{2s}\lvert q\rvert_{\mathrm{h}}^{1/s}$, in the right hand side of \eqref{eq-e}. We have 
\[
\left(\frac{\varepsilon}{2s}\right)^{2N}|q|_h^{2N/s}\,e^{-\frac{\varepsilon}{2s}|q|_h^{1/s}}\leq (2N)!,
\]
and raising both sides of the inequality to the power of $s$, we get  
 \begin{equation}\label{eq-f}
 |q|_h^{2N}\,e^{-\frac{\varepsilon}{2}|q|_h^{1/s}}\leq C_1^{2N}[(2N)!]^s,
 \end{equation}
 where $C_1=(\varepsilon/2s)^{-s}$. Applying \eqref{eq-f} to the  right hand side of \eqref{eq-e} gives
\begin{align*}
M&\leq C\cdot C_1^{2N}[(2N)!]^s\int_\Heis e^{-\frac{\varepsilon}{2}\lvert q\rvert_{\mathrm{h}}^{1/s}} d\mu(q). \end{align*}Thus $M\leq C_2^{2N+1}[(2N)!]^s$, where $C_2=C_1+C\int e^{-\frac{\varepsilon}{2}\lvert q\rvert_{\mathrm{h}}^{1/s}} d\mu$. (Note that the constant $C_2$ is independent of $N$.) This proves that  $\Ff\in\mathcal{G}^s(\THeis)$.   
\end{proof}

We have just seen that if a function $f$ defined on $\Heis$ has a proper exponential decay, then its Fourier transform is in a space $\mathcal{G}^s$. We now examine the converse result: if a function $\tilde{\phi}$ defined on $\fHeis$ is in a class $\mathcal{G}^s$, then its inverse Fourier transform decays exponentially. This is detailed in the proposition below.  

  \begin{proposition} \label{exp-decay-est}
  Recall that the space $\mathcal{S}(\fHeis)$ is introduced in Definition \ref{def:Schwartz-space}. Let $s \ge 1$ and $\tilde{\phi}$ be an element of   $\mathcal{G}^s(\THeis)\cap \cs(\fHeis)$ with bounded support in $\fHeis$. We call $\phi$ its inverse Fourier transform, as defined in~\eqref{eq-FT-int-inv}. Then there exist constants $C=C_{\phi}>0$ and $\varepsilon=\ep_{\phi}>0$ such that  
  \begin{equation}\label{eq-g}
|\phi(q)|\leq Ce^{-\varepsilon\lvert q \rvert_{\mathrm{h}}^{1/s}}\ \, \text{for all $q\in\Heis$}. 
\end{equation}
    \end{proposition}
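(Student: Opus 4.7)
The plan is to reverse the strategy of the previous proposition: convert the Gevrey-type regularity of $\tilde{\phi}$ into polynomial bounds of the form $|q|_h^{2N}|\phi(q)|\leq \widetilde{C}^{\,2N+1}[(2N)!]^s$, valid for every integer $N\geq 0$, and then optimize over $N$ by means of Stirling's formula to recover the desired exponential decay of the inverse Fourier transform.

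First I iterate the identities $\widehat{M^2 f}=-\HDelta\hat{f}$ and $\widehat{M_0 f}=\HDL\hat{f}$ of Lemma~\ref{mult-diff}: for every $N\geq 0$, the function $(|x|^2+|y|^2)^N\phi$ is the inverse Fourier transform of $(-\HDelta)^N\tilde{\phi}$, while $(-iz)^N\phi$ is the inverse Fourier transform of $\HDL^N\tilde{\phi}$. Combining the inversion formula \eqref{eq-FT-int-inv} with the pointwise bound $|K_{\hat{w}}(q)|\leq 1$ (already checked in the previous proof), I then control both $(|x|^2+|y|^2)^N|\phi(q)|$ and $|z|^N|\phi(q)|$ by the $L^1(\fHeis,d\hat{w})$-norms of the corresponding iterated derivatives of $\tilde{\phi}$.

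Since the support of $\tilde{\phi}$ is bounded in $\fHeis$, the supports of $\HDelta^N\tilde{\phi}$ and $\HDL^N\tilde{\phi}$ stay inside a slightly enlarged bounded set whose $d\hat{w}$-measure grows at most polynomially in $N$. Applying the Gevrey estimate~\eqref{gevrey-est} on this enlarged set yields, for a constant $C_0$ independent of $N$ and $q$,
\begin{equation*}
(|x|^2+|y|^2)^N|\phi(q)|\,\vee\, |z|^N|\phi(q)|\leq C_0^{2N+1}[(2N)!]^s.
\end{equation*}
Combining these two estimates via $(a+b)^N\leq 2^{N-1}(a^N+b^N)$ applied to $|q|_h^{2N}=(|x|^2+|y|^2+|z|)^N$ then produces the master polynomial bound
\begin{equation*}
|q|_h^{2N}\,|\phi(q)|\leq \widetilde{C}^{\,2N+1}[(2N)!]^s,\qquad q\in\Heis,\ N\geq 0,
\end{equation*}
with $\widetilde{C}$ depending on the support of $\tilde{\phi}$, on $s$, and on the Gevrey constants of $\tilde{\phi}$, but not on $q$ or $N$.

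The final step, which I expect to be the main technical difficulty, is the optimization of this family over $N\in\mathbb{N}$. Using Stirling in the form $(2N)!\leq A\,(2N/e)^{2N}\sqrt{N}$, the right-hand side of the master bound reads, up to polynomial prefactors, $\bigl(\widetilde{C}(2N)^s/(e^s|q|_h)\bigr)^{\!2N}$. Choosing $N\simeq c\,|q|_h^{1/s}$ for a sufficiently small $c>0$ forces the bracketed ratio below $1/2$, producing $|\phi(q)|\leq C\exp(-\varepsilon|q|_h^{1/s})$ for $|q|_h$ large, while bounded $|q|_h$ is trivially handled via the uniform boundedness of $\phi\in\cs(\Heis)$ coming from Proposition~\ref{isomorphism}. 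The delicate point here is to ensure that the integer constraint on $N$ and the polynomial Stirling corrections do not spoil the exponential rate, so that the final constants $C,\varepsilon>0$ are truly independent of $q$.
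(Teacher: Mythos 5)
Your proposal is correct and follows essentially the same route as the paper: iterate the identities $\widehat{M^2f}=-\HDelta\hat f$ and $\widehat{M_0f}=\HDL\hat f$ together with $|K_{\hat w}(q)|\le 1$, the bounded support and the Gevrey bounds to obtain $|q|_h^{2N}|\phi(q)|\le \widetilde{C}^{2N+1}[(2N)!]^s$, and then convert these factorial moment bounds into the decay $e^{-\varepsilon|q|_h^{1/s}}$. The only difference is cosmetic and lies in the last step: you optimize directly over $N\simeq c|q|_h^{1/s}$ via Stirling, whereas the paper first deduces $|q|_h^{k/s}|\phi(q)|\le C^{k+1}k^k$ for every integer $k$ and then sums the exponential series $\sum_k(\varepsilon|q|_h^{1/s})^k/k!$, which sidesteps the integer-constraint issue you flag; both conversions are standard and equally valid.
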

\begin{proof} Let $\tilde{\phi}\in \mathcal{G}^s(\THeis)\cap \cs(\fHeis)$. Owing to Proposition \ref{isomorphism}, the inverse Fourier transform $\phi$ of $\tilde{\phi}$ is well defined.
 Then it suffices to show that the above estimate~\eqref{eq-g} holds uniformly over $|q|_h\geq 1$. By \eqref{eq-FT-M} and \eqref{eq-FT-int-inv}, we have that for any integer $k\ge1$ and $q\in\Heis$,
 \begin{align*}
 |M^{2k}\phi(q)|
 =\left|\frac{2^{n-1}}{\pi^{n+1}}\int_{\fHeis}e^{i\lambda z} K_{\hat{w}} (q)((-\HDelta)^{k}\tilde{\phi})(\hat{w})\,d\hat{w}
    \right|.
 \end{align*} 
 Owing to \eqref{eq-b}, our assumption (ii) in Definition \ref{def-diff} and the fact that $\tilde{\phi}$ 
 has bounded support, we obtain that for all $q\in \Heis$,
\begin{equation}
\label{eq-h}
 |M^{2k}\phi(q)|\leq C^{2k+1}[(2k)!]^s.
\end{equation} 
 Similarly we can also obtain that for all $q\in \Heis$,
\begin{align}\label{eq-i}
|M_0^{k}\phi(q)|
\leq C^{2k+1}[(2k)!]^s.
\end{align}
 Given any integer $k\geq 1$, we choose $N$ such that $2N-1\leq \frac{k}{s}\leq 2N$. Then for any $|q|_h\geq 1$, we have 
\begin{align}\label{eq-mid-1}
    |q|_h^{k/s}|\phi(q)| \leq |q|_h^{2N}|\phi(q)| &\leq C_1^{2N}(|M^{2N}\phi(q)|+|M_0^{N}\phi(q)| )\notag \\
    &\leq C_2^{2N+1}[(2N)!]^s,
\end{align}where we have used \eqref{eq-h} and \eqref{eq-i} for the last inequality. 
 Since $(2N)!\leq 2^{2N}(2N-1)!$, one can bound the right hand side of \eqref{eq-mid-1} accordingly in order to get
 \begin{align}\label{eq-mid-2} |q|_h^{k/s}|\phi(q)| \leq 
 C_3^{2N+1}[(2N-1)!]^s\leq C_3^{2N+1}[(2N-1)]^{s(2N-1)}\leq C_4^{k+1}k^k.
\end{align}
 It then follows from \eqref{eq-mid-2} that there exists $C>0$ such that uniformly in $q\in \Heis$,
 \begin{equation}\label{eq-j}
e^{\varepsilon|q|_h^{1/s}}|\phi(q)|=\sum_{k=0}^\infty\frac{(\varepsilon|q|_h^{1/s})^k}{k!}|\phi(q)|\leq C\sum_{k=0}^\infty\frac{(\varepsilon C k)^k}{k!}, 
 \end{equation}
uniformly for some $C>0$.  At last since $1/k!\leq (e/k)^k$, it suffices to choose $\varepsilon$ so that $\varepsilon Ce<1$ in the right hand side of \eqref{eq-j} to obtain the desired estimate \eqref{eq-g}.
\end{proof}

Up to now we have seen general properties of Gevrey classes. We now produce an example of  functions in $ \mathcal{G}^s$ with bounded support in Fourier modes. This proves in particular that the set of functions $\tilde{\phi}$ satisfying the assumptions of Proposition \ref{exp-decay-est} is nontrivial.

\begin{proposition}\label{prop 2.13}
Let $f$ be a function on $[0,\infty)^n\times\mathbb{Z}^n\times\mathbb{R}$ satisfying the following: 
\begin{enumerate}[wide, labelwidth=!, labelindent=0pt, label= \emph{(\alph*)}]
\setlength\itemsep{.0in}
    \item The support of $f$ is such that $\textnormal{Supp}(f)\subset[0,\infty)^n\times\{0\}\times\mathbb{R}$.   \\[-0.2cm]
    \item  There exists $C=C_f<\infty$ and $s\ge 1$ such that for every   $\alpha\in \mathbb{N}^{n+1}$, uniformly in $(x,\kappa,\lambda)$ we have \begin{equation}\label{est-gevrey}
    |\partial_{x,\lambda}^\alpha f(x_1,...,x_n,\kappa,\lambda)|\leq C^{|\alpha|+1}(\alpha!)^s.\end{equation} 
\end{enumerate} 
\noindent
For $\hat{w}=(m,\ell,\lambda)\in \fHeis$, we define a function $\Theta_f(\hat{w})$ by \begin{equation}\label{def-theta-f}
\Theta_f(\hat{w})\defeq f(|\lambda|R(m,\ell), m-\ell,\lambda),\quad \text{with}\quad R(m,\ell)\defeq (m_j+\ell_j+1)_{1\leq j\leq n}.\end{equation} Then $\Theta_f\in \mathcal{G}^s(\fHeis)$.\end{proposition}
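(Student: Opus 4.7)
The plan is to verify the two defining conditions of $\mathcal{G}^s(\fHeis)$ in Definition \ref{def-diff} separately. Condition (i) is immediate: for fixed $(m,\ell) \in \mathbb{N}^{2n}$, the map $\lambda \mapsto \Theta_f(m,\ell,\lambda) = f(|\lambda| R(m,\ell), m-\ell, \lambda)$ is smooth on $\R^*$ because, by assumption (b), $f$ is Gevrey-$s$ in its continuous variables. The substance of the proof lies in verifying the Gevrey estimate \eqref{gevrey-est}.

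The first key observation I would exploit is that $\Theta_f$ is supported on the diagonal $\{m=\ell\}$ by assumption (a), and that the operators $\HDelta$ and $\HDL$ in \eqref{b1}--\eqref{b2} only involve shifts $(m,\ell) \mapsto (m \pm e_j, \ell \pm e_j)$, which preserve $m-\ell$. Consequently $\HDelta^N \Theta_f$ and $\HDL^N \Theta_f$ are also supported on the diagonal for every $N \ge 0$, and it suffices to estimate them at points of the form $(m,m,\lambda)$.

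Next I would introduce the auxiliary function $F(y,\lambda) := f(y, 0, \lambda)$ on $[0,\infty)^n \times \R$, which by \eqref{est-gevrey} satisfies $|\partial_{y,\lambda}^\alpha F(y,\lambda)| \le C^{|\alpha|+1}(\alpha!)^s$. With $y := |\lambda| R(m,m) = (|\lambda|(2m_j+1))_{1\le j\le n}$, we have $\Theta_f(m,m,\lambda) = F(y,\lambda)$, and the index shift $m \mapsto m \pm e_j$ corresponds exactly to $y_j \mapsto y_j \pm 2|\lambda|$. Starting from the simplified diagonal form \eqref{b1'} and expanding each difference $\Theta_f(\hat{w}_j^\pm) - \Theta_f(\hat{w})$ via the fundamental theorem of calculus, I would derive the one-step representation
\begin{align*}
\HDelta \Theta_f(m,m,\lambda)
&= \sum_{j=1}^n \int_0^1 \partial_{y_j} F(y + 2|\lambda| t e_j, \lambda)\, dt \\
&\quad + \sum_{j=1}^n (y_j - |\lambda|) \int_0^1 t \int_{-1}^1 \partial_{y_j}^2 F(y + 2|\lambda| t u e_j, \lambda)\, du\, dt.
\end{align*}
The decisive feature is that the prefactor $1/(2|\lambda|)$ in \eqref{b1'} is exactly cancelled by the Taylor increments $\pm 2|\lambda|$, so no negative power of $|\lambda|$ survives. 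Moreover, the coefficient $y_j - |\lambda| = 2|\lambda| m_j$ is bounded on any compact $K \subset (\fHeis, \widehat{d})$: on $K$ the quantity $\widehat{d}_0(\hat{w}) = |\lambda|(|m+\ell|+n) + |m-\ell|$ is bounded, hence so is $|\lambda| m_j$. An analogous manipulation for $\HDL$, in which the chain rule applied to $\partial_\lambda[F(|\lambda|(2m+1),\lambda)]$ conveniently absorbs the $n/(2\lambda)$ term together with the leading piece of the discrete shifts, yields a parallel representation featuring $(\partial_\lambda F)(y,\lambda)$ plus bounded-coefficient $y$-derivatives of order two.

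Thus on the diagonal, both $\HDelta$ and $\HDL$ act effectively as differential operators of order at most two in $(y,\lambda)$, with coefficients bounded on $K$. By induction on $N$, I would express $\HDelta^N \Theta_f(m,m,\lambda)$ and $\HDL^N \Theta_f(m,m,\lambda)$ as finite combinations of multiple integrals of $\partial_{y,\lambda}^\alpha F$ with $|\alpha| \le 2N$, whose coefficients remain bounded on $K$. The Gevrey bound on $F$ combined with the elementary inequality $\alpha! \le (2N)!$ for $|\alpha| \le 2N$ then produces the desired estimate $C_K^{2N+1}[(2N)!]^s$ in \eqref{gevrey-est}.

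The main obstacle will be the combinatorial bookkeeping in the induction on $N$: one must verify at each step that (i) no new negative power of $|\lambda|$ is introduced, (ii) the order of differentiation of $F$ increases by at most two per application, and (iii) the coefficients remain polynomial in $y$, hence uniformly bounded on $K$. The cancellation of the apparent $1/(2|\lambda|)$ singularity depends at the first step on the precise matching between the shift $y_j \mapsto y_j \pm 2|\lambda|$ and the polynomial weights $m_j$ and $m_j+1$ appearing in \eqref{b1'}; showing that this matching persists through all $N$ iterations, so that no accumulation of inverse powers of $|\lambda|$ occurs, is the technical heart of the argument.
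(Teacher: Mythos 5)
Your outline is correct, but it takes a genuinely different route from the paper's. The paper's treatment of $\HDelta^N\Theta_f$ never uses the derivative bounds (b) at all: it bounds $|f|\le C$, iterates the crude estimate to get a factor $(2|m|+n)(2|m|+n+2)\cdots(2|m|+n+2p)\,|\lambda|^{-(p+1)}$, controls this by $(2p+2)!\,2^{2|m|+n+2p}|\lambda|^{-(p+1)}$ via the binomial inequality, and only at the end absorbs the powers of $|\lambda|^{-1}$ and $2^{|m|}$ into $C_K^{2p+1}$, using that compact subsets of $(\fHeis,\widehat d)$ keep $|\lambda|$ bounded away from $0$ and $|m|$ bounded; the hypothesis (b) enters only through the $\partial_\lambda$ component in the splitting $\HDL=\partial_\lambda+\UHDL$, after which the two pieces are recombined with a Leibniz expansion. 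Your route instead exploits the exact cancellation between the prefactor $1/(2|\lambda|)$ and the increments $\pm 2|\lambda|$ of $y=|\lambda|R(m,m)$, recasting $\HDelta$ on the diagonal as a bona fide second-order operator in $(y,\lambda)$ with coefficients $O(y_j)$ and no inverse powers of $|\lambda|$. What this buys is a stronger, $\lambda$-uniform estimate near $\lambda=0$ (where the paper's bounds degenerate), at the price of consuming the full strength of (b) already for $\HDelta$ and, more importantly, of the combinatorial induction you yourself flag as the technical heart: you must track simultaneously the coefficient degree ($\le N$), the derivative order ($\le 2N$), and the multiplicities produced by the discrete Leibniz rule, checking that the factorial factors coming from differencing the polynomial coefficients are compensated by the correspondingly lower order of differentiation of $F$ (e.g.\ via $N!/(N-k)!\le (2N)!/(2N-2k)!$). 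That bookkeeping is doable but is precisely what the paper's cruder compact-set argument sidesteps, so if you pursue your version you should state the induction hypothesis explicitly rather than leave it as an announced obstacle.
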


\begin{proof}
Item (i) in Definition \ref{def-diff} is easily verified thanks to the fact that $f$ is a smooth function. We will thus focus on checking relation \eqref{gevrey-est}. We will divide our proof in several steps.

\noindent
\textit{Step 1: Estimates for $\HDelta\Theta_f$.}
Due to the fact that the support of $f$  in the second variable is reduced to the singleton $\{0\}$, one can in fact  restrict ourselves to variables of the form $\hat{w}=(m,m,\lambda)$. For notational simplicity, we thus set $R(m)=R(m,m)$ and $f(|\lambda|R(m),\lambda)=f(|\lambda|R(m),0,\lambda)$. Then using the definition \eqref{b1} of $\HDelta$ as well as relation \eqref{def-theta-f}, some elementary algebraic manipulations show that 
\begin{multline}\label{m}
(\HDelta\Theta_f)(m,m,\lambda)
=-\frac{1}{2|\lambda|}\Big[
(2|m|+n)f(|\lambda|R(m),\lambda) \\
-\sum_{j=1}^n\big((m_j+1)f(|\lambda|R(m+e_j),\lambda)+m_jf(|\lambda|R(m-e_j),\lambda)\big)
\Big].
\end{multline}
Since we have assumed $|f|\leq C$, it  follows easily from \eqref{m} that 
\begin{equation}\label{est-i}
\big|(\HDelta\Theta_f)(m,m,\lambda)\big|\leq \frac{2|m|+n}{|\lambda|}\, C.
\end{equation}
Moreover, iterating the estimate \eqref{est-i} one gets the following upper bound for $\HDelta^2\Theta_f$:
\begin{multline*}
\big|(\HDelta^2\Theta_f)(m,m,\lambda)\big|  \\
\leq 
\frac{1}{2|\lambda|}\left((2|m|+n)\,\frac{2|m|+n}{|\lambda|}+\sum_{j=1}^n (m_j+1)\, \frac{2|m|+2+n}{|\lambda|}+m_j\, \frac{2|m|+n}{|\lambda|}\right) C \, ,
\end{multline*}
which can be simplified as
\begin{equation}\label{est-j}
\big|(\HDelta^2\Theta_f)(m,m,\lambda)\big|
\leq
\frac{(2|m|+n)(2|m|+n+2)}{|\lambda|^2}\, C.
\end{equation}
Therefore an induction procedure yields, for $p\geq 0$, \begin{align}\nonumber
\big|(\HDelta^{p+1}\Theta_f)(m,m,\lambda)\big|&\leq \frac{(2|m|+n)(2|m|+n+2)...(2|m|+n+2p)}{|\lambda|^{p+1}}\,C\\[0.1cm]\label{est-k}&\leq\frac{(2|m|+n+2p)!}{(2|m|+n-2)!}\,|\lambda|^{-(p+1) }\,C.\end{align}
Recalling the elementary binomial relation $(j+k)!\leq j!\,k!\, 2^{j+k}$,   we thus have
\begin{equation}\label{est-c}
\big|(\HDelta^{p+1}\Theta_f)(m,m,\lambda)\big|\leq (2p+2)!\,2^{2|m|+n+2p} \,|\lambda|^{-(p +1)}\,C.    
\end{equation} As mentioned above, due to the fact that $\Theta_f$ is supported on the diagonal, \eqref{est-c} is enough to prove that   for any compact set $K\subset \fHeis$, there exists a 
 positive constant $C_K$ such  that for all $p\geq 0 $ we have 
 \begin{equation}\label{c1}
 \sup_K\big|(\HDelta^{p}\Theta_f)\big|\leq C_K^{2p+1}[(2p)!]^s.
 \end{equation}

\noindent
\textit{Step 2: Estimates for $\HDL\Theta_f$.} We proceed to prove an estimate of the form~\eqref{c1} for $\HDL$. First decompose the right hand side of \eqref{b2} as 
\begin{equation}\label{d}    
\HDL=\partial_\lambda+\UHDL,
\end{equation} where the operator $\UHDL$ is defined by \begin{equation}\label{e}
\UHDL g(\hat{w})= \frac{n}{2\lambda} g(\hat{w})
+\frac{1}{2 \lambda}\sum_{j=1}^n \left(
\sqrt{m_j\ell_{j} }\, g(\hat{w}_j^-)-\sqrt{(m_j+1)(\ell_j+1)} \, g({\hat{w}_j}^+) \right).
\end{equation}
First let us bound the term $\partial_\lambda \Theta_f$ from the right hand side of \eqref{d}. Similarly to the estimates for $\HDelta$, it is enough to focus our attention on  diagonal points of the form $(m,m,\lambda)$ in $\fHeis$. Also notice that we are estimating all our quantities on compact sets of $\fHeis$. Those sets avoid the hyperplane $\la=0$, which means that we can consider $\la\mapsto|\la|$ as a smooth function. Then it is readily checked from expression \eqref{def-theta-f} that
\begin{equation}\label{g}
\partial_\lambda\Theta_f(m,m,\lambda)=(\partial_\lambda f)(|\lambda|R(m), \lambda)+\text{sgn}\,(\lambda)\sum_{j=1}^n(2m_j+1)(\partial_{x_j}f)(|\lambda|R(m), \lambda).
\end{equation} 
Iterating this differentiation, it can be verified that  $\partial_\lambda^j\Theta_f$ satisfies the estimate
\begin{equation}\label{g1}
\big|\partial_\lambda^j\Theta_f(m,m,\lambda)\big|\leq (2|m|+n+1)^j C^{j+1} (j!)^s.
\end{equation} 
As far as $\UHDL$ in \eqref{e} is concerned, we get 
\begin{align}\nonumber
&\UHDL\Theta_f(m,m,\lambda)\\[0.1cm]
\label{h}&=\frac{n}{2\lambda}f(|\lambda|R(m), \lambda)+\frac{1}{2\lambda}\sum_{j=1}^n \big(m_jf(|\lambda|R(m-e_j), \lambda)-(m_j+1)f(|\lambda|R(m+e_j), \lambda)\big).
\end{align}
Let us now estimate iterates of $\UHDL\Theta_f$. Indeed
like in \eqref{est-i}, one can simply bound $|f|$ by $C$ in  \eqref{h}. This enables to write
\[
\big|(\UHDL\Theta_f)(m,m,\lambda)\big|\leq \frac{|m|+n}{|\lambda|}\, C.\] 
One can then proceed as in \eqref{est-j} and get 
 \begin{align*}\big|(\UHDL^2\Theta_f)(m,m,\lambda)\big|&\leq 
 \frac{n}{2|\lambda|}\,\frac{|m|+n}{|\lambda|}\, C+\frac{1}{2|\lambda|}\sum_{j=1}^n\left( m_j\,\frac{|m|+n}{|\lambda|}\, C+(m_j+1)\, \frac{|m|+n+1}{|\lambda|}\, C\right)\\[0.1cm]
&\leq  
\frac{(|m|+n)(|m|+n+1)}{|\lambda|^2}\, C.
 \end{align*}
 Then similarly to \eqref{est-k}, an induction procedure yields that for $k\geq 0$ we have
 \begin{equation}\label{o}
 \big|(\UHDL^k\Theta_f)(m,m,\lambda)\big|\leq\frac{1}{|\lambda|^k}\,\frac{(|m|+n+k-1)!}{(|m|+n-1)!}\,C.
\end{equation}
With \eqref{o} in hand, we can go back to the decomposition~\eqref{d} of $\HDL$.  According to this decomposition, if one wishes to upper bound $\HDL^{\,k}\Theta_f$, an estimation of  derivatives of the following form is in order:
\[
\partial_\lambda^j\left(\UHDL\right)^{k-j}\Theta_f.
\]
We will omit details of those computations for sake of conciseness. However, a slight elaboration of \eqref{h} reveals that 
 that  $(\UHDL^k\Theta_f)(m,m,\lambda)$ is the product of $\frac{1}{(2\lambda)^k}$ and a sum of $N_k$ terms of translates of $f$, where $N_k$ satisfies an inequality similar to \eqref{o}. Namely we have
 \begin{equation}\label{n}
 N_k\leq 2^k\,\frac{(|m|+n+k-1)!}{(|m|+n-1)!}=\prod_{j=0}^{k-1}\,(2|m|+2n+2j).
 \end{equation}
 Therefore, differentiating with respect to $\lambda$ and putting together \eqref{g1}, \eqref{o} and \eqref{n}, we get 
 \begin{multline}\label{p}
     \left|\frac{d^j}{d\lambda^j}(\UHDL^k\Theta_f)(m,m,\lambda)\right|
     \leq
     \sum_{p=0}^j\binom{j}{p}\frac{(k+p-1)!}{(k-1)!}\,\frac{1}{|\lambda|^{k+p}}\,(2|m|+2k+n+1)^{j-p} \\
     \times\frac{(|m|+n+k-1)!}{(|m|+n-1)!}\,C^{j-p+1}((j-p)!)^s.
     \end{multline}
     In \eqref{p} we now simply bound $((j-p)!)^s$ by $(j!)^s\,/\,p!$ and  thus obtain
     \begin{multline}\label{p1}
     \left|\frac{d^j}{d\lambda^j}(\UHDL^k\Theta_f)(m,m,\lambda)\right|
     \leq\sum_{p=0}^j\binom{j}{p}\,\binom{k+p-1}{k-1}\,\frac{1}{|\lambda|^{k+p}}\,(2|m|+2k+n+1)^{j-p}\\
     \times\frac{(|m|+n+k-1)!}{(|m|+n-1)!}\,C^{j-p+1}(j!)^s.
     \end{multline}
     Exactly like we did for $\eqref{est-c}$, we now invoke the relation $(j+k)!\leq j!\,k!\, 2^{j+k}$. This enables to write 
     \[\frac{(|m|+n+k-1)!}{(|m|+n-1)!}\leq 2^{|m|+n+k-1}k!,\quad \binom{j}{p}\leq 2^j,\quad \text{and}\quad\binom{k+p-1}{k-1}\leq 2^{k+p-1}.\]
     Plugging the above into \eqref{p1}, we obtain
     \begin{equation*}
\left|\frac{d^j}{d\lambda^j}(\UHDL^k\Theta_f)(m,m,\lambda)\right|
\leq
\sum_{p=0}^j2^j\,2^{k+p-1}\,\frac{1}{|\lambda|^{k+p}}\,2^{j   -p}\,(|m|+k+n)^{j-p}\left(2^{|m|+n+k-1}k!\right)C^{j-p+1}(j!)^s.
\end{equation*}
     Rearranging the terms in this expression, we discover that 
     \begin{equation}\label{q}
     \left|\frac{d^j}{d\lambda^j}(\UHDL^k\Theta_f)(m,m,\lambda)\right|
     \leq 2^{2j+2k+|m|+n}\,C^{j+1}\,k!\,(j!)^s\, \sum_{p=0}^j\frac{1}{|\lambda|^{k+p}}\,(|m|+k+n)^{j-p}.
 \end{equation}
 Let us now return to the iterations of order $N\geq1$ for $\HDL$. Namely invoking \eqref{d} we write
 \[\HDL^N\Theta_f = \sum_{j=0}^N\binom{N}{j}\,\frac{d^j}{d\lambda^j}\,\UHDL^{N-j}\Theta_f.\]
 Gathering \eqref{q} into the above identity we obtain
\begin{multline}\label{c2}
     \left|(\HDL^N\Theta_f)(m,m,\lambda)\right|\\
     \leq\sum_{j=0}^N\binom{N}{j}\, 2^{2N+|m|+n}\,C^{j+1}\,(N-j)!\,(j!)^s \sum_{p=0}^j\frac{1}{|\lambda|^{N-j+p}}\,(|m|+N-j+n)^{j-p}.
     \end{multline}
     In addition one can simplify the binomial terms and rearrange terms in~\eqref{c2}, which enables to get 
     \begin{equation*} \left|(\HDL^N\Theta_f)(m,m,\lambda)\right|
     \leq \sum_{j=0}^NN!\, 2^{2N+|m|+n}\,C^{j+1}\,(j!)^{s-1} \sum_{p=0}^j\frac{1}{|\lambda|^{N-j+p}}\,(|m|+N-j+n)^{j-p}.
     \end{equation*}
     Bounding trivially $j!$ by $N!$ and $C^j$ by $C^N$ above, this yields
     \begin{equation}\label{r}
    \left|(\HDL^N\Theta_f)(m,m,\lambda)\right|
\leq 
(N!)^s\, 2^{2N+|m|+n}\,C^{N+1}\, \sum_{j=0}^N\sum_{p=0}^j\frac{1}{|\lambda|^{N-j+p}}\,(|m|+N-j+n)^{j-p}.
\end{equation} 
One can further simplify expression \eqref{r}. Specifically, if $(m,m,\lambda)$ lies in a compact set $K\subset \fHeis$, we find that there exists a constant $C_1=C_{1, K}>1$ such that
 \begin{eqnarray}\label{c3}
  \mathcal{A}_K
  &\equiv&
  \sup_K \sum_{j=0}^N\sum_{p=0}^j\frac{1}{|\lambda|^{N-j+p}}\,(|m|+N-j+n)^{j-p} \notag\\
 & \leq &
  \sum_{j=0}^N\sum_{p=0}^j C_1^{N-j+p}(C_1+N-j)^{j-p}
  = C_{1}^{N} \sum_{j=0}^N\sum_{p=0}^j C_{1}^{-(j-p)}(C_1+N-j)^{j-p}.
  \end{eqnarray}
  We now resort to some very rough estimates for the right hand side of~\eqref{c3}. Namely bound each $C_{1}^{-(j-p)}$ by 1 and every term $(C_1+N-j)$ by $C_{1}+N$. We obtain
  \begin{equation*}
\mathcal{A}_K \le C_{1}^{N} N (N+1) \lp C_{1}+N  \rp^{N} \, .
\end{equation*}
Invoking the elementary bound $N!\ge (N/3)^{N}$, we thus get the existence of a constant $C_{2}>C_{1}$ such that
  \begin{equation*}
\mathcal{A}_K
\le
C_2^{N}\, N! \, .
\end{equation*}
Plugging this inequality back into \eqref{r} and recalling that we are considering a parameter $s\ge 1$,
  we conclude that for $N\geq 0$ the following holds true:
  \begin{equation}\label{c4}
     \sup_K\left|(\HDL^N\Theta_f)(m,m,\lambda)\right|
     \leq (N!)^s\,C_3^N\, C^{N+1}\, 2^{N+2}\, C_2^{N}\, N! 
     \leq C_4^{2N+1} \, [(2N)!]^s. 
  \end{equation}

\noindent
\textit{Step 3: Conclusion.}  
Summarizing our considerations so far, we have obtained relations~\eqref{c1} and~\eqref{c4}. It is now straightforward to see that~\eqref{gevrey-est} is fulfilled, which finishes the proof.
\end{proof}

\subsection{Bernstein's lemma}\label{sec-Berns}

As a key step in the definition of weighted Besov spaces, one would like to accurately quantify the effect of derivatives on functions whose support is bounded in Fourier modes. This kind of result is often referred to as Bernstein's lemma, see \cite[Section 2.2]{MW}. We will prove this result for $\Heis$ in the current section, beginning with a few preliminary notions.
Let us start by introducing a broad class of weights which will be used in the sequel. 

\begin{definition}\label{def:v-moderate}
We are given two functions $v, w:\Heis\to \mathbb{R}_+ $, considered as weights on $\Heis$. We say that $w$ is $v$-moderate if for every $p, q\in\Heis$,we have
\begin{equation}\label{d1}
w(p q)\leq v(p)w(q).
\end{equation}
\end{definition}

With the notion of $v$-moderate weight in hand, we now state a weighted version of Young's inequality adapted to the Heisenberg group framework.
\begin{proposition}\label{young-ineq} 
Let $v, w:\Heis\to \mathbb{R}_+ $ be two weight functions, and assume that $w$ is $v$-moderate according to Definition~\ref{def:v-moderate}. Consider 3 parameters $\alpha, \beta, \gamma\in [1,\infty]$ such that 
\[
\frac{1}{\alpha}+\frac{1}{\beta}=\frac{1}{\gamma}+1.
\] 
Then for every measurable functions $f, g:\Heis\to \mathbb{R}$, we have
\begin{equation}\label{d2}
\|(f \star g)w\|_{L^\gamma}\leq \|f\,v\|_{L^\alpha}\|g\,w\|_{L^\beta}.
\end{equation}
\end{proposition}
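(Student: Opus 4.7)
The plan is to reduce the weighted statement to the classical (unweighted) Young convolution inequality on the unimodular Lie group $(\Heis,\mu)$, using the $v$-moderate property of $w$ to absorb the weight $w(p)$ into the convolution integrand. This mimics the Gr\"ochenig-type weighted argument, transposed to $\Heis$.

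First, I would recall that the convolution on $\Heis$ takes the form $(f\star g)(p)=\int_{\Heis} f(q)\,g(q^{-1}p)\,d\mu(q)$. Writing $p$ as the product $q\cdot(q^{-1}p)$ and invoking the moderate condition \eqref{d1} gives the pointwise bound $w(p)\le v(q)\,w(q^{-1}p)$, valid for every pair $(p,q)\in\Heis\times\Heis$. Multiplying the defining integral of $(f\star g)(p)$ by $w(p)$ and bringing the inequality inside yields
\begin{equation*}
\bigl|(f\star g)(p)\bigr|\,w(p)\;\le\;\int_{\Heis}\bigl(|f|\,v\bigr)(q)\,\bigl(|g|\,w\bigr)(q^{-1}p)\,d\mu(q)\;=\;(F\star G)(p),
\end{equation*}
where I set $F:=|f|\,v$ and $G:=|g|\,w$. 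Taking $L^{\gamma}$-norms of both sides and using monotonicity gives $\|(f\star g)\,w\|_{L^\gamma}\le \|F\star G\|_{L^\gamma}$.

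The second step is to invoke the classical Young convolution inequality on $\Heis$, namely $\|F\star G\|_{L^\gamma}\le \|F\|_{L^\alpha}\|G\|_{L^\beta}$ whenever $\tfrac1\alpha+\tfrac1\beta=\tfrac1\gamma+1$. This is a standard fact on any locally compact group with bi-invariant Haar measure: the usual proof, based on splitting $|F(q)\,G(q^{-1}p)|$ as a product of three factors with exponents adjusted to $(\alpha,\beta,\gamma)$ and applying H\"older's inequality in three indices, transfers verbatim from the Euclidean setting once one knows that $\mu$ is invariant under both left and right translations and under the inversion $q\mapsto q^{-1}$. Since $\Heis$ is a nilpotent, hence unimodular, Lie group, each change of variables of the form $p\mapsto qp$, $p\mapsto pq^{-1}$ used in the H\"older argument preserves $\mu$, so the computation goes through. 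Combining this bound with the first step delivers \eqref{d2}.

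The essentially only nonroutine point is the verification of the unweighted Young inequality for convolutions $\int_{\Heis} F(q)\,G(q^{-1}p)\,d\mu(q)$ in place of the Euclidean $\int_{\R^d} F(q)\,G(p-q)\,dq$; this reduces to unimodularity of $\Heis$ and is the main (mild) obstacle. Everything else is a direct consequence of the $v$-moderate bound on $w$ and monotonicity of $L^\gamma$-norms.
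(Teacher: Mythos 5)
Your proposal is correct and follows essentially the same route as the paper: both use the $v$-moderate bound $w(p)\le v(q)\,w(q^{-1}p)$ to dominate $|(f\star g)(p)|\,w(p)$ pointwise by $\bigl[(|f|v)\star(|g|w)\bigr](p)$ and then apply the unweighted Young inequality on $\Heis$. The only cosmetic difference is that the paper cites this last inequality from the literature on locally compact groups, whereas you justify it directly via unimodularity of $\Heis$; both are fine.
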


\begin{proof}
This follows from a similar argument as in the case of $\mathbb{R}^n$ (see, e.g., \cite[Theorem 2.4]{MW}). Recall that for all $p\in\Heis$ we have
\begin{equation*}
(f \star g)(p) =  \int_\Heis f(pq^{-1})g(q)d\mu(q)=\int_\Heis f(q)g(q^{-1}p)d\mu(q).
\end{equation*}
Therefore it is readily checked that
\begin{equation*}
 |f \star g|(p)w(p) 
 \leq 
 \int |f(q)| \, |g(q^{-1}p)| w(p) \, d\mu(q) \, .
\end{equation*}
Next recast relation~\eqref{d1} as $w(p)\le v(q) w(q^{-1}p)$. We obtain
\begin{equation*}
|f \star g|(p)w(p) 
\leq \int|f(q)| \, |g(q^{-1}p)| v(q)w(q^{-1}p) \, d\mu(q)
=
[(|f|v)\star(|g|w)](p).
\end{equation*}
Our claim \eqref{d2} then follows from Young's inequality for functions on locally compact groups (see \cite[Lemma 1.4]{BCD2}).
\end{proof}
We now introduce a pseudo-norm on $\Heis$ which avoids some singularities near the origin $e$. We label its main properties in the next lemma.

\begin{lemma}
    Recall that 
the Carnot--Carath\'eodory distance is defined in  \eqref{cc-metric}. We define a pseudo-norm $|\cdot|_*$ on $\Heis$ by
\begin{equation}\label{pseudonorm}
 |p |_*= \sqrt{1+d_{cc}(e, p)^2},\quad \text{for $p\in\Heis$.}
 \end{equation}
Then  $|\cdot|_*$ satisfies the following properties: 
\begin{equation}\label{rela-b}
    |e|_*=1,\quad |p^{-1}|_*=|p|_*\qquad \text{for  $p\in\Heis$,}
\end{equation} as well as the triangular inequality
\begin{equation}\label{tri-ineq}
    |pq|_*\leq |p|_*+|q|_*.
\end{equation}
\end{lemma}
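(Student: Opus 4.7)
The proof breaks into two independent pieces: the elementary identities \eqref{rela-b}, and the triangle inequality \eqref{tri-ineq}. I would handle them in that order, since the first is essentially a one-line computation and the second reduces, after exploiting the geometric structure of $d_{cc}$, to a purely scalar inequality.

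For \eqref{rela-b}, I would compute directly from \eqref{pseudonorm}. The identity $|e|_*=1$ is immediate from $d_{cc}(e,e)=0$. For symmetry under inversion, I would invoke the left-invariance \eqref{left-invariant property} with $p_{1}=e$ and $p_{2}=p^{-1}$, multiplied on the left by $p$, to obtain
\[
d_{cc}(e,p^{-1}) \;=\; d_{cc}(p\cdot e,\, p\cdot p^{-1}) \;=\; d_{cc}(p,e) \;=\; d_{cc}(e,p),
\]
the last step being the symmetry of the metric. Plugging this into the definition of $|\cdot|_*$ gives $|p^{-1}|_*=|p|_*$.

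For the triangular inequality \eqref{tri-ineq}, the plan is to separate the group-theoretic input from an elementary scalar estimate. The group-theoretic step is the ordinary triangle inequality for $d_{cc}$ combined with left-invariance \eqref{left-invariant property}, giving
\[
d_{cc}(e,pq) \;\leq\; d_{cc}(e,p) + d_{cc}(p,pq) \;=\; d_{cc}(e,p) + d_{cc}(e,q).
\]
Setting $a=d_{cc}(e,p)\geq 0$ and $b=d_{cc}(e,q)\geq 0$, it then suffices to establish the scalar inequality
\[
\sqrt{1+(a+b)^{2}} \;\leq\; \sqrt{1+a^{2}} + \sqrt{1+b^{2}}
\]
and use monotonicity of $\sqrt{1+t^{2}}$ in $t\geq 0$.

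The scalar inequality is the only slightly non-trivial step, and I would prove it by embedding into $\mathbb{R}^{2}$: take the vectors $u=(1,a)$ and $v=(0,b)$, so that $\|u\|=\sqrt{1+a^{2}}$, $\|v\|=b\leq \sqrt{1+b^{2}}$, and $\|u+v\|=\sqrt{1+(a+b)^{2}}$; the Euclidean triangle inequality then yields the claim. Alternatively one can square both sides twice and reduce to $0\leq 3+(2a+b)^{2}+3b^{2}$, which is automatic. I expect no real obstacle; the only point requiring a little care is to recognize that left-invariance of $d_{cc}$ is exactly what converts $d_{cc}(p,pq)$ into $d_{cc}(e,q)$, so that the cross term in the triangle inequality is controlled by $|q|_*$ rather than by something depending on $p$.
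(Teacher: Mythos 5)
Your proposal is correct and follows essentially the same route as the paper: the triangle inequality for $d_{cc}$ combined with left-invariance reduces \eqref{tri-ineq} to the scalar inequality $\sqrt{1+(a+b)^2}\le\sqrt{1+a^2}+\sqrt{1+b^2}$, which the paper verifies by expanding $(|p|_*+|q|_*)^2$ while you verify it by an embedding into $\R^2$ — a purely cosmetic difference. Your explicit derivation of $|p^{-1}|_*=|p|_*$ via left-invariance spells out what the paper dismisses as trivial.
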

\begin{proof}
Relation \eqref{rela-b} is trivial. In order to prove \eqref{tri-ineq}, let us invoke the triangular inequality for $d_{cc}$ and the left invariance property \eqref{left-invariant property}. This enables to write
\begin{equation*}
    |pq|^2_* = 1+d_{cc}(e, pq)^2
    \leq 1+(d_{cc}(e,p)+d_{cc}(p,pq))^2=1+(d_{cc}(e,p)+d_{cc}(e,q))^2.
    \end{equation*} 
    Expanding the right hand side above and resorting to the definition \eqref{pseudonorm} of $|\cdot|_*$, it is thus easily seen that 
    \begin{eqnarray*}
    |pq|^2_*&\leq& 
    1+d_{cc}(e,p)^2+d_{cc}(e,q)^2+2\,d_{cc}(e,p)d_{cc}(e,q) \\
    &\le&
    (|p|_*+|q|_*)^2,
    \end{eqnarray*}
    which finishes the proof of \eqref{tri-ineq}.
    \end{proof}

The pseudo-norm $|\cdot|_*$ is the main building block in order to introduce the exponential weights considered in this paper. We label the definition of those weights for further use.
\begin{definition}\label{weight}
Recall that $s$ is a parameter such that $s\ge 1$.
  Let $\eta$ be a real number in $(0,1/s)$ and  $\nu\in\mathbb{R}$. We define the weight $ w_\nu:\Heis\to\R_+$ by
    \[
    w_\nu(q)=e^{-\nu|q|_*^\eta},
    \]where we recall that $|q|_*$ is defined in \eqref{pseudonorm}. We also denote by $L_\nu^p$ the corresponding space $L^p(\Heis,w^p_\nu(q)\, d\mu(q))$, and  by $(\cdot,\cdot)_\nu$ the inner product in $L^2_\nu$.
\end{definition}

\begin{remark}
Our notation for weighted $L^p$-spaces differs slightly from the one introduced in \cite{MW}. Namely the spaces in \cite{MW} are of the form $L^p(\Heis,w_\nu(q)\, d\mu(q))$, in contrast with our version where we also raise $w_\nu$ to a power $p$. While the main properties of the weighted spaces are not affected by this difference, our spaces are such that the weighted $L^\infty$-space can be seen as a limit of weighted $L^p$-spaces. This will be useful for the estimates involving the noise $\dW^{\zeta, \alpha}$ in \eqref{eq:pam}. It also simplifies the paraproduct computations in Section \ref{sec-paraprod} below.
\end{remark}
    We now turn to a basic lemma stating that our exponential weights are moderate.
    \begin{lemma}\label{lemma-moderate}
For $\nu\in\R$, recall that the weight $w_\nu$ is introduced in Definition \ref{weight}, for a parameter $0<\eta<1/s$. Then for all $\nu\geq 0$, $w_\nu$ is $w_{-\nu}$-moderate.
    \end{lemma}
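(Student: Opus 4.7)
The claim unpacks to $e^{-\nu|pq|_*^\eta}\leq e^{\nu|p|_*^\eta}e^{-\nu|q|_*^\eta}$, i.e., after dividing by $\nu\geq 0$ (the case $\nu=0$ being trivial) and taking logarithms, to the scalar inequality
\begin{equation}\label{plan:goal}
|q|_*^\eta \leq |p|_*^\eta + |pq|_*^\eta,
\qquad p,q\in\Heis.
\end{equation}
So my plan is to prove \eqref{plan:goal} and then undo the reductions.

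The first step is a decomposition trick: write $q = p^{-1}(pq)$ and apply the triangle inequality \eqref{tri-ineq} for $|\cdot|_*$, combined with the symmetry $|p^{-1}|_*=|p|_*$ from \eqref{rela-b}. This yields
\begin{equation*}
|q|_* \leq |p^{-1}|_* + |pq|_* = |p|_* + |pq|_*.
\end{equation*}

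The second step uses the subadditivity of $t\mapsto t^\eta$ on $[0,\infty)$ for exponents $\eta\in(0,1)$, namely $(a+b)^\eta\leq a^\eta+b^\eta$ for $a,b\geq 0$. Since our exponent satisfies $\eta\in(0,1/s)$ with $s\geq 1$, certainly $\eta\in(0,1)$, so we may apply this to the above bound to obtain \eqref{plan:goal}. Multiplying \eqref{plan:goal} by $-\nu\leq 0$ and exponentiating gives exactly the required inequality $w_\nu(pq)\leq w_{-\nu}(p)w_\nu(q)$.

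There is no real obstacle here: the whole argument is a two-line computation, and the only nontrivial ingredient beyond the triangle inequality \eqref{tri-ineq} already established for $|\cdot|_*$ is the elementary concavity fact $(a+b)^\eta\leq a^\eta+b^\eta$, which is the reason why the definition of $w_\nu$ requires $\eta<1/s\leq 1$ rather than allowing a general positive exponent. This constraint is exactly what makes the exponential weights moderate.
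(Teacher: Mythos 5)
Your proof is correct and follows essentially the same route as the paper: both arguments reduce the claim to $|q|_*^\eta \leq |p|_*^\eta + |pq|_*^\eta$, obtain $|q|_*\leq |p|_*+|pq|_*$ from the triangle inequality \eqref{tri-ineq} together with the symmetry $|p^{-1}|_*=|p|_*$, and then conclude via the subadditivity $(a+b)^\eta\leq a^\eta+b^\eta$ valid for $\eta\le 1$. The only difference is cosmetic (you isolate the scalar inequality up front, whereas the paper bounds the ratio $w_\nu(pq)/w_\nu(q)$ directly).
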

    \begin{proof}
    
Our objective is to show that the pair $w=w_\nu$, $v=w_{-\nu}$ satisfies relation \eqref{d1}. Namely, for $p,q\in\Heis$ we wish to prove that \begin{equation}\label{obj}
    \frac{w_\nu(pq)}{w_\nu(q)}\leq w_{-\nu}(p).
\end{equation}To this aim, we first apply the mere definition of $w_\nu$ in order to write\[\frac{w_\nu(pq)}{w_\nu(q)}=\text{exp}(\nu(|q|_*^\eta-|pq|_*^\eta)).\]
Next we invoke \eqref{rela-b} and \eqref{tri-ineq}, which yields $|q|_*\leq|pq|_*+|p|_*$. Since we are considering a positive parameter $\nu$ and $\eta<1/s \le 1$, we obtain
    \begin{align}\nonumber
    \frac{w_\nu(pq)}{w_\nu(q)}&\leq \text{exp}(\nu(|pq|_*+|p|_*)^\eta-\nu |pq|_*^\eta)\\\label{ineq-f}&\leq \text{exp}(\nu|p|_*^\eta)=w_{-\nu}(p) \, ,
\end{align}where we resort to the elementary relation $|a+b|^\eta-|a|^\eta\leq|b|^\eta$ for the second inequality. Note that \eqref{ineq-f} is exactly our claim \eqref{obj}, which finishes the proof.
    \end{proof}

  It is known (see \cite[p.\ 23]{R}) that there exists a function $\varphi\in \mathcal{G}^s(\mathbb{R}^n)$  with compact support such that  $\varphi\equiv 1$ on the unit ball centered at the origin (with respect to the Euclidean distance $|\cdot|_2$). We recall this important result in the lemma below.
\begin{lemma} \label{bump-func}  Let $s> 1$ and recall that the Gevrey space $\cg^s(\R^n)$ on $\R^n$ is defined by \eqref{g-est}. Then there exists a  compactly supported function $\varphi\in \mathcal{G}^s(\mathbb{R}^n)$  such that  $\varphi\equiv 1$ on the unit ball $B_{1}(0)=\{x\in\mathbb{R}^n:|x|_2<1\}$.
\end{lemma}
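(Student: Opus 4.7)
The plan follows the classical three-stage construction of a Gevrey plateau function. First I would introduce the one-dimensional Gevrey building block
\begin{equation*}
\psi_s(t) = \exp\bigl(-t^{-1/(s-1)}\bigr) \text{ for } t>0, \qquad \psi_s(t)=0 \text{ for } t\leq 0.
\end{equation*}
Since $s>1$, the exponent $1/(s-1)$ is finite, and a direct induction (or the contour argument recalled in \cite{R}) yields $|\psi_s^{(k)}(t)| \leq C^{k+1}(k!)^s$ uniformly on compacts of $\mathbb{R}$, so $\psi_s \in \mathcal{G}^s(\mathbb{R})$. For a fixed small $\delta>0$, the symmetric product
\begin{equation*}
\rho(t) = c_\delta\, \psi_s(t+\delta)\,\psi_s(\delta-t),
\end{equation*}
with $c_\delta$ chosen so that $\int_\mathbb{R} \rho = 1$, is a non-negative function in $\mathcal{G}^s(\mathbb{R})$ supported in $[-\delta,\delta]$; the Gevrey bound on $\rho$ follows from the Leibniz rule applied to the estimate on $\psi_s$.

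Next I would produce a one-dimensional Gevrey plateau via convolution. Set
\begin{equation*}
\chi(t) = \bigl(\mathbf{1}_{[-1/2-\delta,\, 3/2+\delta]} \star \rho\bigr)(t).
\end{equation*}
Since all derivatives fall onto the smooth factor $\rho$, the bound $\|\chi^{(k)}\|_{\infty} \leq \|\rho^{(k)}\|_{L^1} \leq C^{k+1}(k!)^s$ shows that $\chi \in \mathcal{G}^s(\mathbb{R})$. Inspection of the ranges in the convolution integral then gives $\chi \equiv 1$ on $[-1/2,\, 3/2]$ (which contains $[0,1]$) and $\mathrm{supp}(\chi) \subset [-1/2-2\delta,\, 3/2+2\delta]$.

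Finally, I would define the radial lift $\varphi(x) = \chi(|x|_2^2)$ for $x \in \mathbb{R}^n$. Since $|x|_2 \leq 1$ forces $|x|_2^2 \in [0,1]$, one obtains $\varphi \equiv 1$ on $B_1(0)$, while $\varphi$ is compactly supported in $\{x : |x|_2^2 \leq 3/2+2\delta\}$. To verify $\varphi \in \mathcal{G}^s(\mathbb{R}^n)$ I would apply the Fa\`a di Bruno formula to the composition of $\chi$ with the quadratic polynomial $f(x) = |x|_2^2$: only partial derivatives of $f$ of orders $1$ and $2$ are non-zero and they are trivially bounded on compacts, so the standard combinatorial count combined with the Gevrey estimate $|\chi^{(k)}| \leq C^{k+1}(k!)^s$ yields $|\partial^\alpha \varphi(x)| \leq C_K^{|\alpha|+1}(\alpha!)^s$ on every compact $K \subset \mathbb{R}^n$.

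The main technical obstacle is the Fa\`a di Bruno bound in the last step, where one must check that the combinatorial blow-up of the iterated chain rule is absorbed by the Gevrey weight $(k!)^s$. This is the classical stability of $\mathcal{G}^s$ under composition with analytic maps (and in particular with polynomials), and it is precisely what is recorded in \cite[p.~23]{R}; the remaining pieces of the argument are elementary one-dimensional calculations.
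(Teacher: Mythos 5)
Your construction is correct, but it takes a different route from the paper in its final step. The paper also starts from the one--dimensional block $\psi(x)=\exp(-x^{-1/(s-1)})\mathbbm{1}_{(x>0)}$, but it then forms the $n$--dimensional bump $\Psi(x)=\prod_{j=1}^n\psi(1+x_j)\psi(1-x_j)$ by tensoring (using only closure of $\mathcal{G}^s$ under products) and obtains the plateau directly as the convolution of $\Psi/\lVert\Psi\rVert_{L^1}$ with the indicator of the large ball $B_{2n}(0)$ in $\R^n$; the Gevrey bound for $\varphi$ then follows from the elementary Young estimate $\lVert\partial^\alpha\Psi\star\mathbbm{1}\rVert_\infty\le\lVert\partial^\alpha\Psi\rVert_{L^1}$, with no chain rule needed. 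You instead build a one--dimensional plateau $\chi$ and lift it radially as $\varphi=\chi(|x|_2^2)$, which forces you to invoke Fa\`a di Bruno, i.e.\ the stability of $\mathcal{G}^s$ under composition with polynomials. That is a correct and classical result (and is in Rodino), but it is a heavier tool than the paper needs; note also that the combinatorics naturally produce a bound in terms of $(|\alpha|!)^s$, and you should record the standard inequality $|\alpha|!\le n^{|\alpha|}\alpha!$ to convert it into the form $C^{|\alpha|+1}(\alpha!)^s$ required by \eqref{g-est}. What your route buys is a radial $\varphi$ (not needed for the lemma); what the paper's route buys is a shorter verification relying only on products and convolution.
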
 \noindent {\it Proof.} It is a standard fact that the function   $\psi(x)=\exp(-x^{-\kappa})\,\mathbbm{1}_{(x>0)}$, where  $\kappa=1/(s-1)$, lies in $\mathcal{G}^s(\mathbb{R})$. Since Gevrey classes on $\mathbb{R}^n$ are stable under multiplication, the compactly supported function 
\[\Psi(x_1,\ldots,x_n)=\prod_{j=1}^n\psi(1+x_j)\psi(1-x_j)\] lies in $\mathcal{G}^s(\mathbb{R}^n)$. Then we take $\varphi$ to be the convolution of the function  $\Psi/\lVert\Psi\rVert_{L^1}$ and the indicator function of $B_{2n}(0)$ (see e.g. \cite[p.\ 22]{R}), so that $\varphi\in \mathcal{G}^s(\mathbb{R}^n)$  has compact support and   $\varphi\equiv 1$ on  $B_{1}(0)$.\qed
  
  In the next lemma we extend Lemma \ref{bump-func} to the Heisenberg group. That is, we prove  the existence of a  Gevrey function $\phi\in\cg^s(\Heis)$ with bounded support, such that $\phi\equiv 1$ on the unit ball of $\Heis$.
  
  \begin{lemma}\label{lemma-phi}
  Consider a parameter $s> 1$ and the Gevrey space $\cg^s(\fHeis)$ on the Heisenberg group $\fHeis$, as introduced in Definition \ref{def-diff}. Recall that the distance $\widehat{d}_0$ is given by \eqref{dist0}.  Then there exists a function $\tilde{\phi} \in \cg^s(\fHeis)$ with bounded support such that $\tilde{\phi}\equiv 1$ on the unit ball $B_1:=\{\hat{w}\in\fHeis : \widehat{d}_0(\hat{w})<1\}$.
  \end{lemma}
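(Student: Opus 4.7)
The plan is to produce $\tilde{\phi}$ as a function of the form $\Theta_f$ introduced in Proposition~\ref{prop 2.13}, so that the Gevrey regularity $\tilde{\phi}\in\mathcal{G}^s(\fHeis)$ comes for free from that proposition. The key geometric observation driving the construction is that the unit ball $B_1=\{\widehat{d}_0<1\}$ contains only \emph{diagonal} points $\hat{w}=(m,m,\lambda)$: indeed, for $\hat{w}\in B_1$ the quantity $|m-\ell|$ is a non-negative integer strictly less than $1$, hence $m=\ell$, and the remaining condition reads $|\lambda|(2|m|+n)<1$. This matches the support hypothesis on $f$ in Proposition~\ref{prop 2.13}, which in turn forces $\Theta_f$ to vanish off the diagonal.

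First I would apply Lemma~\ref{bump-func} in dimension $n=1$ to obtain a compactly supported Gevrey function $\chi\in\mathcal{G}^s(\R)$ with $\chi\equiv 1$ on $[-1,1]$ and supported in some interval $[-R,R]$. I would then define $f:[0,\infty)^n\times\Z^n\times\R\to\R$ by
\[
f(x_1,\ldots,x_n,\kappa,\lambda):=\chi(x_1+\cdots+x_n)\,\mathbbm{1}_{\{\kappa=0\}}.
\]
Hypothesis (a) of Proposition~\ref{prop 2.13} is immediate. For hypothesis (b), note that $f$ is independent of $\lambda$ on $\{\kappa=0\}$, so all $\lambda$-derivatives vanish; the $x$-derivatives reduce, via the chain rule through the linear map $x\mapsto x_1+\cdots+x_n$, to derivatives of $\chi$, and the estimate \eqref{est-gevrey} then follows from $\chi\in\mathcal{G}^s(\R)$ after absorbing a harmless combinatorial factor arising from $|\alpha|!\leq n^{|\alpha|}\alpha!$.

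Setting $\tilde{\phi}:=\Theta_f$, Proposition~\ref{prop 2.13} yields $\tilde{\phi}\in\mathcal{G}^s(\fHeis)$. The two remaining properties are then direct. For the support, $\tilde{\phi}(\hat{w})\neq 0$ forces $m=\ell$ (by the support of $f$) and $|\lambda|(2|m|+n)\leq R$ (by the support of $\chi$), so $\widehat{d}_0(\hat{w})=|\lambda|(2|m|+n)\leq R$, which shows that $\tilde{\phi}$ has bounded support in $(\fHeis,\widehat{d}_0)$. For the value on $B_1$, any $\hat{w}\in B_1$ is diagonal with $|\lambda|(2|m|+n)<1$ by the first paragraph, so
\[
\tilde{\phi}(\hat{w})=f\bigl(|\lambda|R(m,m),0,\lambda\bigr)=\chi\bigl(|\lambda|(2|m|+n)\bigr)=1,
\]
as $\chi\equiv 1$ on $[-1,1]$. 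The only step requiring some care is the verification of the Gevrey bound for $f$, but this reduces to a routine application of the chain rule together with $\chi\in\mathcal{G}^s(\R)$; once this is in place, Proposition~\ref{prop 2.13} does all the heavy lifting of transferring Gevrey regularity from $f$ to $\Theta_f$ on the Fourier side.
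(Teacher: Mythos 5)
Your proof is correct and follows essentially the same route as the paper: both construct $\tilde{\phi}=\Theta_f$ from the compactly supported Gevrey bump of Lemma \ref{bump-func}, let Proposition \ref{prop 2.13} supply the Gevrey regularity, and then verify the bounded support and the value $1$ on $B_1$ by the same diagonal reduction $\widehat{d}_0(\hat{w})<1\Rightarrow m=\ell$. The only (harmless) difference is that you compose a one-dimensional bump with $x_1+\cdots+x_n$, so that its argument is exactly $|\lambda|(2|m|+n)=\widehat{d}_0(\hat{w})$ on the diagonal, which spares you the $|\cdot|_1$ versus $|\cdot|_2$ comparison the paper needs for its $n$-dimensional bump, at the cost of the routine multinomial check $|\alpha|!\leq n^{|\alpha|}\alpha!$ that you correctly flag.
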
 
  \begin{proof}
Consider the function $\varphi$ on $\R^n$ constructed in Lemma \ref{bump-func}. For $(x,\kappa,\lambda)\in[0,\infty)^n\times\Z^n\times\R$, we set 
\begin{equation}\label{func-f}
f(x, \kappa, \lambda) = \varphi(x)\,\mathbbm{1}_{\{\kappa=0\}},
\quad\text{and}\quad
\tilde{\phi}(m,\ell,\lambda) = \Theta_f(m,\ell,\lambda),
\end{equation} 
where we recall the notation $\Theta_f$ introduced in \eqref{def-theta-f}.
Our aim is to show that $\tilde{\phi}=\Theta_f$ as defined above fulfills the conditions of our lemma.

\begin{enumerate}[wide, labelwidth=!, labelindent=0pt, label= \textbf{(\roman*)}]
\setlength\itemsep{.05in}

\item
Clearly the function $f$ satisfies both conditions (a) and (b) in Proposition \ref{prop 2.13}. Therefore if we define $\Theta_f$ as in  \eqref{def-theta-f}, we directly get that $\Theta_f \in \cg^s(\Heis)$.

\item
Observe that if $f$ is given as in \eqref{func-f}, then $\Theta_f$ can be written as   
\begin{equation}\label{func-thetaf}
\Theta_f (m,\ell,\lambda)=\varphi(|\lambda|R(m,\ell))\mathbbm{1}_{\{m=\ell\}}.
\end{equation}
Thanks to the relation above, one can show that the function $\Theta_f$ is identically $1$ on the ball $B_1$ in $\Heis$. To this aim, resorting to the expression \eqref{dist0}, it is readily checked that if we wish to have $\widehat{d}_0(\hat{w})<1$ then we need $m=\ell$. Hence one can describe $B_1$ as 
  \begin{equation}\label{eq-B-1}
  B_1=\{(m,\ell,\lambda)\in\fHeis :\, m=\ell,\, |\lambda|R(m,m)=|\lambda|(2|m|+n)<1 \}.
  \end{equation}
From \eqref{eq-B-1} and the fact that  $\lvert|\lambda|R(m,m)\rvert_2\leq \lvert|\lambda|R(m,m)\rvert_1$,   we have that
\[
\tilde{\phi}\lp m,\ell,\la  \rp
=
\Theta_f (m,\ell,\lambda)
=\vp\lp |\lambda|R(m,m) \rp
\equiv 1\, ,\ \quad \text{on}\ \, B_1.
\]
This proves that $\tilde{\phi}\equiv 1$ on the unit ball $B_1$.

\item
To see that the $\Theta_f$ has bounded support, let $R>0$ be such that $\varphi(x)=0$ whenever $|x|\geq R$. Then $\Theta_f(m,\ell,\lambda) = 0 $ whenever $\vert|\lambda|R(m,\ell)\vert_2\geq R$. Due to the fact that $\lvert|\lambda|R(m,\ell)\rvert_1\leq n^{1/2} \lvert|\lambda|R(m,\ell)\rvert_2$, we see that 
\begin{equation*}
|\lambda|(|m+\ell|+n) \geq n^{1/2} R
\quad\Longrightarrow\quad
\big\lvert|\lambda|R(m,\ell)\big\rvert_2 > R \, .
\end{equation*}
Hence it is readily seen that
\[
\Theta_f(m,\ell,\lambda) = 0 \quad \text{whenever } \quad |\lambda|(|m+\ell|+n) \geq n^{1/2} R.
\]
Noting that the support of $\Theta_f$ is contained in the diagonal $\{m=\ell\}$, we then conclude that 
\begin{equation}\label{suppf}
  \text{Supp}\, \Theta_f\subset \{(m,\ell,\lambda)\,:\,m=\ell,\,|\lambda|(2|m|+n) < n^{1/2} R\}.
  \end{equation}

At last  it is clear from \eqref{suppf}   that $\text{Supp}\, \Theta_f$ is bounded with respect to the metric $\widehat{d}$.
Specifically, one can prove that if $\hat{w}, \hat{w}'$ lye in  $\text{Supp}\, \Theta_f$, then $\widehat{d}(\hat{w},\hat{w}')\le M$ for a large enough constant $M$. Namely for all  $\hat{w}=(m,m,\lambda)$ and  $\hat{w}'=(m',m',\lambda')$ such that $\hat{w},\hat{w}'\in\text{Supp}\, \Theta_f$, according to the definition~\eqref{dist} of $\widehat{d}$ we have
\begin{align*}
\widehat{d}(\hat{w},\hat{w}')&=|\lambda(2 m)-\lambda'(2m')|_1+0+n|\lambda-\lambda'|\\&\leq|\lambda|(2 |m|)+|\lambda'|(2| m'|)+n|\lambda|+n|\lambda'|<2\,n^{1/2} R\equiv M,
\end{align*}
where the last inequality follows from \eqref{suppf}. We then obtain that the diameter of $\text{Supp}\, \Theta_f$ is bounded by $M$.
\end{enumerate}
Gathering the items (i)-(ii)-(iii) above, our claims are now proved.
\end{proof}

Before stating our version of Bernstein's lemma, let us address some scaling properties that shall be used later.
\begin{lemma}\label{lemma-g-tau}
Let $\tilde{\phi}\in \mathcal{G}^s(\fHeis)$ as be given in Lemma \ref{lemma-phi}. For any $\tau>0$, let $\tilde{\phi}_\tau(m,\ell,\lambda)=\tilde{\phi}(m, \ell, \lambda/\tau)$, $(m,\ell,\lambda)\in \fHeis$. Then $\tilde{\phi}_\tau$ is in $\mathcal{G}^s(\fHeis)$ with bounded support and $\tilde{\phi}_\tau\equiv1$ on $B_\tau\cap D:=\{\hat{w}\in\fHeis : \widehat{d}_0(\hat{w})<\tau, \text{ and } m=\ell\}$. Moreover, let
$g_\tau$ be the inverse Fourier transform of $\tilde{\phi}_\tau$, i.e. $\hat{g}_\tau=\tilde{\phi}_\tau$. Then for any $(x,y,z)\in \Heis$, we have
\begin{equation}\label{eq-scaling}
    g_\tau(x,y,z)=\tau^{n+1} g_1(\tau^{1/2}x,\tau^{1/2}y,\tau z)=\tau^{n+1} g_1(\delta_{\tau^{1/2}\,}q),
\end{equation}
where the dilation $\delta_{\tau^{1/2}}$ is defined by \eqref{eq-dilation}.
\end{lemma}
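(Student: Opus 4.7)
The plan is to handle the three claims of the lemma — membership in $\mathcal{G}^s(\fHeis)$, bounded support with $\tilde{\phi}_\tau\equiv 1$ on $B_\tau\cap D$, and the scaling identity \eqref{eq-scaling} — essentially independently, each via a direct rescaling argument in the $\lambda$ variable. The key observation is that the projective Fourier transform is built from rescaled Hermite functions $\Phi_k^\lambda$ whose scaling in $\lambda$ matches the Heisenberg dilation $\delta_{\tau^{1/2}}$.

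First I would establish Gevrey regularity. Since $\HDelta$ contains only factors of $1/|\lambda|$ and shifts in $(m,\ell)$, and $\HDL=\partial_\lambda+\UHDL$ with $\UHDL$ involving only $1/\lambda$ and such shifts, a direct substitution using $\tilde{\phi}_\tau(m,\ell,\lambda)=\tilde{\phi}(m,\ell,\lambda/\tau)$ yields
\begin{equation*}
\HDelta\tilde{\phi}_\tau(m,\ell,\lambda)=\tau^{-1}(\HDelta\tilde{\phi})(m,\ell,\lambda/\tau),\qquad \HDL\tilde{\phi}_\tau(m,\ell,\lambda)=\tau^{-1}(\HDL\tilde{\phi})(m,\ell,\lambda/\tau).
\end{equation*}
An easy induction, using that these identities apply to any rescaled function in $\lambda$, then gives $\HDelta^N\tilde{\phi}_\tau(\hat{w})=\tau^{-N}(\HDelta^N\tilde{\phi})(m,\ell,\lambda/\tau)$ and the same for $\HDL^N$. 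The Gevrey estimate \eqref{gevrey-est} for $\tilde{\phi}_\tau$ on a compact $K\subset\fHeis$ therefore inherits from the corresponding estimate for $\tilde{\phi}$ on the compact image of $K$ under $\lambda\mapsto\lambda/\tau$, at the cost of replacing the constant $C$ by a $\tau$-dependent constant (independent of $N$).

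Next I would verify the support and unit-value claims, which are essentially bookkeeping given Lemma \ref{lemma-phi}. If $\tilde{\phi}_\tau(m,\ell,\lambda)\neq 0$ then $(m,\ell,\lambda/\tau)\in\supp\tilde{\phi}$, which forces $m=\ell$ and $|\lambda/\tau|(2|m|+n)\leq n^{1/2}R$, so the $\widehat{d}$-boundedness of $\supp\tilde{\phi}_\tau$ follows exactly as in the final step of the proof of Lemma \ref{lemma-phi}. Conversely, if $(m,\ell,\lambda)\in B_\tau\cap D$ then $m=\ell$ and $|\lambda|(2|m|+n)<\tau$, hence $(m,\ell,\lambda/\tau)\in B_1$, and \eqref{func-thetaf} yields $\tilde{\phi}_\tau(m,\ell,\lambda)=\tilde{\phi}(m,\ell,\lambda/\tau)=1$.

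For the scaling identity \eqref{eq-scaling} I would start from the inversion formula \eqref{eq-FT-int-inv} applied to $g_\tau$ and change variables $\lambda=\tau\mu$ in the $d\lambda$ integral. This produces a factor $\tau^{n+1}$ from $|\lambda|^n\,d\lambda=\tau^{n+1}|\mu|^n\,d\mu$, transforms $\tilde{\phi}(m,\ell,\lambda/\tau)$ into $\tilde{\phi}(m,\ell,\mu)$, and rewrites $e^{i\lambda z}$ as $e^{i\mu(\tau z)}$. The crux is then the kernel identity
\begin{equation*}
K_{m,\ell,\tau\mu}(x,y,z)=K_{m,\ell,\mu}(\tau^{1/2}x,\tau^{1/2}y,\tau z),
\end{equation*}
which I would derive straight from \eqref{eq-FT-K}: the substitution $\xi=\tau^{-1/2}\xi'$ combined with the scaling $\Phi_k^{\tau\mu}(u)=\tau^{n/4}\Phi_k^\mu(\tau^{1/2}u)$ coming from \eqref{a2} converts the integrand on the left into the integrand on the right (and $K$ is independent of $z$, so the insertion of $\tau z$ is free). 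Putting the rescaling of $\lambda$ and this kernel identity together exactly produces $g_\tau(q)=\tau^{n+1}g_1(\delta_{\tau^{1/2}}q)$.

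The main obstacle will be the last step: making sure that rescaling the spectral parameter $\lambda$ by $\tau$ corresponds exactly to the sub-Riemannian dilation $\delta_{\tau^{1/2}}$ in physical space. This is not a superficial computation but a manifestation of the compatibility between the harmonic-oscillator eigenbasis $(\Phi_k^\lambda)$ and the natural scaling of $\Heis$, so a careful change of variable in \eqref{eq-FT-K} and of the $|\lambda|^n$ Plancherel weight is needed to get the exponent $n+1$ correct.
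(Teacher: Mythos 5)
Your proposal is correct and follows essentially the same route as the paper: the scaling identity \eqref{eq-scaling} is obtained exactly as in the paper's proof, by the substitution $\lambda\to\tau\lambda$, $\xi\to\tau^{-1/2}\xi$ in the inversion formula \eqref{eq-FT-int-inv} and the kernel \eqref{eq-FT-K}, yielding the factor $\tau^{n+1}$ from $|\lambda|^n\,d\lambda$, and the support and unit-value claims are the same bookkeeping the paper imports from Lemma \ref{lemma-phi}. The only (harmless) divergence is your treatment of Gevrey membership via the commutation identities $\HDelta^N\tilde{\phi}_\tau(\hat{w})=\tau^{-N}(\HDelta^N\tilde{\phi})(m,\ell,\lambda/\tau)$, where the paper instead implicitly notes that $\tilde{\phi}_\tau=\Theta_{f_\tau}$ with $f_\tau(x,\kappa,\lambda)=\varphi(x/\tau)\mathbbm{1}_{\{\kappa=0\}}$ and reapplies Proposition \ref{prop 2.13}; both arguments are valid.
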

\begin{proof}
First from its definition and \eqref{func-thetaf} we know that 
\begin{equation}\label{eq-phi-tau}
\tilde{\phi}_\tau(m,\ell,\lambda)=\varphi\left(\frac{|\lambda|}{\tau}R(m,\ell)\right)\mathbbm{1}_{\{m=\ell\}}.
\end{equation}
Similar to what we have proved in  Lemma \ref{lemma-phi}, it is then easily seen that $\tilde{\phi}_\tau\equiv 1$ on the set $B_\tau$.
To see \eqref{eq-scaling}, resorting to \eqref{eq-FT-int-inv} we have 
\begin{align}\label{eq-gtau}
g_\tau(x,y,z)&=\frac{2^{n-1}}{\pi^{n+1}}\sum_{m,\ell\in\N^{n}}J_{m,\ell, \tau}(x,y,z),\end{align}
where we have set
\[J_{m,\ell,\tau}(x,y,z)=\int_\R e^{i\lambda z} K_{m,\ell,\lambda}(x,y,z)  \tilde{\phi}_\tau(m,\ell,\lambda)|\lambda|^n d\lambda.\]
Next writing the definition  \eqref{eq-FT-K} of $K_{m,\ell,\lambda}$ and having the expression \eqref{a2} of $\Phi_k^\lambda$ in mind, we obtain
\begin{multline*}
J_{m,\ell,\tau}(x,y,z) 
=   
\int_\R e^{i\lambda z} \bigg( \int_{\R^n} e^{2i\lambda\langle y,\,\xi\rangle}|\lambda|^{n/2}\Phi_m(|\lambda|^{1/2}(x+\xi))\\
\cdot\Phi_\ell^\lambda(|\lambda|^{1/2}(-x+\xi))d\xi \, \bigg)\tilde{\phi}_\tau(m,\ell,\lambda)|\lambda|^n d\lambda.
\end{multline*}
Moreover, it is easily seen from \eqref{eq-phi-tau} that $\tilde{\phi}_\tau(m,\ell,\lambda)=\tilde{\phi}_1(m,\ell,\lambda/\tau)$ for all $\tau>0$. Therefore one can apply the rescaling $\lambda\to\tau\lambda$ and $\xi\to \tau^{-1/2}\xi$ in order to get
\begin{multline*}
J_{m,\ell,\tau}(x,y,z)  =   \int_\R e^{i\tau\lambda z} \bigg( \int_{\R^n} e^{2i\lambda\langle \tau^{1/2}y,\,\xi\rangle}|\lambda|^{n/2}\Phi_m(|\lambda|^{1/2}(\tau^{1/2}x+\xi)) \\
\cdot \Phi_\ell^\lambda(|\lambda|^{1/2}(-\tau^{1/2}x+\xi))d\xi \, \bigg)\tilde{\phi}_1(m,\ell,\lambda)|\lambda|^n \tau^{n+1} d\lambda.
\end{multline*}
Invoking the definition  \eqref{eq-FT-K} of the kernels $K_{m,\ell,\lambda}$ again, the above expression for $J_{m,\ell,\tau}$ can be recast as
\begin{align*}
J_{m,\ell,\tau}(x,y,z) &=
\tau^{n+1} \int_\R e^{i\lambda (\tau z)} K_{m,\ell,\lambda}(\tau^{1/2}x,\tau^{1/2}y,\tau z)  \tilde{\phi}_1(m,\ell,\lambda)|\lambda|^n d\lambda\\&=\tau^{n+1}J_{m,\ell,1}(\tau^{1/2}x,\tau^{1/2}y,\tau z).
\end{align*}
Plugging this identity into  \eqref{eq-gtau}, this easily implies \eqref{eq-scaling}. 
\end{proof}

As a last preliminary step before we prove Bernstein's lemma, we state a scaling property for functions of the form $\Delta^k g_\tau$ for any positive integer $k$.

\begin{lemma}\label{lemma-g-tau-k-bd} For $\tau>0$, consider the function $g_\tau$ given by \eqref{eq-scaling}. For $k\in\N$, let us also recall that $\Delta^k$ is defined in \eqref{eq-FT-delN}. We denote by $g^{(k)}_\tau$ the dilated function of $\Delta^k g$, namely, for $q=(x,y,z)\in\Heis$, 
\begin{align}\label{eq-g-k}
g_\tau^{(k)}(q)=(\Delta^k g_1)_\tau(q)=\tau^{n+1}(\Delta^k  g_1)(\delta_{\tau^{1/2}}\,q),
\end{align} 
where the dilations $\delta_{\tau^{1/2}}$ are defined by \eqref{eq-dilation}. Also recall that the spaces $L^p_\nu$ have been introduced in Definition \ref{weight}. 
Then for any $\nu,  \gamma>0$,  there exists a constant $C>0$ such that
\begin{equation}\label{eq-bd-gk}
\lVert g_\tau^{(k)} \rVert_{L^\gamma_{-\nu }}\leq C\tau^{(n+1)(1-\frac{1}{\gamma})}.
\end{equation}
\end{lemma}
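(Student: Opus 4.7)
The strategy is to transfer the weighted $L^\gamma$ estimate from $g_\tau^{(k)}$ to the fixed, $\tau$-independent function $\Delta^k g_1$ via the scaling identity \eqref{eq-g-k}, concentrating all the $\tau$-dependence in the explicit prefactor $\tau^{(n+1)(1-1/\gamma)}$, and then controlling the residual integral via the exponential-decay result of Proposition \ref{exp-decay-est}. As a preliminary step, I observe that by \eqref{eq-FT-delN}, $\widehat{\Delta^k g_1}(m,\ell,\lambda)=(-4|\lambda|(2|m|+n))^k\tilde{\phi}(m,\ell,\lambda)$. Since $\tilde{\phi}$ has bounded support by Lemma \ref{lemma-phi}, and since compact subsets of $\fHeis$ stay away from the hyperplane $\lambda=0$, the polynomial factor is smooth on $\supp(\tilde{\phi})$ and the product remains in $\cg^s(\fHeis)\cap\cs(\fHeis)$ with bounded support. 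Proposition \ref{exp-decay-est} then furnishes constants $C_0,\varepsilon>0$ (depending on $k$) such that $|(\Delta^k g_1)(q)|\le C_0\,e^{-\varepsilon |q|_h^{1/s}}$ for every $q\in\Heis$.

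\emph{Scaling and uniform integrability.} Inserting \eqref{eq-g-k} into the weighted norm, then changing variables $q'=\delta_{\tau^{1/2}}q$ using that the Haar-measure Jacobian of $\delta_{\tau^{1/2}}$ equals $\tau^{n+1}$ and the scaling $d_{cc}(e,\delta_{\tau^{-1/2}}q')=\tau^{-1/2}d_{cc}(e,q')$, we obtain
\begin{equation*}
\|g_\tau^{(k)}\|_{L^\gamma_{-\nu}}^\gamma
=\tau^{(n+1)(\gamma-1)}\int_\Heis |(\Delta^k g_1)(q')|^\gamma\,\exp\Big(\gamma\nu\big(1+\tau^{-1}d_{cc}(e,q')^2\big)^{\eta/2}\Big)\,d\mu(q').
\end{equation*}
For $\tau\ge 1$ (the regime of relevance for the subsequent Bernstein-type lemma), the inequality $(1+\tau^{-1}r^2)^{\eta/2}\le(1+r^2)^{\eta/2}$ together with the equivalence \eqref{eq-equiv-cc-h} and the subadditivity $(1+a)^{\eta/2}\le 1+a^{\eta/2}$ (valid since $0<\eta/2<1$) yields $\big(1+\tau^{-1}d_{cc}(e,q')^2\big)^{\eta/2}\le C_1(1+|q'|_h^\eta)$ for some $C_1>0$. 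Combining this with the decay bound, the integrand is majorized pointwise by $C_0^\gamma e^{\gamma\nu C_1}\exp\big(-\gamma\varepsilon|q'|_h^{1/s}+\gamma\nu C_1|q'|_h^\eta\big)$, which is integrable because $\eta<1/s$ by Definition \ref{weight}. Denoting by $C_2$ the resulting finite, $\tau$-independent integral and taking $\gamma$-th roots yields the announced inequality with $C=C_2^{1/\gamma}$.

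\emph{Main obstacle.} The delicate point is the Gevrey step: multiplication by $(|\lambda|(2|m|+n))^k$ is not $C^\infty$ at $\lambda=0$ (for $k$ odd), so it is a priori unclear that membership in $\cg^s(\fHeis)\cap\cs(\fHeis)$ is preserved. This difficulty dissolves because the Gevrey bounds \eqref{gevrey-est} and Schwartz bounds \eqref{semi-norms} only need to hold on compact subsets of $\fHeis$, all of which avoid $\lambda=0$; since $\tilde{\phi}$ is itself compactly supported, the polynomial factor is uniformly bounded with bounded derivatives of every order on $\supp(\tilde{\phi})$, and no new singular behavior is introduced. A second subtle ingredient is the condition $\eta<1/s$ from Definition \ref{weight}, which is precisely what makes the weight growth $\nu|q'|_h^\eta$ lose against the Gevrey decay $\varepsilon|q'|_h^{1/s}$ as $|q'|_h\to\infty$, giving uniform integrability.
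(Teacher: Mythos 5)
Your proof follows the same route as the paper's: reduce to $\tau=1$ by the dilation identity, establish exponential decay of $\Delta^k g_1$ via Proposition \ref{exp-decay-est}, then change variables and use $\eta<1/s$ to control the weighted integral uniformly in $\tau\ge 1$. The scaling computation and the integrability argument are correct; in fact your handling of the weight is slightly sharper than the paper's, since you keep the exponent $\eta$ in $\nu|q'|_h^{\eta}$ rather than bounding it by $|q'|_h^{1/s}$, and thereby obtain the estimate for every $\nu>0$ without the smallness restriction $\nu<\nu_0$ that creeps into the paper's own computation.

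The one step that is asserted rather than proved is the claim that $\widehat{\Delta^k g_1}=(-4|\lambda|(2|m|+n))^k\tilde{\phi}$ still lies in $\cg^s(\fHeis)$. Your ``main obstacle'' paragraph worries about smoothness at $\lambda=0$, but that is not where the difficulty sits: the real issue is that $\cg^s(\fHeis)$ is defined through the nonlocal operators $\HDelta$ and $\HDL$ of \eqref{b1}--\eqref{b2}, which involve shifts $\hat w\mapsto\hat w_j^{\pm}$ in the discrete variables, so ``multiplication by a factor with bounded derivatives on the support'' does not come with an off-the-shelf Leibniz rule and does not obviously preserve the iterated bounds \eqref{gevrey-est}. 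The paper closes this gap by observing that on the diagonal $|\lambda|(2|m|+n)=|\lambda|\sum_j R_j(m,m)$, so that the product is again of the form $\Theta_{f_0}$ with $f_0(x,\kappa,\lambda)=(-4\sum_i x_i)^k\varphi(x)\mathbbm{1}_{\{\kappa=0\}}$; since a polynomial times a compactly supported Gevrey function on $\R^n$ still satisfies \eqref{est-gevrey}, Proposition \ref{prop 2.13} applies directly and yields $\Theta_{f_0}\in\cg^s(\fHeis)$. You should route your argument through this observation (or supply an explicit discrete Leibniz estimate for $\HDelta$ and $\HDL$); with that substitution the rest of your proof stands as written.
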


\begin{proof}[Proof of Lemma~\ref{lemma-g-tau-k-bd}] We first prove that $g^{(k)}$ in \eqref{eq-g-k} has exponential decay, which is enough to see that $g^{(k)}_\tau\in L^\gamma_{-\nu }$. As a preliminary step in this direction, let us show that   $\widehat{ g^{(k)}_1}$ sits in $\mathcal{G}^s(\fHeis)\cap\mathcal{S}(\fHeis)$ and has bounded support.
 To this aim, note that owing to \eqref{eq-FT-delN} we have
 \begin{equation}\label{eq-F-gk}
 \widehat{ g^{(k)}_1}(m,\ell,\lambda)=\widehat{\Delta^k_{\mathcal{H}\,} g_1}(m,\ell,\lambda)= \Big(-4|\lambda|(2|m|+n)\Big)^k\tilde{\phi}(m,\ell,\lambda).
\end{equation}
Hence $\widehat{g^{(k)}_1}$ has bounded support as $\tilde{\phi}$ does. In addition, recall from \eqref{func-f} that $\tilde{\phi}=\Theta_f$ with $f=\varphi\cdot\mathbbm{1}_{\{\kappa=0\}}$. Some elementary algebra using the definition \eqref{def-theta-f} of $\Theta_f$ shows that 
\begin{equation}
\label{eq-theta-f}
\Big(-4|\lambda|(2|m|+n)\Big)^k\tilde{\phi}(m,\ell,\lambda)=\Theta_{f_0}(m,\ell,\lambda),
\end{equation}
where $f_0(x,\kappa,\lambda)=(-4\sum x_i)^k\varphi(x)\mathbbm{1}_{\{\kappa=0\}}$. 
In addition, since $h\equiv\varphi\cdot\mathbbm{1}_{\{\kappa=0\}}$ verifies the assumptions of Proposition \ref{prop 2.13} and $f_0$ is defined as a polynomial in $x$ multiplied by $h$, it is readily checked that $f_0$ also  verifies the assumptions of Proposition \ref{prop 2.13}. Putting together~\eqref{eq-F-gk} and \eqref{eq-theta-f} and applying Proposition \ref{prop 2.13}, we then obtain that 
\[
 \widehat{g^{(k)}_1}\in \mathcal{G}^s(\fHeis)\cap\mathcal{S}(\fHeis).
\]
\noindent Therefore a straightforward application of   Proposition \ref{exp-decay-est} yields  that  $g^{(k)}_1$ is of exponential decay. Otherwise stated,   there exist $C>0$ and $\varepsilon>0$ such that
\begin{equation}\label{eq-g1k-bd}
|g_1^{(k)}(q)|\leq Ce^{-\varepsilon\lvert q \rvert_h^{1/s}}\ \, \text{for all $q=(x,y,z)\in\Heis$}.
\end{equation}
\indent Let us now turn to the scaling inequality \eqref{eq-bd-gk}. According to our definition \eqref{eq-g-k} of $g^{(k)}$, we have for  $q=(x,y, z)\in\Heis$, 
\[
g^{(k)}_\tau(q)=\tau^{n+1}g^{(k)}_1(\delta_{\tau^{1/2}}\, q).
\]
With our Definition \eqref{weight} of inner products in weighted spaces and using a scaling
$q=(x,y, z)\to \delta_{\tau^{1/2}\,} q =  (\tau^{-1/2}x, \tau^{-1/2} y, \tau^{-1} z)$, we get 
\begin{align*}
\lVert g_\tau^{(k)} \rVert_{L^\gamma_{-\nu }}
&=\left(\int_\Heis\tau^{\gamma (n+1)}\,|g_1^{(k)}|^\gamma(\delta_{\tau^{1/2}\,} q)\, e^{{\nu \gamma}|q|_*^\eta}\, d\mu(q)\right)^{\frac{1}{\gamma}}\\
&=\tau^{(n+1)(1-\frac{1}{\gamma})}\left(\int_\Heis|g_1^{(k)}|^\gamma(q)\, e^{{\nu \gamma}|(\delta_{\tau^{-1/2}\,} q)|_*^\eta}\, d\mu(q)\right)^{\frac{1}{\gamma}}.
\end{align*} 
Plugging \eqref{eq-g1k-bd} into the above inequality we thus obtain 
\begin{equation}
\label{est-d}
\lVert g_\tau^{(k)} \rVert_{L^\gamma_{-\nu}}
\le C \tau^{(n+1)(1-\frac{1}{\gamma})}\left(\int_\Heis e^{{\nu \gamma}|(\delta_{\tau^{-1/2}\,} q)|_*^\eta -\varepsilon\lvert q \rvert_h^{1/s}}\, d\mu(q)\right)^{\frac{1}{\gamma}}.
\end{equation}

We now resort to the definition \eqref{pseudonorm} of $|\cdot|_*$ and relation \eqref{eq-equiv-cc-h}. We also take into account that $\tau\ge1$. We discover that there exists  $c>0$ such that
\begin{align*}
|\delta_{\tau^{-1/2}}\,q|_*&=\sqrt{1+d_{cc}(e,\, \delta_{\tau^{-1/2}}\,q)}\le c(1+|q|_h).
\end{align*} 
Plugging this relation into \eqref{est-d} and recalling that in Definition \ref{weight} we have chosen $0<\eta<\frac1s$, we end up with
\begin{equation}\label{eq-gk-bd1}
\lVert g_\tau^{(k)} \rVert_{L^\gamma_{-\nu }}
\le 
C \tau^{(n+1)(1-\frac{1}{\gamma})}\left(\int_\Heis e^{{\nu \gamma}c(1+|q|_h)^{1/s} -\varepsilon\lvert q \rvert_h^{1/s}}\, d\mu(q)\right)^{\frac{1}{\gamma}}.
\end{equation}

We now choose  $\nu_0={\varepsilon}/{2c\gamma}>0$  and  $0<\nu<\nu_0$ in \eqref{eq-gk-bd1}, so that $\nu\gamma c<\varepsilon/2$. We obtain
\begin{align*}
\int_\Heis e^{{\nu \gamma}c(1+|q|_h)^{1/s} -\varepsilon\lvert q \rvert_h^{1/s}}\, d\mu(q)\leq\int_{|q|_h\le 1}e^{2\varepsilon}d\mu(q)+\int_{|q|_h\ge 1} e^{-\frac{\varepsilon}2\lvert q \rvert_h^{1/s}}d\mu(q)\le C_1
\end{align*} 
for some constant $C_1>0$. 
Plug the above bound into \eqref{eq-gk-bd1} we then obtain the desired estimate \eqref{eq-bd-gk}.
\end{proof}

The preliminary results we have proved so far allow us to state the main result of this section. That is, we are ready to prove the aforementioned Bernstein lemma.
\begin{proposition}\label{Bernstein} Let $f\in \mathcal{S}(\Heis)$, and consider  parameters $\nu_0>0$,   $k\in \mathbb{N}$  and $\beta\in [1,\infty]$. Then there exists $C=C_{\nu_0,\,\beta}$ such that for every $\nu\leq \nu_0$, $\alpha\geq\beta$ and $\tau\geq 1$, if we assume 
\begin{equation}
\label{supp-Fourier}
\emph{Supp}\, \hat{f} \subset  \{(m,\ell,\lambda):|\lambda|(2|m|+n)< \tau\},\end{equation}
then we have
\begin{equation}
\label{est-h}
\lVert \Delta^k f\rVert_{L^\alpha_\nu}\leq C\tau^{k+(n+1)(\frac{1}{\beta}-\frac{1}{\alpha})}\lVert f\rVert_{L^\beta_{\nu }},
\end{equation}
where we recall that the spaces $L^\alpha_\nu$ are introduced in definition \ref{weight}.
\end{proposition}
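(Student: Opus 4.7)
The plan is to express $\Delta^k f$ as a left convolution $\tau^k\,g_\tau^{(k)}\star f$ with a Fourier-localized kernel, and then apply the weighted Young inequality (Proposition~\ref{young-ineq}) together with the scaling estimate of Lemma~\ref{lemma-g-tau-k-bd}.

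To prove this reproducing identity, I would first exploit the scaling \eqref{eq-scaling} and the $2$-homogeneity $\Delta(h\circ\delta_\lambda)=\lambda^2(\Delta h)\circ\delta_\lambda$ of the sub-Laplacian under the dilations $\delta_\lambda$. Iterating gives $\Delta^k g_\tau = \tau^k g_\tau^{(k)}$, so that by \eqref{eq-FT-delN}
\begin{equation*}
\widehat{g_\tau^{(k)}}(m,\ell,\lambda) = \tau^{-k}\bigl(-4|\lambda|(2|m|+n)\bigr)^k \tilde\phi_\tau(m,\ell,\lambda).
\end{equation*}
Since $\tilde\phi_\tau$ is supported on the diagonal $\{m=\ell\}$ by \eqref{func-thetaf}, the convolution formula~\eqref{eq-conv}--\eqref{def-dot} collapses to the single term
\begin{equation*}
\widehat{g_\tau^{(k)}\star f}(m,\ell,\lambda) = \tau^{-k}\bigl(-4|\lambda|(2|m|+n)\bigr)^k \tilde\phi_\tau(m,m,\lambda)\,\hat f(m,\ell,\lambda).
\end{equation*}
On the Fourier support \eqref{supp-Fourier} of $\hat f$ the condition $|\lambda|(2|m|+n)<\tau$ coincides with $\widehat{d}_0(m,m,\lambda)<\tau$, and Lemma~\ref{lemma-g-tau} forces $\tilde\phi_\tau(m,m,\lambda)=1$. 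Comparing with~\eqref{eq-FT-delN} yields $\widehat{g_\tau^{(k)}\star f}=\tau^{-k}\widehat{\Delta^k f}$, hence $\Delta^k f=\tau^k g_\tau^{(k)}\star f$.

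For the second step I would apply the weighted Young inequality. The weight $w_\nu$ is $w_{-|\nu|}$-moderate for $|\nu|\le\nu_0$: for $\nu\ge 0$ this is Lemma~\ref{lemma-moderate}, and the case $\nu\le 0$ follows by the same argument using~\eqref{tri-ineq} and the subadditivity $(a+b)^\eta\le a^\eta+b^\eta$. Choose the Young exponent $\alpha'\in[1,\infty]$ by $\tfrac{1}{\alpha'}+\tfrac{1}{\beta}=\tfrac{1}{\alpha}+1$ (possible since $\alpha\ge\beta$). Applying Proposition~\ref{young-ineq} to the reproducing identity gives
\begin{equation*}
\|\Delta^k f\|_{L^\alpha_\nu}=\tau^k\|g_\tau^{(k)}\star f\|_{L^\alpha_\nu}\le \tau^k\|g_\tau^{(k)}\|_{L^{\alpha'}_{-|\nu|}}\|f\|_{L^\beta_\nu}.
\end{equation*}
Lemma~\ref{lemma-g-tau-k-bd}, applied with $|\nu|\in(0,\nu_0)$, then yields $\|g_\tau^{(k)}\|_{L^{\alpha'}_{-|\nu|}}\le C\tau^{(n+1)(1-1/\alpha')}=C\tau^{(n+1)(1/\beta-1/\alpha)}$, and combining with the previous display proves~\eqref{est-h}.

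The main obstacle is the bookkeeping in the reproducing identity. Because $g_\tau^{(k)}$ is defined in Lemma~\ref{lemma-g-tau-k-bd} as the dilation of $\Delta^k g_1$ (rather than as $\Delta^k g_\tau$), one must identify the exact power of $\tau$ produced by the $\lambda^2$-homogeneity of $\Delta$; this is what generates the $\tau^k$ prefactor in \eqref{est-h}. Additionally, one must choose to convolve as $g_\tau^{(k)}\star f$ rather than the other way around, so that the Fourier multiplier lands on the variable $m$ that controls the support condition~\eqref{supp-Fourier}, given the non-commutative nature of convolution on $\Heis$.
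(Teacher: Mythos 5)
Your proposal is correct and follows essentially the same route as the paper's proof: establishing the reproducing identity $\Delta^k f=\tau^k g_\tau^{(k)}\star f$ via the diagonal support of $\tilde\phi_\tau$ and the condition $\tilde\phi_\tau\equiv 1$ on the Fourier support of $\hat f$, then concluding with the weighted Young inequality of Proposition~\ref{young-ineq} and the scaling bound of Lemma~\ref{lemma-g-tau-k-bd}. The only cosmetic difference is that the paper first derives $g_\tau\star f=f$ and then applies $\Delta^k$ to both sides, whereas you verify the same identity directly on the Fourier side; your extra remark on the moderateness of $w_\nu$ for $\nu\le 0$ is a welcome (if minor) point of care.
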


\begin{remark}
Condition \eqref{supp-Fourier} is interpreted as $\mathrm{Supp}\, \hat{f}$ being contained in a ball in $\fHeis$. It is compatible with \cite[Definition 3.1]{BBG}.
\end{remark}

\begin{proof}[Proof of Proposition \ref{Bernstein}] Recall the definition \eqref{def-dot} of our dot product $\tilde{\phi}_\tau\cdot f$. Also recall from \eqref{eq-phi-tau} that $\tilde{\phi}_\tau$ is supported on the diagonal $\{m=\ell\}$. Hence we have 
\begin{equation}\label{phi-dot-f}
(\tilde{\phi}_\tau\cdot \hat{f})(m,\ell,\lambda)=\sum_{j\in\mathbb{N}^n} \tilde{\phi}_\tau(m,j,\lambda ) \hat{f}(j,\ell,\lambda) =  \tilde{\phi}_\tau(m,m,\lambda ) \hat{f}(m,\ell,\lambda).
\end{equation} 
One of the crucial points in our proof is thus the following: since we have assumed \eqref{supp-Fourier} and since we have obtained the relation $\tilde{\phi}_\tau\equiv 1$ on $B_\tau\cap D$ in Lemma~\ref{lemma-g-tau}, it is readily checked that $\tilde{\phi}_\tau(m,m,\lambda ) \hat{f}(m,\ell,\lambda)= \hat{f}(m,\ell,\lambda)$ for all $(m,\ell,\lambda)$. Therefore, \eqref{phi-dot-f} can also be written as 
\begin{equation}\label{eq-phi-dot-f}
    (\tilde{\phi}_\tau\cdot \hat{f})(m,\ell,\lambda)=\hat{f}(m,\ell,\lambda).
\end{equation}
Taking inverse Fourier transforms on both sides of \eqref{eq-phi-dot-f}, resorting to \eqref{eq-conv} and recalling that we have set $\hat{g_\tau}=\tilde{\phi}_\tau$ in Lemma \ref{lemma-g-tau}, this yields
\begin{equation}\label{eq-g-star-f}
g_\tau\star f= f.    
\end{equation}
\indent Let us  now evaluate $\Delta^k g_\tau$ for $g_\tau$ in the left hand side of \eqref{eq-g-star-f}. Resorting to \eqref{eq-g-k}, some easy scaling considerations entail that 
\begin{equation}
\label{eq-Delta-k}
\Delta^k g_\tau = \tau^{n+1}\Delta^k (g_1\circ \delta_{\tau^{1/2}})=  \tau^{n+1+k}(\Delta^k g_1)\circ \delta_{\tau^{1/2}}.
\end{equation} Recalling again  that $g^{(k)}_\tau$ is defined by \eqref{eq-g-k}, we have obtained \begin{equation}\label{eq-del-g-tau}
    \Delta^kg_\tau=\tau^k g^{(k)}_\tau.
\end{equation}
Hence applying $\Delta^k$ on both sides of \eqref{eq-g-star-f} and combining with \eqref{eq-del-g-tau} we get 
\begin{align}\label{eq-k}
\Delta^k f = \Delta^k g_\tau\star f= \tau^k g_\tau^{(k)}\star f.
\end{align}
We are now in a position to apply the weighted Young's inequality in Proposition \ref{young-ineq}  to relation~\eqref{eq-k}. Namely, for  $\alpha\geq \beta$ and $\gamma$ such that $\gamma^{-1}+\beta^{-1}=1+\alpha^{-1}$,  by invoking the fact that $w_{\nu}$ is $w_{-\nu}$-moderate (see Lemma \ref{lemma-moderate}),   we obtain
\begin{eqnarray*}
\lVert\Delta^k f\rVert_{L^\alpha_\nu}
&=&\tau^{k}\,\lVert (g^{(k)}_\tau\star f) w_\nu\rVert_{L^\alpha}\\[0.1cm]
&\leq& 
\tau^{k}\lVert g_\tau^{(k)}\, w_{-\nu}\rVert_{L^\gamma}\lVert f w_\nu\rVert_{L^\beta}
= \tau^{k}\,\lVert g_\tau^{(k)} \rVert_{L^\gamma_{-\nu }}\lVert f \rVert_{L^\beta_{\nu}}.
\end{eqnarray*} 
Our claim \eqref{est-h} then follows from Lemma \ref{lemma-g-tau-k-bd} and the relation $\beta^{-1}-\al^{-1}=1-\ga^{-1}$.
\end{proof}

\begin{remark}\label{rmk-embedding-rho}
The results in Proposition \ref{Bernstein} also holds for the weight function $\rho_b(q)=C(1+|q|_*^b)$ for some $C, b>0$. Namely for any  $k\in \mathbb{N}$, $\alpha\geq\beta$ and $\tau\geq 1$, if $f\in \mathcal{S}(\Heis)$ satisfies \eqref{supp-Fourier}, then there exists $C=C_{b,\,\beta}$ such that 
\begin{equation}
\label{est-h-rho-b}
\lVert \Delta^k f\rVert_{L^\alpha_{\rho_b}}\leq C\tau^{k+(n+1)(\frac{1}{\beta}-\frac{1}{\alpha})}\lVert f\rVert_{L^\beta_{\rho_b }}.
\end{equation}
The proof follows the lines of the proof for Proposition \ref{Bernstein}. One only need to use the facts that 
\[
\rho_b(p)=1+|p|_*^b\le 1+(|q|_*+|q^{-1}p|_*)^b\le C_b(1+|q|_*^b)(1+|q^{-1}p|_*^b)=C_b\rho_b(q)\rho_b(q^{-1}p)
\]
and 
\begin{align}\label{eq-g-tau-rho-b}
\lVert g_\tau^{(k)} \rVert_{L^\gamma_{\rho_b }}\leq C\tau^{(n+1)(1-\frac{1}{\gamma})}.
\end{align}
In particular, the above inequality \eqref{eq-g-tau-rho-b} follows from similar argument as in \eqref{eq-bd-gk}.
\end{remark}

\section{Weighted Besov spaces}\label{sec-basic}
In this section we introduce a proper notion of weighted Besov space on $\Heis$ and derive corresponding smoothing effects for the heat flow. 
\subsection{Basic definitions} The blocks in Littlewood-Paley analysis are based on partitions of unity. Let us first construct those partitions in $\fHeis$. To this aim we start with a partition of unity in $\R^n$. As in \cite{BCD2}, this is defined in  the following way.
\begin{proposition}\label{prop-a}
There exists a pair $\chi, \tilde{\chi}\in\mathcal{G}^s(\R^n)$ such that: 
\begin{enumerate}[wide, labelwidth=!, labelindent=0pt, label= \emph{(\roman*)}]
\setlength\itemsep{.05in}

\item  For $x\in\R^n$, we have $0
    \leq\chi(x), \tilde{\chi}(x)\leq 1$.
  
    \item The supports of $\chi,\tilde{\chi}$ satisfy 
    \begin{equation}\label{eq-chi-supp}
\textnormal{Supp}\,\tilde{\chi}\subset B_{4/3}(0):=\{x\in\R^n:|x|<4/3\}, \quad
\textnormal{Supp}\,\chi\subset \mathcal{C}^*:=\{x\in\R^n:3/4\leq|x|<8/3\}.
\end{equation}
\item We have
\[
\sum_{k=-1}^\infty\chi_k(x) =1,\quad \text{where} \ \chi_{-1}=\tilde{\chi} \ \text{ and }\ \chi_{k}= \chi( \cdot / 2^k) \ \text{ for }\ k\geq 0.
\]
\end{enumerate}
\end{proposition}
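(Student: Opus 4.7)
The plan is to reduce everything to the construction of a single radial, non-increasing Gevrey bump $\phi\in\mathcal{G}^s(\mathbb{R}^n)$ taking values in $[0,1]$, identically $1$ on $B_{3/4}(0)$ and supported (as a closed set) strictly inside $B_{4/3}(0)$. Once such a $\phi$ is at hand I will simply set
\[
\tilde{\chi}(x) \defeq \phi(x), \qquad \chi(x) \defeq \phi(x/2)-\phi(x),
\]
so that the required partition of unity reduces to a telescoping sum in $\phi$.

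To build $\phi$, I would first construct an auxiliary scalar function $\psi\in\mathcal{G}^s(\mathbb{R})$ with values in $[0,1]$, $\psi\equiv 1$ on $(-\infty,9/16]$, $\psi\equiv 0$ on $[b,\infty)$ for some fixed $b\in(9/16,16/9)$, and $\psi'\le 0$. A standard recipe, mirroring the proof of Lemma~\ref{bump-func}, is the quotient $\psi=\beta/(\alpha+\beta)$ with
\[
\alpha(t)=\exp\bigl(-(t-9/16)^{-\kappa}\bigr)\mathbbm{1}_{\{t>9/16\}},\qquad \beta(t)=\exp\bigl(-(b-t)^{-\kappa}\bigr)\mathbbm{1}_{\{t<b\}},
\]
for $\kappa=1/(s-1)$; both lie in $\mathcal{G}^s(\mathbb{R})$ and $\alpha+\beta$ stays bounded below on the transition interval, so stability of $\mathcal{G}^s$ under quotient by a strictly positive element yields $\psi\in\mathcal{G}^s(\mathbb{R})$. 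Setting $\phi(x)\defeq\psi(|x|^2)$ then produces a radial, $[0,1]$-valued, non-increasing function with $\phi\equiv 1$ on $B_{3/4}(0)$ and $\mathrm{Supp}\,\phi\subset\overline{B_{\sqrt{b}}(0)}\subset B_{4/3}(0)$, and $\phi$ remains in $\mathcal{G}^s(\mathbb{R}^n)$ since composition with the polynomial $|\cdot|^2$ preserves the class.

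With $\phi$ in hand the three items are routine. Stability of $\mathcal{G}^s(\mathbb{R}^n)$ under dilation and linear combinations gives $\chi\in\mathcal{G}^s(\mathbb{R}^n)$; the bound $0\le\chi\le 1$ follows from $0\le\phi\le 1$ together with the radial monotonicity $\phi(x/2)\ge\phi(x)$. For the supports, $\mathrm{Supp}\,\tilde{\chi}\subset B_{4/3}(0)$ is immediate; if $|x|\le 3/4$ then $|x/2|\le 3/8$ so $\phi(x)=\phi(x/2)=1$ and $\chi(x)=0$, while if $|x|\ge 8/3$ then $|x/2|\ge 4/3$ so both terms vanish, giving $\mathrm{Supp}\,\chi\subset\{3/4\le|x|<8/3\}$. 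Finally, the partition identity follows from the telescoping computation
\[
\sum_{k=-1}^{N}\chi_k(x)=\phi(x)+\sum_{k=0}^{N}\bigl(\phi(x/2^{k+1})-\phi(x/2^k)\bigr)=\phi(x/2^{N+1}),
\]
which equals $1$ as soon as $2^{N+1}\ge 4|x|/3$ and hence converges pointwise to $\phi(0)=1$ as $N\to\infty$.

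The only genuinely delicate point I foresee is the simultaneous preservation of Gevrey regularity, non-negativity and strict radial monotonicity. This is precisely why I pass through $\psi(|x|^2)$ rather than any radial convolution (which could destroy monotonicity), and why the ratio construction for $\psi$ is preferable to a naive cutoff; everything else is bookkeeping.
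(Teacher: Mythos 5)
The paper gives no proof of this proposition --- it is stated with a citation to \cite{BCD2} --- and your construction is precisely the standard one from that reference (a telescoped low-frequency cutoff, $\chi(x)=\phi(x/2)-\phi(x)$), upgraded to the Gevrey setting by the same quotient-of-bumps device that underlies the paper's own Lemma \ref{bump-func}. Your argument is correct: the denominator $\alpha+\beta$ is indeed bounded below on all of $\R$ (not only on the transition interval, which is worth stating explicitly when invoking inverse-closedness of $\mathcal{G}^s$), the radial monotonicity of $\psi(|\cdot|^2)$ gives $0\le\chi\le 1$, the strict inclusion $\mathrm{Supp}\,\phi\subset \overline{B_{\sqrt{b}}(0)}$ with $\sqrt{b}<4/3$ yields the strict upper bound $|x|<8/3$ in \eqref{eq-chi-supp}, and the telescoping sum is locally finite and exactly $1$ for $N$ large. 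The only implicit restriction, consistent with the rest of the paper, is $s>1$, since $\kappa=1/(s-1)$ and no nontrivial compactly supported function exists in $\mathcal{G}^1$.
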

In Proposition \ref{prop 2.13}, we used the existence of a function in $\cg^s(\R^n)$ in order to produce an example of function in $\cg^s(\fHeis)$. We now proceed in the same way for the partition of unity on the diagonal of $\fHeis$. Our result is summarized in the lemma below.

\begin{lemma}\label{Definition phi_k}
    Let $\{\chi_k: k\geq -1\}$ be the partition of unity in $\R^n$ produced in Proposition \ref{prop-a}. We set $f_k=\chi_k\,\mathbbm{1}_{\{\kappa=0\}}$, and \begin{equation}
        \label{eqn-b}\tilde{\phi}_k(m,\ell,\lambda)=\Theta_{f_k}(m,\ell,\lambda)=\chi_k(|\lambda|R(m,\ell))\,\mathbbm{1}_{\{m=\ell\}},
    \end{equation}where we recall the notation $R$ and $\Theta_{f_k}$ from \eqref{def-theta-f}. Then we have 
\begin{enumerate}[wide, labelwidth=!, labelindent=0pt, label= \emph{(\roman*)}]
\setlength\itemsep{.05in}

    \item Every $\tilde{\phi}_k$ is an element of $\cg^s(\fHeis)$.
    \item The following relation holds true:
    \begin{equation}\label{eqn-d}    \sum_{k=-1}^\infty\tilde{\phi}_k(m,\ell,\lambda)=\mathbbm{1}_{\{m=\ell\}}.
    \end{equation}
    \end{enumerate}
\end{lemma}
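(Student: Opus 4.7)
The plan is to verify that each function $f_k(x,\kappa,\lambda) = \chi_k(x)\mathbbm{1}_{\{\kappa=0\}}$ satisfies the hypotheses (a) and (b) of Proposition \ref{prop 2.13}, so that $\tilde{\phi}_k = \Theta_{f_k}$ automatically lies in $\mathcal{G}^s(\fHeis)$. Part (ii) will then be a direct consequence of the partition-of-unity identity for $\{\chi_k\}$ on $\mathbb{R}^n$.

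For part (i), condition (a) in Proposition \ref{prop 2.13} is immediate from the definition of $f_k$, since $f_k$ is supported in $[0,\infty)^n\times\{0\}\times\mathbb{R}$. For condition (b), I will use that $\chi,\tilde{\chi}\in \mathcal{G}^s(\mathbb{R}^n)$ by Proposition \ref{prop-a}, so there is a constant $C$ such that $|\partial^\beta\chi(x)|,\,|\partial^\beta\tilde{\chi}(x)|\leq C^{|\beta|+1}(\beta!)^s$ uniformly on all of $\mathbb{R}^n$ (the supports of $\chi,\tilde{\chi}$ are compact by~\eqref{eq-chi-supp}). Since $f_k$ has no dependence on $\lambda$ and $\kappa$ plays no role in the smooth estimate, the relevant derivatives of $f_k$ reduce to $\partial^\beta\chi_k$ in the $x$-variable. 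For $k\geq 0$ the chain rule gives $\partial^\beta\chi_k(x)=2^{-k|\beta|}(\partial^\beta\chi)(x/2^k)$, so
\begin{equation*}
|\partial^\beta\chi_k(x)|\leq 2^{-k|\beta|}\,C^{|\beta|+1}(\beta!)^s\leq C^{|\beta|+1}(\beta!)^s,
\end{equation*}
and for $k=-1$ we already have the bound for $\tilde{\chi}$. Hence condition (b) of Proposition \ref{prop 2.13} is satisfied (with a $k$-dependent constant is perfectly fine, but in fact the constant is uniform in $k$), and Proposition \ref{prop 2.13} yields $\tilde{\phi}_k=\Theta_{f_k}\in\mathcal{G}^s(\fHeis)$.

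For part (ii), observe that by the identity \eqref{func-thetaf} applied with $f=f_k$, the expression $\Theta_{f_k}(m,\ell,\lambda)=\chi_k(|\lambda|R(m,\ell))\,\mathbbm{1}_{\{m=\ell\}}$ is consistent with~\eqref{eqn-b}. If $m\neq\ell$, then every summand in \eqref{eqn-d} vanishes because of the indicator, so the sum equals $0=\mathbbm{1}_{\{m=\ell\}}$. If $m=\ell$, then the indicator is always $1$, and invoking property (iii) of Proposition \ref{prop-a} at the point $|\lambda|R(m,m)\in\mathbb{R}^n$ gives
\begin{equation*}
\sum_{k=-1}^\infty \tilde{\phi}_k(m,m,\lambda)=\sum_{k=-1}^\infty \chi_k(|\lambda|R(m,m))=1,
\end{equation*}
which is exactly $\mathbbm{1}_{\{m=\ell\}}$. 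This proves \eqref{eqn-d}.

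There is no genuine obstacle here: the only mild subtlety is checking that the Gevrey bounds for $\chi_k=\chi(\cdot/2^k)$ are not destroyed by the rescaling, which is settled by the simple chain-rule computation above (the factor $2^{-k|\beta|}\leq 1$ helps rather than hurts). Everything else is a direct invocation of Proposition \ref{prop 2.13} and Proposition \ref{prop-a}.
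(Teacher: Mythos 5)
Your proof is correct and takes exactly the same route as the paper, which simply cites Proposition \ref{prop 2.13} for item (i) and Proposition \ref{prop-a} for item (ii); you have merely filled in the details the paper leaves implicit (the verification of hypotheses (a) and (b) via the chain-rule bound $|\partial^\beta\chi_k|\leq 2^{-k|\beta|}C^{|\beta|+1}(\beta!)^s$, and the case split on $m=\ell$ versus $m\neq\ell$). All of these details check out, including the observation that the $\sup_K$ in the Gevrey definition upgrades to a global bound because $\chi,\tilde{\chi}$ are compactly supported.
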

\begin{proof}
    Item (i) is an immediate consequence of Proposition \ref{prop 2.13}. Item (ii) is derived trivially from  Proposition \ref{prop-a}.
\end{proof}

With the partition of unity in hand we can now define the Littlewood-Paley blocks similarly to what is done in the flat case. This is formalized in the lemma below about Schwartz type functions.

\begin{lemma}
    Consider $f \in \mathcal{S} (\Heis)$, where we recall that $\mathcal{S} (\Heis)$ is introduced in Definition~\ref{def:schwartz-and-multiplication}. For $k\geq -1$, we
   define the block $\sigma_k f$ of order $k$ by its Fourier transform as 
\begin{equation}\label{eqn-c}
\widehat{\sigma_{k} f}  = \tilde{\phi}_k\cdot\hat{f},
\end{equation}
where $\tilde{\phi}_k$ is defined in lemma \ref{Definition phi_k}.
We also define $S_k f$ as 
\[
S_k f=\sum_{i<k}\sigma_{i}f.
\]
Then we have the pointwise convergence of  $S_kf$  to $f$. 
\end{lemma}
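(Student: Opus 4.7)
\medskip

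\noindent\textbf{Proof plan.} The plan is to reduce the claim to an application of dominated convergence in Fourier variables. First I would compute $\widehat{\sigma_i f}$ explicitly. Since $\tilde{\phi}_i(m,j,\lambda)=\chi_i(|\lambda|R(m,m))\mathbbm{1}_{\{m=j\}}$ by Lemma \ref{Definition phi_k} and the dot product is given by \eqref{def-dot}, one obtains
\[
\widehat{\sigma_i f}(m,\ell,\lambda)=(\tilde{\phi}_i\cdot \hat f)(m,\ell,\lambda)=\chi_i\bigl(|\lambda|R(m,m)\bigr)\hat f(m,\ell,\lambda).
\]
Summing over $i<k$, this gives $\widehat{S_k f}(m,\ell,\lambda)=\Xi_k(m,\lambda)\hat f(m,\ell,\lambda)$, with
\[
\Xi_k(m,\lambda):=\sum_{i=-1}^{k-1}\chi_i\bigl(|\lambda|R(m,m)\bigr).
\]
By Proposition \ref{prop-a}(iii) we have $\Xi_k(m,\lambda)\to 1$ as $k\to\infty$ for every fixed $(m,\lambda)$; moreover, since $\chi_i\geq 0$ and $\sum_{i\geq -1}\chi_i\equiv 1$, the partial sums satisfy $0\leq \Xi_k\leq 1$.

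Next I would apply the inversion formula \eqref{eq-FT-int-inv} to write, for every $q=(x,y,z)\in\Heis$,
\[
f(q)-S_k f(q)=\frac{2^{n-1}}{\pi^{n+1}}\int_{\fHeis} e^{i\lambda z} K_{\hat w}(q)\,\hat f(\hat w)\bigl(1-\Xi_k(m,\lambda)\bigr)d\hat w.
\]
The pointwise convergence of the integrand to $0$ is clear from the previous paragraph. To invoke dominated convergence I would bound the integrand uniformly in $k$ by $|K_{\hat w}(q)|\,|\hat f(\hat w)|\,|\lambda|^n$, which by the kernel estimate \eqref{eq-b} is controlled by $|\hat f(m,\ell,\lambda)||\lambda|^n$.

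It then remains to check that this dominating function is integrable on $\fHeis$. Since $f\in\mathcal{S}(\Heis)$, Proposition \ref{isomorphism} yields $\hat f\in\mathcal{S}(\fHeis)$, so for every $\kappa>0$ the semi-norm $\|\hat f\|_{0,\kappa,\mathcal{S}(\fHeis)}$ in \eqref{semi-norms} is finite, giving
\[
|\hat f(m,\ell,\lambda)|\leq C_\kappa\bigl(1+|\lambda|(|m+\ell|+n)+|m-\ell|\bigr)^{-\kappa}.
\]
Choosing $\kappa$ large enough (say $\kappa>2n+2$), the sum $\sum_{m,\ell\in\mathbb{N}^n}\int_\mathbb{R}|\hat f(m,\ell,\lambda)||\lambda|^n\,d\lambda$ is easily seen to converge by comparing with geometric/polynomial tails in $(m+\ell,\lambda)$ and $m-\ell$. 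Dominated convergence then gives $f(q)-S_k f(q)\to 0$, proving the pointwise convergence.

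I do not expect a serious obstacle here: the computation in the first paragraph is a direct consequence of the diagonal support of $\tilde{\phi}_i$, and the rest is a textbook partition-of-unity argument once the Fourier isomorphism from Proposition \ref{isomorphism} has been invoked. The mildly technical step is verifying the $L^1(\fHeis,|\lambda|^n d\hat w)$ integrability of $\hat f$, but this is a routine consequence of rapid decay in the semi-norms \eqref{semi-norms}.
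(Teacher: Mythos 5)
Your proposal is correct and follows essentially the same route as the paper: reduce to the Fourier side using the diagonal support of $\tilde{\phi}_i$, observe that the partial sums of the partition of unity converge to $\mathbbm{1}_{\{m=\ell\}}$, and transfer back via the inversion formula. The only difference is that you make the final transfer explicit through dominated convergence (using $|K_{\hat w}(q)|\leq 1$ and the rapid decay of $\hat f$ from the semi-norms \eqref{semi-norms}), whereas the paper simply invokes Proposition \ref{isomorphism} a second time; your version is, if anything, slightly more complete on that last step.
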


\begin{proof}
    Owing to \eqref{eqn-b}, every $\tilde{\phi}_k$ is supported on the diagonal $\{m=\ell\}$ in $\fHeis$. Therefore we have 
    \begin{align}\label{d3}
\left(\tilde{\phi}_k\cdot\hat{f}\right)(m,\ell,\lambda) 
&= \sum_{j\in\N^n}\tilde{\phi}_k(m,j,\lambda)\hat{f}(j,\ell,\lambda) \notag\\ 
&= \tilde{\phi}_k(m,m,\lambda)\hat{f}(m,\ell,\lambda).
\end{align}
Hence according to our definition \eqref{eqn-c}, we get 
\begin{align*}
\widehat{S_k f}(m,\ell,\lambda)&= \sum_{i<k}\tilde{\phi}_i(m,m,\lambda)\hat{f}(m,\ell,\lambda).
\end{align*}
Now thanks to Proposition \ref{isomorphism} we have that $\hat{f}\in \mathcal{S} (\fHeis)$. Thus $\sum_{i<k}\tilde{\phi}_i(m,m,\lambda)\hat{f}(m,\ell,\lambda)$ converges to $\hat{f}(m,\ell,\lambda)$, due to \eqref{eqn-d}. Applying Proposition~\ref{isomorphism} again, we thus get the desired convergence of $S_kf$  to $f$.
\end{proof}
We close this section by giving a formal definition of our weighted Besov spaces.

\begin{definition}\label{def-BS} Let $f\in\mathcal{S}(\Heis)$. For $k\geq -1$ consider the function $\sigma_k f$ defined by \eqref{eqn-c}. 
For  any $\gamma\in \R$, $\nu>0$, and $\alpha,\beta\in[1,\infty]$ , we define a norm
\[
\| f \|_{\mathfrak{B}^{\gamma,\, \nu}_{\alpha,\,\beta}}=\left\{\sum_{k=-1}^{\infty} \left( 2^{\gamma k}  \| \sigma_{k} f \|_{L^\alpha_\nu}\right)^\beta \right\}^{\frac{1}{\beta}},
\]
where the spaces $L^\alpha_\nu$ are those introduced in Definition \ref{weight}. 
The weighted Besov space $\mathfrak{B}^{\gamma,\, \nu}_{\alpha,\,\beta}(\Heis)$ is defined as the completion of $\mathcal{S} (\Heis)$ with respect to this norm.
\end{definition}

\subsection{Heat flow and Besov spaces}\label{sec-flow}
In this section we will quantify the smoothing effects of the heat semigroup in the weighted Besov spaces $\mathfrak{B}^{\gamma,\, \nu}_{\alpha,\,\beta}$. As a preliminary result, we first state a space-time  decay property for functions with Fourier transforms supported in an annulus.  
\begin{lemma}\label{lemma-exp-decay}
Let $\varphi\in\cg^s(\R^n)$ be a Gevrey function supported in an annulus $ \mathcal{C} = \{x\in\R^n: R\leq |x|_2\leq R'\}$. As in \eqref{func-f} and \eqref{func-thetaf}, we consider the function  \begin{equation}
\tilde{\phi}(m,\ell,\lambda) = \Theta_f(m,\ell,\lambda), \quad\text{where}\quad f(x, \kappa, \lambda) = \varphi(x)\,\mathbbm{1}_{\{\kappa=0\}}.
\end{equation} Also recall that the heat kernel $p_t$ has been defined by \eqref{eq-Heis-kernel}. We denote $g=g_1$ where $g_\tau$ is introduced in Lemma \ref{lemma-g-tau}. Specifically we have $\hat{g}=\tilde{\phi}$. Then  
 there exist constants $C, c>0$ such that  
  \begin{equation}\label{space-time-exp-decay}
|(g\star p_t)(q)|\leq Ce^{-ct-c\lvert q \rvert_{\mathrm{h}}^{1/s}}\ \, \text{for all}\ \, q\in\Heis.
\end{equation}
\end{lemma}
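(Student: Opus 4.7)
The plan is to combine the explicit Fourier formula for $p_t$ with the Gevrey/exponential-decay machinery already established in Proposition~\ref{prop 2.13} and Proposition~\ref{exp-decay-est}. Writing $\hat{g}=\tilde{\phi}=\Theta_f$, the convolution identity \eqref{eq-conv} together with \eqref{eq-pt-hat} yields
\begin{equation*}
\widehat{g\star p_t}(m,\ell,\lambda)
=\tilde{\phi}(m,m,\lambda)\,e^{-4t|\lambda|(2|m|+n)}\,\mathbbm{1}_{\{m=\ell\}}.
\end{equation*}
Since on the diagonal $R(m,m)=(2m_j+1)_{1\le j\le n}$ and therefore $|\lambda|(2|m|+n)=\bigl\lvert |\lambda|R(m,m)\bigr\rvert_1$, we can recast this Fourier transform as $\Theta_{f_t}$ in the sense of \eqref{def-theta-f}, where
\begin{equation*}
f_t(x,\kappa,\lambda)=e^{-4t|x|_1}\,\varphi(x)\,\mathbbm{1}_{\{\kappa=0\}},
\qquad x\in[0,\infty)^n.
\end{equation*}

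The first substantive step is to check that $f_t$ satisfies the hypotheses (a) and (b) of Proposition~\ref{prop 2.13} with constants that are \emph{uniform in $t\ge 0$} and that carry an additional decay factor $e^{-ct}$. Condition (a) is immediate. For (b), the map $x\mapsto |x|_1=\sum_j x_j$ is smooth on $[0,\infty)^n$, and since $\mathrm{Supp}\,\varphi\subset\{R\le|x|_2\le R'\}$ we have $|x|_1\ge |x|_2\ge R$ on $\mathrm{Supp}\,\varphi$. Expanding $\partial_x^\alpha(e^{-4t|x|_1}\varphi(x))$ by Leibniz produces terms of the form $(4t)^{|\beta|}e^{-4t|x|_1}\partial_x^{\alpha-\beta}\varphi(x)$. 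We split $e^{-4t|x|_1}=e^{-2Rt}\cdot e^{-2t|x|_1}$; the first factor yields the desired uniform prefactor, while the elementary inequality $\sup_{t\ge 0} t^{|\beta|} e^{-2t|x|_1}\le C_R^{|\beta|}|\beta|!$ on $\mathrm{Supp}\,\varphi$, combined with the Gevrey bound $|\partial^{\alpha-\beta}\varphi|\le C^{|\alpha-\beta|+1}((\alpha-\beta)!)^s$ and standard binomial manipulations, gives
\begin{equation*}
|\partial_{x,\lambda}^\alpha f_t(x,\kappa,\lambda)|\le e^{-2Rt}\,\tilde C^{|\alpha|+1}(\alpha!)^s,
\end{equation*}
with $\tilde C$ independent of $t$. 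Thus Proposition~\ref{prop 2.13} applies and shows that $\Theta_{f_t}\in\mathcal{G}^s(\fHeis)$, where the bounds in \eqref{gevrey-est} carry an extra factor $e^{-2Rt}$.

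Finally, the bounded support of $\Theta_{f_t}$ (inherited from that of $\varphi$) together with the membership in $\mathcal{G}^s(\fHeis)\cap\mathcal{S}(\fHeis)$ allows us to invoke Proposition~\ref{exp-decay-est}. Re-reading its proof, the only place where the Gevrey constants enter is in \eqref{eq-h}--\eqref{eq-i}, which control $|M^{2k}\phi|$ and $|M_0^k\phi|$ by the same Gevrey constants; hence the prefactor $e^{-2Rt}$ propagates unchanged through \eqref{eq-mid-1}, \eqref{eq-mid-2} and \eqref{eq-j}. This yields
\begin{equation*}
|(g\star p_t)(q)|\le C\,e^{-2Rt}\,e^{-\varepsilon |q|_h^{1/s}}
\qquad\text{for all }q\in\Heis,\ t\ge 0,
\end{equation*}
and setting $c=\min(2R,\varepsilon)$ gives \eqref{space-time-exp-decay}. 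The main technical obstacle is precisely this uniformity control: one must ensure that the Gevrey constants produced in Proposition~\ref{prop 2.13} (whose proof tracks derivatives of $f$ very carefully) remain bounded as $t\to\infty$, with the entire $t$-dependence concentrated in a single decaying prefactor that survives the transition from Fourier to physical space.
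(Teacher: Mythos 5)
Your proposal is correct and follows essentially the same route as the paper: identify $\widehat{g\star p_t}$ as $\Theta_{f_t}$ with $f_t(x,\kappa,\lambda)=\varphi(x)e^{-4t|x|_1}\mathbbm{1}_{\{\kappa=0\}}$, use the annulus support of $\varphi$ together with the elementary bound $t^{k}e^{-at}\lesssim a^{-k}k!$ to show the Gevrey estimates hold uniformly with an extra $e^{-ct}$ prefactor, and then push that prefactor through Propositions~\ref{prop 2.13} and~\ref{exp-decay-est}. (Only a cosmetic slip: your split $e^{-4t|x|_1}=e^{-2Rt}e^{-2t|x|_1}$ should be an inequality $\le$ valid on $\mathrm{Supp}\,\varphi$, which is exactly how the paper uses $x_1+\cdots+x_n\ge R$.)
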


\noindent {\it Proof.}
    As stated in \eqref{eq-pt-hat}, we have \begin{equation}\label{c}\hat{p}_t(m,\ell,\lambda) =  e^{-4t|\lambda|(2|m|+n)}\,\mathbbm{1}_{\{m=\ell\}}.\end{equation}
     Moreover, it is readily checked that  
    \begin{align}
    \nonumber\big(\widehat{g\star p_t} \big)(m,\ell,\lambda)&=\big(\tilde{\phi}\cdot \hat{p}_t\big)(m,\ell,\lambda)\\ \nonumber
    &= \sum_{j\in\N^n} \tilde{\phi}(m,j,\lambda)\, \hat{p}_t(j,\ell,\lambda)\\ \label{j}
    &= \tilde{\phi}(m,\ell,\lambda)\, \hat{p}_t(m,\ell,\lambda).
    \end{align} Gathering the expressions \eqref{func-f} and \eqref{func-thetaf} for $\tilde{\phi}$ and our formula \eqref{c} for $\hat{p}_t$, we get 
 \[\big(\widehat{g\star p_t} \big)(m,\ell,\lambda) = \varphi(|\lambda|R(m,\ell))\,\mathbbm{1}_{\{m=\ell\}}\,e^{-4 t |\lambda|(2|m|+n)}.\]Recalling again the definition  \eqref{def-theta-f} of $\Theta_f$, one discovers that $\widehat{g\star p_t}$ can be written as 
 \begin{equation}\label{dd}
 \big(\widehat{g\star p_t} \big)(m,\ell,\lambda) = \Theta_f(m,\ell,\lambda), \quad\text{with}\quad
 f(x,\kappa,\lambda) = \varphi(x)\,e^{-4 t (x_1+...+x_n)} \mathbbm{1}_{\{\kappa = 0 \}}.
 \end{equation}
 With 
 \eqref{dd} in hand, we will prove that $\widehat{g\star p_t}$ satisfies  \eqref{space-time-exp-decay} by a mere elaboration of Proposition \ref{prop 2.13}. Specifically we let the reader check that one is just reduced to prove an inequality similar to \eqref{est-gevrey}, weighted by the exponential term $e^{-ct}$. Namely for all $t\geq 0$, $x\in[0,\infty)^n$ and $\alpha\in\N^n$, it is sufficient to prove that  for any compact set $K\subset \fHeis$, there exists a 
 positive constant $C_K$ such  that 
 \begin{equation}\label{exp-est0}
  \sup_K\left( \max\left(|\HDelta^{N}\Theta_f|,\,  |\HDL^{N}\Theta_f|
     \right)\right)\leq C_K^{2N+1}[(2N)!]^se^{-c t }\ \,\text{for all $N\geq 0$.}
 \end{equation}
 Otherwise stated, going back to \eqref{g-est}, we wish to show that the function 
 \begin{equation}\label{gg}
h(x)=\varphi(x)\zeta_t(x),\quad \text{where}\quad\zeta_t(x)=e^{-4t(x_1+\cdots+x_n)},
 \end{equation} is an element of $\cg^s(\R^n).$ Invoking the fact that $\varphi\in\cg^s(\R^n)$ and stability arguments for products in $\cg^s(\R^n)$, we are further reduced to show that  
\begin{equation}\label{exp-est}
\partial^\alpha_x\zeta_t(x)\leq  C^{|\alpha|+1}(\alpha!)^s\,e^{-c t }
\end{equation} for    $x\in\R^n$, $t\geq 0 $ and $\alpha\in\N^n$. In addition, observe that in \eqref{gg}, the function $\zeta_t$ is multiplied by $\varphi$, whose support is included in the annulus $\mathcal{C}$. Therefore we can content ourselves with a proof of \eqref{exp-est} for $x\in \mathcal{C}$ only. Recalling that $ \mathcal{C} = \{x\in\R^n: R\leq |x|_2\leq R'\}$, 
we set $r = R/2$.  With all those preliminary reductions in hand, we now evaluate the left hand side of \eqref{exp-est}. An  elementary computation shows that for $\alpha\in\N^n$ and $\zeta_t$ defined by \eqref{gg}, we have 
\begin{equation}\label{hh}
   \left\lvert\partial^\alpha_x \zeta_t(x)\right\rvert =(4t)^{|\alpha|}e^{-4 t (x_1+\cdots+x_n)}.
\end{equation}
Moreover for $k\in\N$, $t\geq 0$ and an additional parameter $r\geq 0$, it is readily checked that 
\[(4t)^k = r^{-k} (4rt)^k\leq r^{-k}\, k!\, e^{4rt}.\]Plugging this rough estimate into \eqref{hh}, we end up with
\begin{align*}
\left\lvert\partial^\alpha_x \zeta_t(x)\right\rvert\leq r^{-|\alpha|}\,|\alpha|!\,e^{-4 t (x_1+\cdots+x_n-r)}.
\end{align*} Now choose $r=R/2$ and invoke the fact that $|x|_2\geq R$ if $x\in\mathcal{C}$. Also, resort to the inequality $|\alpha|!/\alpha!\leq n^{|\alpha|}$, valid for all $\alpha\in\N^n$. This yields the existence of two constants $ C, c >0 $ such that
\[
\left\lvert\partial^\alpha_x \zeta_t(x)\right\rvert\leq  C^{|\alpha|+1}\alpha!\,e^{-c t }.
\]Summarizing our considerations, we have just proved relation \eqref{exp-est}. Due to our series of reductions \eqref{exp-est0}--\eqref{gg}, this shows relation \eqref{space-time-exp-decay} and finishes our proof.\qed\medskip

We turn to a first quantification of the smoothing effects of the heat semigroup for functions whose support is bounded in Fourier modes. 
\begin{proposition}[Smoothing effect of the heat flow]\label{eff-heat-flow} For two constants $0< R_1<R_2$, consider the following region in $\fHeis$:
\begin{equation}
    \label{aa}
\tilde{\mathcal{R}} = \{(m, \ell,\lambda)\in\fHeis: R_1\leq |\lambda|(2|m|+n)\leq R_2\}.
\end{equation}
Recall that the heat semigroup $P_t=e^{t\Delta/2}$ is defined by \eqref{eq-semigroup}. Let also $\nu_0>0$ be an additional parameter and we pick an arbitrary $\alpha\geq 1$. Then there exist constants $C, c>0$ such that for every $0< \nu\leq\nu_0$, $t\geq 0$ and $\tau\geq 1$, $F \in \mathcal{S}(\Heis)$, if we assume
    \begin{equation}
    \label{bb}    
    \text{Supp}\, \hat{F}\subset \tau\tilde{\mathcal{R}} = \{(m, \ell,\lambda)\in\fHeis: \tau R_1\leq |\lambda|(2|m|+n)\leq \tau R_2\},
    \end{equation}
    then the following inequality holds true:\begin{equation}\label{heat-flow-est}
    \lVert P_t F\rVert_{L^\alpha_\nu}\leq C e^{-c\tau t} \lVert  F\rVert_{L^\alpha_\nu}.
    \end{equation}
\end{proposition}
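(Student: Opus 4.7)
I would reduce the bound to an estimate of the form $\|p_t \star g_\tau\|_{L^1_{-\nu}} \le Ce^{-c\tau t}$, where $g_\tau$ is a Gevrey cut-off that acts as the identity on $\widehat{F}$ in Fourier space. Once this is established, the weighted Young inequality of Proposition \ref{young-ineq}, combined with the moderateness of $w_\nu$ from Lemma \ref{lemma-moderate}, immediately yields
\begin{equation*}
\|P_t F\|_{L^\alpha_\nu} \le \|p_t\star g_\tau\|_{L^1_{-\nu}}\,\|F\|_{L^\alpha_\nu} \le Ce^{-c\tau t}\|F\|_{L^\alpha_\nu}.
\end{equation*}
The exponential decay in $\tau t$ will come from restricting the Fourier multiplier $e^{-4t|\lambda|(2|m|+n)}$ to the annulus $\tau\tilde{\mathcal R}$.

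Mimicking Lemma \ref{bump-func}, I would first construct $\varphi\in\mathcal{G}^s(\mathbb{R}^n)$ with $\mathrm{Supp}\,\varphi\subset\{x:R_1/2\le|x|_1\le 2R_2\}$ and $\varphi\equiv 1$ on $\{x:R_1\le|x|_1\le R_2\}$. With $f(x,\kappa,\lambda)=\varphi(x)\mathbbm{1}_{\{\kappa=0\}}$, Proposition \ref{prop 2.13} gives $\tilde\phi=\Theta_f\in\mathcal{G}^s(\fHeis)\cap\mathcal{S}(\fHeis)$ with bounded support, and its rescaled version $\tilde\phi_\tau(m,\ell,\lambda)=\varphi(|\lambda/\tau|R(m,\ell))\mathbbm{1}_{\{m=\ell\}}$ equals $1$ on the diagonal slice of $\tau\tilde{\mathcal R}$ since $|\lambda/\tau|(2|m|+n)\in[R_1,R_2]$ there. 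Denoting by $g_\tau$ the inverse Fourier transform of $\tilde\phi_\tau$, the same argument as in the proof of Proposition \ref{Bernstein} (the diagonal-support reduction $(\tilde\phi_\tau\cdot\widehat F)(m,\ell,\lambda)=\tilde\phi_\tau(m,m,\lambda)\widehat F(m,\ell,\lambda)$) combined with $\mathrm{Supp}\,\widehat F\subset\tau\tilde{\mathcal R}$ gives $\tilde\phi_\tau\cdot\widehat F=\widehat F$, i.e.\ $g_\tau\star F=F$. Associativity of convolution then yields $P_tF=(p_t\star g_\tau)\star F$.

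For the pointwise estimate of $p_t\star g_\tau$, a direct Fourier computation yields $\widehat{p_t\star g_\tau}(m,\ell,\lambda)=\varphi(|\lambda/\tau|R(m,\ell))\mathbbm{1}_{\{m=\ell\}}e^{-4t|\lambda|(2|m|+n)}$, which after the substitution $\mu=\lambda/\tau$ coincides with $\widehat{p_{\tau t}\star g}(m,\ell,\mu)$ for $g=g_1$. The scaling argument of Lemma \ref{lemma-g-tau} then transfers this to direct space:
\begin{equation*}
(p_t\star g_\tau)(q)=\tau^{n+1}(p_{\tau t}\star g)(\delta_{\tau^{1/2}}q).
\end{equation*}
Since $\varphi$ is supported in an annulus of $\mathbb{R}^n$, Lemma \ref{lemma-exp-decay} applies to $p_{\tau t}\star g$ and delivers constants $C,c>0$ with $|(p_t\star g_\tau)(q)|\le C\tau^{n+1}e^{-c\tau t-c|\delta_{\tau^{1/2}}q|_h^{1/s}}$.

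To conclude, I would change variables $q'=\delta_{\tau^{1/2}}q$ in $\|p_t\star g_\tau\|_{L^1_{-\nu}}$; the Jacobian $\tau^{-(n+1)}$ cancels the prefactor, leaving
\begin{equation*}
\|p_t\star g_\tau\|_{L^1_{-\nu}}\le Ce^{-c\tau t}\int_{\Heis}e^{-c|q'|_h^{1/s}+\nu|\delta_{\tau^{-1/2}}q'|_*^\eta}\,d\mu(q').
\end{equation*}
For $\tau\ge 1$ one has $|\delta_{\tau^{-1/2}}q'|_*=(1+\tau^{-1}d_{cc}(e,q')^2)^{1/2}\le|q'|_*$, which combined with the equivalence \eqref{eq-equiv-cc-h} between $|\cdot|_*$ and $|\cdot|_h$ and the condition $\eta<1/s$ makes the remaining integral finite, uniformly in $\tau\ge 1$ and $\nu\le\nu_0$. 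The main obstacle is really the bookkeeping of scalings in the previous paragraph: the identity $(p_t\star g_\tau)(q)=\tau^{n+1}(p_{\tau t}\star g)(\delta_{\tau^{1/2}}q)$ is precisely what converts the bounded-support condition on the Fourier side into the decay $e^{-c\tau t}$ in direct space, while the observation $|\delta_{\tau^{-1/2}}q'|_*\le|q'|_*$ for $\tau\ge 1$ is what prevents the weight from degrading the spatial integral as $\tau\to\infty$.
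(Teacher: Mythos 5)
Your proof is correct and follows essentially the same route as the paper's: reduce to $P_tF=(g_\tau\star p_t)\star F$ via a Gevrey multiplier that is identically $1$ on the diagonal slice of $\tau\tilde{\mathcal{R}}$, apply the weighted Young inequality, and control $\lVert g_\tau\star p_t\rVert_{L^1_{-\nu}}$ by combining the space-time decay of Lemma \ref{lemma-exp-decay} with the dilation scaling. Your bookkeeping of the prefactor in $(p_t\star g_\tau)(q)=\tau^{n+1}(p_{\tau t}\star g)(\delta_{\tau^{1/2}}q)$ is in fact slightly more careful than the paper's displayed identity \eqref{ll}, which omits the $\tau^{n+1}$; as you note, this factor is exactly cancelled by the Jacobian of the change of variables, so both versions reach the same conclusion.
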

\begin{proof}
Let $\varphi\in\cg^s(\R^n)$ be a Gevrey function supported inside an annulus in $\R^n$ and such that $\varphi\equiv 1$ on  $  \{x\in\R^n: R_1\leq |x|_1\leq R_2\}$.  As in Lemma \ref{lemma-exp-decay} and \eqref{func-f}--\eqref{func-thetaf}, we consider the function  
\begin{equation}\label{kk}
\tilde{\phi}(m,\ell,\lambda) = \Theta_f(m,\ell,\lambda), \quad\text{where}\quad f(x, \kappa, \lambda) = \varphi(x)\,\mathbbm{1}_{\{\kappa=0\}}.
\end{equation} Then $\tilde{\phi}\in \cg^s(\fHeis)$ is a  Gevrey function with compact support in $\fHeis$ and such that $\tilde{\phi}\equiv 1$ on $\tilde{\mathcal{R}}\cap D$, where $D$ is the diagonal $(m=\ell)$.   Recall that for all our computations we are considering $\eta$ such that $\eta<1/s$ for the Definition \ref{weight} of our exponential weights $w_\nu$. As in Lemma \ref{lemma-g-tau}, we write $\tilde{\phi}_\tau=\tilde{\phi}(m,\ell,\lambda/\tau)$ and $\hat{g_\tau} = \tilde{\phi}_\tau$, so that 
\begin{align} \label{eq-g-tau}
g_\tau(q) = \tau^{n+1} g_1(\delta_{\tau^{1/2}}\, q), \quad\text{for $q\in\Heis$.}
\end{align}
We have
\begin{align*}
     \widehat{P_t F}(m,\ell,\lambda) &= \sum_{j\in\N^n} \hat{p}_t(m,j,\lambda) \hat{F}(j,\ell,\lambda)\\
     &=\hat{p}_t(m,m,\lambda) \hat{F}(m,\ell,\lambda)\\&= \hat{p}_t(m,m,\lambda) \hat{F}(m,\ell,\lambda) \tilde{\phi}_\tau(m,m,\lambda),
\end{align*} where we have used the fact that $\tilde{\phi}_\tau=1$ on $\tau\tilde{\mathcal{R}}\cap D$ for the last identity. It follows that \begin{align*}
\widehat{P_t F}(m,\ell,\lambda) &= ( \tilde{\phi}_\tau\cdot \hat{p}_t)(m,m,\lambda)\hat{F}(m,\ell,\lambda)\\ &= ((\tilde{\phi}_\tau\cdot \hat{p}_t)\cdot\hat{F})(m,\ell,\lambda).
\end{align*}Taking inverse Fourier transforms above, we have thus obtained that\[P_t F=(g_\tau\star p_t)\star F.\]
Thus applying Proposition \ref{young-ineq}, we get 
\begin{align}\label{eq-PtF}
\lVert P_t F\rVert_{L^\alpha_\nu} = \lVert ((g_\tau \star p_t)\star F)\, w_\nu\rVert_{L^\alpha}\leq \lVert (g_\tau \star p_t)\, w_{-\nu} \rVert_{L^1}  \lVert F\rVert_{L^\alpha_\nu}.  
\end{align}
To establish \eqref{heat-flow-est}, it then suffices to show that
\begin{equation}\label{est0}
\lVert (g_\tau \star p_t)\, w_{-\nu} \rVert_{L^1}\leq Ce^{-c\tau t}.
\end{equation}
In the remainder of the proof we will focus on deriving the above inequality. Let us first use the identity \eqref{eq-g-tau} together with $\delta_{\tau^{1/2}}(qx^{-1})=\delta_{\tau^{1/2}q}(\delta_{\tau^{1/2}}x)^{-1}$. This enables to write  
\begin{align}\label{eq-gpt-conv}
(g_\tau\star p_t)(q) = \tau^{n+1} \int_{\Heis} g_1(\delta_{\tau^{1/2}}\,q(\delta_{\tau^{1/2}}\,x)^{-1}) p_t(x)d\mu(x).
\end{align}
We now perform the change of variable $\delta_{\tau^{1/2}}x=y$ in \eqref{eq-gpt-conv}, which yields
\begin{align}\label{eq-gpt-convolution-1}
(g_\tau\star p_t)(q) = \int_{\Heis} g_1((\delta_{\tau^{1/2}}\,q)\,  y^{-1}) p_t(\delta_{\tau^{-1/2}}\,y)d\mu(y).
\end{align}
In addition, it is readily checked from \eqref{eq-Heis-kernel} that $p_t((\delta_{\tau^{-1/2}}\,q))=p_{\tau t}(q)$. Plugging this identity into \eqref{eq-gpt-convolution-1} we end up with 
\begin{equation}\label{ll}
(g_\tau\star p_t)(q) = (g_1\star p_{\tau t})(\delta_{\tau^{1/2}}\,q), \quad \text{for $q\in\Heis$.}
\end{equation}
Thus using  Lemma \ref{lemma-exp-decay} and another change of variable $q:=\delta_{\tau^{1/2}q}$, we get
\begin{align*}
\lVert (g_\tau \star p_t)\, w_{-\nu} \rVert_{L^1}
&\leq C e^{-c\tau t}\int_\Heis e^{-c\,\lvert\delta_{\tau^{1/2}}\,q\rvert^{1/s}_h} e^{\nu\,|q|^{\eta}_*}d\mu(q)\\
&\leq C e^{-c\tau t}\tau^{-(n+1)}\int_\Heis e^{-c\,\lvert q\rvert^{1/s}_h} e^{\nu\,\lvert\delta_{\tau^{-1/2}}\,q\rvert^{\eta}_*}d\mu(q).
\end{align*}
In addition, since $\tau\geq 1$, we have $|\delta_{\tau^{-1/2}\,}q|_*\leq |q|_*$. Reporting this information in the right hand side above, we end up with
\begin{align*}
\lVert (g_\tau \star p_t)\, w_{-\nu} \rVert_{L^1}\leq C e^{-c\tau t}\int_\Heis e^{-c\,\lvert q\rvert^{1/s}_h} e^{\nu\,\lvert q\rvert^{\eta}_*}d\mu(q).
\end{align*}Owing to the  fact that we have chosen $\eta<1/s$ we conclude that \eqref{est0}   holds. This finishes our proof.
\end{proof}
As a last preliminary result, we now state a proposition on time regularity for the action of the heat semigroup.
    \begin{proposition}[Time regularity  of the heat flow]\label{prop-time-reg}
    For a parameter $\tau\geq 1$, we consider a function $F \in \mathcal{S}(\Heis)$ such that  
    \[
    \emph{Supp}\, \hat{F}\subset \{(m, \ell,\lambda)\in\fHeis: |\lambda|(2|m|+n)< \tau\}.
    \]
    Then for all $\alpha\in[1, \infty)$ and $\nu\leq \nu_0$, there exists a constant $C>0$ such that for every $t\geq 0$ and $\tau\geq 1$, we have
    \begin{equation}\label{time-est0}
    \lVert (\emph{Id} - P_t) F\rVert_{L^\alpha_\nu}\leq C (t\tau\wedge 1) \lVert  F\rVert_{L^\alpha_\nu}.
    \end{equation}
    \end{proposition}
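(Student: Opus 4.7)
The strategy is to split the argument into two complementary regimes: $t\tau \le 1$, handled by a derivative estimate combined with Bernstein's inequality, and $t\tau > 1$, for which we only need the uniform $L^\alpha_\nu$-boundedness of $P_t$ on the restricted class of functions.

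For the small-time regime $t\tau \le 1$, observe that \eqref{eq-pt-hat} shows $\hat p_s$ acts as a diagonal Fourier multiplier, so $\mathrm{Supp}\,\widehat{P_s F} \subset \mathrm{Supp}\,\hat F \subset \{|\lambda|(2|m|+n) < \tau\}$. Writing
\[
(\mathrm{Id} - P_t) F = -\tfrac{1}{2}\int_0^t \Delta P_s F\, ds
\]
via the fundamental theorem of calculus for the semigroup, an application of Bernstein's inequality (Proposition \ref{Bernstein}) with $k=1$ and $\alpha=\beta$ yields $\lVert \Delta P_s F\rVert_{L^\alpha_\nu} \leq C\tau\lVert P_s F\rVert_{L^\alpha_\nu}$. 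Combined with a uniform bound $\lVert P_s F\rVert_{L^\alpha_\nu} \leq C\lVert F\rVert_{L^\alpha_\nu}$ on $s \in [0, 1/\tau] \subset [0,1]$ (which follows from weighted Young's inequality since $p_s$ lies in $L^1_{-\nu}$ with a bound depending only on $\nu_0$ for $s$ in a bounded interval), integration over $[0,t]$ yields $\lVert (\mathrm{Id} - P_t)F\rVert_{L^\alpha_\nu} \leq C t\tau\lVert F\rVert_{L^\alpha_\nu}$.

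For the large-time regime $t\tau > 1$, we have $t\tau \wedge 1 = 1$ and it suffices to prove $\lVert P_t F\rVert_{L^\alpha_\nu} \le C\lVert F\rVert_{L^\alpha_\nu}$ uniformly in $t$. Decompose $F$ into Littlewood--Paley blocks, $F = \sum_{k=-1}^\infty \sigma_k F$. The Fourier support assumption forces $\sigma_k F = 0$ for all $k$ with $2^k \gtrsim \tau$, so the sum is effectively finite with at most $O(\log \tau)$ terms. For each $k \ge 0$, $\sigma_k F$ has Fourier support in an annulus at scale $2^k$, and Proposition \ref{eff-heat-flow} gives
\[
\lVert P_t \sigma_k F\rVert_{L^\alpha_\nu} \le C e^{-c 2^k t}\lVert \sigma_k F\rVert_{L^\alpha_\nu} \le C\lVert \sigma_k F\rVert_{L^\alpha_\nu}.
\]
The low-frequency block $\sigma_{-1} F$ has Fourier support in a fixed compact region independent of $\tau,t$; one handles it by a direct variant of Proposition \ref{eff-heat-flow} adapted to ball-support, using the uniform spatial decay of $g\star p_t$ associated to a fixed low-frequency bump $g$. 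Summing the finitely many blocks and absorbing the logarithmic factor into the constant closes the case.

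The main technical obstacle is the uniform-in-$t$ $L^\alpha_\nu$-bound on $P_t$ in the large-time regime. A naïve weighted Young inequality fails because $\lVert p_t\rVert_{L^1_{-\nu}}$ grows in $t$; one must exploit the bounded Fourier support of $F$ to write $P_t F = (g_\tau \star p_t)\star F$ and use the cancellation built into $g_\tau$. The Littlewood--Paley route above achieves this by reducing to the annular case where the exponential-in-$t$ decay from Lemma \ref{lemma-exp-decay} is available, with the low-frequency piece requiring a separate but analogous spatial-decay computation.
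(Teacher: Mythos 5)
Your proof takes a genuinely different route from the paper's. The paper writes $(\mathrm{Id}-P_t)F=(g_\tau\star H_t)\star F$ with $\widehat{H_t}=\mathbbm{1}_{D}-\hat{p}_t$, proves the pointwise bound $|(g_\tau\star H_t)(q)|\le C t\tau\, e^{-c|q|_h^{1/s}}$ by running the Gevrey machinery of Lemma \ref{lemma-exp-decay} on the symbol $1-e^{-4t|x|}=t\cdot\frac{1-e^{-4t|x|}}{t}$, and concludes by weighted Young. Your small-time regime $t\tau\le 1$ replaces all of this with the fundamental theorem of calculus plus Bernstein's inequality, which is more elementary and works: the Fourier support of $P_sF$ is unchanged, Proposition \ref{Bernstein} gives $\|\Delta P_sF\|_{L^\alpha_\nu}\le C\tau\|P_sF\|_{L^\alpha_\nu}$, and integration over $[0,t]$ produces the factor $t\tau$. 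Two small points you should make explicit there: the uniform bound $\sup_{s\le 1}\|p_s\,w_{-\nu}\|_{L^1}<\infty$ requires (sub-)Gaussian decay of the heat kernel, which is standard but not recorded in the paper; and since $P_sF=F\star p_s$, Proposition \ref{young-ineq} as stated puts the moderating weight on the \emph{left} convolution factor, so you need the companion inequality $w_\nu(ab)\le w_\nu(a)\,w_{-\nu}(b)$ (which follows from the same triangle inequality for $|\cdot|_*$) to land the weight $w_\nu$ on $F$ and $w_{-\nu}$ on $p_s$.

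The large-time regime is where there is a genuine gap. Your Littlewood--Paley argument gives $\|P_tF\|_{L^\alpha_\nu}\le C\sum_{k}\|\sigma_kF\|_{L^\alpha_\nu}$ over the $O(\log\tau)$ surviving blocks; there is no orthogonality in $L^\alpha_\nu$, each block can only be bounded by $\|F\|_{L^\alpha_\nu}$, and the factors $e^{-c2^kt}$ do not save you in the worst case $t\sim 1/\tau$ (they are all bounded below by $e^{-c}$ when $2^k\lesssim\tau$). So you obtain $C(\log\tau)\|F\|_{L^\alpha_\nu}$, and a $\log\tau$ cannot be ``absorbed into the constant'': the proposition asserts a constant uniform in $\tau\ge 1$, and this uniformity is exactly what is consumed downstream (e.g.\ in \eqref{ff}, applied with $\tau=2^k$ and $k\to\infty$). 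The fix is short and already in your toolkit: the ball-support variant of Proposition \ref{eff-heat-flow} that you invoke only for the single block $\sigma_{-1}F$ applies verbatim to $F$ itself, since $\hat F$ is supported in the ball $\{|\lambda|(2|m|+n)<\tau\}$. Writing $P_tF=(g_\tau\star p_t)\star F$ with $\hat{g}_\tau=\tilde{\phi}_\tau\equiv 1$ on that ball and bounding $\|(g_\tau\star p_t)\,w_{-\nu}\|_{L^1}$ uniformly (one merely loses the $e^{-c\tau t}$ factor available in the annular case) yields $\|P_tF\|_{L^\alpha_\nu}\le C\|F\|_{L^\alpha_\nu}$ with no decomposition and no logarithm --- this is in fact the paper's own first line of proof.
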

\begin{proof}
Thanks to Proposition \ref{eff-heat-flow}, we have $\Vert P_t F\Vert_{L^\alpha_\nu}\lesssim \left\Vert F\right\Vert_{L^\alpha_\nu}$. Hence in order to show \eqref{time-est0}, it is sufficient to prove 
\begin{equation}\label{time-est}
  \lVert (\text{Id} - P_t) F\rVert_{L^\alpha_\nu}\leq C t\tau \lVert  F\rVert_{L^\alpha_\nu}.   
\end{equation}
In order to derive \eqref{time-est}, let $\tilde{\phi}$ be as in Lemma \ref{lemma-phi}, and let $\tilde{\phi}_\tau(m,\ell,\lambda) = \tilde{\phi}(m,\ell,\lambda/\tau)$. Since $\text{Supp}\, \hat{F}\subset B_\tau$, we have 
\[\tilde{\phi}_\tau\cdot \hat{F} = \hat{F} = \mathbbm{1}_{D}\cdot \hat{F}\] where $D$ is the diagonal in $\fHeis$. Hence we can write
\begin{align*}
     \widehat{(\text{Id} - P_t) F}&= (\mathbbm{1}_{D}-(\tilde{\phi}_\tau\cdot \hat{p}_t))\cdot \hat{F}\\ &=\tilde{\phi}_\tau\cdot(\mathbbm{1}_{D}- \hat{p}_t))\cdot \hat{F}\\ &= \widehat{(g_\tau\star H_t)\star F},
\end{align*}where $\hat{H_t}=\mathbbm{1}_{D}-\hat{p}_t$. As in \eqref{ll}, we have 
\begin{align}\label{eq-gHt-conv}
(g_\tau\star H_t)(q) =  (g_1\star H_{\tau t})(\delta_{\tau^{1/2}}\,q), \quad \text{for $q\in\Heis$.}
\end{align}

We now use some arguments which are directly imported from the proof of Lemma \ref{lemma-exp-decay}. In order to make the comparison more transparent, let us focus on the case $\tau=1$ in \eqref{eq-gHt-conv}. We let the reader  fill the details for a general $\tau>0$. First a series of elementary computations lead to the following  relation that is equivalent to \eqref{dd}:
 \[
 \big(\widehat{g\star H_t} \big)(m,\ell,\lambda) = \Theta_f(m,\ell,\lambda), \quad\text{with}\quad
 f(x,\kappa,\lambda) = \varphi \left(\frac{x}{\tau}\right)(1-e^{-4 t |x|}) \mathbbm{1}_{\{\kappa = 0 \}}.
 \]
 Next similarly to \eqref{gg} one can write 
 \[
f(x,\kappa, \lambda)=t\varphi(x)\zeta_t(x) \mathbbm{1}_{\{\kappa=0\}},
 \]
 where the function $\zeta_t$ is given by 
 \[
 \zeta_t(x)=\frac{1-e^{-4|x|t}}{t}.
 \]
 It is now easy to see that $\zeta_t$ verifies the series of inequalities \eqref{exp-est}. As in Lemma \ref{lemma-exp-decay}, it is enough to assert
\[
|(g_\tau\star H_{t})(q)|\leq Ct\tau e^{-c|q|_h^{1/s}}, \quad \text{for $q\in\Heis$}.
\]  Thus \eqref{time-est} follows as in the proof of Lemma \ref{eff-heat-flow}.
\end{proof}

Our desired smoothing effects for the heat semigroup are now direct consequences of Propositions \ref{eff-heat-flow} and \ref{prop-time-reg}. Our findings are summarized below.
 \begin{proposition}\label{prop-continuity}
 For $\kappa\in\R$, $\nu>0$ and $\alpha,\beta\in [1,\infty)$, let $\mathfrak{B}^{\ka,\nu}_{\alpha,\,\beta}$ be the Besov spaces introduced in Definition \ref{def-BS}. We also consider a time horizon $\tau$ and $\kappa'>\kappa$. Then there exists a constant $C=C_{\alpha,\beta,\kappa',\nu}$ such that for all $t\in [0,\tau]$ we have 
  \begin{equation}\label{eq-smoothing-a}
 \|P_tf\|_{ \mathfrak{B}^{\kappa', \nu}_{\alpha,\,\beta}}\le C t^{-\frac{\kappa'-\kappa}{2}}\|f\|_{ \mathfrak{B}^{\kappa,\, \nu}_{\alpha,\,\beta}}.
 \end{equation}
 Moreover, if $\gamma$ is an additional positive parameter lying in $(0,1)$, the following time regularity holds true:
 \begin{equation}\label{eq-smoothing-b}
 \|(\Id-P_t)f\|_{ \mathfrak{B}^{\kappa-2\gamma,\, \nu}_{\alpha,\,\beta}}\le C t^{\gamma}\|f\|_{ \mathfrak{B}^{\kappa,\, \nu}_{\alpha,\,\beta}}.
 \end{equation}
 \end{proposition}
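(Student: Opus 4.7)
My plan is to reduce both inequalities to per-block $L^{\alpha}_{\nu}$-estimates via the Littlewood--Paley decomposition $f=\sum_{k\ge -1}\sigma_k f$, and then re-sum the resulting bounds in the $\ell^{\beta}$-norm defining the Besov space. Since $\sigma_k$ and $P_t$ are both Fourier multipliers on $\fHeis$ (compare \eqref{eqn-c} and \eqref{eq-pt-hat}), they commute, so $\sigma_k P_t f = P_t\sigma_k f$ throughout.

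For the smoothing estimate \eqref{eq-smoothing-a}, I exploit that for $k\ge 0$ the Fourier support of $\sigma_k f$ is contained in the annulus $\{3/4\cdot 2^{k}\le|\lambda|(2|m|+n)\le 8/3\cdot 2^{k}\}$, which puts it in the scope of Proposition~\ref{eff-heat-flow} with $\tau=2^{k}$; this delivers the exponential damping $\|P_t\sigma_k f\|_{L^{\alpha}_{\nu}}\le C e^{-c2^{k}t}\|\sigma_k f\|_{L^{\alpha}_{\nu}}$. For the low-frequency block $k=-1$, whose Fourier support is a ball rather than an annulus, I instead invoke the weighted Young inequality (Proposition~\ref{young-ineq}) applied to $P_t\sigma_{-1}f=p_t\star\sigma_{-1}f$, together with the fact that the heat kernel $p_t$ belongs to $L^{1}_{-\nu}$ uniformly in $t\in[0,\tau]$ (Gaussian-type decay in $d_{cc}$ versus $\eta<1/s\le 1$). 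This produces the uniform bound $\|P_t\sigma_{-1}f\|_{L^{\alpha}_{\nu}}\le C\|\sigma_{-1}f\|_{L^{\alpha}_{\nu}}$. Plugging these estimates into the definition of the Besov norm and extracting the $k$-dependent multiplier as a supremum yields
\[
\|P_t f\|_{\mathfrak{B}^{\kappa',\nu}_{\alpha,\beta}}\le C\,\Bigl(\sup_{k\ge -1}2^{(\kappa'-\kappa)k}e^{-c2^{k}t}\Bigr)\,\|f\|_{\mathfrak{B}^{\kappa,\nu}_{\alpha,\beta}},
\]
and the supremum is controlled by the classical one-dimensional optimization $\sup_{x>0}x^{\delta}e^{-cxt}\le C_{\delta}t^{-\delta}$ (applied with $\delta=\kappa'-\kappa>0$), producing the announced temporal factor.

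For the time regularity \eqref{eq-smoothing-b}, the Fourier support of $\sigma_k f$ is contained in a ball of radius $\sim 2^{k}$ for every $k\ge -1$, so Proposition~\ref{prop-time-reg} applies directly and gives $\|(\Id-P_t)\sigma_k f\|_{L^{\alpha}_{\nu}}\le C(t\cdot 2^{k}\wedge 1)\|\sigma_k f\|_{L^{\alpha}_{\nu}}$. The slick step is the elementary inequality
\[
(t\cdot 2^{k}\wedge 1)\le (t\cdot 2^{k})^{\gamma},
\]
valid for every $\gamma\in(0,1]$, $k\ge -1$, and $t\ge 0$ (it follows from $u\le u^{\gamma}$ when $u\le 1$ and $1\le u^{\gamma}$ when $u>1$, applied with $u=t\cdot 2^{k}$). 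Inserting this into the Besov norm gives
\[
\sup_{k\ge -1}2^{-2\gamma k}(t\cdot 2^{k}\wedge 1)\le t^{\gamma}\sup_{k\ge -1}2^{-\gamma k}=C_{\gamma}\,t^{\gamma},
\]
from which \eqref{eq-smoothing-b} follows by the same $\ell^{\beta}$-summation as in part (a).

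The only genuine complication is the separate treatment of the low-frequency block $k=-1$ in the smoothing estimate: its Fourier support being a ball prevents a direct application of Proposition~\ref{eff-heat-flow}, and a Young-type bound against $w_{-\nu}$ must be substituted. Aside from this bookkeeping, both parts reduce to routine manipulation of the Besov norm combined with the per-block bounds from Propositions~\ref{eff-heat-flow} and~\ref{prop-time-reg} and a one-dimensional optimization in $k$.
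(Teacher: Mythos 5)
Your overall strategy --- commuting $\sigma_k$ with $P_t$, applying Proposition \ref{eff-heat-flow} (resp.\ Proposition \ref{prop-time-reg}) blockwise, optimizing over $k$, and resumming in $\ell^\beta$ --- is exactly the paper's route. Your treatment of \eqref{eq-smoothing-b} is correct; the paper in fact establishes the slightly stronger bound with $\kappa-\gamma$ in place of $\kappa-2\gamma$ by writing $2^{(\kappa-\gamma)k}(2^kt\wedge 1)\le t^{\gamma}\bigl[(2^kt)^{-\gamma}(2^kt\wedge 1)\bigr]2^{\kappa k}$ rather than your inequality $(2^kt\wedge 1)\le(2^kt)^{\gamma}$, but both are valid. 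Your separate handling of the block $k=-1$ in part (a), whose Fourier support is a ball rather than an annulus, is a legitimate refinement that the paper silently skips; just be aware that the uniform bound $\lVert p_t\,w_{-\nu}\rVert_{L^1}\le C$ needs a Gaussian-type upper bound for $p_t$ that the paper never states (alternatively, reuse the kernel $g_\tau\star p_t$ from the proof of Proposition \ref{prop-time-reg} to get $\lVert P_t\sigma_{-1}f\rVert_{L^\alpha_\nu}\le C\lVert\sigma_{-1}f\rVert_{L^\alpha_\nu}$ with no new input).

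The genuine problem is the last step of part (a). Your optimization $\sup_{x>0}x^{\delta}e^{-cxt}\le C_{\delta}t^{-\delta}$, applied with $\delta=\kappa'-\kappa$, yields
\[
\sup_{k}\,2^{(\kappa'-\kappa)k}e^{-c2^{k}t}\le C\,t^{-(\kappa'-\kappa)},
\]
i.e.\ the temporal factor $t^{-(\kappa'-\kappa)}$, not the announced $t^{-(\kappa'-\kappa)/2}$; since $t^{-(\kappa'-\kappa)}\ge t^{-(\kappa'-\kappa)/2}$ for small $t$, what you have proved is strictly weaker than \eqref{eq-smoothing-a}, and the phrase ``producing the announced temporal factor'' is not justified. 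Nor is this a repairable bookkeeping slip within the blockwise method: taking $t=2^{-k}$ shows that $2^{(\kappa'-\kappa)k}e^{-c2^{k}t}$ is genuinely of order $t^{-(\kappa'-\kappa)}$, so the damping $e^{-c2^{k}t}$ cannot yield a better exponent. (The paper's own proof hides the same factor of two: in the display preceding \eqref{ee}, the prefactors $t^{-\frac{\kappa'-\kappa}{2}}(2^kt)^{\frac{\kappa'-\kappa}{2}}2^{\kappa k}$ multiply out to $2^{\frac{\kappa'+\kappa}{2}k}$, not $2^{\kappa'k}$; the exponent $\frac{\kappa'-\kappa}{2}$ would be the natural one if block $k$ sat at spectral height $2^{2k}$, as in the localization of Proposition \ref{prop-support}, rather than $2^{k}$ as in Lemma \ref{Definition phi_k}.) You should either state and prove the estimate with $t^{-(\kappa'-\kappa)}$ or account explicitly for the missing factor of two; as written, this step is a gap.
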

  \begin{proof}
 According to \eqref{eq-chi-supp} and \eqref{eqn-b}, the function $\sigma_k f$ is such that \[\text{Supp}(\widehat{\sigma_k f})\subset \{2^k R_1\leq |\lambda|(2|m|+n)\leq 2^k R_2\},\] for two constants $0<R_1<R_2$. Otherwise stated, according to our notation \eqref{aa} and \eqref{bb}, we have $\text{Supp} (\widehat{\sigma_k f})\subset 2^k \tilde{R}$. Hence applying Proposition \ref{eff-heat-flow}, we obtain\begin{equation}  
 \label{cc} \lVert P_t(\sigma_k f)\rVert_{L^\alpha_\nu}\leq C
_0\,  e^{-c 2^k t} \lVert  \sigma_k f\rVert_{L^\alpha_\nu}.\end{equation} In addition, since both $\sigma_k f$ and $P_t f$ are obtained through multiplication operators in Fourier modes, it is clear that $\sigma_k(P_t f) = P_t(\sigma_k f)$. Therefore one can recast \eqref{cc} as  \begin{equation}
    \label{ddd}
    \lVert \sigma_k(P_t f)\rVert_{L^\alpha_\nu}\leq C
_0\,  e^{-c 2^k t} \lVert  \sigma_k f\rVert_{L^\alpha_\nu}.
\end{equation} Consider now $\kappa' >\kappa$ as in our inequality \eqref{eq-smoothing-a}. Thanks to \eqref{ddd}, we have \begin{align*}
 2^{\kappa' k}\lVert \sigma_k(P_t f)\rVert_{L^\alpha_\nu}&\leq C
_0\,  t^{-\frac{\kappa'-\kappa}{2}}\left[(2^kt)^{\frac{\kappa'-\kappa}{2}} e^{-c2^{k}t}\right]2^{\kappa k t} \lVert  \sigma_k f\rVert_{L^\alpha_\nu}.
 \end{align*} Moreover, some elementary calculus considerations show that $(2^kt)^{\frac{\kappa'-\kappa}{2}} e^{-c2^{k}t}$ is bounded by a constant which only depends on $\kappa,\kappa'$. Thus we discover that \begin{equation}
     \label{ee}
     2^{\kappa' k}\lVert \sigma_k(P_t f)\rVert_{L^\alpha_\nu}\leq C
_{\kappa,\kappa'}\,  t^{-\frac{\kappa'-\kappa}{2}}\, 2^{\kappa k t}\, \lVert  \sigma_k f\rVert_{L^\alpha_\nu}.
 \end{equation}
    We now easily get our claim \eqref{eq-smoothing-a} by taking  $\ell^\beta$ norms on both sides of \eqref{ee}. \\

In order to prove \eqref{eq-smoothing-b}, we proceed very similarly to \eqref{eq-smoothing-a}. Namely, we invoke the fact that $\text{Supp} (\widehat{\sigma_k f})\subset 2^k \tilde{R}$ and the relation $\sigma_k(\text{Id} - P_t)f = (\text{Id} - P_t)\sigma_k f$. This allows to obtain, similarly to \eqref{ddd}, that \begin{equation}
        \label{ff}
        \lVert \sigma_k((\text{Id}- P_t) f)\rVert_{L^\alpha_\nu}\lesssim (2^kt\wedge 1)\,  \lVert \sigma_k f\rVert_{L^\alpha_\nu}.
    \end{equation}
    Now multiply both sides of \eqref{ff} by $2^{(\kappa-\gamma) k}$. Some trivial algebraic manipulations show that 
    \begin{align}\label{ggg}
 2^{(\kappa-\gamma) k}\,\lVert \sigma_k((\text{Id}- P_t) f)\rVert_{L^\alpha_\nu}&\lesssim 2^{(\kappa-\gamma) k}\,(2^kt\wedge 1)\,  \lVert \sigma_k f\rVert_{L^\alpha_\nu}\\
 &\lesssim t^{\gamma}\left[(2^kt)^{-\gamma} (2^kt\wedge 1)\right]2^{\kappa k} \lVert  \sigma_k f\rVert_{L^\alpha_\nu}.\nonumber
 \end{align}We now invoke the fact that $\gamma\in (0,1)$. In that case, the term $(2^kt)^{-\gamma} (2^kt\wedge 1)$ is uniformly bounded. Therefore \eqref{ggg} becomes \begin{equation}
     \label{hhh}
     2^{(\kappa-\gamma) k}\,\lVert \sigma_k((\text{Id}- P_t) f)\rVert_{L^\alpha_\nu}\lesssim  t^{\gamma}2^{\kappa k} \lVert  \sigma_k f\rVert_{L^\alpha_\nu}.
 \end{equation}Thus one can conclude that \eqref{eq-smoothing-b} holds by taking $\ell^\beta$ norms of both sides of \eqref{hhh}. This concludes our proof.
 \end{proof}

\subsection{Paraproduct}\label{sec-paraprod}
In this section we define products of functions in our weighted Besov spaces $\mathfrak{B}^{\gamma}_{\alpha,\,\beta}$. More specifically we are interested in products of the form $f_{1}\cdot f_{2}$, with a function $f_{1}$ in a Besov space such that $\gamma_{1}>0$ and a distribution $f_{2}$ in a Besov space with $\gamma_{2}<0$. This will allow us to deal with the noisy products in equation~\eqref{eq:pam}.

In order to state our main result, and still with equation~\eqref{eq:pam} in mind, we will extend the class of exponentially weighted Besov spaces (introduced in Definition~\ref{weight}) to a wider class of weights including polynomials. 
\begin{definition}\label{def-weight-class}
In the sequel we will handle a class of $\ct$ of weights $w: \Heis\to \R_+$ such that $\ct=\hat{\ct}\cup\tilde{\ct}$ with 
\begin{equation}\label{eq-hat-ct}
\hat{\ct}=\big\{ w:\Heis\to\R_+; \, w(q)=e^{-\nu|q|_*^\eta}, \ \text{for}\ \nu\in\R\ \text{and}\ \eta\in(0,1/s)\bigg\}
\end{equation}
and where $\tilde{\ct}$ is defined by 
\begin{equation}\label{eq-til-ct}
\tilde{\ct}=\big\{ \rho:\Heis\to\R_+; \, \rho(q)=|q|_*^{-b}, \ \text{for}\ b\in\R\bigg\}.
\end{equation}
We will write $w_\nu$ (as in Definition \ref{weight}) for a generic element of $\hat{\ct}$, as well $\rho_b$ for a generic element of $\tilde{\ct}$.
\end{definition}

The above definition leads to a natural extension of Definition \ref{def-BS} to general weighted Besov spaces. We label this definition here for further use.

\begin{definition}\label{def-BS-general} Let $f\in\mathcal{S}(\Heis)$, and $w:\Heis\to \R_+$ a weight function in the class $\ct$ introduced in Definition \ref{def-weight-class}. We denote by $L_w^\alpha$ the corresponding space $L^\alpha(\Heis,w(\cdot)\, d\mu(\cdot))$, and  by $(\cdot,\cdot)_w$ the inner product in $L^2_w$. For  any $\gamma\in \R$ and $\alpha,\beta\in[1,\infty]$ , we define the norm
\[
\| f \|_{\mathfrak{B}^{\gamma,\, w}_{\alpha,\,\beta}}=\left\{\sum_{k=-1}^{\infty} \left( 2^{\gamma k}  \| \sigma_{k} f \|_{L^\alpha_w}\right)^\beta \right\}^{\frac{1}{\beta}},
\]
where $\sigma_k f$ are defined by \eqref{eqn-c} for $k\geq -1$. The weighted Besov space $\mathfrak{B}^{\gamma,\, w}_{\alpha,\,\beta}(\Heis)$ is defined as the completion of $\mathcal{S} (\Heis)$ with respect to this norm.
\end{definition}

We can now spell out our main result on distributional paraproduct, for our general class of weights $\ct$.

\begin{proposition}\label{prop-product}
Let $w_1$, $w_2$ be two weight functions in the class $\ct$, and $\kappa_1>0>\kappa_2$ such that $\kappa_1>|\kappa_2|$. Let $\alpha,\alpha_1,\alpha_2,\beta \in [1,\infty]$ be parameters satisfying
 $
 \frac{1}{\alpha} = \frac{1}{\alpha_1} + \frac{1}{\alpha_2}.
 $
 Then the product map 
\[
(f_1, f_2)\in \mathfrak{B}^{\kappa_1,\, w_1}_{\alpha_1,\,\beta}\times \mathfrak{B}^{\kappa_2,\, w_2}_{\alpha_2,\,\beta}\to f_1f_2\in \mathfrak{B}^{\kappa_2,\, w_1w_2}_{\alpha,\,\beta}
\]
is continuous and the following inequality holds true:
\[
\|f_1f_2\|_{\mathfrak{B}^{\kappa_2,\, w_1w_2}_{\alpha,\,\beta}}\le \|f_1\|_{\mathfrak{B}^{\kappa_1,\, w_1}_{\alpha_1,\,\beta}}\|f_2\|_{\mathfrak{B}^{\kappa_2,\, w_2}_{\alpha_2,\,\beta}}.
\]
\end{proposition}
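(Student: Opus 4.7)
My plan is to prove the bilinear estimate by adapting Bony's paraproduct decomposition to the Heisenberg setting. I would write
\[
f_1 f_2 = \Pi_<(f_1,f_2) + \Pi_=(f_1,f_2) + \Pi_>(f_1,f_2),
\]
where $\Pi_<(f_1,f_2) = \sum_{k\ge -1} S_{k-1}f_1\cdot \sigma_k f_2$, $\Pi_>(f_1,f_2) = \sum_{k\ge -1} \sigma_k f_1 \cdot S_{k-1}f_2$ and $\Pi_=(f_1,f_2) = \sum_{|j-k|\le 1} \sigma_j f_1\cdot \sigma_k f_2$, and then bound each piece in $\mathfrak{B}^{\kappa_2,w_1w_2}_{\alpha,\beta}$. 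Before attacking the three pieces, I would establish as a preliminary the following spectral-support fact: if $\hat{f}$ is supported in $\{|\lambda|(2|m|+n)\le A\}$ and $\hat{g}$ in $\{|\lambda|(2|m|+n)\le B\}$, then $\widehat{fg}$ is (essentially) supported in $\{|\lambda|(2|m|+n)\lesssim A+B\}$, so that $S_{k-1}f_1\cdot \sigma_kf_2$ is spectrally localized in an annulus of scale $\sim 2^k$ and $\sigma_jf_1\cdot\sigma_kf_2$ (for $|j-k|\le1$) in a ball of radius $\sim 2^k$. I would also verify that the projectors $\sigma_\ell$ are uniformly bounded on the weighted $L^\alpha_w$-spaces of Definition~\ref{def-weight-class}, using the weighted Young inequality of Proposition~\ref{young-ineq} together with the observation (cf.\ Remark~\ref{rmk-embedding-rho}) that products of weights in $\ct$ behave well under moderation.

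For $\Pi_<(f_1,f_2)$ the spectral localization in annuli allows me to reduce the estimate of $2^{\kappa_2 \ell}\|\sigma_\ell \Pi_<\|_{L^\alpha_{w_1w_2}}$ to a finite sum over $k$ with $|k-\ell|\le N_0$. Weighted H\"older (since $\frac{1}{\alpha_1}+\frac{1}{\alpha_2}=\frac{1}{\alpha}$ and $w_1w_2$ is compatible with the factors $w_i$) gives
\[
\|S_{k-1}f_1\cdot \sigma_kf_2\|_{L^\alpha_{w_1w_2}} \leq \|S_{k-1}f_1\|_{L^{\alpha_1}_{w_1}}\,\|\sigma_kf_2\|_{L^{\alpha_2}_{w_2}},
\]
and since $\kappa_1 > 0$, the partial sums $\|S_{k-1}f_1\|_{L^{\alpha_1}_{w_1}}$ are bounded uniformly in $k$ by $\|f_1\|_{\mathfrak{B}^{\kappa_1,w_1}_{\alpha_1,\beta}}$; the $\ell^\beta$-summability of $2^{\kappa_2 k}\|\sigma_k f_2\|_{L^{\alpha_2}_{w_2}}$ yields the desired bound. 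The term $\Pi_>(f_1,f_2)$ is dual: now H\"older applied inside the Littlewood-Paley sum and the assumption $\kappa_2<0$ give $\|S_{k-1}f_2\|_{L^{\alpha_2}_{w_2}}\lesssim 2^{-\kappa_2 k}\|f_2\|_{\mathfrak{B}^{\kappa_2,w_2}_{\alpha_2,\beta}}$, so the additional factor $2^{-\kappa_2 k}$ is compensated by the $2^{\kappa_1 k}$-gain attached to $\|\sigma_k f_1\|_{L^{\alpha_1}_{w_1}}$; summing over $k$ with $|k-\ell|\le N_0$ delivers a sequence $\ell^\beta$-summable in $\ell$.

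For the resonant piece $\Pi_=$ only $k\ge \ell - N_0$ contribute when computing $\sigma_\ell \Pi_=$, because each summand is spectrally supported in a ball (not an annulus) of radius $\sim 2^k$ and $\sigma_\ell$ vanishes on Fourier frequencies $\gg 2^\ell$. Setting $a_k=2^{\kappa_1 k}\|\sigma_k f_1\|_{L^{\alpha_1}_{w_1}}$ and $b_k=2^{\kappa_2 k}\|\sigma_k f_2\|_{L^{\alpha_2}_{w_2}}$, H\"older yields
\[
2^{\kappa_2 \ell}\|\sigma_\ell \Pi_=\|_{L^\alpha_{w_1w_2}} \lesssim \sum_{k\ge \ell-N_0} 2^{-(\kappa_1+\kappa_2)(k-\ell)}\, a_k b_k,
\]
and the hypothesis $\kappa_1+\kappa_2>0$ (equivalent to $\kappa_1>|\kappa_2|$) makes the kernel $2^{-(\kappa_1+\kappa_2)(k-\ell)}\mathbf{1}_{k\ge \ell-N_0}$ summable in $k-\ell$, so a discrete Young inequality converts the $\ell^\beta$-norm of the left-hand side into the product $\|(a_k)\|_{\ell^\beta}\,\|(b_k)\|_{\ell^\infty} \le \|f_1\|_{\mathfrak{B}^{\kappa_1,w_1}_{\alpha_1,\beta}}\|f_2\|_{\mathfrak{B}^{\kappa_2,w_2}_{\alpha_2,\beta}}$ (the roles of $\ell^\beta$ and $\ell^\infty$ can be permuted by Cauchy-Schwarz/H\"older in $(a_k)(b_k)$).

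The main obstacle is the preliminary spectral-support fact for pointwise products: unlike in $\R^d$ where $\widehat{fg}=\hat f*\hat g$ makes support-addition transparent, on $\Heis$ the Fourier transform is operator-valued and the pointwise product does not convert into a tractable combinatorial operation on $\fHeis$ in an obvious way. I would address this either by using an integral expression for $\widehat{fg}$ in terms of the Wigner-type kernels $K_{m,\ell,\lambda}$ from \eqref{eq-FT-K} and exploiting the decay of $\tilde\phi_k$ away from a $2^k$-band, or more robustly by replacing exact spectral truncation with a ``fattened'' projector whose errors are Schwartz-class in $\fHeis$ and absorbed using the Bernstein-type estimate of Proposition~\ref{Bernstein}. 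Once this step is secured, the three-piece decomposition above runs as in the Euclidean case, with the only extra bookkeeping being the weighted H\"older and Young inequalities adapted to the class $\ct$.
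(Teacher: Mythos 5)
Your overall architecture coincides with the paper's: the Bony decomposition into $f\varolessthan g$, $f\varodot g$, $f\varogreaterthan g$, weighted H\"older on each dyadic summand, the uniform bound $\|S_{k-1}f_1\|_{L^{\alpha_1}_{w_1}}\lesssim \|f_1\|_{\mathfrak{B}^{\kappa_1,w_1}_{\alpha_1,\beta}}$ from $\kappa_1>0$, the compensation $2^{-\kappa_2 k}\cdot 2^{\kappa_1 k}$ in the $\varogreaterthan$ term, and the discrete Young inequality with the summable kernel coming from $\kappa_1+\kappa_2>0$ for the resonant piece. Those estimates are essentially the paper's Lemmas \ref{lemma-decomp}, \ref{lemma-decomp-2} and \ref{lemma-est-prod}, and you execute them correctly.

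The genuine gap is exactly the point you yourself flag as ``the main obstacle'' and then leave unresolved: the frequency localization of pointwise products on $\Heis$, i.e.\ that $S_{k-1}f_1\cdot\sigma_k f_2$ is spectrally supported in an annulus of scale $2^k$ and $\sigma_j f_1\cdot\sigma_k f_2$ (for $|j-k|\le 1$) in a ball of scale $2^k$. Without this, the reduction of $\sigma_\ell\Pi_<$ to finitely many $k$ with $|k-\ell|\le N_0$, and of $\sigma_\ell\Pi_=$ to $k\ge\ell-N_0$, has no justification, and the entire argument collapses. Your two suggested workarounds are not developed and neither is the route the paper takes: the difficulty is not the sharpness of the cutoffs (so a ``fattened projector'' with Schwartz-class errors does not obviously help), but the absence of any convolution identity $\widehat{fg}=\hat f*\hat g$ on $\fHeis$, so it is not even clear a priori that a product of two band-limited functions is band-limited in any sense. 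The paper resolves this (Proposition \ref{prop-support} and Corollary \ref{cor-support}) by constructing a unitary intertwining operator $A$ between its Schr\"odinger-representation Fourier transform and the Bargmann-type transform of Bahouri--Gallagher, matching the rescaled Hermite basis $\Phi_m^\lambda$ with the monomial basis $F_m^\lambda$ via the diagonalization of $\cf f(\lambda)$ on radial functions and Laguerre calculus, and then importing the support-addition result of \cite[Proposition 4.2]{BG}. This is a substantial, self-contained piece of analysis (the whole of the ``Mapping Fourier settings'' and ``Frequency localization'' subsections) and it is the actual mathematical content of the proposition beyond the standard Littlewood--Paley bookkeeping; your proposal acknowledges the need for it but does not supply it.
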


\begin{remark}
Note that Proposition \ref{prop-product} reads similarly to more classical paraproduct inequalities in $\R^d$ (see e.g. \cite[Theorem 2.82]{BCD2}). However, our method of proof has to depart from the flat case in very fundamental aspects. At the heart of the problem lies the following fact: in $\R^d$, for two Schwartz functions $\varphi, \psi$, we have
\begin{align}\label{eq-F-prod}
\cf_{\R^d}(\varphi \, \psi)=\cf_{\R^d}(\varphi)\star\cf_{\R^d}(\psi).
\end{align}
This enables to prove that $\supp \cf_{\R^d}(\varphi)\subset \mathcal{C}_1$ and  $\supp \cf_{\R^d}(\psi)\subset \mathcal{B}$ imply that 
\begin{align}\label{eq-inclus}
\supp \cf_{\R^d}(\varphi \, \psi)\subset \mathcal{C}_2,
\end{align}
where $\mathcal{C}_1$, $\mathcal{C}_2$ are annulus and $\mathcal{B}$ is a ball. Unfortunately, there is no equivalent of relation~\eqref{eq-F-prod} in $\Heis$. As a result, the road to establish an inclusion like \eqref{eq-inclus} for the Heisenberg group is quite long. We shall describe this path in the next subsection.
\end{remark}
We finish the statement of our main paraproduct results by giving an immediate consequence of Proposition \ref{prop-product} that will be used later. 
\begin{corollary}
Let $w_1,w_2\in \ct$ and $\kappa_1, \kappa_2\in\R$ be as given in Proposition \ref{prop-product}. Then it holds that 
\[
\|f_1f_2\|_{\mathfrak{B}^{\kappa_2,\, w_1w_2}_{\alpha,\,\beta}}\le \|f_1\|_{\mathfrak{B}^{\kappa_1,\, w_1}_{\alpha,\,\beta}}\|f_2\|_{\mathfrak{B}^{\kappa_2,\, w_2}_{\infty,\,\beta}}.
\]
\end{corollary}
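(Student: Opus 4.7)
The plan is to apply Proposition \ref{prop-product} directly with a careful choice of exponents. Specifically, I would take $\alpha_1 = \alpha$ and $\alpha_2 = \infty$ in the proposition. Then the compatibility condition
\[
\frac{1}{\alpha} = \frac{1}{\alpha_1} + \frac{1}{\alpha_2} = \frac{1}{\alpha} + \frac{1}{\infty}
\]
is satisfied trivially, and the remaining hypotheses on $\kappa_1, \kappa_2, w_1, w_2$ carry over verbatim from the corollary's assumptions.

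Applying Proposition \ref{prop-product} with these choices yields
\[
\|f_1 f_2\|_{\mathfrak{B}^{\kappa_2,\, w_1 w_2}_{\alpha,\,\beta}} \leq \|f_1\|_{\mathfrak{B}^{\kappa_1,\, w_1}_{\alpha,\,\beta}} \|f_2\|_{\mathfrak{B}^{\kappa_2,\, w_2}_{\infty,\,\beta}},
\]
which is precisely the desired inequality. There is no real obstacle here, since the corollary is just the specialization of the proposition to the case where one factor is measured in $L^\infty_{w_2}$; the only thing worth double-checking is that the embedding $\mathfrak{B}^{\kappa_2, w_2}_{\infty,\beta} \hookrightarrow \mathfrak{B}^{\kappa_2, w_2}_{\alpha_2, \beta}$ is not needed in the other direction, and indeed we do not need any such embedding because we are using $\alpha_2 = \infty$ on the right-hand side of the proposition directly.
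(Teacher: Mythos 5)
Your proposal is correct and matches the paper's own argument exactly: the authors also obtain the corollary by taking $\alpha_1=\alpha$ and $\alpha_2=\infty$ in Proposition \ref{prop-product}. Nothing further is needed.
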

\begin{proof}
The conclusion follows from plugging $\alpha_1=\alpha$ and $\alpha_2=\infty$ in Proposition \ref{prop-product}.
\end{proof}
\subsubsection{Preliminary results}\label{sec-preliminary}

The paraproduct problem in $\Heis$ has been addressed in \cite{BG}, albeit in a different Fourier setting. Our strategy consists in mapping their results to ours by comparing Fourier projections. In the process, radial functions and Laguerre calculus play a prominent role. We first recall some basic facts about those objects here. Let us start with a basic definition.

\begin{definition}\label{def-radial-f}
Let $f:\Heis\to\R$ be a continuous function. We say that $f$ is radial if there exists another function $\rf: \R_+\times \R\to\R$ such that for all $q=(x,y,z)$ we have
\[
f(x,y,z)=\rf(|v|,z),
\]
where we have set $v=(x,y)$ and the norm $|v|$ has been defined in~\eqref{eq:radial}.
\end{definition} 

Our next proposition features Laguerre polynomials, that we proceed to define now.

\begin{definition}
For $n,k\ge0$, the Laguerre polynomial $L^k_n$ of order $k$ and type $n$ is defined by
\begin{align}\label{eq-Lauguerre}
L_k^{n}(x)=\sum_{j=0}^k (-1)^j \begin{pmatrix} k+n \\  k-j \end{pmatrix} \frac{x^j}{j!}
\end{align}
\end{definition}

We now state a result taken from \cite[relation (3.12)]{BBG}.
\begin{lemma}\label{lemma-Four-radial}
Let $g$ be a radial function in $\cs(\Heis)$, as given in Definition \ref{def-radial-f}. We consider the Fourier transform $\hat{g}$ of $g$, as spelled out in Definition \ref{def:projective-fourier}. Then there exists $\rg:\N\times\R\to\C$ such that for $(m,\ell, \lambda)\in\fHeis$ we have  
\begin{align}\label{eq-radial-FT}
\hat{g}(m,\ell, \lambda)=\rg(|m|, \lambda)\delta_{m,\ell}
\end{align}
where for any $k\in \N$ we have set
\[
\rg(k,\lambda)=\begin{pmatrix} k+n-1 \\  k \end{pmatrix} ^{-1}\int_\Heis \overline{e^{iz\lambda} \ck_{k, \lambda}(x,y)}g(q)d\mu(q),
\]
and where the function $\ck_{k, \lambda}$ can be expressed in terms of the kernels $K$ in \eqref{eq-FT-K} and Laguerre polynomials $L$ in \eqref{eq-Lauguerre}:
\[  
\ck_{k, \lambda}(x,y):=\sum_{m\in \N^n,|m|=k}K_{m,m,\lambda}(x,y)=e^{-|\lambda| |v|^2} 
L_k^{n-1}\!\left(2|\lambda| |v|^2\right),
\]
where we recall that $v$ stands for the couple $(x,y)$.
\end{lemma}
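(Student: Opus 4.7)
The statement has two parts: (a) a structural claim that $\hat{g}$ is supported on the diagonal $\{m=\ell\}$ with $\hat{g}(m,m,\lambda)$ depending only on $|m|$ and $\lambda$; (b) an explicit integral formula for this value involving Laguerre polynomials. Part (a) will follow from the rotational invariance of $g$ in the horizontal plane, while (b) will emerge from summing (a) over multi-indices of fixed length and invoking a classical Mehler--Laguerre generating function identity.

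\emph{Step 1: The structural claim (a).} The unitary group $U(n)$ acts on $\Heis$ by group automorphisms of the form $(v,z)\mapsto(Uv,z)$, where we identify the horizontal plane $\R^{2n}$ with $\C^n$. A radial function $g$ in the sense of Definition~\ref{def-radial-f} is invariant under this action. Under the Schrödinger representation $U^\lambda$, this $U(n)$-action lifts to the Weil (metaplectic) representation on $L^2(\R^n)$, whose isotypic components coincide with the eigenspaces $H_k^\lambda:=\mathrm{span}\{\Phi_m^\lambda:|m|=k\}$ of the harmonic oscillator $\Delta^\lambda_{\mathrm{osc}}$, and each $H_k^\lambda$ is irreducible. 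Since $g$ is $U(n)$-invariant, the operator $\cf(g)(\lambda)$ commutes with the metaplectic representation, so Schur's lemma yields that $\cf(g)(\lambda)$ restricts to a scalar $\rg(k,\lambda)$ on each $H_k^\lambda$. This is exactly $\hat{g}(m,\ell,\lambda)=\rg(|m|,\lambda)\delta_{m,\ell}$.

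\emph{Step 2: The Laguerre identity (b).} Summing the identity from Step 1 over all $m\in\N^n$ with $|m|=k$ and using $\#\{m:|m|=k\}=\binom{k+n-1}{k}$, the integral representation~\eqref{eq-FT-int} gives
\[\binom{k+n-1}{k}\rg(k,\lambda) = \int_\Heis \overline{e^{i\lambda z}\,\ck_{k,\lambda}(x,y)}\,g(q)\,d\mu(q),\]
with $\ck_{k,\lambda}:=\sum_{|m|=k}K_{m,m,\lambda}$. To identify $\ck_{k,\lambda}$ in closed form, I would compute its generating series. The Mehler formula applied coordinatewise yields, with $u=x+\xi$ and $v=\xi-x$,
\[\sum_{m\in\N^n}r^{|m|}\Phi_m^\lambda(u)\Phi_m^\lambda(v) = \Bigl(\tfrac{|\lambda|}{\pi(1-r^2)}\Bigr)^{n/2}\exp\Bigl(-|\lambda|\tfrac{1+r}{1-r}|x|^2-|\lambda|\tfrac{1-r}{1+r}|\xi|^2\Bigr).\]
Integrating against $e^{2i\lambda\langle y,\xi\rangle}$ in $\xi$ is an elementary Gaussian integral producing
\[\sum_{k\ge 0} r^k\,\ck_{k,\lambda}(x,y) = (1-r)^{-n}\,e^{-|\lambda||v|^2}\,\exp\!\bigl(-\tfrac{2r|\lambda||v|^2}{1-r}\bigr),\]
where $|v|^2=|x|^2+|y|^2$. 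Comparison with the standard Laguerre generating function $\sum_k L_k^{n-1}(t)\,r^k=(1-r)^{-n}e^{-rt/(1-r)}$ evaluated at $t=2|\lambda||v|^2$ yields $\ck_{k,\lambda}(x,y)=e^{-|\lambda||v|^2}L_k^{n-1}(2|\lambda||v|^2)$, completing the proof.

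\emph{Main obstacle.} The representation-theoretic argument in Step 1 is the delicate point: one must justify rigorously that the harmonic-oscillator eigenspaces $H_k^\lambda$ are the irreducible isotypic components of the metaplectic $U(n)$-action. This is classical via the Bargmann--Fock realization, in which $H_k^\lambda$ corresponds to the irreducible $U(n)$-representation on homogeneous polynomials of degree $k$ in $n$ complex variables. A lower-tech alternative would establish diagonality using only $U(1)$-phase invariance in each complex coordinate together with the explicit integral formula~\eqref{eq-FT-K}, and then check separately that $\hat{g}(m,m,\lambda)$ depends only on $|m|$ by permuting the coordinates. The Mehler--Laguerre computation in Step 2 is a routine verification.
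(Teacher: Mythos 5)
Your proposal is correct, and it is worth noting that the paper does not actually prove this lemma: it is imported wholesale from \cite[relation (3.12)]{BBG}, so your argument supplies a self-contained proof where the paper offers only a citation. Your Step 1 is the standard and correct way to see the diagonality: the $U(n)$-action $(v,z)\mapsto(Uv,z)$ preserves the symplectic form, hence acts by automorphisms of $\Heis$, and by Stone--von Neumann it is implemented on $L^2(\R^n)$ by metaplectic operators $M(U)$ satisfying $\cf(g)(\lambda)=M(U)^*\cf(g)(\lambda)M(U)$ for radial $g$ (radial in the sense of Definition \ref{def-radial-f} is $O(2n)$-invariance, which is more than enough). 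The spaces $H_k^\lambda$ are indeed the irreducible, pairwise inequivalent isotypic pieces (in the Bargmann--Fock model they are $\det^{-1/2}\otimes\mathrm{Sym}^k(\C^n)$), so Schur's lemma gives the scalar action; your observation that this can alternatively be reduced to coordinatewise $U(1)$-phase invariance plus permutation symmetry is a legitimate lower-tech fallback. Your Step 2 checks out numerically: with $u=x+\xi$, $v=-x+\xi$ the Mehler exponent collapses to $-|\lambda|\tfrac{1+r}{1-r}|x|^2-|\lambda|\tfrac{1-r}{1+r}|\xi|^2$, the Gaussian integral in $\xi$ produces $(1-r)^{-n}e^{-|\lambda|\frac{1+r}{1-r}|v|^2}$, and writing $\tfrac{1+r}{1-r}=1+\tfrac{2r}{1-r}$ matches the Laguerre generating function $\sum_k L_k^{(n-1)}(t)r^k=(1-r)^{-n}e^{-rt/(1-r)}$ at $t=2|\lambda||v|^2$; the prefactor $\binom{k+n-1}{k}=\#\{m:|m|=k\}$ is also right. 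The one point you should make explicit if you write this up in full is the intertwining identity $U^\lambda_{(Uv,z)}=M(U)^*U^\lambda_{(v,z)}M(U)$ together with the invariance of Haar measure under $U$, which is what converts radiality of $g$ into the commutation relation for $\cf(g)(\lambda)$.
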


%

In the sequel we will need to consider radial test functions. The following lemma asserts that this type of functions exists.
\begin{lemma}
Let $s>1$ and recall that ${B}_1$ is the ball in $\fHeis$ given by
\[
{B}_1=\{\hw\in\fHeis; \hat{d}_0(\hw)<1 \}.
\]
Let $\rphi$ be the function such that $\rphi\equiv1$ on ${B}_1$, and $\rphi\in \cg^s(\fHeis)$, as constructed in Lemma~\ref{lemma-phi}. We set $g=\cf^{-1}(\rphi)$. Then $g$ is a radial function.
\end{lemma}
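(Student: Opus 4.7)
The plan is to compute $g=\cf^{-1}(\tilde{\phi})$ via the inversion formula~\eqref{eq-FT-int-inv} and read off that the result depends on $q=(x,y,z)$ only through $|v|=|(x,y)|$ and $z$. The key input is the Laguerre summation identity recorded in Lemma~\ref{lemma-Four-radial}, which becomes applicable provided the diagonal values of $\tilde{\phi}$ depend only on $|m|$ and $\lambda$. Achieving this requires a careful specialization of the auxiliary bump function $\varphi$ used in Lemmas~\ref{bump-func}--\ref{lemma-phi}.

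The first step is to choose $\varphi$ in sum-symmetric form. Specifically, take $\varphi(x)=h(x_1+\cdots+x_n)$, where $h\in\cg^s(\R)$ is compactly supported with $h\equiv 1$ on $[-\sqrt{n},\sqrt{n}]$; such an $h$ is produced by the one-dimensional version of the convolution construction in the proof of Lemma~\ref{bump-func}. Since $|x_1+\cdots+x_n|\le \sqrt{n}\,|x|_2$ by Cauchy--Schwarz, this $\varphi$ is identically $1$ on the Euclidean unit ball in $\R^n$, is compactly supported, and lies in $\cg^s(\R^n)$, so it fulfills all the hypotheses of Lemma~\ref{bump-func}. With this choice, the explicit formula~\eqref{func-thetaf} yields
\begin{equation*}
\tilde{\phi}(m,\ell,\lambda)=\varphi\lp|\lambda|R(m,\ell)\rp\mathbbm{1}_{\{m=\ell\}}=h\lp|\lambda|(2|m|+n)\rp\mathbbm{1}_{\{m=\ell\}},
\end{equation*}
so the diagonal values of $\tilde{\phi}$ depend only on $|m|$ and $\lambda$.

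The second step is to plug this into~\eqref{eq-FT-int-inv}. Since $\tilde{\phi}$ is supported on the diagonal $\{m=\ell\}$,
\begin{equation*}
g(q)=\frac{2^{n-1}}{\pi^{n+1}}\sum_{m\in\N^n}\int_{\R}e^{i\lambda z}\,K_{m,m,\lambda}(q)\,h\lp|\lambda|(2|m|+n)\rp|\lambda|^n\,d\lambda.
\end{equation*}
Grouping by $k=|m|$ and invoking the Laguerre identity from Lemma~\ref{lemma-Four-radial},
\begin{equation*}
\sum_{m\in\N^n,\,|m|=k}K_{m,m,\lambda}(x,y)=e^{-|\lambda||v|^2}L_k^{n-1}\lp2|\lambda||v|^2\rp,
\end{equation*}
this rewrites as
\begin{equation*}
g(q)=\frac{2^{n-1}}{\pi^{n+1}}\sum_{k=0}^{\infty}\int_{\R}e^{i\lambda z}\,h\lp|\lambda|(2k+n)\rp\,e^{-|\lambda||v|^2}L_k^{n-1}\lp2|\lambda||v|^2\rp|\lambda|^n\,d\lambda,
\end{equation*}
whose right-hand side manifestly depends only on $|v|$ and $z$. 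By Definition~\ref{def-radial-f} this means $g$ is radial. The only subtle point is the initial specialization of $\varphi$ to sum-symmetric form; once this is secured, the rest of the argument is a direct computation resting on the explicit summation formula for the kernels $K_{m,m,\lambda}$.
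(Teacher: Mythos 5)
Your argument follows the same route as the paper's proof: use the diagonal support of $\tilde{\phi}$ to collapse the inversion formula \eqref{eq-FT-int-inv} to a sum over $m$, regroup by $k=|m|$, and invoke the Laguerre summation identity $\sum_{|m|=k}K_{m,m,\lambda}=e^{-|\lambda||v|^2}L_k^{n-1}(2|\lambda||v|^2)$ from Lemma~\ref{lemma-Four-radial} to see that the result depends on $q$ only through $|v|$ and $z$. The substantive difference is that you make explicit a point the paper's proof passes over in silence: the regrouping by $|m|=k$ is only legitimate if the diagonal values $\tilde{\phi}(m,m,\lambda)=\varphi(|\lambda|R(m,m))$ depend on $m$ through $|m|$ alone, which is not automatic for the generic $\varphi$ produced in Lemma~\ref{bump-func} (since $R(m,m)=(2m_j+1)_j$ carries more information than $|m|$). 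Your specialization $\varphi(x)=h(x_1+\cdots+x_n)$ is exactly the right fix and is consistent with how $\tilde\phi$ is used elsewhere, since the lemma is only needed to exhibit \emph{some} radial test function with the stated properties. One small inaccuracy: for $n\ge 2$ the function $h(x_1+\cdots+x_n)$ is supported on a slab and is \emph{not} compactly supported on $\R^n$, so it does not literally satisfy the conclusion of Lemma~\ref{bump-func}. This is harmless here because $\Theta_f$ only sees $\varphi$ on $[0,\infty)^n$, where $\{x\ge 0: x_1+\cdots+x_n\le R\}$ is compact, so $\mathrm{Supp}\,\Theta_f$ is still bounded in $(\fHeis,\widehat d\,)$ and Propositions~\ref{prop 2.13} and~\ref{exp-decay-est} still apply; you should state this restriction rather than claim compact support on all of $\R^n$.
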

\begin{proof}
The proof follows from considerations in \cite[page 17-18]{BBG}. Namely the function $\rphi$ constructed in Lemma \ref{lemma-phi} is supported on the diagonal $\{m=\ell\}$, In this situation, the Fourier inversion formula takes the form (recall that $g\equiv \cf^{-1}(\rphi)$):
\begin{align}\label{eq-g-function}
g(q)=\frac{2^{n-1}}{\pi^{n+1}}\sum_{\ell\in \N}\int_\R e^{i\lambda z}\omega_{\ell,\lambda}(v)\phi (\ell, \ell, \lambda)|\lambda|^d d\lambda,
\end{align}
where we have set $v=(x,y)$ and $q=(v,z)$ for a generic element $q\in \Heis$, and where the function $\omega$ is given by 
\[
\omega_{\ell,\lambda}(v)=e^{-|\lambda| |v|^2} L_\ell^{(n-1)}(2|\lambda| |v|^2).
\]
Since $\omega_{\ell,\lambda}$ is obviously radial, we deduce from \eqref{eq-g-function} that $g$ is radial as well.
\end{proof}


\subsubsection{Mapping Fourier settings}

As mentioned at the beginning of Section \ref{sec-preliminary}, our strategy towards a proper definition of paraproduct hinges on the Fourier setting from \cite{BG}. We shall first briefly recall this framework. 

\begin{lemma}\label{lem:representations-bg}
Consider the state space
\begin{equation}\label{k3}
\ch
=
L^{2}\lp  \C^{n}; \,  d\xi \rp \, .
\end{equation}
As in Definition \ref{def-radial-f}, we write $q=(x,y,z)=(v,z)$ for a generic element of $\Heis$. Then a set of representations $\{\tu_{q}^{\la}; \, q\in\Heis, \, \la\in \R\}$ is given by:
\begin{equation}\label{k4}
\tu_{q}^{\la}f(\xi)
=
\lp\frac{2|\la|}{\pi}  \rp^{n/2} e^{-|\la| |\xi|^{2}}
\exp\lp \la\lp \ii z + 2\lla \xi, v  \rra  - {|v|^{2}}   \rp\rp
f\lp \xi-\bar{v}  \rp \,  .
\end{equation}
\end{lemma}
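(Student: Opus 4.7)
The plan is to verify two things: (i) each $\tilde{U}_q^\lambda$ defines a bounded operator on $\ch = L^2(\C^n; d\xi)$, and, more substantively, (ii) the map $q \mapsto \tilde{U}_q^\lambda$ is a group homomorphism from $(\Heis, *)$ into the bounded operators on $\ch$, with $*$ the Heisenberg product \eqref{eq-groupaction}. Throughout, I will identify $v = (x,y) \in \R^{2n}$ with $v = x + iy \in \C^n$, so that $\langle \xi, v\rangle$ denotes the Hermitian pairing on $\C^n$, $|v|^2 = |x|^2+|y|^2$, and $\bar v = x - iy$ is the standard complex conjugate.

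For (i), I would bound $|\tilde{U}_q^\lambda f(\xi)|^2$ using $|\exp(\lambda(iz + 2\langle\xi, v\rangle - |v|^2))|^2 = \exp(2\lambda(2\operatorname{Re}\langle\xi, v\rangle - |v|^2))$, combine with the Gaussian prefactor $(2|\lambda|/\pi)^n e^{-2|\lambda||\xi|^2}$, and complete the square in $\xi$. After a translation, the resulting Gaussian integral will reduce to $\|f\|_{\ch}^2$ up to a controlled shift, with the normalization constant $(2|\lambda|/\pi)^{n/2}$ chosen precisely so that $\tilde{U}_q^\lambda$ maps $\ch$ into itself with operator norm uniform in $q$.

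For (ii), the main computation is direct: expand $(\tilde{U}_{q'}^\lambda f)(\eta)$ via \eqref{k4}, then substitute $\eta = \xi - \bar v$ into the formula for $(\tilde{U}_q^\lambda [\tilde{U}_{q'}^\lambda f])(\xi)$. This will produce two Gaussian factors $e^{-|\lambda||\xi|^2}$ and $e^{-|\lambda||\xi-\bar v|^2}$, two phase factors, an overall translation $f(\xi - \bar v - \bar{v'}) = f(\xi - \overline{v + v'})$, and the constant $(2|\lambda|/\pi)^n$. I will then reorganize these so they match the single expression for $(\tilde{U}_{q*q'}^\lambda f)(\xi)$, whose phase is $\exp(\lambda(i(z+z'+2\omega(v,v')) + 2\langle\xi, v+v'\rangle - |v+v'|^2))$ with overall constant $(2|\lambda|/\pi)^{n/2}$. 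The key algebraic identity driving the verification will be
\[
\omega(v, v') = \operatorname{Im}\langle v, v'\rangle_{\C^n},
\]
which follows at once from the definitions of the symplectic form on $\R^{2n}$ and of the Hermitian inner product on $\C^n$ under the identification above. Combined with the expansions $|v+v'|^2 = |v|^2 + |v'|^2 + 2\operatorname{Re}\langle v, v'\rangle$ and $|\xi - \bar v|^2 = |\xi|^2 - 2\operatorname{Re}\langle \xi, \bar v\rangle + |v|^2$, this identity will absorb the extra Gaussian $e^{-|\lambda||\xi-\bar v|^2}$ and the cross-term $\langle \bar v, v'\rangle$ into the phase $2i\lambda\omega(v,v')$ appearing in $q*q'$. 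The main obstacle will be precisely this bookkeeping — ensuring every Gaussian and exponential factor lands in its proper place — a calculation of the same flavor as the standard verification of the Bargmann--Fock realization of the Heisenberg group representation.
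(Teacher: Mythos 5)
You are proving something the paper does not actually prove: Lemma \ref{lem:representations-bg} is a recall of the framework of \cite{BG}, and the only justification offered is the remark that follows it, namely that the $\la$-dependent spaces $\ch_\la$ of \eqref{k1} carrying the representations $V_q^\la$ of \eqref{k2} are transported to the fixed space $L^2(\C^n,d\xi)$ by the unitary $W_\la F(\xi)=(2|\la|/\pi)^{n/2}e^{-|\la||\xi|^2}F(\xi)$, which moves the Gaussian weight from the measure into the vectors. A self-contained verification would be a legitimate alternative route, but your plan for it has a genuine gap.

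The gap is in step (ii), and it is not bookkeeping. First note that for the group identity $e=(0,0,0)$ formula \eqref{k4} gives $\tu_{e}^{\la}f(\xi)=(2|\la|/\pi)^{n/2}e^{-|\la||\xi|^2}f(\xi)\neq f(\xi)$, so the family cannot be a homomorphism on $L^2(\C^n,d\xi)$ in the literal sense. More generally, composing two copies of \eqref{k4} yields
\begin{equation*}
\tu_{q}^{\la}\lp\tu_{q'}^{\la}f\rp(\xi)
=\lp\tfrac{2|\la|}{\pi}\rp^{n}e^{-|\la||\xi|^{2}}e^{-|\la||\xi-\bar v|^{2}}\,
e^{\la(\ii z+2\lla\xi,v\rra-|v|^{2})}\,e^{\la(\ii z'+2\lla\xi-\bar v,v'\rra-|v'|^{2})}
f\lp\xi-\overline{v+v'}\rp ,
\end{equation*}
whereas $\tu_{qq'}^{\la}f(\xi)$ carries the constant $(2|\la|/\pi)^{n/2}$ and a single Gaussian $e^{-|\la||\xi|^2}$. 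The surplus factor $(2|\la|/\pi)^{n/2}e^{-|\la||\xi-\bar v|^{2}}$ is a genuine Gaussian in $\xi$ times a $\la$-dependent constant; it cannot be absorbed into the phase by $\om(v,v')=\mathrm{Im}\lla v,v'\rra$ or by completing squares, because the target expression contains no quadratic term in $\xi$ beyond $e^{-|\la||\xi|^2}$ and no power of $2|\la|/\pi$ beyond $n/2$. The correct reading, and the one the sequel actually relies on (Lemma \ref{lem:eigenfunctions-bg} only uses the unitary equivalence with $U^\la$ and the diagonal action on radial functions), is that \eqref{k4} is the Bargmann--Fock representation \eqref{k2} transported by $W_\la$: the multiplicativity must be checked for $V^\la$ on $\ch_\la$, where the Gaussian sits in the measure and is not reduplicated at each application, and then conjugated by the unitary $W_\la$ (equivalently, one works on the image of the holomorphic Fock space, where the reproducing-kernel structure accounts for the extra factor). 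Your identity $\om(v,v')=\mathrm{Im}\lla v,v'\rra$ and the expansions of $|v+v'|^2$ and $|\xi-\bar v|^2$ do exactly the work you describe in that setting; applied directly to \eqref{k4} on $L^2(\C^n,d\xi)$, the computation stalls at its first line.
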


\noindent
The representations given in Lemma \ref{lem:representations-bg} give raise to a Fourier transform which is labeled below for further use. This is the Fourier transform used in \cite{BG}.
\begin{definition}\label{def:fourier-bg}
We will call $\tcf$ the Fourier transform based on the representations $\tu_{q}^{\la}$ introduced in~\eqref{k4}. That is similarly to~\eqref{eq-Fourier-transf}, for a function $f\in L^1(\Heis)$ we set
\begin{equation}\label{eq-Fourier-transf-2}
\tcf(f) (\lambda)=\int_{\Heis} f(q)\tu_{q}^\lambda \,  d\mu(q) .
\end{equation}
\end{definition}

\begin{remark}
In a somewhat non standard way, the state space in \cite{BG} was depending on the parameter $\la\in\R$.
Namely for a given $\la$, the state space in \cite{BG} is
\begin{equation}\label{k1}
\ch_{\la}
=
L^{2}\lp  \C^{n}; \, \lp \frac{2|\la|}{\pi}  \rp^{n} e^{-2|\la| |\xi|^{2}} d\xi \rp \, .
\end{equation}
Then the representations in \cite{BG} are given by 
\begin{equation}\label{k2}
V_{q}^{\la}F(\xi)
=
\exp\lp \la\lp \ii z + 2\lla \xi, v  \rra  \rp - \frac{|z|^{2}}{2}  \rp
F\lp \xi-\bar{v}  \rp \, ,
\quad\text{for all } F\in\ch_{\la} \, .
\end{equation}
However, it is easy to go from~\eqref{k1}-\eqref{k2} to a single state space, as we did in~\eqref{k3}-\eqref{k4}.
\end{remark}

The analysis in \cite[Proposition 2.1]{BG} relies on a set of eigenfunctions that we now proceed to describe.
\begin{lemma}\label{lem:eigenfunctions-bg}
For $m\in\N^{n}$, $\la\in\R$ and $\xi\in\C^{n}$ define
\begin{equation}
F_{m}^{\la}(\xi)
=
\frac{\lp 2|\la|\xi  \rp^{m}}{(m{!})^{1/2}}
:=
\prod_{j=1}^{n} \frac{\lp 2|\la|\xi_{j}  \rp^{m_{j}}}{(m_{j}!)^{1/2}} \, .
\end{equation}
Then $\{F_{m}^{\la}; \, m\in\N^{n}\}$ is an orthonormal basis for $\ch$. Furthermore, whenever $f$ is radial we have
\begin{equation}\label{eq-F-R}
\lla  \tcf f(\la) F_{m}^{\la}, F_{m'}^{\la} \rra
=
\mathbbm{1}_{(m=m')} \, R_{m}^{f,\la} \, ,
\end{equation}
where the scalar $R_{m}^{f,\la}$ is defined by
\begin{equation}\label{eq-R-m}
R_{m}^{f,\la}
=
\binom{|m|+n-1}{|m|}^{-1}
\int_{\Heis} f(q) e^{\ii\la z} L_{m}^{(n-1)}(2|\la| |v|^{2}) e^{-2|\la| |v|^{2}} \, \mu(dq) \, ,
\end{equation}
and where the generic element of $\Heis$ is written $q=(v,z)$.
\end{lemma}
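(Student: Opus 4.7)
The plan is to reduce both assertions to standard facts about the Bargmann--Fock space and to the earlier Lemma \ref{lemma-Four-radial} on radial functions, exploiting the unitary equivalence (guaranteed by the Stone--von Neumann theorem) between the representation $\tilde{U}^\lambda$ introduced here and the Schr\"odinger representation $U^\lambda$ of \eqref{eq:representation1}.

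For the orthonormal basis claim, I would observe that, after extracting the Gaussian factor $(2|\lambda|/\pi)^{n/2} e^{-|\lambda||\xi|^2}$ hidden in $\tilde{U}_q^\lambda$, the natural inner product on the representation space is the Bargmann--Fock inner product with weight $(2|\lambda|/\pi)^n e^{-2|\lambda||\xi|^2} d\xi$. With this weight one has the standard identity
\[
\int_{\C^n} \xi^m \bar\xi^{m'}\, e^{-2|\lambda||\xi|^2} d\xi = \delta_{m,m'}\, \frac{\pi^n\, m!}{(2|\lambda|)^{|m|+n}},
\]
which, combined with the normalization $(2|\lambda|)^{|m|}/(m!)^{1/2}$ in $F_m^\lambda$, yields orthonormality; completeness is the usual density of holomorphic polynomials in the Fock space.

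For the matrix element identity, I would construct a unitary intertwiner $W_\lambda: L^2(\R^n) \to \ch$ satisfying $W_\lambda U_q^\lambda = \tilde{U}_q^\lambda W_\lambda$ and mapping the Hermite basis $\Phi_m^\lambda$ to $F_m^\lambda$ up to a fixed phase; this $W_\lambda$ is (a rescaling of) a Bargmann transform, and the matching of bases follows from the fact that both families realize the number-operator basis $m \leftrightarrow (a^*)^m |0\rangle$ in their respective models. With $W_\lambda$ at hand, one immediately obtains
\[
\langle \tcf f(\lambda) F_m^\lambda, F_{m'}^\lambda\rangle_{\ch}
= \langle \cf(f)(\lambda) \Phi_m^\lambda, \Phi_{m'}^\lambda\rangle_{L^2(\R^n)} = \Ff(m, m', \lambda),
\]
so that Lemma \ref{lemma-Four-radial} supplies both the Kronecker delta $\mathbbm{1}_{(m=m')}$ and a closed-form expression for the diagonal entries in terms of $L_{|m|}^{(n-1)}(2|\lambda||v|^2) e^{-|\lambda||v|^2}$ integrated against $f$.

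The main obstacle I anticipate is reconciling the weight $e^{-|\lambda||v|^2}$ coming from Lemma \ref{lemma-Four-radial} with the weight $e^{-2|\lambda||v|^2}$ appearing in \eqref{eq-R-m}. This discrepancy is precisely the Gaussian absorbed into $\tilde{U}_q^\lambda$ but absent from $U_q^\lambda$, and tracking it carefully through $W_\lambda$ --- together with the binomial coefficient $\binom{|m|+n-1}{|m|}^{-1}$ arising from the $U(n)$-dimension count over $\{|m|=k\}$ --- produces the precise normalization in \eqref{eq-R-m}. A direct alternative is to compute $\langle \tilde{U}_q^\lambda F_m^\lambda, F_{m'}^\lambda\rangle$ as a Gaussian integral, expand $(\xi-\bar v)^m$ by the binomial theorem, perform the angular integration in $\xi$ to collapse to $m=m'$, and recognize the resulting series as a value of $L_{|m|}^{(n-1)}$ via the classical generating function $\sum_k L_k^{(\alpha)}(x) t^k = (1-t)^{-\alpha-1} e^{-xt/(1-t)}$; this route is longer but avoids the intertwiner bookkeeping.
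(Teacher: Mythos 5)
The paper never proves this lemma: it is imported from \cite{BG} (the surrounding text points to Proposition 2.1 there), and the only nearby argument is the \emph{next} lemma, which takes \eqref{eq-F-R}--\eqref{eq-R-m} as known input in order to identify $A^{*}\Phi_{m}^{\la}$ with $F_{m}^{\la}$. So you are supplying a proof where the paper supplies a citation, and your two ingredients --- Fock-space orthogonality of monomials for the basis claim, and a Bargmann-type intertwiner combined with Lemma \ref{lemma-Four-radial} for the matrix elements --- are the natural ones. Building $W_{\la}$ independently as a Bargmann transform, rather than invoking the paper's operator $A$ (whose identification is itself \emph{deduced} from \eqref{eq-F-R}), correctly avoids a circularity.

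Two steps do not close as written, however. First, orthonormality: with your own Gaussian identity, the norm of $F_{m}^{\la}$ in the weighted space $\lp 2|\la|/\pi\rp^{n}e^{-2|\la||\xi|^{2}}d\xi$ comes out to $(2|\la|)^{2|m|}(m!)^{-1}\cdot m!\,(2|\la|)^{-|m|}=(2|\la|)^{|m|}$, not $1$; orthonormality holds only for the normalization $((2|\la|)^{1/2}\xi)^{m}/(m!)^{1/2}$, so your computation does not ``yield orthonormality'' for the $F_{m}^{\la}$ as defined --- you need to either flag the normalization or adjust it. (Relatedly, with $\ch=L^{2}(\C^{n},d\xi)$ as in \eqref{k3} the polynomials $F_{m}^{\la}$ are not square-integrable at all; your silent passage to the Gaussian-weighted space is not optional and must be made explicit.) Second, and more seriously, the reconciliation you propose for the Gaussian weights cannot work: matrix elements are invariant under unitary equivalence, so whatever kernel appears in $\lla \cf f(\la)\Phi_{m}^{\la},\Phi_{m'}^{\la}\rra$ --- namely $e^{-|\la||v|^{2}}L_{|m|}^{(n-1)}(2|\la||v|^{2})$ by Lemma \ref{lemma-Four-radial} --- must reappear \emph{unchanged} in $\lla\tcf f(\mp\la)F_{m}^{\la},F_{m'}^{\la}\rra$; no amount of ``tracking the Gaussian absorbed into $\tilde U_{q}^{\la}$ through $W_{\la}$'' can convert it into the $e^{-2|\la||v|^{2}}$ of \eqref{eq-R-m}. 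The two formulas are simply inconsistent with each other (the paper's later proof asserts they ``match'', which they do not as printed), and a correct argument lands on exactly one of them. Your fallback --- the direct Gaussian integral with the Laguerre generating function --- is the right way to settle which constant is correct, and it should replace, not supplement, the hoped-for intertwiner bookkeeping.
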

We now start to relate our notion of Fourier transform to the one put forward in \cite{BG}. To this aim, observe first that the representations~\eqref{k3}-\eqref{k4} are obviously different from our representations $U_q^\lambda$ in~\eqref{eq:representation1}. However, the general representation theory in $\Heis$ asserts that the two objects are equivalent. Let us specify this point in another lemma.

\begin{lemma}\label{lem:eigenfunctions-bg}
The representations $\tu_{q}^{\la}$ given in Lemma~\ref{lem:representations-bg} are unitarily equivalent to~\eqref{eq:representation1}. Otherwise stated, there exits a unitary operator $A: L^{2}(\C^{n})\to L^{2}(\R^{n})$ such that for every $q\in\Heis,\, \la\in\R$ we have
\begin{equation}\label{k5}
\tu_{q}^{\la} = A^{*} U_{q}^{-\la} A,
\quad\text{and}\quad
U_{q}^{\la} = A \,\tu_{q}^{-\la} A^{*} .
\end{equation}
In addition, one can choose the operator $A$ so that
\begin{equation}\label{eq-F-A-Phi}
F_{m}^{\la} = A^{*}\Phi_{m}^{\la} .
\end{equation}
\end{lemma}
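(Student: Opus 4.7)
The plan is to invoke the Stone--von Neumann theorem. For each $\la\in\R^{*}$, this classical result states that any two irreducible unitary representations of $\Heis$ whose restriction to the center $\{(0,0,z)\}$ is scalar multiplication by $e^{\ii\la z}$ are unitarily equivalent, and the intertwining unitary is unique up to a unimodular phase. Accordingly, I would first verify that both $\tu_{q}^{\la}$ and $U_{q}^{-\la}$ are irreducible unitary representations on $\ch=L^{2}(\C^{n};d\xi)$ and on $L^{2}(\R^{n})$ respectively. Irreducibility and unitarity of the Schr\"odinger representation $U^{-\la}$ are standard. For $\tu^{\la}$ one can import these properties from the Fock--Bargmann realization $V_{q}^{\la}$ on $\ch_{\la}$ given in \eqref{k1}--\eqref{k2} by conjugation with the multiplier isomorphism $J:\ch_{\la}\to\ch$, $(JF)(\xi)=(2|\la|/\pi)^{n/2}e^{-|\la||\xi|^{2}}F(\xi)$, which absorbs the Gaussian weight of $\ch_{\la}$ into the function; a direct computation then shows that $JV_{q}^{\la}J^{-1}$ coincides with $\tu_{q}^{\la}$.

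Next, one matches central characters. Evaluating \eqref{eq:representation1} at $q=(0,0,z)$ gives $U_{(0,0,z)}^{-\la}u(\xi)=e^{\ii\la z}u(\xi)$, while evaluating \eqref{k4} at $v=0$ yields $\tu_{(0,0,z)}^{\la}f(\xi)=e^{\ii\la z}f(\xi)$. Hence both representations share the central character $e^{\ii\la z}$, and Stone--von Neumann produces a unitary $A:\ch\to L^{2}(\R^{n})$ satisfying $A\tu_{q}^{\la}=U_{q}^{-\la}A$ for every $q\in\Heis$, which is exactly the content of the two identities in \eqref{k5}. Schur's lemma then guarantees that $A$ is unique up to multiplication by a unimodular constant, and we only need to pin down that phase so as to secure \eqref{eq-F-A-Phi}.

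To fix the scalar, I would exploit the ladder structure present in both pictures. On the Schr\"odinger side, the Hermite basis $\{\Phi_{m}^{\la}\}_{m\in\N^{n}}$ is generated from the ground state $\Phi_{0}^{\la}$ by iterating the rescaled creation operators $C_{j}^{\la}=-|\la|^{-1/2}\partial_{j}+|\la|^{1/2}x_{j}$, each of which arises by differentiating $U^{-\la}$ at the identity along a suitable combination of the horizontal basis vectors $X_{j},Y_{j}$ in \eqref{eq-basis}. On the Bargmann side, $\{F_{m}^{\la}\}_{m\in\N^{n}}$ is generated from $F_{0}^{\la}\equiv 1$ by iterating the multiplication operator $f\mapsto \sqrt{2|\la|}\,\xi_{j}f$, which one checks arises by differentiating $\tu^{\la}$ along the very same Lie-algebra element. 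Since $A$ intertwines the representations, it intertwines their infinitesimal generators and hence the two ladder operators. Choosing the phase of $A$ to enforce $A^{*}\Phi_{0}^{\la}=F_{0}^{\la}$ and proceeding by induction on $|m|$ then yields $A^{*}\Phi_{m}^{\la}=F_{m}^{\la}$ for all multi-indices $m$, which is \eqref{eq-F-A-Phi}.

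As a consistency check one may compare diagonal matrix coefficients on radial Schwartz inputs using Lemma~\ref{lemma-Four-radial} and the identity \eqref{eq-F-R}: both $\lla\cf(f)(-\la)\Phi_{m}^{\la},\Phi_{m}^{\la}\rra$ and $\lla\tcf(f)(\la)F_{m}^{\la},F_{m}^{\la}\rra$ collapse to the same scalar $\rf(|m|,-\la)=R_{m}^{f,\la}$ once one uses that the kernel $\mathcal{K}_{k,\la}$ is real-valued. The main obstacle I foresee is the careful bookkeeping of sign conventions in $\la$ and of the Gaussian weight when moving between $\ch_{\la}$ and $\ch$, so that the central characters line up (forcing the $-\la$ on the Schr\"odinger side against $+\la$ on the Bargmann side) and so that the ladder operators on the two sides correspond to the same element of the complexified Heisenberg Lie algebra; once those conventions are tracked, Stone--von Neumann together with the vacuum plus ladder argument delivers the two identities in \eqref{k5} and \eqref{eq-F-A-Phi}.
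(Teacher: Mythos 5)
Your proposal is correct and its first half is the same as the paper's: the existence of the intertwiner $A$ in \eqref{k5} is obtained there from ``general representation theory considerations'', i.e.\ exactly the Stone--von Neumann uniqueness you invoke (your matching of central characters is right once one absorbs the stray Gaussian prefactor of \eqref{k4} into the functions via your multiplier map $J$, which is clearly the intended reading). Where you genuinely diverge is in proving \eqref{eq-F-A-Phi}. The paper pins down the basis by comparing diagonal matrix coefficients on \emph{radial} functions: it computes $\lla \cf f(\la)\Phi_m^\la,\Phi_{m'}^\la\rra$ from Lemma~\ref{lemma-Four-radial} and matches it with the Laguerre formula \eqref{eq-F-R} for $\lla \tcf f(-\la)F_m^\la,F_{m'}^\la\rra$ --- precisely the computation you relegate to a consistency check. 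You instead run a vacuum-plus-ladder argument: normalize the Schur phase by $A^{*}\Phi_0^\la=F_0^\la$ and propagate along the intertwined creation operators. This costs the normalization bookkeeping you flag (for instance the Bargmann raising operator matching $C_j$ should be multiplication by $2|\la|\xi_j$ rather than $\sqrt{2|\la|}\,\xi_j$ if the coefficients $\sqrt{m_j+1}$ are to agree on the nose), but it buys precision: since $R_m^{f,\la}$ depends only on $|m|$, the radial matrix-coefficient comparison by itself only places $A^{*}\Phi_m^\la$ in the degenerate eigenspace spanned by $\{F_{m'}^\la:|m'|=|m|\}$ when $n\ge 2$, whereas your ladder argument resolves the individual multi-indices. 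Conversely, the paper's computation has the advantage of directly delivering the diagonalization identity \eqref{eq-cf-phi}, which is what is actually used later in Proposition~\ref{prop-support}; with your route one still has to combine \eqref{eq-F-A-Phi} with \eqref{eq-F-R} to recover it.
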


\begin{proof}
The existence of $A$ such that \eqref{k5} holds true is ensured by general representation theory considerations. Hence we also have
\begin{equation}\label{eq-FourierTs}
\cf f(\la) = A\,\tcf f(-\la) A^{*}.
\end{equation}
Apply the above relation to $\Phi_{m}^{\la}$. This yields
\begin{equation}\label{k6}
\lla  \cf f(\la) \Phi_{m}^{\la}, \Phi_{m'}^{\la} \rra
= 
\lla  A\,\tcf f(-\la) A^{*} \Phi_{m}^{\la}, \Phi_{m'}^{\la} \rra
=
\lla  \tcf f(-\la) A^{*} \Phi_{m}^{\la}, A^{*}\Phi_{m'}^{\la} \rra .
\end{equation}
Moreover, one can easily generate operators of the form $\tcf f(\la)$ which are invertible by considering radial functions $f$. 
Therefore invoking~\eqref{k6}, we get $F_{m}^{\la} = A^{*}\Phi_{m}^{\la}$ if one can prove that for a generic radial function $f$ we have
\begin{equation}\label{eq-cf-phi}
\lla  \cf f(\la) \Phi_{m}^{\la}, \Phi_{m'}^{\la} \rra 
=
\mathbbm{1}_{(m=m')} \, R_{m}^{f,-\la},
\end{equation}
where we recall that $R_{m}^{f,\la}$ is defined by \eqref{eq-R-m}.
To verify \eqref{eq-cf-phi}, we just need to recall \eqref{eq-F-trans-decomp} and apply Lemma \ref{lemma-Four-radial}. We obtain that for any radial function $f$, 
\begin{multline*}
\lla  \cf f(\la) \Phi_{m}^{\la}, \Phi_{m'}^{\la} \rra \\
=\mathbbm{1}_{(m=m')} 
\begin{pmatrix} |m|+n-1 \\  |m| \end{pmatrix} ^{-1}
\int_\Heis {e^{-iz\lambda} e^{-|\lambda|(|x|^2+|y|^2)} L_{|m|}^{n-1}(2|\lambda| |(x,y)|^2)}f(q)dq \, ,
\end{multline*}
where $L_k^{n-1}$ denotes the Laguerre polynomial of order $k$ and type $n-1$ as given in~\eqref{eq-Lauguerre}. This matches with \eqref{eq-R-m} for $R^{f,-\lambda}_m$, which ends the proof.

\end{proof}

\subsubsection{Frequency localization of paraproducts}

An important step in the definition of products is the following: show that the product of 2 functions which are localized in Fourier modes is still localized in Fourier modes. Below we prove a projective version of this fact, still inspired by~\cite{BG}.

\begin{proposition}\label{prop-support}
Consider an annulus $\cac_{0}$ in $\R$ and $j,l\in\Z$. Let $f,g$ be radial functions in $\cs(\Heis)$ such that for all 
$m\in \N^n$ we have
\begin{align}\label{eq-cond-f}
\cf (f)(\lambda)\Phi^\lambda_m=0 \quad \text{for all } \lambda\not\in (2|m|+n)^{-1}2^{2j} \cac_0
\end{align}
and
\begin{align}\label{eq-cond-g}
\cf (g)(\lambda)\Phi^\lambda_m=0 \quad \text{for all } \lambda\not\in (2|m|+n)^{-1}2^{2l} \cac_0.
\end{align} 
Then the following holds true:
 \begin{enumerate}[wide, labelwidth=!, labelindent=0pt, label= \emph{\textbf{(\arabic*)}}]
\setlength\itemsep{.05in}

\item
If $\ell>j+1$, then  there exists an annulus $\cac_0'$ such that 
\begin{align*}
\cf (fg)(\lambda) \Phi^\lambda_m=0 \quad \text{for all } \lambda\not\in (2|m|+n)^{-1}2^{2j} \cac'_0
\end{align*} 

\item
If $|\ell-j|\le 1$, there exists a ball $\mathcal{B}'_0$ such that 
\begin{align*}
\cf (fg)(\lambda) \Phi^\lambda_m=0 \quad \text{for all } \lambda\not\in (2|m|+n)^{-1}2^{2j} \mathcal{B}'_0.
\end{align*} 
\end{enumerate}
\end{proposition}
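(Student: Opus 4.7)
The plan is to transfer the problem into the Bahouri--Gallagher Fourier framework $\tcf$ from Definition~\ref{def:fourier-bg}, where radial functions admit a transparent scalar diagonalization. The starting observation is that if $f,g$ are radial in the sense of Definition~\ref{def-radial-f}, then so is their pointwise product $fg$. By Lemma~\ref{lemma-Four-radial}, $\hat f$, $\hat g$ and $\widehat{fg}$ are all diagonal in the basis $(\Phi_m^\lambda)_{m\in\N^n}$, and by Lemma~\ref{lem:eigenfunctions-bg} we have $F_m^\lambda = A^*\Phi_m^\lambda$. Combined with \eqref{eq-FourierTs}, the hypotheses \eqref{eq-cond-f}--\eqref{eq-cond-g} become vanishing conditions on the scalars $R_m^{f,\lambda}$ and $R_m^{g,\lambda}$ of \eqref{eq-R-m} outside the annuli $\lambda(2|m|+n)\in 2^{2j}\cac_0$ and $\lambda(2|m|+n)\in 2^{2\ell}\cac_0$ respectively. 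It therefore suffices to establish the desired support property for the diagonal coefficients $R_m^{fg,\lambda}$.

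The core technical step is to derive an explicit product formula for $R_m^{fg,\lambda}$ in terms of the $R_k^{f,\cdot}$ and $R_{k'}^{g,\cdot}$. The idea is to expand $\tcf(fg)(\lambda)=\int_{\Heis}f(q)g(q)\tilde U_q^\lambda \, d\mu(q)$ using the explicit representation kernel~\eqref{k4}, re-inject $f$ and $g$ via the inverse Fourier formula associated with the basis $(F_m^\lambda)$, and then simplify using the Heisenberg group law together with the orthogonality of the Laguerre functions appearing implicitly in~\eqref{eq-R-m}. The output should be an identity expressing $R_m^{fg,\lambda}$ as a discrete/continuous ``convolution'' of the Fourier coefficients of $f$ and $g$, in which the coupling between the Hermite indices $(m,k,k')$ and between the spectral variables encodes the Heisenberg group law and is governed by Laguerre calculus.

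With the product formula in hand, the support analysis reduces to a combinatorial exercise. Non-zero contributions to $R_m^{fg,\lambda}$ require indices and frequencies satisfying both the coupling constraints above and the localizations $\mu(2|k|+n)\in 2^{2j}\cac_0$ and $\nu(2|k'|+n)\in 2^{2\ell}\cac_0$. A triangle-inequality argument applied to the Schr\"odinger frequency $\lambda(2|m|+n)$ then yields the two announced conclusions: when $|\ell-j|\le 1$ all the scales involved are comparable, so $\lambda(2|m|+n)$ lies in a ball of size $\sim 2^{2\max(j,\ell)}$, which proves~\textbf{(2)}; when $\ell>j+1$ the scales of $f$ and $g$ are well-separated, and the higher frequency dominates, confining $\lambda(2|m|+n)$ to an annulus of size $\sim 2^{2\max(j,\ell)}$, which after a mild dilation of $\cac_0$ gives~\textbf{(1)}.

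The main obstacle is Step~2: unlike on $\R^d$, where $\widehat{fg}=\hat f\star\hat g$ is a clean convolution, on $\Heis$ the analogous identity mixes sums over Hermite-index shifts with integrals in $\lambda$ against Laguerre-type kernels, and the bookkeeping of the prefactor $(2|m|+n)^{-1}$---which is what makes the resulting scaling genuinely sub-Riemannian---must be carried out with care. Once the correct coupling constraints between $(m,k,k')$ and $(\lambda,\mu,\nu)$ are extracted, the case-by-case support analysis in Step~3 is immediate.
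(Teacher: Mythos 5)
Your setup coincides with the paper's: both translate the hypotheses \eqref{eq-cond-f}--\eqref{eq-cond-g} into the Bahouri--Gallagher framework via the unitary intertwiner $A$ of Lemma \ref{lem:eigenfunctions-bg} (so that $\cf h(-\la)\Phi_m^\la=A\,\tcf h(\la)F_m^\la$), and both exploit the fact that radial functions are diagonalized in the bases $(\Phi_m^\la)$, resp.\ $(F_m^\la)$. Where you diverge is at the decisive point: the paper does \emph{not} derive any product formula for the diagonal coefficients. It simply invokes \cite[Proposition 4.2]{BG}, which already contains the frequency localizations \eqref{eq-supp-fg-C}--\eqref{eq-supp-fg-B} for products of radial functions in the $\tcf$ setting, and then transfers back to $\cf$ using the same intertwiner and the symmetry of $\cac_0$. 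Your plan instead proposes to re-prove that localization from scratch.

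That is where the gap lies. Your Step 2 --- the ``explicit product formula'' expressing $R_m^{fg,\la}$ as a convolution-type pairing of the coefficients $R_k^{f,\cdot}$ and $R_{k'}^{g,\cdot}$ with coupling constraints between $(m,k,k')$ and the spectral variables --- is precisely the content of the cited Bahouri--Gallagher proposition, and in your write-up it is asserted rather than established: you say the output ``should be'' such an identity and yourself flag the bookkeeping as the main obstacle. Moreover, the route you sketch (re-injecting $f,g$ through the inversion formula and using Laguerre orthogonality) is not how that identity is actually obtained; Bahouri--Gallagher prove it by reducing the Heisenberg Fourier transform of a product to Euclidean Fourier analysis on $\R^{2n}$ combined with Weyl/Laguerre calculus, and the resulting constraints are not a clean triangle inequality on the quantity $\la(2|m|+n)$ but require controlling how the Hermite index can shift under multiplication. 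Without either carrying out that computation in full or citing the BG result, the proof is incomplete: the case analysis in Step 3 ($|\ell-j|\le 1$ versus $\ell>j+1$) is fine but rests entirely on the unproven formula. As a side remark, your conclusion that in case (1) the product localizes in an annulus at scale $2^{2\max(j,\ell)}=2^{2\ell}$ is the correct one (and is what Corollary \ref{cor-support} actually uses), even though the proposition as printed writes $2^{2j}$ there.
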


\begin{proof}  
First  combining \eqref{eq-FourierTs} and \eqref{eq-F-A-Phi} we have for any $h\in  \cs'({\Heis})$  that
\begin{align}\label{m1}
 \cf h(-\la)\Phi_m^\la =A \tcf h(\la)F_m^\la.
\end{align}
Therefore the conditions \eqref{eq-cond-f} and \eqref{eq-cond-g} implies that
\[
 \tcf f(\la)F_m^\la = \mathbbm{1}_{(2|m|+n)^{-1}2^{2j} \cac_0}(\la) \, \tcf f(\la)F_m^\la
\]
and 
\[
 \tcf g(\la)F_m^\la = \mathbbm{1}_{(2|m|+n)^{-1}2^{2l} \cac_0}(\la) \, \tcf g(\la)F_m^\la.
\]
As announced at the beginning of the section, we will now be able to transfer some of the results from \cite{BG} to our setting. Namely resorting to \cite[Proposition 4.2]{BG} we obtain that 
 \begin{enumerate}[wide, labelwidth=!, labelindent=0pt, label= {{(\roman*)}}]
\setlength\itemsep{.05in}

\item
When $\ell-j>1$, there exists an annulus $\chi_0'$ such that 
\begin{align}\label{eq-supp-fg-C}
 \tcf (fg)(\la)F_m^\la = \mathbbm{1}_{(2|m|+n)^{-1}2^{2j} \chi'_0}(\la) \,\tcf (fg)(\la)F_m^\la.
\end{align} 

\item 
When $|\ell-j|\le 1$, there exists a ball $\mathcal{B}'_0$ such that 
\begin{align}\label{eq-supp-fg-B}
 \tcf (fg)(\la)F_m^\la = \mathbbm{1}_{(2|m|+n)^{-1}2^{2\ell} \mathcal{B}'_0}(\la) \, \tcf (fg)(\la)F_m^\la.
\end{align} 
\end{enumerate}
Here it should be recalled that the simple localization formulae \eqref{eq-supp-fg-C}-\eqref{eq-supp-fg-B} are obtained in~\cite{BG} thanks to the diagonalization property \eqref{eq-cf-phi} for $\cf f(\la)$, whenever $f$ is radial. We now conclude by using \eqref{m1} again. Namely we have
\[
 \cf (fg)(-\la)\Phi_m^\la =A \tcf(fg)(\la)F_m^\la.
\]
Hence by symmetry of $\cac_0$, if $\tcf(fg)(\la)F_m^\la=0$ for $\la\not\in (2|m|+n)^{-1}2^{2j}\cac_0$, the same is true for $\cf(fg)(\la)\Phi_m^\la$. This yields \eqref{eq-cond-f}. Relation \eqref{eq-cond-g} is obtained in the same way.
\end{proof}

\smallskip

\begin{remark}
Proposition 4.2 in~\cite{BG} is proved by relating Fourier transforms in $\Heis$ to Fourier transforms in $\R^{2n}$ for specific situations. We could have adopted the same strategy in our situation. However, it looked simpler to just import the beautiful computations of~\cite{BG} to our setting.

\end{remark}

For a function $f\in L^2(\Heis)$, the blocks are good examples of functions in $\mathcal{S} (\Heis)$. Therefore a direct application of Proposition \ref{prop-support} yields the following corollary, which mirrors the remark in \cite[page 92]{BG}.

\begin{corollary}\label{cor-support}
Let $f \in \mathcal{S} (\Heis)$. For $k\geq -1$, recall $\sigma_k f$ and $S_k f$ as defined in \eqref{eqn-c}. 
For $\varepsilon\in\{-1,0,1\}$ and $k\ge 0$   we set
\[
h_k^S:= S_{k-1}f\sigma_k g, \quad\text{and} \quad h_k^{\sigma,\epsilon}= \sigma_{k-\epsilon}f\sigma_k g.
\]
Then there exists an annulus $\cac_{0}^{\prime}$ and a ball $\cb_{0}^{\prime}$ given as in Proposition~\ref{prop-support} such that for all $(m,\ell,\lambda)\in \fHeis$ we have
\begin{eqnarray*}
\hat{h}_k^S(m,\ell,\lambda)&=&\mathbbm{1}_{ (2|m|+n)^{-1}2^{2k} \cac'_0}(\la) \, \hat{h}_k^S(m,\ell,\lambda)
 \\
\hat{h}_k^{\sigma,\epsilon}(m,\ell,\lambda)
&=&
\mathbbm{1}_{ (2|m|+n)^{-1}2^{2k} \mathcal{B}'_0}(\la) \, \hat{h}_k^{\sigma,\epsilon}(m,\ell,\lambda) .
\end{eqnarray*}

\end{corollary}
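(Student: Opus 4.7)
My plan is to reduce the corollary to Proposition~\ref{prop-support} by exploiting the frequency localization of the Littlewood--Paley blocks. The key preliminary observation is that, since $\tilde{\phi}_k$ is supported on the diagonal $\{m=\ell\}$ with value $\chi_k(|\lambda|R(m,\ell))$, the definition in~\eqref{eqn-c} yields
\begin{equation*}
\cf(\sigma_k f)(\lambda)\Phi_m^\lambda \;=\; \chi_k(|\lambda|(2|m|+n))\, \cf(f)(\lambda)\Phi_m^\lambda,
\end{equation*}
so $\cf(\sigma_k f)(\lambda)\Phi_m^\lambda$ vanishes whenever $\lambda \notin (2|m|+n)^{-1}2^{2k}\cac_0$, for the appropriate dyadic annulus $\cac_0$ coming from the support of $\chi$. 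This is exactly the hypothesis required in Proposition~\ref{prop-support}, with frequency index $k$.

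For $h_k^{\sigma,\varepsilon} = \sigma_{k-\varepsilon}f \cdot \sigma_k g$ with $\varepsilon \in \{-1,0,1\}$, the frequency indices of the two factors differ by at most one, so case~(2) of Proposition~\ref{prop-support} applies with $j = k-\varepsilon$ and $\ell = k$. Up to enlarging the resulting ball $\cb_0'$ by a bounded factor that is uniform in $\varepsilon \in \{-1,0,1\}$, this gives the second claim. For $h_k^S = S_{k-1}f \cdot \sigma_k g$, I would decompose $S_{k-1}f = \sum_{i \le k-2}\sigma_i f$ and treat each piece separately: for every $i \le k-2$ one has $\ell = k > i+1 = j+1$, so case~(1) of Proposition~\ref{prop-support} yields an annulus $\cac_0'$ (uniform in $i$, since it depends only on the support data of $\chi$ and $\tilde{\chi}$) such that $\widehat{\sigma_i f \cdot \sigma_k g}$ vanishes outside $(2|m|+n)^{-1}2^{2k}\cac_0'$. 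Summing over $i \le k-2$ preserves this frequency support condition and yields the first claim.

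The main technical obstacle I anticipate is that Proposition~\ref{prop-support} is formulated under a radial assumption on $f$ and $g$, while $\sigma_i f$ and $\sigma_k g$ are typically not radial. A close reading of its proof, however, reveals that radiality is used only to transfer the problem to the setting of~\cite{BG} via the scalar identification of Lemma~\ref{lemma-Four-radial}, whereas the Fourier-support conclusion itself depends only on the localization of $\cf(\cdot)(\lambda)\Phi_m^\lambda$ in $\lambda$ established above. Checking that this extension of Proposition~\ref{prop-support} beyond radial functions is harmless is the most delicate point in the argument; once it is in place, the combinatorial step of reassembling the pieces is straightforward.
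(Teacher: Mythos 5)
Your proposal is correct and follows essentially the same route as the paper, which simply observes that the blocks lie in $\mathcal{S}(\Heis)$ and invokes Proposition~\ref{prop-support} directly; your extra steps (computing $\cf(\sigma_k f)(\lambda)\Phi_m^\lambda=\tilde{\phi}_k(m,m,\lambda)\,\cf(f)(\lambda)\Phi_m^\lambda$, decomposing $S_{k-1}f$ into blocks and using the uniformity of the annulus in $i$, and enlarging the ball uniformly in $\varepsilon$) are exactly the details the paper leaves implicit. Your flag about radiality is well taken: the blocks of a general $f\in\mathcal{S}(\Heis)$ are not radial, so Proposition~\ref{prop-support} as stated does not literally apply, but, as you note, its proof only uses the identity $\cf h(-\lambda)\Phi_m^\lambda=A\,\tcf h(\lambda)F_m^\lambda$ (valid for all $h$) together with the localization result of \cite{BG}, so the radial hypothesis is inessential and your reduction is sound.
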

\subsubsection{Definition of paraproducts}
With the preliminary notions introduced above, we can now turn to a definition of paraproduct. Notice that this can now be done similarly to the flat case, as in e.g \cite{BCD}.

\begin{definition}  Let $f, g$ be smooth functions in $\mathcal{S}(\Heis)$. We define the quantities $ f\varolessthan g$ and $ f \varodot g$ by
 \begin{align*}
 f\varolessthan g &=  \sum_{j < k-1} \sigma_j f \ \sigma_k g = \sum_{k} S_{k-1} f \ \sigma_k g.
 \\
 f \varodot g &= \sum_{|j-k| \leq 1} \sigma_j f \ \sigma_k g,
 \end{align*}
 where  we recall that $\sigma_k f$ and $S_k f$ are defined in \eqref{eqn-c}. Then the product $fg$ can be decomposed  as
   \begin{equation}\label{prod-decomp}
  fg = f\varolessthan g + f \varodot g + f\varogreaterthan g,
  \end{equation} where $f\varogreaterthan g = g\varolessthan f$. 
\end{definition}  
  Our main task in this section is to extend the  product~\eqref{prod-decomp} to elements $f$ and $g$ in suitable  Besov spaces (especially when $g$ is a distribution). We shall do this by considering each term in the decomposition~\eqref{prod-decomp} separately. First we gather some relevant lemmas for Besov estimates of functions which are localized in Fourier modes. Those lemmas are elaborations of \cite[Lemma 2.23]{BCD2}
  
   \begin{lemma}\label{lemma-decomp}
   Let $\kappa\in\R$, $\alpha,\beta\in [1,\infty]$,  and let $\mathcal{C}\subset\fHeis$ be an  annulus. Then there exists a constant $C<\infty$ such that for any sequence $\{f_k;k\ge0\}$ of functions  in $\mathcal{S}(\Heis)$ satisfying 
   \begin{equation}
       \text{Supp} \hat{f}_k\subseteq 2^k \mathcal{C}\, , 
       \quad \text{and}\quad \left( 2^{\kappa k} \lVert f_k\rVert_{L^\alpha_\nu}\right)_{k\in\N} \in \ell^\beta,
   \end{equation} 
   the function $ f \defeq \sum^\infty_{k=0} f_k$ is an element of $\mathfrak{B}^{\kappa,\nu}_{\alpha,\beta}$ and the following holds true:
   \begin{equation}\label{eq-decomp-C}
\lVert f\rVert_{\mathfrak{B}^{\kappa,\nu}_{\alpha,\beta}}\leq C \left\lVert \left( 2^{\kappa k} \lVert f_k\rVert_{L^\alpha_\nu}\right)_{k\in\N}\right\rVert_{\ell^\beta}. 
   \end{equation}
  \end{lemma}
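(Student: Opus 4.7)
The plan is to follow the classical Littlewood--Paley argument, adapted to the projective Fourier setting of $\Heis$. Two ingredients lie at its heart: a \emph{frequency overlap} property, which reduces the sum defining $\sigma_j f$ to a finite window around $j$; and \emph{uniform continuity} of the Littlewood--Paley block operators $\sigma_j$ on $L^\alpha_\nu$.

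For the first ingredient, the assumption $\supp(\widehat{f_k})\subseteq 2^k\mathcal{C}$, together with the support of $\tilde{\phi}_j$ described by Proposition \ref{prop-a} and Lemma \ref{Definition phi_k}, implies that $\sigma_j f_k = 0$ unless the dyadic set $\{|\lambda|(2|m|+n)\sim 2^j\}$ (or the ball $\{|\lambda|(2|m|+n)<4/3\}$ when $j=-1$) meets $2^k\mathcal{C}$. Writing $\mathcal{C}\subset\{\widehat{w}\in\fHeis: R_1\leq \widehat{d}_0(\widehat{w})\leq R_2\}$, an elementary calculation produces an integer $N_0$, depending only on $R_1,R_2$ and the partition of unity, such that
\begin{equation*}
\sigma_j f_k = 0 \qquad\text{whenever } |j-k|>N_0.
\end{equation*}

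For the second ingredient, observe that $\widehat{\sigma_j f_k}=\tilde{\phi}_j\cdot\widehat{f_k}$, so that by \eqref{eq-conv} we have $\sigma_j f_k = g_{2^j}\star f_k$, where $g_{2^j}$ is the inverse Fourier transform of $\tilde\phi_j$ obeying the scaling relation of Lemma \ref{lemma-g-tau}. The weighted Young inequality of Proposition \ref{young-ineq} (applicable because $w_\nu$ is $w_{-\nu}$-moderate by Lemma \ref{lemma-moderate}) then yields
\begin{equation*}
\|\sigma_j f_k\|_{L^\alpha_\nu} \leq \|g_{2^j}\|_{L^1_{-\nu}}\,\|f_k\|_{L^\alpha_\nu}.
\end{equation*}
A direct application of Lemma \ref{lemma-g-tau-k-bd} with $k=0$ and $\gamma=1$ produces a constant $C$, independent of $j$, such that $\|g_{2^j}\|_{L^1_{-\nu}}\leq C$ for every $j\geq -1$ (provided $\nu$ lies below the threshold $\nu_0$ of that lemma, which is our standing assumption). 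Hence the $\sigma_j$ are uniformly bounded on $L^\alpha_\nu$.

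Combining the two steps, for each $j\geq -1$ we have
\begin{equation*}
2^{\kappa j}\|\sigma_j f\|_{L^\alpha_\nu}
\leq \sum_{|k-j|\leq N_0} 2^{\kappa j}\|\sigma_j f_k\|_{L^\alpha_\nu}
\leq C\sum_{|k-j|\leq N_0} 2^{\kappa(j-k)}\bigl(2^{\kappa k}\|f_k\|_{L^\alpha_\nu}\bigr).
\end{equation*}
Since the factor $2^{\kappa(j-k)}$ is bounded by $2^{|\kappa|N_0}$, the right-hand side is a finitely supported discrete convolution of the sequence $(2^{\kappa k}\|f_k\|_{L^\alpha_\nu})_k$ with an $\ell^1$ kernel. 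Taking $\ell^\beta$-norms in $j$ and applying the discrete Young inequality $\ell^1\star\ell^\beta\to\ell^\beta$ yields \eqref{eq-decomp-C}, and the convergence of $\sum_k f_k$ in $\mathfrak{B}^{\kappa,\nu}_{\alpha,\beta}$ follows by applying the same estimate to the tails of the series.

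The main technical obstacle is the $j$-uniform bound $\|g_{2^j}\|_{L^1_{-\nu}}\leq C$: exponential weights and high-frequency rescalings interact nontrivially, and this is precisely where the Gevrey parameter $\eta<1/s$ is essential, since it ensures that the exponential decay of $g_1$ produced by Proposition \ref{exp-decay-est} dominates the growth of $w_{-\nu}(\delta_{2^{-j/2}}q)$ uniformly in $j\geq 0$, exactly as in the proof of Lemma \ref{lemma-g-tau-k-bd}.
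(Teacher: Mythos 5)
Your proof is correct and follows essentially the same route as the paper's: the finite frequency overlap $\sigma_j f_k=0$ for $|j-k|>N_0$, the $j$-uniform bound on the block operators via the weighted Young inequality of Proposition \ref{young-ineq} together with the scaling bound of Lemma \ref{lemma-g-tau-k-bd}, and the final discrete convolution argument with Young's inequality in $\ell^\beta$. Your treatment is, if anything, slightly more explicit than the paper's about where the uniform bound $\lVert g_{2^j}\rVert_{L^1_{-\nu}}\leq C$ comes from.
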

  
  \begin{proof}
  Clearly there exists an $N_0>0$ such that for all $|j-k|>N_0$ it holds that 
  \[
  \sigma_jf_k=0.
  \]
  Therefore for an arbitrary $j\in \N$ one has
  \begin{align}\label{eq-sig-f}
    \|\sigma_jf\|_{L_{\nu}^\alpha}= \left\Vert\sum_{|j-k|\le N_0}\sigma_jf_k\right\Vert_{L_{\nu}^\alpha} \le C\sum_{|j-k|\le N_0} \left\Vert \sigma_jf_k\right\Vert_{L_{\nu}^\alpha}.
  \end{align}
 In addition, recall from \eqref{eqn-c} that $\sigma_jf_k=\phi_j\star f_k$ for a smooth function $\phi_j$. Hence applying Proposition \ref{young-ineq} similarly to what we did in \eqref{eq-PtF} and observing that $\|\phi_j w_{-\nu}\|_{L^1}$ is uniformly bounded in $j$, relation \eqref{eq-sig-f} becomes
  \begin{align}\label{eq-sig-f1}
    \|\sigma_jf\|_{L_{\nu}^\alpha} \le C\sum_{|j-k|\le N_0} \left\Vert f_k\right\Vert_{L_{\nu}^\alpha}.
  \end{align}
 With an easy algebraic manipulation, we end up with
  \begin{align}\label{eq-sig-f2}
 2^{\kappa j}  \|\sigma_jf\|_{L_{\nu}^\alpha}\le C\sum_{|j-k|\le N_0}2^{\kappa k} \left\Vert f_k\right\Vert_{L_{\nu}^\alpha},
  \end{align}
  where we recall that the generic constant $C$ can change from line to line. The right hand side of \eqref{eq-sig-f2} can now be interpreted as a discrete convolution. Indeed, setting $F(\ell)=C\mathbbm{1}_{[-N_0,N_0]}(\ell)$ and $G(k)=2^{k\kappa} \left\Vert f_k\right\Vert_{L_{\nu}^\alpha}$, one can recast \eqref{eq-sig-f2} as
  \begin{align}\label{eq-FG}
  2^{\kappa j}  \|\sigma_jf\|_{L_{\nu}^\alpha}\le  (F\star G)(j). 
  \end{align}
Applying Young's inequality to the $\ell^\beta$ norm of the sequence $\{(F\star G)(j); j\ge0 \}$ we thus get
\begin{equation}\label{k7}
\lVert f\rVert_{\mathfrak{B}^{\kappa,\nu}_{\alpha,\beta}}\leq \|F\star G\|_{\ell^\beta}\le C \| (2^{k\kappa} \left\Vert f_k\right\Vert_{L_{\nu}^\alpha}) _{k\in \N} \|_{\ell^\beta},
\end{equation}
  where we have resorted to the $\ell^{1}$-norm of $F$ (which is obviously finite) in our application of Young's inequality.
Now relation~\eqref{k7} is our claim \eqref{eq-decomp-C}, and the proof is completed.
  \end{proof}
  
  When the Fourier modes of a function are localized in a ball, the estimates we obtain are not as good as in Lemma \ref{lemma-decomp} (namely they will only hold for a regularity parameter $\ka>0$). Nevertheless, the following lemma will be useful in order to define paraproducts in Besov spaces.
  
    \begin{lemma}\label{lemma-decomp-2}
    As in Lemma \ref{lemma-decomp}, consider a sequence $\{f_k;k\ge0\}$ of functions in $\mathcal{S}(\Heis)$. We assume that $\ka>0$ and
    \begin{equation}\label{eq-decom-assump2}
       \text{Supp} \hat{f}_k\subseteq 2^k \mathcal{B} \, ,
       \quad \text{and}\quad 
       \left( 2^{\kappa k} \lVert f_k\rVert_{L^\alpha_\nu}\right)_{k\in\N} \in \ell^\beta,
   \end{equation} 
   where $\mathcal{B}\subset\fHeis$ is a ball in $\fHeis$ with respect to the distance $\hat{d}_0$, and where  $\alpha,\beta\in [1,\infty]$. Define $f$ as  $  f = \sum^\infty_{k=0} f_k$. Then
   \begin{equation}\label{eq-decom-conclu2}
       f\in\mathfrak{B}^{\kappa,\nu}_{\alpha,\beta}\quad \text{and}\quad \lVert f\rVert_{\mathfrak{B}^{\kappa,\nu}_{\alpha,\beta}}\leq C \left\lVert \left( 2^{\kappa k} \lVert f_k\rVert_{L^\alpha_\nu}\right)_{k\in\N}\right\rVert_{\ell^\beta}. 
   \end{equation}
   \end{lemma}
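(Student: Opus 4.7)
The strategy mirrors that of Lemma~\ref{lemma-decomp}, with one crucial modification forced by the weaker support assumption. When the Fourier support of $f_k$ lies in an annulus $2^k\mathcal{C}$, the blocks $\sigma_j f_k$ vanish unless $|j-k|\le N_0$, which produces a \emph{two-sided} discrete convolution; here, the support is contained in the \emph{ball} $2^k\mathcal{B}$, and the same type of frequency-overlap reasoning based on Corollary~\ref{cor-support} and the support of $\tilde{\phi}_j$ yields only the one-sided constraint
\begin{equation*}
\sigma_j f_k = 0 \quad\text{whenever } j\ge k+N_0,
\end{equation*}
for some integer $N_0$ depending only on $\mathcal{B}$ and on the partition of unity $\{\tilde{\phi}_j\}$. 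This asymmetry is exactly what makes the positivity of $\kappa$ essential below.

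Given the above, I would first write
\begin{equation*}
\sigma_j f = \sum_{k\ge (j-N_0)_+} \sigma_j f_k,
\end{equation*}
and then, exactly as in the proof of Lemma~\ref{lemma-decomp}, invoke the weighted Young inequality (Proposition~\ref{young-ineq}) together with the uniform bound on $\lVert \tilde{\phi}_j \, w_{-\nu}\rVert_{L^1}$ derived from the scaling Lemma~\ref{lemma-g-tau-k-bd} to get
\begin{equation*}
\lVert \sigma_j f\rVert_{L^\alpha_\nu}
\le C \sum_{k\ge (j-N_0)_+} \lVert f_k\rVert_{L^\alpha_\nu}.
\end{equation*}

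Multiplying by $2^{\kappa j}$ and rewriting the exponent as $\kappa j = \kappa k - \kappa(k-j)$, I obtain
\begin{equation*}
2^{\kappa j}\lVert \sigma_j f\rVert_{L^\alpha_\nu}
\le C \sum_{k\ge (j-N_0)_+} 2^{-\kappa(k-j)}\, 2^{\kappa k}\lVert f_k\rVert_{L^\alpha_\nu}
= (F \star G)(j),
\end{equation*}
where $F(\ell) = C\, 2^{-\kappa \ell}\, \mathbbm{1}_{\{\ell \ge -N_0\}}$ and $G(k) = 2^{\kappa k}\lVert f_k\rVert_{L^\alpha_\nu}$. This is the main step where $\kappa>0$ enters: it guarantees $F\in\ell^1(\mathbb{Z})$, with $\lVert F\rVert_{\ell^1} \le C\, 2^{\kappa N_0}/(1-2^{-\kappa})$. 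If $\kappa\le 0$, the series defining $F$ diverges, which is why the annulus case of Lemma~\ref{lemma-decomp} was valid for all $\kappa\in\mathbb{R}$ but the ball case is not.

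Finally, I would apply the discrete Young inequality for $\ell^1 \star \ell^\beta \hookrightarrow \ell^\beta$ to conclude
\begin{equation*}
\lVert f\rVert_{\mathfrak{B}^{\kappa,\nu}_{\alpha,\beta}}
= \bigl\lVert \bigl(2^{\kappa j}\lVert \sigma_j f\rVert_{L^\alpha_\nu}\bigr)_{j}\bigr\rVert_{\ell^\beta}
\le \lVert F\rVert_{\ell^1}\lVert G\rVert_{\ell^\beta}
\le C \bigl\lVert \bigl(2^{\kappa k}\lVert f_k\rVert_{L^\alpha_\nu}\bigr)_{k}\bigr\rVert_{\ell^\beta},
\end{equation*}
which is \eqref{eq-decom-conclu2}. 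The main (and really only) obstacle is justifying the one-sided vanishing $\sigma_j f_k = 0$ for $j\ge k+N_0$; this follows by comparing the annular support of $\tilde{\phi}_j$ (from \eqref{eq-chi-supp}) with the ball $2^k\mathcal{B}$, precisely as in Corollary~\ref{cor-support}(2), so that for $j$ sufficiently larger than $k$ the two supports become disjoint.
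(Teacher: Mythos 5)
Your proof is correct and follows essentially the same route as the paper: one-sided vanishing $\sigma_j f_k=0$ for $j\ge k+N_0$ from comparing the annular support of $\tilde{\phi}_j$ with the ball $2^k\mathcal{B}$, the weighted Young inequality for the $L^\alpha_\nu$ bound, and a discrete convolution against a kernel that is summable precisely because $\kappa>0$. The only cosmetic point is that the convolution kernel whose $L^1_{-\nu}$ norm is uniformly bounded is the inverse Fourier transform of $\tilde{\phi}_j$ (the function $g_j$), not $\tilde{\phi}_j$ itself, and the one-sided vanishing is a direct support comparison rather than an instance of Corollary~\ref{cor-support}; neither affects the argument.
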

\begin{proof}
This proof goes along the same lines as for Lemma \ref{lemma-decomp}. Details will thus be omitted. 
Recall in the definition \eqref{eqn-c} of $\sigma_j f_k$ that $\tilde{\phi}_j$ is supported in an annulus $2^j\mathcal{C}$. Since $\mathcal{B}$ is a ball, there exists an $N_1$ such that for all $j\ge k+N_1$ it holds that 
\[
\sigma_jf_k=0.
\]
Similarly to \eqref{eq-sig-f}-\eqref{eq-sig-f2}, this implies that 
\[
   \|\sigma_jf\|_{L_{\nu}^\alpha}= \left\Vert\sum_{k> j-N_1}\sigma_jf_k\right\Vert_{L_{\nu}^\alpha} \le C\sum_{k> j- N_1} \left\Vert f_k\right\Vert_{L_{\nu}^\alpha}.
\]
Therefore like in \eqref{eq-sig-f2} we obtain
  \[
 2^{\kappa j}  \|\sigma_jf\|_{L_{\nu}^\alpha}\le C\sum_{k>j- N_1}2^{\kappa(j-k)}2^{\kappa k} \left\Vert f_k\right\Vert_{L_{\nu}^\alpha}.
  \]
Hence using the same discrete convolution arguments as in \eqref{eq-FG}, we get
  \[
  2^{\kappa j}  \|\sigma_jf\|_{L_{\nu}^\alpha}\le  (F\star G)(j), 
  \]
  where $F(\ell)=C2^{\kappa \ell}\mathbf{1}_{[-\infty,N_1]}(\ell)$ and $G(k)=2^{k\kappa} \left\Vert f_k\right\Vert_{L_{\nu}^\alpha}$. Applying Young's inequality we obtain that 
  \[
\lVert f\rVert_{\mathfrak{B}^{\kappa,\nu}_{\alpha,\beta}}\leq \|F\star G\|_{\ell^\beta}\le C \| (2^{k\kappa} \left\Vert f_k\right\Vert_{L_{\nu}^\alpha}) _{k\in \N} \|_{\ell^\beta},
  \]
    where we have resorted to the $\ell^{1}$-norm of $F$ (which is  finite whenever $\ka>0$) in our application of Young's inequality.
 The proof is thus completed.
\end{proof}
  
  We now turn to the main result of this section, giving the definition of paraproducts in our weighted Besov spaces. 
  
  \begin{lemma}\label{lemma-est-prod} 
  \emph{(1)}  Suppose $\kappa, \kappa_1, \kappa_2 \in \R$ and $\alpha,\alpha_1,\alpha_2,\beta \in [1,\infty]$ are such that
 $$
 \kappa_1 \neq 0, \quad \kappa = (\kappa_1\wedge 0) + \kappa_2 \quad \text{and} \quad \frac{1}{\alpha} = \frac{1}{\alpha_1} + \frac{1}{\alpha_2}.
 $$
  Then the mapping $(f,g) \mapsto f \varolessthan g$ (defined for $f,g \in \mathcal{S}$) can be extended to a continuous bilinear map  $\mathfrak{B}^{\kappa_1,w_1}_{\alpha_1,\beta} \times {\mathfrak{B}^{\kappa_2,w_2}_{\alpha_2,\beta}}\to\mathfrak{B}^{\kappa, w_1w_2}_{\alpha, \beta}$, Moreover, there exists a constant $C < \infty$ such that 
 \begin{equation}
 \label{e:para}
 \|f \varolessthan g\|_{\mathfrak{B}^{\kappa,w_1w_2}_{\alpha,\beta}} \le C \, \|f\|_{B^{\kappa_1,w_1}_{\alpha_1,\beta}} \, \|g\|_{B^{\kappa_2,w_2}_{\alpha_2,\beta}}.
 \end{equation}
\emph{(2)} Suppose  $\kappa_1, \kappa_2 \in \R$ are such that $\kappa = \kappa_1 + \kappa_2 > 0$, and let $\alpha,\alpha_1,\alpha_2,\beta$ be as above. Then the mapping $(f,g) \mapsto f \varodot g$ can be extended to a continuous bilinear map  $\mathfrak{B}^{\kappa_1,w_1}_{\alpha_1,\beta} \times {\mathfrak{B}^{\kappa_2,w_2}_{\alpha_2,\beta}}\to\mathfrak{B}^{\kappa,w_1w_2}_{\alpha,\beta}$. Moreover, there exists a constant $C < \infty$ such that 
\begin{align}\label{eq-fn-k}
 \|f \varodot g\|_{\mathfrak{B}^{\kappa,w_1w_2}_{\alpha,\beta}} \le C \, \|f\|_{\kB^{\kappa_1,w_1}_{\alpha_1,\beta}} \, \|g\|_{\kB^{\kappa_2,w_2}_{\alpha_2,\beta}}.
 \end{align}
    
  \end{lemma}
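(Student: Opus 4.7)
My plan is to treat the two parts separately, in each case decomposing the paraproduct as a series $\sum_k h_k$ indexed by Littlewood--Paley scales and invoking Corollary \ref{cor-support} to identify the Fourier support of each $h_k$. The two terms $f \varolessthan g$ and $f \varodot g$ differ in whether that support is an annulus (allowing arbitrary $\kappa$ via Lemma \ref{lemma-decomp}) or a ball (requiring $\kappa>0$ via Lemma \ref{lemma-decomp-2}). Throughout I will use the weighted H\"older inequality
\[
\|fg\|_{L^\alpha_{w_1 w_2}} \le \|f\|_{L^{\alpha_1}_{w_1}} \|g\|_{L^{\alpha_2}_{w_2}},\qquad \tfrac{1}{\alpha}=\tfrac{1}{\alpha_1}+\tfrac{1}{\alpha_2},
\]
which holds for $w_1,w_2\in\mathcal{T}$ because the weights are multiplicative at the level of the $L^\alpha$-weight (with the obvious modification for the power $\alpha$ built into Definition~\ref{weight}).

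For part (1), write $f\varolessthan g = \sum_{k\ge 0} h_k^S$ with $h_k^S = S_{k-1}f \cdot \sigma_k g$. By Corollary \ref{cor-support}, $\widehat{h_k^S}$ is supported in an annulus of the form $(2|m|+n)^{-1}2^{2k}\cac_0'$, so Lemma \ref{lemma-decomp} reduces the problem to showing
\[
\Bigl(2^{\kappa k}\|h_k^S\|_{L^\alpha_{w_1 w_2}}\Bigr)_{k} \in \ell^\beta
\quad\text{with norm}\quad \lesssim \|f\|_{\mathfrak{B}^{\kappa_1,w_1}_{\alpha_1,\beta}}\,\|g\|_{\mathfrak{B}^{\kappa_2,w_2}_{\alpha_2,\beta}}.
\]
Weighted H\"older gives $\|h_k^S\|_{L^\alpha_{w_1 w_2}} \le \|S_{k-1}f\|_{L^{\alpha_1}_{w_1}} \|\sigma_k g\|_{L^{\alpha_2}_{w_2}}$. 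I now split according to the sign of $\kappa_1$. If $\kappa_1>0$, a discrete H\"older estimate on $S_{k-1}f=\sum_{j<k-1}\sigma_j f$ with the geometric weight $2^{-\kappa_1 j}$ yields $\|S_{k-1}f\|_{L^{\alpha_1}_{w_1}}\lesssim \|f\|_{\mathfrak{B}^{\kappa_1,w_1}_{\alpha_1,\beta}}$ uniformly in $k$; since in this case $\kappa=\kappa_2$, the desired $\ell^\beta$ bound follows directly from $\|g\|_{\mathfrak{B}^{\kappa_2,w_2}_{\alpha_2,\beta}}$. If $\kappa_1<0$, the same H\"older step gives $\|S_{k-1}f\|_{L^{\alpha_1}_{w_1}}\lesssim 2^{-\kappa_1 k}\|f\|_{\mathfrak{B}^{\kappa_1,w_1}_{\alpha_1,\beta}}$, and since now $\kappa=\kappa_1+\kappa_2$ the factor $2^{\kappa k}$ rearranges into $2^{\kappa_2 k}\|\sigma_k g\|_{L^{\alpha_2}_{w_2}}$, whose $\ell^\beta$ norm is again $\|g\|_{\mathfrak{B}^{\kappa_2,w_2}_{\alpha_2,\beta}}$. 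Note the hypothesis $\kappa_1\ne 0$ is exactly what guarantees the geometric sum converges.

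For part (2), write $f\varodot g = \sum_{\varepsilon\in\{-1,0,1\}}\sum_{k}h_k^{\sigma,\varepsilon}$ with $h_k^{\sigma,\varepsilon}=\sigma_{k-\varepsilon}f\cdot\sigma_k g$. By Corollary \ref{cor-support}, $\widehat{h_k^{\sigma,\varepsilon}}$ is supported in a ball $(2|m|+n)^{-1}2^{2k}\mathcal{B}_0'$. Here the condition $\kappa=\kappa_1+\kappa_2>0$ allows us to apply Lemma \ref{lemma-decomp-2}. Weighted H\"older gives
\[
2^{\kappa k}\|h_k^{\sigma,\varepsilon}\|_{L^\alpha_{w_1w_2}} \le 2^{\kappa_1(k-\varepsilon)}\|\sigma_{k-\varepsilon}f\|_{L^{\alpha_1}_{w_1}} \cdot 2^{\kappa_2 k}\|\sigma_k g\|_{L^{\alpha_2}_{w_2}}\cdot 2^{\kappa_1\varepsilon}.
\]
Taking $\ell^\beta$ norms in $k$ and applying the $\ell^\infty\cdot\ell^\beta\hookrightarrow\ell^\beta$ H\"older bound (with the $\ell^\infty$ factor absorbing the $f$-sequence, which is bounded by $\|f\|_{\mathfrak{B}^{\kappa_1,w_1}_{\alpha_1,\beta}}$ since $\ell^\beta\subset\ell^\infty$), we obtain \eqref{eq-fn-k} via Lemma \ref{lemma-decomp-2}.

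The main obstacle that I expect is the careful handling of the weighted H\"older inequality together with the two different regimes ($\kappa_1>0$ versus $\kappa_1<0$) in part (1): one has to track how the geometric series $\sum_{j<k-1}2^{-\kappa_1 j}$ either stays bounded or produces a factor $2^{-\kappa_1 k}$, and then verify that this combines with $2^{\kappa k}$ to reproduce precisely $2^{\kappa_2 k}\|\sigma_k g\|_{L^{\alpha_2}_{w_2}}$ summable in $\ell^\beta$. Everything else reduces to bookkeeping once Corollary \ref{cor-support} and Lemmas \ref{lemma-decomp}--\ref{lemma-decomp-2} are in hand.
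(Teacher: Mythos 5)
Your proposal is correct and follows essentially the same route as the paper's proof: Corollary \ref{cor-support} to localize the Fourier support of $h_k^S$ (annulus) and $h_k^{\sigma,\varepsilon}$ (ball), Lemma \ref{lemma-decomp} resp.\ Lemma \ref{lemma-decomp-2} to reduce to an $\ell^\beta$ bound on $2^{\kappa k}\|h_k\|_{L^\alpha_{w_1w_2}}$, weighted H\"older, and the block estimates $\|\sigma_j f\|_{L^{\alpha_1}_{w_1}}\le 2^{-\kappa_1 j}\|f\|_{\mathfrak{B}^{\kappa_1,w_1}_{\alpha_1,\beta}}$. Your explicit case split on the sign of $\kappa_1$ is just an unpacking of the paper's single bound $\|S_{k-1}f\|_{L^{\alpha_1}_{w_1}}\lesssim 2^{-(\kappa_1\wedge 0)k}\|f\|_{\mathfrak{B}^{\kappa_1,w_1}_{\alpha_1,\beta}}$, so there is no substantive difference.
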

\begin{proof}
We will prove the two items in our lemma separately. 

\noindent
\emph{Proof of item (1).} Recall that $f \varolessthan g$ is defined as $\sum_{k=0}^\infty h_k^s$, with $h_k^s=S_{k-1}f\sigma_kg$. Hence one can invoke Corollary \ref{cor-support} to assert that $\hat{h}_k^s$ is supported in an annulus $(2|m|+n)^{-1}2^{2k} \chi'_0$. Therefore we can apply a slight elaboration of \eqref{eq-decomp-C} in Lemma \ref{lemma-decomp}, which yields
\begin{align}\label{eq-fn-h}
 \|f \varolessthan g\|_{\mathfrak{B}^{\kappa,w_1w_2}_{\alpha,\beta}} \le  C\left\lVert \left( 2^{\kappa k} \lVert h_k^s\rVert_{L^\alpha_{w_1w_2}}\right)_{k\in\N}\right\rVert_{\ell^\beta}.
\end{align}
We now bound the right hand side of \eqref{eq-fn-h}. First recalling that $h_k^s=S_{k-1}f\sigma_kg$ and resorting to H\"older's inequality we have 
  \begin{align}\label{eq-bd-hks}
 \lVert h_k^s\rVert_{L^\alpha_{w_1w_2}} \le  \lVert S_{k-1} f \rVert_{L^{\alpha_1}_{w_1}}  \lVert   \sigma_k g\rVert_{L^{\alpha_2}_{w2}}.
\end{align}
In addition, from Definition \ref{def-BS} we have that
\begin{align}\label{eq-fn-i}
\lVert \sigma_{j} f \rVert_{L^{\alpha_1}_{w_1}} \le 2^{-\kappa_1 j} \lVert  f \rVert_{\mathfrak{B}^{\kappa_1,\, w_1}_{\alpha_1,\beta}}.
\end{align}
Hence summing over $j$ we get
\begin{align}\label{eq-fn-j}
\lVert S_{k-1} f \rVert_{L^{\alpha_1}_{w_1}}\le \sum_{j=-1}^{k-2}\lVert \sigma_{j} f \rVert_{L^{\alpha_1}_{w_1}}
\le \sum_{j=-1}^{k-2} 2^{-\kappa_1 j} \lVert  f \rVert_{\mathfrak{B}^{\kappa_1,\, w_1}_{\alpha_1,\beta}}\lesssim 2^{-(\kappa_1 \wedge 0)k} \lVert  f \rVert_{\mathfrak{B}^{\kappa_1,\, w_1}_{\alpha_1,\beta}}.
\end{align}
Plugging \eqref{eq-fn-i}  and \eqref{eq-fn-j} into \eqref{eq-bd-hks}, we then obtain
\[
 \lVert S_{k-1} f \ \sigma_k g\rVert_{L^\alpha_{w_1w_2}}\lesssim 2^{-(\kappa_1 \wedge 0)k} \lVert  f \rVert_{\mathfrak{B}^{\kappa_1,\, w_1}_{\alpha_1,\beta}}\lVert   \sigma_k g\rVert_{L^{\alpha_2}_{w_2}}.
\]
Therefore the right hand side of \eqref{eq-fn-h}  can be bounded as follows:
\begin{eqnarray}\label{eq-fn-m}
\left\lVert \left( 2^{\kappa k} \lVert S_{k-1} f \ \sigma_k g\rVert_{L^\alpha_{w_1w_2}}\right)_{k\in\N}\right\rVert_{\ell^\beta}
&\lesssim&   
\lVert  f \rVert_{\mathfrak{B}^{\kappa_1,\, w_1}_{\alpha_1,\beta}}\left\lVert \left( 2^{\kappa k_2} \lVert   \sigma_k g\rVert_{L^{\alpha_2}_{w_2}}\right)_{k\in\N}\right\rVert_{\ell^\beta}\notag\\
&=& 
\lVert  f \rVert_{\mathfrak{B}^{\kappa_1,\, w_1}_{\alpha_1,\beta}}  \lVert  g \rVert_{\mathfrak{B}^{\kappa_2,\, w_2}_{\alpha_2,\beta}}.
\end{eqnarray}
This proves our claim \eqref{e:para}.

\noindent
\emph{Proof of item (2).} The proof of \eqref{eq-fn-k} follows closely the arguments of item (1). Namely one writes
\[
 \|f \varodot g\|_{\mathfrak{B}^{\kappa,w_1w_2}_{\alpha,\beta}} = \sum_{k=0}^\infty \sum_{\varepsilon\in \{-1,0,1\}} h_{k}^{\sigma, \varepsilon},
\]
where $h_{k}^{\sigma, \varepsilon}=\sigma_{k-\varepsilon}f \sigma_k g$. Moreover, according to Corollary \ref{cor-support} we have $\hat{h}_{k}^{\sigma, \varepsilon}$ supported in a ball $(2|m|+n)^{-1}2^{2k}\mathcal{B}'_0$. Since we have assumed $\kappa>0$, Lemma \ref{lemma-decomp-2} allows to write 
\begin{align}\label{eq-fn-l}
 \|f \varodot g\|_{\mathfrak{B}^{\kappa,w_1w_2}_{\alpha,\beta}} \le C \sum_{\varepsilon\in\{-1,0,1\}}\left\lVert \left( 2^{\kappa k} \lVert h_{k}^{\sigma, \varepsilon}\rVert_{L^\alpha_{w_1w_2}}\right)_{k\in\N}\right\rVert_{\ell^\beta}
\end{align}
We now proceed to bound the right hand side of \eqref{eq-fn-l}. That is applying  H\"older inequality, for all $\varepsilon\in\{-1,0,1\}$ and $k\ge0$ we have
\begin{align}
 \lVert \sigma_{k-\varepsilon} f \ \sigma_k g\rVert_{L^\alpha_{w_1w_2}} \le  \lVert  \sigma_{k-\varepsilon} f \rVert_{L^{\alpha_1}_{w_1}}  \lVert   \sigma_k g\rVert_{L^{\alpha_2}_{w_2}}.
\end{align}
Moreover, from Definition \ref{def-BS} it holds that
\[
\lVert \sigma_{k-\varepsilon} f \rVert_{L^{\alpha_1}_{w_1}} \le 2^{-\kappa_1 (k-\varepsilon)} \lVert  f \rVert_{\mathfrak{B}^{\kappa_1,\, w_1}_{\alpha_1,\beta}},\quad
\lVert \sigma_{k} g \rVert_{L^{\alpha_2}_{w_2}} \le 2^{-\kappa_2 k} \lVert  f \rVert_{\mathfrak{B}^{\kappa_2,\, w_2}_{\alpha_2,\beta}}.
\]
Hence similarly to what we did for \eqref{eq-fn-m}, we end up with 
\[
\left\lVert \left( 2^{\kappa k} \lVert \sigma_{k-\varepsilon} f \ \sigma_k g\rVert_{L^\alpha_{w_1w_2}}\right)_{k\in\N}\right\rVert_{\ell^\beta}
\lesssim \|f\|_{\kB^{\kappa_1,w_1}_{\alpha_1,\beta}} \, \|g\|_{\kB^{\kappa_2,w_2}_{\alpha_2,\beta}}.
\]
This establishes \eqref{eq-fn-k} and finishes our proof.
\end{proof}
 
 We complete this section by giving a short proof of our product rules in Proposition \ref{prop-product}.
 
\begin{proof}[Proof of Proposition \ref{prop-product}] We start from the decomposition \eqref{prod-decomp} and bound the 3 terms $f \varolessthan g$, $f \varodot g$, and $f \varogreaterthan g$ seperately. Now for $f \varolessthan g$, Lemma \ref{lemma-est-prod} yields
\[
 \|f \varolessthan g\|_{\mathfrak{B}^{\kappa_2,w_1w_2}_{\alpha,\beta}} \le \|f \varolessthan g\|_{\mathfrak{B}^{\kappa,w_1w_2}_{\alpha,\beta}} \le C \, \|f\|_{\mathfrak{B}^{\kappa_1,w_1}_{\alpha_1,\beta}} \, \|g\|_{\mathfrak{B}^{\kappa_2,w_2}_{\alpha_2,\beta}}.
\]
Similar estimates also hold for $f \varogreaterthan g$ and $f \varodot g$.  This concludes the proof.
\end{proof}

\section{Existence and uniqueness of pathwise solutions for a class of SPDEs}\label{sec:spdes-existence}

As an illustration of the use of weighted Besov spaces, this section addresses the existence and uniqueness of pathwise solutions to \eqref{eq:pam}, for noises $\dW$ in certain Besov spaces. For this purpose  we shall replace first the noise $\dW$ by $\cW$, a nonrandom H\"older continuous function in time with values in a Besov space of distributions. 
Then we will prove that the noises $\dW^{\zeta,\alpha}$ in \eqref{eq-intro-cov} sit in convenient Besov spaces under proper condition on $\zeta, \alpha$.

\subsection{Paths in weighted spaces}\label{sec-path-weight}

 
We begin this section by introducing sets of H\"older functions in time, with values in weighted Besov spaces on $\Heis$. This is the kind of noise $\cW$ which will drive Equation \eqref{eq:pam}. Our basic spaces of H\"older functions are defined as 
 \begin{equation}\label{eq-holder-space}
C^{\vartheta,\kappa,w}_{\alpha,\beta}:=C^\vartheta([0,T]; \mathfrak{B}^{\kappa,\, w}_{\alpha,\,\beta}).
\end{equation}
We shall also need elaborations of those spaces taking into account weights $w_t$ decreasing with time. We label a definition in that sense below.
\begin{definition}\label{def-D-space}
Consider three positive coefficients $\nu, b,\eta$ and define a subset $\{w_t;t\in[0,T]\}$ of $\ct$ by setting
\begin{equation}\label{eq-weight-wt}
w_t(q)=e^{-(\nu+bt)|q|_*^\eta},\quad  q\in\Heis.
\end{equation}
We also consider the  space of $\theta$-H\"older continuous functions on $[0,T]$ that takes values in $\mathfrak{B}^{\kappa,\, w_t}_{\alpha,\,\beta}$ for each $t$:
\begin{align}\label{eq-holder-theta}
\cd^{\theta,\kappa,\nu,b}_{\alpha,\beta}:&=\lbrace f\in C([0,T]\times \Heis), \,\|f_t\|_{\mathfrak{B}^{\kappa,\, w_t}_{\alpha,\,\beta}}\le c_{T,f},\notag \\
&\qquad\qquad \text{and}\  \|f_t-f_s\|_{\mathfrak{B}^{\kappa,\, w_t}_{\alpha,\,\beta}}\le c_f|t-s|^\theta,\ \text{for all }\  0\le s<t\le T
\rbrace
\end{align}
We equip the space $\cd^{\theta,\kappa,\nu,b}_{\alpha,\beta}$ with the H\"older norm
\[
\|f\|_{\cd^{\theta,\kappa,\nu,b}_{\alpha,\beta}}:=\sup_{ 0\le s<t\le T}\frac{ \|\delta f_{st}\|_{\mathfrak{B}^{\kappa,\, w_t}_{\alpha,\,\beta}}}{|t-s|^\theta},
\]
where here and in the sequel we write time increments as 
\begin{equation}\label{eq-time-inc}
\delta f_{st}=f_t-f_s.
\end{equation}
\end{definition}


\subsection{Existence and uniqueness for the SHE}
The definitions in the previous section allow to give a precise meaning to the notion of solution to our rough PDE \eqref{eq:pam}. Namely a solution is defined in the following way.
\begin{definition}\label{def-mild-soln}
Consider the H\"older spaces $C_{\alpha,\beta}^{\vartheta,\kappa,w}$ defined by \eqref{eq-holder-space}. We are given a noise $\dot{\cW}\in C_{\alpha_2,\beta}^{\vartheta,-\gamma,\rho_b}$ for some $\gamma>0$, as well as a weight $\rho_b=c(1+|q|_*^b)$. The initial condition for our system is a function $u_0$ sitting in the Besov space $\mathfrak{B}_{\alpha_1,\beta}^{\kappa,\, w}$ from Definition \ref{def-BS}. Consider $u\in \cd_{\alpha_1,\beta}^{\theta,\kappa, \nu, b}$ for some $\kappa>0$, where the space $\cd_{\alpha_1,\beta}^{\theta,\kappa, \nu, b}$ is introduced in~\eqref{eq-holder-theta}. 
We say that $u$ is a mild solution to the equation \eqref{eq:pam} with initial condition $u_0$ if it holds that 
\begin{equation}\label{eq-mild-soln}
u(t,p)=P_tu_0(p)+\int_0^t \int_\Heis p_{t-s}(q^{-1}p)u(s,q)\dot{\cW}(ds,q)d\mu(q),
\end{equation}
where the time integral in \eqref{eq-mild-soln} is understood as a Young integral, while the spatial product $u(s,q)\dot{\cW}(ds, q)$ is meant as in Proposition \ref{prop-product}.
\end{definition}

\begin{remark}
In Definition \ref{def-mild-soln}, we implicitly assume that $\vartheta$, $\kappa$, $w$, $\alpha$, $\beta$, $\gamma$, $\rho_b$, $\alpha_2$ satisfy the assumptions of Proposition \ref{prop-product}. Also notice that in the sequel the parameter $\alpha_2$ for the noise $\dot{\cW}$ will be chosen as $\alpha_2=\infty$.
\end{remark}

\begin{remark}
We refer to \cite{FV}, for instance, for a proper definition of Young's integrals. One way to understand the right hand side of \eqref{eq-Phi} is to consider the time integral as a limit of Riemann sums. Namely consider a generic sequence of partitions $\{\Pi^n=(r_j)_{j\le n};n\ge1\}$ whose mesh goes to $0$ as $n\to\infty$. Then one defines
\begin{align}\label{eq-Riem-sum}
\int_0^t\int_\Heis &p_{t-s}(q^{-1}p)u(s,q)\dot{\cW}(ds,q)d\mu(q)\notag \\
&\quad\quad=\lim_{n\to\infty}\sum_{j=0}^{n-1}\int_\Heis p_{t-r_j}(q^{-1}p)u(r_j, q)(\delta\dot{\cW})_{r_jr_{j+1}}(q)d\mu(q),
\end{align}
where we recall the convention \eqref{eq-time-inc} for the time increments. 
\end{remark}
Let us now state the main assumption on the noise $\dot{\cW}$.
\begin{hypothesis}\label{hypo}
The noise $\dot{\cW}$ driving equation \eqref{eq-mild-soln} belongs to a space $C_{\infty,\beta}^{\vartheta,-\gamma,\rho_b}$ for a given $b>0$. In addition, we assume that $\vartheta$ and $\gamma$ are two positive numbers in $(0,1)$ such that $\vartheta>\frac{1+\gamma}2$. 
\end{hypothesis}
Given a $\dot{\cW}$ satisfying Hypothesis \ref{hypo}, we define the map $\Phi$  on $\cd_{\alpha,\beta}^{\theta,\kappa}$ that is given by
\begin{equation}\label{eq-Phi}
\Phi(v)(t,p):=P_tu_0(p)+\int_0^t \int_\Heis p_{t-s}(q^{-1}p)v(s,q)\dot{\cW}(ds,q)d\mu(q).
\end{equation}
Clearly a solution to \eqref{eq-mild-soln}, if exists, is a fixed point of $\Phi$. In the lemma below, we prove that $\Phi$ is a contraction on 
$\cd_{\alpha,\beta}^{\theta,\kappa, \nu,b}$ whenever the time interval is small enough.
\begin{lemma}\label{lemma-exi-uni}
Fix a time interval $[0,\tau]$ for $\tau\le T$. Given an input distribution $\dot{\cW}\in C_{\infty,\beta}^{\vartheta,-\gamma,\rho_b}$ that satisfies Hypothesis \ref{hypo}, let $\Phi$ be a map as defined in \eqref{eq-Phi}.  Then for $\tau$ small enough, there exists $\theta, \kappa$ satisfying $\theta+\vartheta>1$ and $\gamma<\kappa<1$ such that $\Phi$ is a contraction map from $\cd_{\alpha,\beta}^{\theta,\kappa, \nu,b}$ to $\cd_{\alpha,\beta}^{\theta,\kappa, \nu,b}$.
\end{lemma}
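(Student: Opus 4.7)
The plan is to analyze $\Phi$ as an affine map $\Phi(v) = F + I(v)$, with deterministic part $F_t := P_t u_0$ and stochastic convolution $I(v)_t := \int_0^t P_{t-s}[v_s \cdot d\dot{\cW}_s]$. Since $I$ is linear in $v$, showing that $\Phi$ is a contraction amounts to proving $\|I(v)\|_{\cd^{\theta,\kappa,\nu,b}_{\alpha,\beta}} \le L(\tau) \|v\|_{\cd^{\theta,\kappa,\nu,b}_{\alpha,\beta}}$ with $L(\tau) \to 0$ as $\tau \to 0$. To verify that the deterministic term $F_t$ already belongs to $\cd^{\theta,\kappa,\nu,b}_{\alpha,\beta}$, one combines the monotonicity of $t \mapsto w_t$ (which yields the embedding $\mathfrak{B}^{\kappa,w_0}_{\alpha,\beta} \hookrightarrow \mathfrak{B}^{\kappa,w_t}_{\alpha,\beta}$) with the smoothing estimate \eqref{eq-smoothing-a} to bound $\|F_t\|_{\mathfrak{B}^{\kappa,w_t}_{\alpha,\beta}}$, and applies the time regularity \eqref{eq-smoothing-b} for the H\"older seminorm. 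The remaining task therefore concerns only $I(v)$.

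The heart of the argument is a Young-type integration implemented via the sewing lemma. For fixed $t$, I would introduce the two-parameter germ $A^t_{r_1 r_2} := P_{t-r_1}\bigl[v_{r_1} \cdot \delta\dot{\cW}_{r_1 r_2}\bigr]$ for $0 \le r_1 \le r_2 \le t$, and define $I(v)_t$ as the limit of Riemann sums $\sum_j A^t_{r_j r_{j+1}}$ in $\mathfrak{B}^{\kappa,w_t}_{\alpha,\beta}$. The germ $A^t$ is controlled via three ingredients. \emph{First}, Proposition \ref{prop-product} applied with weights $w_{r_1}$ and $\rho_b$ and regularities $\kappa > 0 > -\gamma$, $\kappa > \gamma$, gives $\|v_{r_1} \cdot \delta\dot{\cW}_{r_1 r_2}\|_{\mathfrak{B}^{-\gamma, w_{r_1}\rho_b}_{\alpha,\beta}} \lesssim \|v_{r_1}\|_{\mathfrak{B}^{\kappa,w_{r_1}}_{\alpha,\beta}} |r_2 - r_1|^\vartheta \|\dot{\cW}\|_{C^\vartheta\mathfrak{B}^{-\gamma,\rho_b}_{\infty,\beta}}$. \emph{Second}, the pointwise inequality $w_t(q)/(w_{r_1}(q)\rho_b(q)) = e^{-b(t-r_1)|q|_*^\eta}/(1+|q|_*^b) \le 1$ yields the continuous embedding $\mathfrak{B}^{-\gamma, w_{r_1}\rho_b}_{\alpha,\beta} \hookrightarrow \mathfrak{B}^{-\gamma, w_t}_{\alpha,\beta}$. \emph{Third}, the heat smoothing \eqref{eq-smoothing-a} in the weighted space with weight $w_t$ gives $\|P_{t-r_1}[f]\|_{\mathfrak{B}^{\kappa,w_t}_{\alpha,\beta}} \lesssim (t-r_1)^{-(\kappa+\gamma)/2} \|f\|_{\mathfrak{B}^{-\gamma,w_t}_{\alpha,\beta}}$. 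Composing these produces the basic bound $\|A^t_{r_1 r_2}\|_{\mathfrak{B}^{\kappa,w_t}_{\alpha,\beta}} \lesssim (t-r_1)^{-(\kappa+\gamma)/2} |r_2 - r_1|^\vartheta$.

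For the sewing consistency, using $P_{t-r_1} = P_{t-u} \circ P_{u-r_1}$ and additivity of $\delta\dot{\cW}$, one finds $\delta A^t_{r_1 u r_2} = P_{t-u}\bigl[(P_{u-r_1} v_{r_1} - v_u) \cdot \delta\dot{\cW}_{u r_2}\bigr]$. Splitting $P_{u-r_1} v_{r_1} - v_u = P_{u-r_1}(v_{r_1} - v_u) - (\mathrm{Id} - P_{u-r_1}) v_u$, the first piece exploits the $\theta$-H\"older regularity of $v$ in $\mathfrak{B}^{\kappa,w_t}_{\alpha,\beta}$ while the second uses the heat time regularity \eqref{eq-smoothing-b}. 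Combined with the product bound, the weight embedding and the smoothing estimate, this gives $\|\delta A^t_{r_1 u r_2}\|_{\mathfrak{B}^{\kappa,w_t}_{\alpha,\beta}} \lesssim (t-u)^{-(\kappa+\gamma)/2}(u-r_1)^\theta (r_2-u)^\vartheta$. Since $\theta + \vartheta > 1$, the sewing lemma applies, $I(v)_t$ is well defined, and aggregating the estimates over $[0,t]$ produces a factor $\tau^\epsilon$ with some $\epsilon > 0$.

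This leads to $\|I(v)\|_{\cd^{\theta,\kappa,\nu,b}_{\alpha,\beta}} \lesssim \tau^\epsilon \|v\|_{\cd^{\theta,\kappa,\nu,b}_{\alpha,\beta}}$, so taking $\tau$ small enough yields a contraction. The parameter window $\gamma < \kappa < 1$ together with $1 - \vartheta < \theta$ and $\theta + \vartheta > 1 + (\kappa+\gamma)/2$ is non-empty precisely because Hypothesis \ref{hypo} guarantees $\vartheta > (1+\gamma)/2$: one picks $\kappa$ slightly above $\gamma$ and $\theta$ slightly above $1 - \vartheta$, leaving a positive gap. The main obstacle will be to carry out the sewing uniformly in $t \in [0,\tau]$ inside the correctly weighted Besov spaces---in particular, to track how $w_t$, $w_{r_1}\rho_b$, and the singular heat-kernel factor $(t-r_1)^{-(\kappa+\gamma)/2}$ interact without losing integrability in the inner time variable.
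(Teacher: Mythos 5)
Your proposal follows essentially the same route as the paper: the paper also constructs the Young integral as a limit of Riemann sums (via dyadic refinement rather than an abstract sewing lemma), uses exactly your decomposition of the sewing defect into a $\delta v$-term and an $(\mathrm{Id}-P)$-term (its $L^{1,n,j}$ and $L^{2,n,j}$ in \eqref{eq-L-1nj}--\eqref{eq-L-2nj}), and combines the same three ingredients --- the paraproduct bound of Proposition \ref{prop-product}, the comparison of $w_t$ with $w_{r}\rho_b$, and the heat-flow smoothing of Proposition \ref{prop-continuity} --- before choosing $\kappa$ slightly above $\gamma$ and $\theta=1-\vartheta+\epsilon$. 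The one substantive discrepancy is your claim that $w_t/(w_{r_1}\rho_b)\le 1$: with $\rho_b$ taken as a \emph{decaying} weight (as it must be for the noise to have finite $\mathfrak{B}^{-\gamma,\rho_b}_{\infty,\beta}$-norm, cf.\ Lemma \ref{lemma-nV}), the embedding $\mathfrak{B}^{-\gamma,\,w_{r}\rho_b}_{\alpha,\beta}\hookrightarrow\mathfrak{B}^{-\gamma,\,w_t}_{\alpha,\beta}$ costs a singular factor $(t-r)^{-b/\eta}$ (the paper's \eqref{eq-w_t}), which enters the exponent count and is why $b$ must be chosen small; adjusting your arithmetic for this does not change the structure of the argument.
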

\begin{proof}
Consider $v^1, v^2\in \cd^{\theta,\kappa,\nu,b}_{\alpha,\beta}$, and we denote the difference by $v^{12}:=v^1-v^2$. Let
\[
\Phi^{12}:=\Phi(v^1)-\Phi(v^2).
\]
Then from the definition of $\Phi$ in \eqref{eq-Phi} we can easily observe that
\begin{equation}\label{eq-conte-mid1}
\Phi^{12}(t,p)=\int_0^t \int_\Heis p_{t-s}(q^{-1}p)v^{12}(s,q)\dot{\cW}(ds,q)d\mu(q).
\end{equation}
We claim that for $\tau$ small enough it holds that 
\begin{equation}\label{eq-claim-contraction}
\|\Phi^{12}\|_{\cd_{\alpha,\beta}^{\theta,\kappa,\nu,b}  }\le \frac12\|v^{12}\|_{\cd_{\alpha,\beta}^{\theta,\kappa,\nu,b}  },
\end{equation}
and notice that \eqref{eq-claim-contraction} is enough to prove our lemma.  We now divide the proof of \eqref{eq-claim-contraction} into several steps.

\noindent
\emph{Step1: Decomposition of $\Phi^{12}$.} 
Let us first change our space-time notation and let $\Phi^{12}_t(\cdot):=\Phi^{12}(t,\cdot)$ for all $t\in[0,\tau]$. Next according to the definition \eqref{eq-semigroup} of the heat semigroup, we can rewrite \eqref{eq-conte-mid1} as 
\begin{equation}\label{eq-phi-12-int}
\Phi^{12}_t=\int_0^t P_{t-r}(v_r^{12}\dot{\cW}(dr)).
\end{equation}
Thanks to the above expression, we now obtain an expression for the time increments of $\Phi^{12}$. Namely using \eqref{eq-phi-12-int} and the semigroup identity
$P_{t-r}-P_{s-r}=(P_{t-s}-\mathrm{Id})P_{s-r}$, valid for all $r<s<t$, we let the reader check that
\begin{align}\label{eq-decomp-i}
\Phi^{12}_t-\Phi^{12}_s=I_{st}+II_{st}
\end{align}
where the terms $I$, $II$ are respectively defined by
\begin{equation}\label{eq-def-I-II}
I_{st}=\int_0^s (P_{t-s}-\Id)P_{s-r}(v^{12}\dot{\cW}(dr)),
\quad\text{and}\quad 
II_{st}=\int_s^t P_{t-r}(v^{12}\dot{\cW}(dr)).
\end{equation}
We estimate the terms $I$ and $II$ separately in the sequel.

\noindent
\emph{Step 2: Approximation of $I$.} 
In order to get an upper bound for $I$, we will interpret the time integral in the Young sense as in \eqref{eq-Riem-sum}. Specifically one considers the dyadic partition of $[0,s]$, of the form $\{r_j=2^{-n}js; j=0,\dots, 2^n\}$. Then one writes
\begin{equation}\label{eq-I-limit}
I_{st}=\lim_{n\to\infty}I^n_{st},
\end{equation}
where the Riemann type sum $I^n_{st}$ is defined by 
\begin{equation}\label{eq-I-n}
I^n_{st}=\sum_{j=0}^{2^n-1}(P_{t-s}-\Id)P_{s-r_j}(v^{12}_{r_j}\, (\delta\dot{\cW})_{r_jr_{j+1}}).
\end{equation}
In \eqref{eq-I-n}, notice that we have invoked our convention \eqref{eq-time-inc} for the time increments again, and that the product $v^{12}\delta\dot{\cW}$ is understood as in Proposition \ref{prop-product}.

In order to establish the convergence of $I^n_{st}$, we will study the differences $I^n_{st}-I^{n-1}_{st}$. Let us first observe that $r^{n-1}_j=r^n_{2j}$ due to the dyadic nature of the family $(r^n_j)_{j\le 2^{n}}$. Hence setting $r_j=r_j^n$ and writing the definition \eqref{eq-I-n} of $I_{st}^n$, we get
\begin{multline*}
I^n_{st}-I^{n-1}_{st}=\sum_{j=0}^{2^n-1}(P_{t-s}-\Id)P_{s-r_j}(v^{12}_{r_j}\, (\delta\dot{\cW})_{r_jr_{j+1}})
\\
-\sum_{j=0}^{2^{n-1}-1}(P_{t-s}-\Id)P_{s-r_{2j}}(v^{12}_{r_{2j}}\, (\delta\dot{\cW})_{(r_{2j}r_{2j+2})}).
\end{multline*}
Therefore some trivial cancellations yield the expansion
\begin{multline}\label{eq-I-n-h}
I^n_{st}-I^{n-1}_{st}=\sum_{j=0}^{2^{n-1}-1}(P_{t-s}-\Id)\bigg[P_{s-r_{2j+1}}(v^{12}_{r_{2j+1}}\, (\delta\dot{\cW})_{r_{2j+1}r_{2j+2}})
\\
-P_{s-r_{2j}}(v^{12}_{r_{2j}}\, (\delta\dot{\cW})_{r_{2j+1}r_{2j+2}})\bigg].
\end{multline}
In addition, the semigroup property of $P$ enables to write
\begin{align*}
&P_{s-r_{2j+1}}(v^{12}_{r_{2j+1}}\, (\delta\dot{\cW})_{r_{2j+1}r_{2j+2}}))-P_{s-r_{2j}}(v^{12}_{r_{2j}}\, (\delta\dot{\cW})_{r_{2j+1}r_{2j+2}}))=L^{1,n,j}_{st}-L^{2,n,j}_{st},
\end{align*}
where $L^{1,n,j}$ and $L^{2,n,j}$ are respectively defined by
\begin{equation}\label{eq-L-1nj}
L^{1,n,j}_{st}=P_{s-r_{2j+1}}((\delta v^{12})_{r_{2j}r_{2j+1}}\, (\delta\dot{\cW})_{r_{2j+1}r_{2j+2}})
\end{equation}
and 
\begin{equation}\label{eq-L-2nj}
L^{2,n,j}_{st}=(P_{r_{2j+1}-r_{2j}}- \Id)P_{s-r_{2j+1}}(v^{12}_{r_{2j}}\, (\delta\dot{\cW})_{r_{2j+1}r_{2j+2}}).
\end{equation}
Plugging this information into \eqref{eq-I-n-h}, we have thus obtained that 
\begin{equation}\label{eq-I-L-nj}
I^n_{st}-I^{n-1}_{st}=\sum_{j=0}^{2^{n-1}-1}(P_{t-s}-\Id)(L^{1,n,j}_{st}-L^{2,n,j}_{st}).
\end{equation}
In the sequel we will estimate the two terms $(P_{t-s}-\Id)L^{1,n,j}_{st}$ and $(P_{t-s}-\Id)L^{2,n,j}_{st}$ separately.

\noindent
\emph{Step 3: Estimate for $(P_{t-s}-\Id)L^{1,n,j}_{st}$.} 
We shall bound the term $(P_{t-s}-\Id)L^{1,n,j}_{st}$ thanks to our Propositions about products and heat semigroup smoothing in Besov spaces (see Sections~\ref{sec-flow} and~\ref{sec-path-weight}). First we invoke \eqref{eq-smoothing-b} applied to Besov spaces with weight $w_t$ (see \eqref{eq-weight-wt} for the definition of $w_t$), which enables to write
\begin{align}\label{eq-Pts-L1nj}
\|(P_{t-s}-\Id)L^{1,n,j}_{st}\|_{\mathfrak{B}_{\alpha,\beta}^{\kappa,\, w_t}} &\le C(t-s)^{\theta}\|L^{1,n,j}_{st} \|_{\mathfrak{B}_{\alpha,\beta}^{\kappa+2\theta,\, w_t}}.
\end{align}
Next with the expression \eqref{eq-L-1nj} of $L^{1,n,j}$ in mind, a simple application of \eqref{eq-smoothing-a} yields
\begin{align}\label{eq-P-Id-m}
\|(P_{t-s}-\Id)L^{1,n,j}_{st}\|_{\mathfrak{B}_{\alpha,\beta}^{\kappa,\, w_t}}
\le 
\frac{C(t-s)^{\theta}\|(\delta v^{12})_{r_{2j}r_{2j+1}}\, (\delta\dot{\cW})_{r_{2j+1}r_{2j+2}}\|_{\mathfrak{B}_{\alpha,\beta}^{-\gamma,\, w_t}}}{(s-r_{2j+1})^{\frac{\kappa+\gamma}2+\theta}}.
\end{align}
Owing to definition \eqref{eq-weight-wt}, whenever $t\in[r_{2j+1}, r_{2j+2}]$ the weight $w_t$ verifies
\begin{equation}\label{eq-w_t}
w_t(q)=e^{-(\nu+bt)|q|_*^\eta}\le \frac{c}{(t-r_{2j+1})^{b/\eta}}\frac{e^{-(\nu+br_{2j+1})|q|_*^\eta}}{1+|q|_*^{b}}= \frac{c}{(t-r_{2j+1})^{b/\eta}}w_{r_{2j+1}}(q)\rho_b(q),
\end{equation}
for all $q\in \Heis$. In addition, if we consider $v^{12}\in  \cd_{\alpha,\beta}^{\theta,\kappa,\nu,\beta}$, we have $(\delta v^{12})_{r_{2j}r_{2j+1}} \in \mathfrak{B}_{\alpha,\beta}^{\kappa,\, w_{r_{2j+1}}}$. In the same way if $\dot{\cW}\in C_{\infty,\beta}^{\vartheta,-\gamma,\rho_b}$, it is readily checked that $(\delta\dot{\cW})_{r_{2j+1}r_{2j+2}}$ is an element of $\mathfrak{B}_{\infty,\beta}^{-\gamma,\, \rho_b}$. Since we have assumed $\kappa>\gamma$, Proposition \ref{prop-product} ensures that 
\begin{multline}\label{eq-prod-ell}
\|(\delta v^{12})_{r_{2j}r_{2j+1}}\, (\delta\dot{\cW})_{r_{2j+1}r_{2j+2}}\|_{\mathfrak{B}_{\alpha,\beta}^{-\gamma,\, w_{r_{2j+1}}\rho_b}} \\
\le
\|(\delta v^{12})_{r_{2j}r_{2j+1}}\|_{\mathfrak{B}_{\alpha,\beta}^{\kappa,\, w_{r_{2j+1}}}} 
\|(\delta\dot{\cW})_{r_{2j+1}r_{2j+2}}\|_{\mathfrak{B}_{\infty,\beta}^{-\gamma,\, \rho_b}} .
\end{multline}
Plugging \eqref{eq-weight-wt} and \eqref{eq-prod-ell} into \eqref{eq-P-Id-m}, we end up with 
\begin{align}\label{proof-mid-1}
\|(P_{t-s}-\Id)L^{1,n,j}_{st}\|_{\mathfrak{B}_{\alpha,\beta}^{\kappa,\, w_t}} &\le \frac{C(t-s)^{\theta}\|(\delta v^{12})_{r_{2j}r_{2j+1}}\|_{\mathfrak{B}_{\alpha,\beta}^{\kappa,\, w_{r_{2j+1}}}} 
\|(\delta\dot{\cW})_{r_{2j+1}r_{2j+2}}\|_{\mathfrak{B}_{\infty,\beta}^{-\gamma,\, \rho_b}}}{(s-r_{2j+1})^{\frac{\kappa+\gamma}2+\theta+\frac{b}{\eta}}}.
\end{align}
It remains to estimate the increments $\delta v^{12}$ and $\delta\dot{\cW}$. Due to the facts that $v^{12}\in \cd_{\alpha,\beta}^{\theta,\kappa,\nu,\beta}$ and $\dot{\cW}\in C_{\infty,\beta}^{\vartheta,-\gamma,\rho_b}$ we know that
\begin{eqnarray*}
\|(\delta v^{12})_{r_{2j}r_{2j+1}}\|_{\mathfrak{B}_{\alpha,\beta}^{\kappa,\, w_{r_{2j+1}}}} 
&\le& 
C\left(\frac{s}{2^{n}}\right)^{\theta} \|v^{12}\|_{\cd_{\alpha,\beta}^{\theta,\kappa,\nu,\beta}}, \\
\|(\delta\dot{\cW})_{r_{2j+1}r_{2j+2}}\|_{\mathfrak{B}_{\infty,\beta}^{-\gamma,\, \rho_b}} &\le& 
C\left(\frac{s}{2^{n}}\right)^{\vartheta}\| \dot{\cW}\|_{C_{\infty,\beta}^{\vartheta,-\gamma,\rho_b}}.
\end{eqnarray*}
Plugging the above estimates back into \eqref{proof-mid-1} we obtain that
\begin{align}\label{proof-mid-6}
\|(P_{t-s}-\Id)L^{1,n,j}_{st}\|_{\mathfrak{B}_{\alpha,\beta}^{\kappa,\, w_t}} &\le \frac{C(t-s)^{\theta} \|v^{12}\|_{\cd_{\alpha,\beta}^{\theta,\kappa,\nu,b}}
\| \dot{\cW}\|_{C_{\infty,\beta}^{\vartheta,-\gamma,\rho_b}}}{(s-r_{2j+1})^{\frac{\kappa+\gamma}2+\theta+\frac{b}{\eta}}}\left(\frac{s}{2^{n}}\right)^{\theta+\vartheta}.
\end{align}

\noindent
\emph{Step 4: Estimate for  $(P_{t-s}-\Id)L^{2,n,j}_{st}$.} 
Recall that $L^{2,n,j}_{st}$ is defined by \eqref{eq-L-2nj}.
In order to bound this term, similarly to what we did in \eqref{eq-Pts-L1nj} for $L^{1,n,j}_{st}$, we resort to \eqref{eq-smoothing-b}. This yields
\begin{align}\label{proof-mid-2}
\|(P_{t-s}-\Id)L^{2,n,j}_{st}\|_{\mathfrak{B}_{\alpha,\beta}^{\kappa,\, w_t} } &\le C(t-s)^{\theta}\|L^{2,n,j}_{st} \|_{\mathfrak{B}_{\alpha,\beta}^{\kappa+2\theta,\, w_t} }.
\end{align}
Next we go back to our expression \eqref{eq-L-2nj} for  $L^{2,n,j}_{st}$, we consider a small parameter $\epsilon>0$ and we invoke inequality \eqref{eq-smoothing-b} again. We end up with
\begin{align}\label{proof-mid-3-1}
\|L^{2,n,j}_{st} \|_{\mathfrak{B}_{\alpha,\beta}^{\kappa+2\theta,\, w_t} }&=\|(P_{r_{2j+1}-r_{2j}}- \Id)P_{s-r_{2j+1}}(v^{12}_{r_{2j}}\, (\delta\dot{\cW})_{r_{2j+1}r_{2j+2}})\|_{\mathfrak{B}_{\alpha,\beta}^{\kappa+2\theta,\, w_t}} \notag \\
&\le C (r_{2j+1}-r_{2j})^{1-\vartheta+\epsilon}\| P_{s-r_{2j+1}}(v^{12}_{r_{2j}}\, (\delta\dot{\cW})_{r_{2j+1}r_{2j+2}})\|_{\mathfrak{B}_{\alpha,\beta}^{\kappa+2\theta+2(1-\vartheta+\epsilon),\, w_t}}.\end{align}
We can now safely apply Proposition \ref{prop-continuity} to the right hand side of \eqref{proof-mid-3-1}. We get
\begin{align}\label{proof-mid-3}
\|L^{2,n,j}_{st} \|_{\mathfrak{B}_{\alpha,\beta}^{\kappa+2\theta,\, w_t} }
\le C \frac{(r_{2j+1}-r_{2j})^{1-\vartheta+\epsilon}}{(s-r_{2j+1})^{1-\vartheta+\epsilon+\theta+\frac{\kappa+\gamma}{2}}} \| v^{12}_{r_{2j}}\, (\delta\dot{\cW})_{r_{2j+1}r_{2j+2}}\|_{\mathfrak{B}_{\alpha,\beta}^{-\gamma,\, w_t}  }.
\end{align}
Moreover, invoking \eqref{eq-w_t} along the same lines as what we did in  \eqref{eq-prod-ell}, we have 
\begin{align}\label{proof-mid-4}
\| v^{12}_{r_{2j}}\, (\delta\dot{\cW})_{r_{2j+1}r_{2j+2}})\|_{\mathfrak{B}_{\alpha,\beta}^{-\gamma,\, w_t}  }
&\le \frac{C}{(t-r_{2j})^{b/\eta}}  \|v^{12}_{r_{2j}}\|_{\mathfrak{B}_{\alpha,\beta}^{\kappa,\, w_{r_{2j}}}  }
\|(\delta\dot{\cW})_{r_{2j+1}r_{2j+2}}\|_{\mathfrak{B}_{\infty,\beta}^{-\gamma,\, \rho_b}  } \notag\\
&\le  \frac{C}{(s-r_{2j+1})^{b/\eta}}  \|v^{12}\|_{ \cd_{\alpha,\beta}^{\theta,\kappa,\nu,b}  }
\|\dot{\cW}\|_{C_{\infty,\beta}^{\vartheta,-\gamma,\rho_b}  }\left( \frac{s}{2^n}\right)^\vartheta
\end{align}

Reporting \eqref{proof-mid-4} into \eqref{proof-mid-3} and then \eqref{proof-mid-3-1} and \eqref{proof-mid-2}, we end up with
\begin{align}\label{proof-mid-4-1}
\|(P_{t-s}-\Id)L^{2,n,j}_{st}\|_{\mathfrak{B}_{\alpha,\beta}^{\kappa,\, w_t} } &\le \frac{C(t-s)^{\theta}(r_{2j+1}-r_{2j})^{1-\vartheta+\epsilon} }{(s-r_{2j+1})^{1-\vartheta+\epsilon+\frac{\kappa+\gamma}2+\theta+\frac{b}{\eta}}}  \|v^{12}\|_{\cd_{\alpha,\beta}^{\theta,\kappa,\nu,b}}
\| \dot{\cW}\|_{C_{\infty,\beta}^{\vartheta,-\gamma,\rho_b}} \left(\frac{s}{2^{n}}\right)^{\theta+\vartheta}
\end{align}

\noindent
\emph{Step 5: Conclusion.} Let us summarize our considerations so far. Combining \eqref{proof-mid-6} and \eqref{proof-mid-4-1}  and picking $\theta=1-\vartheta+\epsilon$, we have obtained that
\[
\|(P_{t-s}-\Id)(L^{1,n,j}_{st}-L^{2,n,j}_{st})\|_{\mathfrak{B}_{\alpha,\beta}^{\kappa,\, w_t}  }  \le \frac{C(t-s)^{\theta}}{(s-r_{2j+1})^{2(1-\vartheta+\epsilon)+\frac{\kappa+\gamma}{2}+b/\eta}} \|v^{12}\|_{ \cd_{\alpha,\beta}^{\theta,\kappa,\nu,b}  }
\|\dot{\cW}\|_{C_{\infty,\beta}^{\vartheta,-\gamma,\rho_b}  } \left( \frac{s}{2^n}\right)^{1+\epsilon}.
\]
Now, chose $\kappa=\gamma+2\epsilon$. By Hypothesis \ref{hypo} 
we obtain that for sufficiently small  $\epsilon,b>0$, there exists a $\delta>0$ such that
\begin{align}\label{proof-mid-5}
\|(P_{t-s}-\Id)(L^{1,n,j}_{st}-L^{2,n,j}_{st})\|_{\mathfrak{B}_{\alpha,\beta}^{\kappa,\, w_t}  }  \le \frac{C(t-s)^{\theta}}{(s-r_{2j+1})^{1-\delta}} \|v^{12}\|_{ \cd_{\alpha,\beta}^{\theta,\kappa,\nu,b}  }
\|\dot{\cW}\|_{C_{\infty,\beta}^{\vartheta,-\gamma,\rho_b}  } \left( \frac{s}{2^n}\right)^{1+\epsilon}.
\end{align}
Next we recall that $I^n_{st}-I^{n-1}_{st}$ is defined in \eqref{eq-I-L-nj}. Hence summing \eqref{proof-mid-5} over $j$ and $n$ we discover that
\begin{multline}\label{eq-sum-g}
\sum_{n=1}^\infty \|I^n_{st}-I^{n-1}_{st}\|_{\mathfrak{B}_{\alpha,\beta}^{\kappa,\, w_t}  }  \\
\le 
C(t-s)^{\theta} \|v^{12}\|_{ \cd_{\alpha,\beta}^{\theta,\kappa,\nu,b}  } \|\dot{\cW}\|_{C_{\infty,\beta}^{\vartheta,-\gamma,\rho_b}  } \sum_{n=1}^\infty \left( \frac{s}{2^{n-1}}\right)^{\epsilon}\bigg( \frac{s}{2^{n-1}}\sum_{j=0}^{2^{n-1}-1}\frac{1}{(s-r_{2j+1})^{1-\delta}}\bigg) .
\end{multline}
The right hand side of \eqref{eq-sum-g} can be estimated thanks to elementary Riemann sums arguments. We thus get the following estimate:
\begin{align}\label{eq-sum-g-2}
\sum_{n=1}^\infty \|I^n_{st}-I^{n-1}_{st}\|_{\mathfrak{B}_{\alpha,\beta}^{\kappa,\, w_t}  }  
\le  C(t-s)^{\theta} \|v^{12}\|_{ \cd_{\alpha,\beta}^{\theta,\kappa,\nu,b}  } \|\dot{\cW}\|_{C_{\infty,\beta}^{\vartheta,-\gamma,\rho_b}  } s^{\epsilon+\delta}.
\end{align}
The fact that $\sum_{n=1}^\infty  \|I^n_{st}-I^{n-1}_{st}\|_{\mathfrak{B}_{\alpha,\beta}^{\kappa,\, w_t}  }  $ is an absolutely convergent series in $\mathfrak{B}_{\alpha,\beta}^{\kappa,\, w_t} $ allows to conclude that the limit of $I^n_{st}$ exists. Specifically, taking into account that $I^0_{st}=0$ we obtain the limit $I_{st}$ in \eqref{eq-I-limit} as the following series in $\mathfrak{B}_{\alpha,\beta}^{\kappa,\, w_t} $:
\[
I_{st}=\sum_{n=1}^\infty (I^n_{st}-I^{n-1}_{st}).
\]
For $s,t\in[0,\tau]$, relation \eqref{eq-sum-g-2} can thus be read as
\begin{equation}\label{eq-est-I}
\|I_{st}\|_{\mathfrak{B}_{\alpha,\beta}^{\kappa,\, w_t}  } \le C\tau^{\epsilon+\delta}(t-s)^{\theta} \|v^{12}\|_{ \cd_{\alpha,\beta}^{\theta,\kappa,\nu,b}  } \|\dot{\cW}\|_{C_{\infty,\beta}^{\vartheta,-\gamma,\rho_b}  }. 
\end{equation}
The upper bounds for the term $II_{st}$ defined by \eqref{eq-def-I-II} are obtained with very similar arguments. We let the patient reader check that we obtain the same bound as \eqref{eq-est-I}

We are now ready to conclude about the announced contraction property. Namely putting together relation \eqref{eq-est-I}, the similar bound we have outlined for  $II_{st}$ and decomposition \eqref{eq-decomp-i}, it is readily checked that the term $\Phi^{12}$ given by \eqref{eq-phi-12-int} fulfills 
\begin{equation}\label{eq-phi-est-j}
\|\Phi^{12}_t-\Phi^{12}_s\|_{\mathfrak{B}_{\alpha,\beta}^{\kappa,\, w_t}  }\le  C\tau^{\epsilon+\delta}(t-s)^{\theta} \|v^{12}\|_{ \cd_{\alpha,\beta}^{\theta,\kappa,\nu,b}  } \|\dot{\cW}\|_{C_{\infty,\beta}^{\vartheta,-\gamma,\rho_b}  } \, ,
\end{equation}
for all $0\le s<t\le \tau$. We now pick $\tau$ small enough, namely 
\begin{equation}\label{eq-tau-k}
\tau=\left(\frac{C}2 \|\dot{\cW}\|_{C_{\infty,\beta}^{\vartheta,-\gamma,\rho_b}  } \right)^{-\frac1{\epsilon+\delta}}.
\end{equation}
Then one can recast \eqref{eq-phi-est-j} as
\[
\|\Phi^{12}_t-\Phi^{12}_s\|_{\mathfrak{B}_{\alpha,\beta}^{\kappa,\, w_t}  }\le \frac12(t-s)^{\theta} \|v^{12}\|_{ \cd_{\alpha,\beta}^{\theta,\kappa,\nu,b}  } 
\]
which is our desired contraction property.
\end{proof}
We now turn to the main existence and uniqueness result of this section. 
\begin{theorem}\label{thm-exi-uni}
Let $\tau$ be a fixed time horizon. Consider a noisy input $\dot{\bW}$ in a distribution space $C_{\infty,\beta}^{\vartheta,-\gamma,\rho_b} $ satisfying Hypothesis \ref{hypo}. Choose some exponents $\theta,\kappa$ such that $\theta+\vartheta>1$ and $\gamma<\kappa<1$. Then there exists a unique solution to equation \eqref{eq-mild-soln}, interpreted as in Definition \ref{def-mild-soln}, sitting in the space $\cd_{\alpha,\beta}^{\theta,\kappa,\nu,b}$ from Definition \ref{def-D-space}.
\end{theorem}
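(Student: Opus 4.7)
The plan is to promote the contraction estimate of Lemma~\ref{lemma-exi-uni} into a genuine fixed-point argument on a small time interval, and then patch local solutions to cover all of $[0,T]$. To make the Banach fixed-point theorem applicable on $[0,\tau]$ with $\tau$ given by \eqref{eq-tau-k}, I first need to verify that $\Phi$ maps some closed ball of $\cd_{\alpha,\beta}^{\theta,\kappa,\nu,b}([0,\tau])$ into itself. Since $\Phi(0)(t,p)=P_tu_0(p)$, this reduces to controlling the free heat evolution in the target norm. Using the smoothing and time-regularity bounds \eqref{eq-smoothing-a}–\eqref{eq-smoothing-b} from Proposition~\ref{prop-continuity}, together with the elementary monotonicity $w_t\le w_0$ built into \eqref{eq-weight-wt}, one gets $\|P_\cdot u_0\|_{\cd_{\alpha,\beta}^{\theta,\kappa,\nu,b}}\le C\|u_0\|_{\mathfrak{B}_{\alpha,\beta}^{\kappa,w_0}}$. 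Combined with \eqref{eq-phi-est-j}, this ball invariance plus contraction yields a unique fixed point $u$ on $[0,\tau]$, which is by construction a mild solution in the sense of Definition~\ref{def-mild-soln}.

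Next I would bootstrap to $[0,T]$. The decisive observation is that $\tau$ in \eqref{eq-tau-k} depends only on $\|\dot{\bW}\|_{C_{\infty,\beta}^{\vartheta,-\gamma,\rho_b}}$ and on the fixed parameters $\epsilon,\delta,b,\eta$, hence not on the size of the initial data. Setting $u_0^{(1)}\defeq u(\tau,\cdot)$, which lies in $\mathfrak{B}_{\alpha,\beta}^{\kappa,w_\tau}$, I would rerun the contraction argument on $[\tau,2\tau]$ by considering the time-shifted noise $\tilde{\bW}(t)\defeq\bW(t+\tau)-\bW(\tau)$ and the time-shifted weight family $\tilde{w}_t=w_{t+\tau}$. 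The key compatibility to check is that the semigroup identity makes the concatenated function $u$ satisfy \eqref{eq-mild-soln} across the joint $t=\tau$: this is a direct consequence of $P_{t-r}=P_{t-\tau}P_{\tau-r}$ for $r\le\tau\le t$ and of the additivity of Young integrals. Iterating $N=\lceil T/\tau\rceil$ times, the weight parameter progresses from $\nu$ to $\nu+bT$ and the solution is built piece by piece on $[0,T]$.

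Uniqueness on $[0,T]$ follows automatically by a continuation argument: two solutions in $\cd_{\alpha,\beta}^{\theta,\kappa,\nu,b}$ must coincide on $[0,\tau]$ by the contraction on the first interval, hence on $[\tau,2\tau]$ by the same argument applied to their common value at $\tau$, and so on; in finitely many steps they coincide on $[0,T]$.

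The principal technical obstacle I anticipate is ensuring that the constants arising in each iteration step are uniform, so that the same $\tau$ works from one subinterval to the next. Concretely, one must verify that the constants in the heat-flow estimates of Propositions~\ref{eff-heat-flow}, \ref{prop-time-reg} and \ref{prop-continuity}, as well as in the paraproduct bound of Proposition~\ref{prop-product}, remain under control as the weight parameter shifts from $\nu$ to $\nu+bk\tau$ for $k=1,\dots,N$. Since Proposition~\ref{eff-heat-flow} is stated uniformly over $\nu\in(0,\nu_0]$ and the paraproduct estimates of Section~\ref{sec-paraprod} depend on the weights only through the product structure $w_1w_2$, the required uniformity holds provided one initially fixes $\nu_0\ge\nu+bT$; this is precisely the role of the parameter $b$ (and of the extension of Besov spaces to the class $\ct$ of Definition~\ref{def-weight-class}) in permitting the finite-horizon iteration.
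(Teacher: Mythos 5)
Your proposal follows essentially the same route as the paper: the paper's proof simply invokes the contraction of Lemma~\ref{lemma-exi-uni}, observes that the contraction time $\tau$ in \eqref{eq-tau-k} is independent of the initial condition, and appeals to "a classical patching argument," leaving the details to the reader. Your write-up correctly supplies exactly those omitted details (ball invariance via the free evolution $P_tu_0$, concatenation across $t=\tau$ via the semigroup property and additivity of the Young integral, and uniformity of constants by fixing $\nu_0\ge\nu+bT$), so it is a faithful and somewhat more complete version of the paper's argument.
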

\begin{proof}
The contraction properties from Lemma \ref{lemma-exi-uni} yields existence and uniqueness on $[0,\tau]\times \Heis$, where $\tau$ is given by \eqref{eq-tau-k}. Moreover, as one can immediately see from the right hand side of \eqref{eq-tau-k}, $\tau$ does not depend on the initial condition $u_0$. Therefore a classical patching argument allows to conclude. Further details are left to the reader for sake of conciseness.
\end{proof}

\subsection{Application to a family of Gaussian noises}\label{sec-app-Gau}
In \cite{BOTW} we have defined a family of noises on $[0,\tau]\times\Heis$ which were white in time, with a spatial covariance given by fractional powers of the Laplacian. In this paper, due to the fact that we are considering Stratonovich type equations, it is very natural to consider noises with a fractional Brownian motion type behavior in time. We briefly define this class of noisy inputs, referring to \cite{BOTW} for further details about the spatial covariance structure. 
\begin{definition}\label{def-cov-noise}
Let $\Gamma$ be a covariance function in time, satisfying 
\begin{align}\label{eq-Gamma}
0\le \Gamma(t)\le c_\Gamma|t|^{-\zeta},
\end{align}
for a constant $c_\Gamma>0$ and $0<\zeta<1$. For a regularity coefficient $\alpha\in (0,\frac{n+1}{2})$, we consider a centered Gaussian family $
\{\bW^{\zeta, \alpha}(\varphi);\varphi\in \mathcal{D}\}$ for a certain subset of functions $\cd: [0,\tau)\times \Heis\to \R$. The covariance structure of this Gaussian family is given by
\begin{align}
&\bE\left( \bW^{\zeta, \alpha}(\varphi)\bW^{\zeta, \alpha}(\psi)\right)
=\int_{[0,\tau)^2} \int_{(\Heis)^2} \varphi (s,q_1)(-\Delta)^{-2\alpha} \psi (t,q_1)\Gamma(s-t) \, d\mu(q_1)dsdt \label{eq-cov-1}\\
&\hspace{.9in}
=\int_{[0,\tau)^2} \int_{(\Heis)^2} \Gamma(s-t)G_{2\alpha}(q_1,q_2)\varphi (s,q_1)  \psi (t,q_1)\, d\mu(q_1)d\mu(q_2) dsdt \, , \label{eq-cov2}
\end{align}
where $G_{2\alpha}$ is the kernel defined by \eqref{green function}.
\end{definition}

\begin{remark}
The identity between \eqref{eq-cov-1} and \eqref{eq-cov2} is explained in \cite[Section 3]{BOTW}. In addition, one can define $\cd$ in Definition \ref{def-cov-noise} as the closure of smooth functions under the inner product given by \eqref{eq-cov2}. The time structure of this functional set is described in \cite{PT}, while the spatial component has been identified as the Sobolev space $W^{-\alpha, 2}$ in \cite[Proposition 3.1]{BOTW}.
\end{remark}
\begin{remark}\label{rmk-V}
As defined in \eqref{eq-cov2} the noise $\bW^{\zeta, \alpha}$ is a distribution in both time and space. In order to be more consistent with a noise $\dot{\bW}$ being H\"older continuous in time like in Hypothesis \ref{hypo}, let us introduce a  new piece of notation. That is for $0\le s\le t\le \tau$ and $\varphi:\Heis\to \R$ smooth enough we set
\begin{align}\label{eq-noise-V}
\nV_t(\varphi)=\bW^{\zeta, \alpha}(\mathbf{1}_{[0,t)}\otimes\varphi),\quad \text{and }\quad \delta\nV_{st}(\varphi)=\bW^{\zeta, \alpha}(\mathbf{1}_{[s,t)}\otimes\varphi),
\end{align}
where $\bW^{\zeta, \alpha}$ is still the noise as introduced in Definition \ref{def-cov-noise}.
\end{remark}
Our aim in this section is to prove that the noises $\nV$ are proper inputs for \eqref{eq-mild-soln}. According to Theorem \ref{thm-exi-uni}, this amounts to prove that $\nV$ satisfies Hypothesis \ref{hypo}. We will thus have to place $\nV$ in a space $C^{\vartheta,\kappa,w}$ as defined in \eqref{eq-holder-space}. 
This involves weighted Besov spaces $\mathfrak{B}^{\kappa,w}$, for which block decompositions are useful. We start by computing this decomposition in the lemma below.
\begin{lemma}
For $\zeta\in(0,1)$ and $\alpha\in (0,\frac{n+1}{2})$, let $\nV$ be the noise given in Definition~\ref{def-cov-noise} and Remark \ref{rmk-V}. For a distribution $f$ defined on $\Heis$, recall that the block $\sigma_kf$ is defined by \eqref{eqn-c}. Then for all $t\in(0,\tau)$ and $q\in \Heis$, the random variable $\sigma_k \nV_t(q)$ is given by the following Wiener integral:
\begin{align}\label{eq-sigma-W}
\sigma_k \nV_t(q) 
= \bW^{\zeta,\alpha}(\mathbf{1}_{[0,t)}\otimes H_{k,q}),
\end{align}
where the function $H_{k,q}$ is defined through the kernels $K_{m,\ell,\lambda}$ in \eqref{eq-FT-K} in the following way:
\begin{equation}\label{eq-H-kq}
H_{k,q}(q')=\frac{2^{n-1}}{\pi^{n+1}} \int_{\fHeis}e^{i\lambda (z-z')} K_{\hat{w}} (q) \tilde{\phi}_k(m,m,\lambda)   \overline{K_{\hat{w}} (q')} \, d \hat{w},
\end{equation}
where we have used the convention \eqref{eq-FT-int-inv} for the integration over $\hat{w}$.
\end{lemma}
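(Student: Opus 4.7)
The plan is to first establish an explicit kernel representation of $\sigma_k f$ when $f\in\mathcal{S}(\Heis)$, and then lift this representation to the distribution-valued noise $\nV_t$ via the defining pairing. Starting from the definition \eqref{eqn-c} of $\sigma_k f$, together with the fact that $\tilde{\phi}_k$ is supported on the diagonal $\{m=\ell\}$, relation \eqref{d3} yields $\widehat{\sigma_k f}(m,\ell,\lambda) = \tilde{\phi}_k(m,m,\lambda)\hat{f}(m,\ell,\lambda)$. Applying the Fourier inversion formula \eqref{eq-FT-int-inv} gives
\begin{equation*}
\sigma_k f(q)
=
\frac{2^{n-1}}{\pi^{n+1}} \int_{\fHeis} e^{i\lambda z} K_{\hat{w}}(q)\, \tilde{\phi}_k(m,m,\lambda)\, \hat{f}(\hat{w})\, d\hat{w}.
\end{equation*}
Inserting the integral representation \eqref{eq-FT-int} for $\hat{f}$ and exchanging the order of integration (Fubini), I obtain
\begin{equation*}
\sigma_k f(q) = \int_{\Heis} H_{k,q}(q')\, f(q')\, d\mu(q'),
\end{equation*}
with $H_{k,q}$ precisely the kernel defined in \eqref{eq-H-kq}.

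Next, I would verify that $H_{k,q}$ lies in the natural Cameron-Martin type spatial test space for $\bW^{\zeta,\alpha}$, namely the spatial Sobolev space $W^{-\alpha,2}$ identified in \cite[Proposition 3.1]{BOTW}. The key point here is that $H_{k,q}$ is obtained by Fourier localization at frequencies carried by $\tilde{\phi}_k$, so that its spatial Fourier transform is supported on the annulus of scale $2^k$ given by \eqref{eq-chi-supp}. Combined with the uniform bound $|K_{\hat{w}}|\le 1$ from \eqref{eq-b} and the rapid decay of $\tilde{\phi}_k$ on $\fHeis$, this ensures that $H_{k,q}$ is a smooth function with enough decay to be paired with $\nV_t$.

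Finally, the defining relation $\nV_t(\varphi) = \bW^{\zeta,\alpha}(\mathbf{1}_{[0,t)}\otimes\varphi)$ from \eqref{eq-noise-V}, extended by continuity/density to test functions $\varphi$ in the Cameron-Martin space, delivers
\begin{equation*}
\sigma_k \nV_t(q) = \nV_t(H_{k,q}) = \bW^{\zeta,\alpha}(\mathbf{1}_{[0,t)}\otimes H_{k,q}),
\end{equation*}
which is the desired identity \eqref{eq-sigma-W}. The main technical obstacle in this scheme is the justification of Fubini when passing from Schwartz test functions to the distribution-valued noise $\nV_t$; this is handled by first proving the identity on smooth test functions $f$, then using a density argument together with continuity of the Wiener integral on $W^{-\alpha,2}$, leveraging the regularity and Fourier localization of $H_{k,q}$ established in the previous step.
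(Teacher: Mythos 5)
Your proposal is correct and follows essentially the same route as the paper: both hinge on the Fourier-inversion computation identifying $H_{k,q}$ as the kernel of $\sigma_k$, and then transfer this to the noise. The only cosmetic difference is that the paper applies a stochastic Fubini identity directly to the Wiener-integral representation of $\widehat{\nV}_t$ (and secures the ``for all $t,q$'' claim via Kolmogorov's criterion), whereas you first prove the kernel identity on Schwartz functions and then pass to $\nV_t$ by density in the Cameron--Martin space; these are equivalent in substance and rigor.
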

\begin{proof}
The blocks are characterized through their Fourier transform according to \eqref{eqn-c}:
\begin{align*}
\widehat{\sigma_k \nV}&=(\tilde{\phi}_k\cdot \widehat{\nV})(m,\ell,\lambda)=\tilde{\phi}_k(m,m,\lambda) \widehat{\nV}(m,\ell,\lambda),
\end{align*}
where the second identity is obtained similarly to \eqref{d3}.  We now apply formula \eqref{eq-FT-int} for the Fourier transforms of $\dot{\bW}^{\zeta,\alpha}$, which yields 
\begin{align*}
\lc \widehat{\sigma_k \nV_t}\rc\!(m,\ell,\la)
=
\tilde{\phi}_k(m,m,\lambda)
\lp \int_0^t\int_\Heis  \overline{e^{i\lambda z} K_{m,\ell,\lambda} (q)} \bW^{\zeta,\alpha}(ds,dq) \rp.
\end{align*}
Therefore invoking the inverse Fourier transform \eqref{eq-FT-int-inv}, our definition \eqref{eq-H-kq} of the kernel $H_{k,q}$ and a standard use of stochastic Fubini identities, we obtain 
\begin{align}\label{eq-sigma-W-mid}
\sigma_k \nV_t(q)
&=\frac{2^{n-1}}{\pi^{n+1}} \int_{\fHeis}e^{i\lambda z} K_{\hat{w}} (q) \left(\tilde{\phi}_k(m,m,\lambda)\int_0^t\int_\Heis  \overline{e^{i\lambda z'} K_{m,\ell,\lambda} (q')} \bW^{\zeta,\alpha}(ds,dq') \right)d \hat{w}\notag \\
&=\int_0^t\int_{\Heis}H_{k,q}(q') \bW^{\zeta,\alpha}(ds,dq')
= \bW^{\zeta,\alpha}(\mathbf{1}_{[0,t)}\otimes H_{k,q}).
\end{align}
Formula \eqref{eq-sigma-W-mid} proves relation \eqref{eq-sigma-W} for fixed $t\in[0,\tau)$ and $q\in\Heis$. It remains to see that this formula is valid almost surely for all $t\in[0,\tau)$ and $q\in\Heis$. Since $k$ is fixed, this easily stems from continuity arguments based on Kolmogorov's criterion. We skip the details for sake of conciseness.
\end{proof}

%

We now delve deeper into the structure of the functions $H_{k,q}$. Our findings are summarized in the following lemma.
\begin{lemma}
For $q,q'\in\Heis$ and $k\ge -1$, let $H_{k,q}(q')$ be  the quantity  given by \eqref{eq-H-kq}. Recall that the inverse Fourier transform $\mathcal{F}^{-1}$ is given by \eqref{eq-FT-int-inv}. Then we have
\begin{equation}\label{eq-H-Fouri}
H_{k,q}(q')=\mathcal{F}^{-1}(\tilde{\phi}_k)(qq'^{-1})
\end{equation}
where $\tilde{\phi}_k$ is given as in \eqref{eqn-b}. 
\end{lemma}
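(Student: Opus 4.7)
My plan is to identify the kernel $H_{k,q}$ by recognizing the Littlewood--Paley block $\sigma_k$ as a convolution operator on $\Heis$ whose convolution kernel is precisely $g_k := \mathcal{F}^{-1}(\tilde{\phi}_k)$, and then applying this interpretation to $\nV_t$. First, from the definition \eqref{eqn-c} of $\sigma_k$ together with the convolution identity \eqref{eq-conv}, we have $\widehat{\sigma_k f} = \tilde{\phi}_k\cdot\hat{f} = \widehat{g_k\star f}$ for every $f\in\cs(\Heis)$. The Fourier isomorphism of Proposition \ref{isomorphism} then yields
\[
\sigma_k f(q) = (g_k\star f)(q) = \int_\Heis g_k(q(q')^{-1})\, f(q')\, d\mu(q').
\]
Note that $g_k\in\cs(\Heis)$ by Propositions \ref{isomorphism} and \ref{exp-decay-est}, which makes the above pairing meaningful even when $f$ is a spatial distribution such as $\nV_t$.

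A standard mollification argument (for instance, regularizing $\nV_t$ by $P_\varepsilon \nV_t$ and using the rapid decay of $g_k$ to pass to the limit $\varepsilon\downarrow 0$) extends the convolution formula to
\[
\sigma_k \nV_t(q) = \int_\Heis g_k(q(q')^{-1})\, \nV_t(q')\, d\mu(q') = \bW^{\zeta,\alpha}\!\bigl(\mathbf{1}_{[0,t)}\otimes g_k(q\,\cdot^{-1})\bigr),
\]
where the last equality invokes the definition \eqref{eq-noise-V} of $\nV_t$. Comparing this expression with the representation \eqref{eq-sigma-W} of the preceding lemma, we obtain
\[
\bW^{\zeta,\alpha}\!\bigl(\mathbf{1}_{[0,t)}\otimes H_{k,q}\bigr) = \bW^{\zeta,\alpha}\!\bigl(\mathbf{1}_{[0,t)}\otimes g_k(q\,\cdot^{-1})\bigr) \quad \text{a.s.}
\]
for every $(t,q)\in[0,\tau)\times\Heis$. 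The isometry of Wiener integrals forces these two integrands to agree in the Cameron--Martin space associated with $\bW^{\zeta,\alpha}$; since both $H_{k,q}(q')$ and $g_k(q(q')^{-1})$ are continuous in $q'$ (as $g_k\in\cs(\Heis)$ and $H_{k,q}$ is given by the absolutely convergent integral \eqref{eq-H-kq}), the equality lifts to a pointwise one, giving
\[
H_{k,q}(q') = g_k(q(q')^{-1}) = \mathcal{F}^{-1}(\tilde{\phi}_k)(q(q')^{-1}),
\]
which is the desired identity \eqref{eq-H-Fouri}.

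The main technical obstacle is to rigorously justify the convolution formula $\sigma_k\nV_t = g_k\star\nV_t$ when $\nV_t$ is only a spatial distribution, and to promote equality of Wiener integrands to pointwise equality of continuous kernels. The former is handled through the mollification scheme indicated above, exploiting the fact that $g_k$ has exponential decay by Proposition \ref{exp-decay-est} and so tests cleanly against $\nV_t$. Alternatively, one may bypass the probabilistic route altogether and verify \eqref{eq-H-Fouri} directly from the explicit kernel formulae: substituting the inverse Fourier expression \eqref{eq-FT-int-inv} for $\mathcal{F}^{-1}(\tilde{\phi}_k)(q(q')^{-1})$, the sought equality reduces to the identity $e^{i\lambda z''} K_{m,m,\lambda}(q(q')^{-1}) = e^{i\lambda(z-z')}K_{m,m,\lambda}(q)\overline{K_{m,m,\lambda}(q')}$ (summed over the diagonal $m=\ell$), which in turn descends from the homomorphism property $U_{qq'}^\lambda = U_q^\lambda U_{q'}^\lambda$ of the Schrödinger representations and the matrix coefficient expression for $K_{m,\ell,\lambda}$ coming from \eqref{eq-FT-K}.
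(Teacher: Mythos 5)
Your main route is genuinely different from the paper's: the paper never touches the noise, proving \eqref{eq-H-Fouri} as a purely deterministic kernel identity by substituting \eqref{eq-FT-K} into \eqref{eq-H-kq}, collapsing the sum over the second Hermite index with the completeness relation $\sum_{\ell}\Phi^\lambda_\ell(-x+\xi)\Phi^\lambda_\ell(-x'+\xi')=\delta(x-x'-(\xi-\xi'))$ (relation \eqref{eq-phi-ell-m}), and recognizing the result as $K_{m,m,\lambda}(q q'^{-1})$ after a change of variables. Your probabilistic detour (two Wiener-integral representations of $\sigma_k\nV_t(q)$, then the isometry) is workable in outline, but its two load-bearing steps are left as sketches. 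First, $\sigma_k\nV_t=g_k\star\nV_t$ for a spatial distribution needs the mollification argument you only allude to. Second, and more seriously, ``the isometry forces the integrands to agree'' only gives vanishing of $h:=H_{k,q}-g_k(q\,\cdot^{-1})$ in the Cameron--Martin norm, i.e.\ $\|(-\Delta)^{-\alpha}h\|_{L^2}=0$; to upgrade this to $h\equiv 0$ you must invoke injectivity of $(-\Delta)^{-\alpha}$ on continuous integrable functions (via Plancherel \eqref{eq-planch} and the nonvanishing of the multiplier $(4|\lambda|(2|m|+n))^{-2\alpha}$). That injectivity is the actual mathematical content of your argument and is asserted rather than proved. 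Note also that your route only yields the lemma under the standing hypotheses on $\zeta,\alpha$ that make $\bW^{\zeta,\alpha}$ well defined, whereas the identity is purely analytic.

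The concrete error is in your ``alternative'' direct verification: the identity you claim to reduce to, $e^{i\lambda z''} K_{m,m,\lambda}(q(q')^{-1}) = e^{i\lambda(z-z')}K_{m,m,\lambda}(q)\overline{K_{m,m,\lambda}(q')}$, is false --- a product of two diagonal matrix coefficients is not the diagonal coefficient of the product of the operators. The homomorphism property you cite expresses the diagonal coefficient of $U^\lambda_{q(q')^{-1}}$ as a \emph{sum over an intermediate Hermite index} $\ell$ of products of off-diagonal coefficients of $U^\lambda_q$ and $U^\lambda_{q'}$; this is exactly why \eqref{eq-H-kq} sums over both $m$ and $\ell$ even though $\tilde\phi_k$ lives on the diagonal, and the sum over $\ell$ is precisely what the paper's completeness relation \eqref{eq-phi-ell-m} evaluates. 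With that correction, your direct route becomes essentially the paper's proof; as written, the stated identity does not hold.
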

\begin{proof}
Let us explicit all the terms in our definition \eqref{eq-H-kq} of the kernel $H_{k,q}$. That is we use the fact, apparent from \eqref{eq-FT-K}, that for $\hat{w}=(m,\ell,\lambda)$,
\[
\overline{K}_{\hat{w}} (q')=\overline{K}_{m,\ell,\lambda} (q')=K_{m,\ell,-\lambda} (q').
\] 
Then we plug \eqref{eq-FT-K} into \eqref{eq-H-kq} and invoke the convention \eqref{eq-FT-int-inv} on integration over $d\hat{w}$. We end up with the identity
\begin{align}\label{eq-expan-H}
H_{k,q}(q')=\frac{2^{n-1}}{\pi^{n+1}} \sum_{m,\ell\in \N^n} &\int_{\R} d\lambda\,|\lambda|^n e^{i\lambda (z-z')} \tilde{\phi}_k(m,m,\lambda)  \int_{\R^{2n}} d\xi d\xi'  e^{2i\lambda\left(\langle y,\xi\rangle- \langle y',\xi'\rangle\right)} \notag\\
&\quad\quad \times \Phi^\lambda_m(x+\xi)\Phi^\lambda_m(x'+\xi')\Phi^\lambda_\ell(-x+\xi)\Phi^\lambda_\ell(-x'+\xi').
\end{align}
We can simplify expression \eqref{eq-expan-H} by recalling (see Section \ref{sec:fourier-general}) that for every $\lambda$ the family $\{\Phi_m^\lambda;m\in \N^n\}$ forms an orthonormal basis of $L^2(\R^n)$. For every $f\in L^2(\R^n)$ we thus have 
\[
f(x)=\sum_{\ell\in \N^n}\left(\int_{\R^n}\Phi_{\ell}^\lambda(y)f(y)dy\right)\Phi_{\ell}^\lambda(x),
\]
from which it is easily seen that the following relation holds true in a distributional sense over $\R^n$:
\begin{align}\label{eq-phi-ell-m}
\sum_{\ell\in\N^n}\Phi^\lambda_\ell(-x+\xi)\Phi^\lambda_\ell(-x'+\xi')=\delta(x-x'-(\xi-\xi')).
\end{align}
Plugging \eqref{eq-phi-ell-m} into \eqref{eq-expan-H} we thus obtain
\begin{align}\label{eq-H-kq1}
H_{k,q}(q')=\frac{2^{n-1}}{\pi^{n+1}}\sum_{m\in \N^n} & \int_{\R} d\lambda\, |\lambda|^n e^{i\lambda (z-z')} \tilde{\phi}_k(m,m,\lambda)  \notag 
\\
&\quad \int_{\R^{n}} d\xi \,e^{2i\lambda\left(\langle y-y',\xi\rangle+ \langle y',x-x'\rangle\right)} \Phi^\lambda_m(x+\xi)\Phi^\lambda_m(\xi-x+2x') .
\end{align}
In the above integral with respect to $\xi$, we perform the change of variable $\xi'=\xi+x$. With relation~\eqref{eq-FT-K} in mind, we realize that
\begin{align}\label{eq-H-mid}
\int_{\R^{n}} &e^{2i\lambda\left(\langle y-y',\xi\rangle+ \langle y',x-x'\rangle\right)}\sum_{m\in \N^n}\Phi^\lambda_m(x+\xi)\Phi^\lambda_m(\xi-x+2x')d\xi \notag \\
&\quad\quad=e^{-2i\lambda \langle y-y', x'\rangle} K_{m,m,\lambda}(x-x',y-y',z-z'+2\langle y',x\rangle-2\langle y,x'\rangle).
\end{align}
Gathering this information into \eqref{eq-H-kq1} we get
\begin{align}\label{eq-H-d}
H_{k,q}(q')=\frac{2^{n-1}}{\pi^{n+1}}\sum_{m\in \N^n} & \int_{\R} d\lambda\, |\lambda|^n e^{i\lambda (z-z'+2\langle y', x\rangle- 2\langle y,x'\rangle)} \tilde{\phi}_k(m,m,\lambda) \notag\\ 
&\times K_{m,m,\lambda}(x-x',y-y',z-z'+2\langle y',x\rangle-2\langle y,x'\rangle) .
\end{align}
Now we simply recall from \eqref{eq-groupaction}  that $(x-x',y-y',z-z'+2\langle y',x\rangle-2\langle y,x'\rangle)=q q'^{-1}$. Taking into account the fact that $\tilde{\phi}_k$ is supported on the diagonal ($m=\ell$) and invoking relation~\eqref{eq-FT-K} again, one can recast \eqref{eq-H-d} as
\begin{align}\label{eq-H-kq3}
H_{k,q}(q')&=\frac{2^{n-1}}{\pi^{n+1}} \int_{\R}e^{i\lambda (qq^{-1} )_z}\sum_{m\in \N^n}K_{m,m,\lambda}(q q'^{-1})
\tilde{\phi}_k(m,m,\lambda) |\lambda|^nd\lambda\\
&=\cf^{-1}({\tilde{\phi}}_k)(q q'^{-1}),
\end{align}
where $(qq^{-1})_z$ stands for the $z$-coordinate of $qq^{-1}\in \Heis$. This finishes the proof of our claim~\eqref{eq-H-Fouri}.
\end{proof} 
In order to compute expectations in a convenient way, we will place our noise $\dot{\bW}^{\zeta,\alpha}$ in a specific class of Besov spaces. We label a proper definition below.

\begin{definition}\label{def-Besov-V}
For $a\in [1, \infty)$, $\gamma>0$ and a weight $\rho_b$ defined as in \eqref{eq-til-ct}, we denote by $\mathfrak{A}_a^{-\gamma}$ the Besov space $\mathfrak{B}^{-\gamma,\rho_b}_{2a,2a}$. Recall from Definition \ref{def-BS} that  the norm on $\mathfrak{A}_a^{-\gamma}$ is given by
\begin{equation}\label{eq-A-norm}
\|f\|_{\mathfrak{A}_a^{-\gamma}}^{2a}=\| f \|^{2a}_{\mathfrak{B}^{-\gamma,\, \rho_b}_{2a,\,2a}}=\sum_{k=-1}^{\infty} \left( 2^{-2\gamma a k}  \| \sigma_{k} f \|^{2a}_{L^{2a}_{\rho_b}}\right).
\end{equation}
\end{definition}

In the lemma below we prove an embedding result for the Besov spaces.
\begin{lemma}
Let $\mathfrak{A}_a^{-\gamma}$ be a Besov space as defined above. If $f\in\mathfrak{A}_a^{-\gamma}$ for all $a\in [1, \infty)$. Then $f\in\mathfrak{A}_\infty^{-\gamma-\epsilon}$ for any $\epsilon>0$.
\end{lemma}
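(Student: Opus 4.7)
The plan is to upgrade the block-wise $L^{2a}$ information encoded in $\|f\|_{\mathfrak{A}_a^{-\gamma}}$ to an $L^\infty$ bound, paying a small regularity loss $\epsilon$ for the upgrade. I interpret $\mathfrak{A}_\infty^{-\gamma-\epsilon}$ as $\mathfrak{B}^{-\gamma-\epsilon,\rho_b}_{\infty,\infty}$, the natural $a\to\infty$ limit of Definition~\ref{def-Besov-V}; its norm is $\sup_{k\geq -1}2^{-(\gamma+\epsilon)k}\|\sigma_k f\|_{L^\infty_{\rho_b}}$. The main ingredient will be the polynomially-weighted Bernstein inequality \eqref{est-h-rho-b} of Remark~\ref{rmk-embedding-rho}, applied block by block.

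I would start by extracting, for each $k\geq -1$, a block-wise bound from the very definition \eqref{eq-A-norm}. Dominating one term of the series by the whole sum gives
\begin{equation*}
\|\sigma_k f\|_{L^{2a}_{\rho_b}}\leq 2^{\gamma k}\,\|f\|_{\mathfrak{A}_a^{-\gamma}} .
\end{equation*}
Next I would use the frequency localization of $\sigma_k f$: by the construction \eqref{eqn-c} together with \eqref{eq-chi-supp}, $\widehat{\sigma_k f}$ is supported in $\{|\lambda|(2|m|+n)\leq C\,2^{k}\}$ for $k\geq 0$, while $\sigma_{-1}f$ is localized in a fixed ball independent of $k$. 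Applying the Bernstein estimate \eqref{est-h-rho-b} with exponents $(\alpha,\beta)=(\infty,2a)$, zero powers of the Laplacian, and $\tau\simeq 2^{k}$, one obtains
\begin{equation*}
\|\sigma_k f\|_{L^\infty_{\rho_b}}\leq C\,2^{(n+1)k/(2a)}\,\|\sigma_k f\|_{L^{2a}_{\rho_b}}.
\end{equation*}

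Combining the two displays leads to
\begin{equation*}
2^{-(\gamma+\epsilon)k}\,\|\sigma_k f\|_{L^\infty_{\rho_b}}\leq C\,2^{\lp -\epsilon+(n+1)/(2a)\rp k}\,\|f\|_{\mathfrak{A}_a^{-\gamma}}.
\end{equation*}
Given $\epsilon>0$, it then suffices to pick $a$ large enough so that $(n+1)/(2a)\leq\epsilon$: the exponent of $2^k$ becomes non-positive, and taking the supremum over $k\geq -1$ yields $\|f\|_{\mathfrak{A}_\infty^{-\gamma-\epsilon}}\leq C\,\|f\|_{\mathfrak{A}_a^{-\gamma}}<\infty$, which is exactly the claimed embedding.

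No genuine obstacle is expected. The one point deserving a little care is that the constant in \eqref{est-h-rho-b} must be uniform in $\tau\geq 1$, so that it does not blow up with $k$; this uniformity is explicit in Proposition~\ref{Bernstein} and is preserved by the polynomial-weight extension of Remark~\ref{rmk-embedding-rho}. The finitely many low-frequency blocks can be absorbed into the same argument with an unimportant constant.
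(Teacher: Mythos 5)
Your proof is correct and follows essentially the same route as the paper: both arguments rest on the weighted Bernstein inequality \eqref{est-h-rho-b} applied block by block with $\tau\simeq 2^{k}$ to pass from $L^{2a}_{\rho_b}$ to $L^{\infty}_{\rho_b}$ at the cost of a factor $2^{(n+1)k/(2a)}$, and then choose $a$ large enough that $(n+1)/(2a)\leq\epsilon$. The paper merely packages the same computation as a general embedding $\mathfrak{B}^{-\gamma,\rho_b}_{\alpha_1,\beta_1}\subset \mathfrak{B}^{-\gamma-(n+1)(1/\alpha_1-1/\alpha_2),\rho_b}_{\alpha_2,\beta_2}$ before specializing, while you go directly to the block-wise sup; the content is identical.
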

\begin{proof}
First we show that for any $0<\alpha_1\le\alpha_2<\infty$ and $0< \beta_1\le\beta_2\le\infty$ it holds that
\begin{align}\label{eq-embedding}
\mathfrak{B}^{-\gamma,\rho_b}_{\alpha_1,\beta_1}\subset \mathfrak{B}^{-\gamma-(n+1)(\frac1{\alpha_1}-\frac1{\alpha_2}),\rho_b}_{\alpha_2,\beta_2}.
\end{align}
To this aim, note from \eqref{est-h-rho-b} with $k=0$ that the following holds true for all $\alpha_2\ge\alpha_1$:
\[
\lVert  \sigma_k f\rVert_{L^{\alpha_2}_{\rho_b}}\leq C2^{k+(n+1)(\frac{1}{\alpha_1}-\frac{1}{\alpha_2})}\lVert \sigma_kf\rVert_{L^{\alpha_1}_{\rho_b }},
\]
given that ${\rm{Supp}}\, \mathcal{F}({\sigma_k f}) \subset  \{(m,\ell,\lambda):|\lambda|(2|m|+n)< C\cdot2^{k}\}$. Therefore 
\begin{align}\label{eq-embed-1}
\|f\|_{\mathfrak{B}^{-\gamma-(n+1)(\frac1{\alpha_1}-\frac1{\alpha_2}),\rho_b}_{\alpha_2,\beta_1}}
&=\bigg(\sum_{k=-1}^{\infty} \left( 2^{-\gamma k-(n+1)k(\frac1{\alpha_1}-\frac1{\alpha_2})}  \| \sigma_{k} f \|_{L^{\alpha_2}_{\rho_b}}\right)^{\beta_1}  \bigg)^{\frac1{\beta_1}}\notag \\
&\le C \bigg(\sum_{k=-1}^{\infty} \left( 2^{-\gamma k}  \| \sigma_{k} f \|_{L^{\alpha_1}_{\rho_b}}\right)^{\beta_1}  \bigg)^{\frac1{\beta_1}}=
  C\|f\|_{\mathfrak{B}^{-\gamma,\rho_b}_{\alpha_1,\beta_1}}.
\end{align}
We then obtain \eqref{eq-embedding} by using the fact that $\ell^{\beta_1}(\Z)$ is continuously embedded in $\ell^{\beta_2}(\Z)$ for all $0< \beta_1\le\beta_2\le\infty$.

Now using \eqref{eq-embedding} with $\alpha_1=\beta_1=2a$ and $\alpha_2=\beta_2=\infty$  we have  $f\in\mathfrak{A}_a^{-\gamma}$ for all $a\in [1, \infty)$ implies that  $f\in\mathfrak{B}^{-\gamma-\frac{n+1}{2a},\rho_b}_{\infty,\infty}$ for all $a\in [1, \infty)$. This then implies the desired conclusion.
\end{proof}

Our next lemma establishes how a noise with a given covariance function sits in spaces of the form $\mathfrak{A}_a^{-\gamma}$.
\begin{lemma}\label{lemma-nV}
Let ${\bW}^{\zeta,\alpha}$ be a noise as given in Definition \ref{def-cov-noise}, with $\zeta\in(0,1)$ and $\alpha\in (0,\frac{n+1}2)$. Recall that we have defined a time-integrated noise $\nV$ in \eqref{eq-noise-V} and that the spaces $\mathfrak{A}_a^{-\gamma}$ are introduced in Definition \ref{def-Besov-V}. Then for  $a\ge1$ large enough, $\gamma>\frac{n+1}{2}-\alpha$ and $\vartheta\in (0,1-\frac{\zeta}2)$, there exists a random variable $Z$ admitting moments of all orders such that the following inequality holds true:
\begin{align}\label{eq-time-holder}
\|\delta\nV_{st}\|_{\mathfrak{A}_a^{-\gamma}}\le Z (t-s)^\vartheta.
\end{align}
As a consequence, we have that ${\bW}^{\zeta,\alpha}$ is in the space $C^{\vartheta,-\gamma,\rho_b}_{\infty,\infty}$ for any $\gamma> \frac{n+1}2-\alpha$ and $\vartheta\in (0,1-\frac{\zeta}2)$.
\end{lemma}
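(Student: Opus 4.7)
The plan is to bound the $L^{2a}$-moments of the Littlewood--Paley block $\sigma_k \delta \nV_{st}$ via Gaussian hypercontractivity, sum over the dyadic scales $k$, and then invoke a Garsia--Rodemich--Rumsey criterion to deduce the H\"older modulus in time. The starting point is that the Wiener integral representation \eqref{eq-sigma-W} exhibits $\sigma_k \delta \nV_{st}(q)$ as a centered Gaussian variable, and the covariance formula \eqref{eq-cov2} combined with the time assumption \eqref{eq-Gamma} yields
\begin{equation*}
\bE\bigl[|\sigma_k \delta \nV_{st}(q)|^2\bigr] \le C (t-s)^{2-\zeta}\, \langle H_{k,q}, (-\Delta)^{-2\alpha} H_{k,q}\rangle_{L^2(\Heis)}.
\end{equation*}
Since the spatial covariance $G_{2\alpha}$ is left-invariant, the random field $q \mapsto \sigma_k \delta \nV_{st}(q)$ is stationary, so the spatial factor is $q$-independent and may be evaluated at $q = e$. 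Using \eqref{eq-H-Fouri} and the fact that the symbol $\tilde{\phi}_k$ is diagonal and real, one checks $H_{k,e} = \phi_k := \cf^{-1}(\tilde{\phi}_k)$. Plancherel \eqref{eq-planch}, the Fourier multiplier formula \eqref{eq-FT-delalpha} for $(-\Delta)^{-2\alpha}$, and the localization of $\tilde{\phi}_k$ on $|\lambda|(2|m|+n)\sim 2^k$ then give $\langle \phi_k, (-\Delta)^{-2\alpha} \phi_k\rangle \lesssim 2^{-2\alpha k}\|\phi_k\|_{L^2}^2$. Combining with the scaling identity \eqref{eq-scaling} applied at $\tau = 2^k$, which yields $\|\phi_k\|_{L^2}^2 \lesssim 2^{k(n+1)}$, produces the single-block estimate $\bE\bigl[|\sigma_k \delta \nV_{st}(q)|^2\bigr] \lesssim (t-s)^{2-\zeta} \cdot 2^{k(n+1-2\alpha)}$.

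Since $\sigma_k \delta \nV_{st}(q)$ is Gaussian pointwise in $q$, Wick's formula upgrades this to $\bE[|\sigma_k \delta \nV_{st}(q)|^{2a}] \le C_a (t-s)^{a(2-\zeta)}\, 2^{ak(n+1-2\alpha)}$. Choosing $b$ so that $\int_\Heis \rho_b\, d\mu < \infty$, integrating over $q$, and summing via \eqref{eq-A-norm} gives
\begin{equation*}
\bE\bigl[\|\delta \nV_{st}\|_{\mathfrak{A}_a^{-\gamma}}^{2a}\bigr] \lesssim (t-s)^{a(2-\zeta)} \sum_{k \ge -1} 2^{2ak\lp\frac{n+1}{2} - \alpha - \gamma\rp},
\end{equation*}
and the geometric series converges precisely when $\gamma > \frac{n+1}{2} - \alpha$.

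A Garsia--Rodemich--Rumsey argument applied to the $\mathfrak{A}_a^{-\gamma}$-valued process $t\mapsto \nV_t$ then produces a random variable $Z$ with finite moments of all orders such that $\|\delta \nV_{st}\|_{\mathfrak{A}_a^{-\gamma}} \le Z (t-s)^\vartheta$ for every $\vartheta < \tfrac{1}{2}(2-\zeta) - \tfrac{1}{2a}$; choosing $a$ large yields \eqref{eq-time-holder} for any $\vartheta < 1 - \zeta/2$. The final assertion about $C^{\vartheta,-\gamma,\rho_b}_{\infty,\infty}$ then follows from the embedding lemma stated just before the proposition, applied for every $a \ge 1$, so that the regularity loss $\epsilon = (n+1)/(2a)$ is arbitrarily small. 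The delicate point will be the first step: verifying the $q$-stationarity together with the identity $H_{k,e} = \phi_k$, which combines the left-invariance of the noise with the fact that $\tilde{\phi}_k$ is diagonal, real, and a scalar function of $|\lambda|(2|m|+n)$; once established, the Plancherel computation of the spatial variance is straightforward.
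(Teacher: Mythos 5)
Your overall strategy coincides with the paper's: Gaussian equivalence of moments reduces everything to the variance of a single block, the variance produces $(t-s)^{2-\zeta}\,2^{k(n+1-2\alpha)}$, the sum over $k$ converges exactly when $\gamma>\frac{n+1}{2}-\alpha$, and a Garsia/Kolmogorov argument plus the embedding lemma finish the proof. The one step you do genuinely differently is the per-block variance: the paper writes out $\|(-\Delta)^{-\alpha}H_{k,q}\|_{L^2}^2$ via Plancherel and computes the resulting integral $J_{m,\ell,k}=\int \mathbf{1}_{\{|\lambda|<2^{k+2}/(2|m|+n)\}}(2|m|+n)^{-2\alpha}|\lambda|^{n-2\alpha}\,d\lambda$ explicitly, summing $\sum_m (2|m|+n)^{-(n+1)}<\infty$ at the end, whereas you bound the symbol of $(-\Delta)^{-2\alpha}$ by $2^{-2\alpha k}$ on the frequency support and then invoke the dilation identity to get $\|\phi_k\|_{L^2}^2\lesssim 2^{k(n+1)}$. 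Your version is shorter and more conceptual where it applies.

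There is, however, one concrete gap: the localization $|\lambda|(2|m|+n)\sim 2^k$ holds only for $k\ge 0$. For $k=-1$ the block $\tilde{\phi}_{-1}$ is supported in the ball $\{|\lambda|(2|m|+n)<4/3\}$, which contains $\lambda=0$, and there the symbol $\big(4|\lambda|(2|m|+n)\big)^{-2\alpha}$ is unbounded, so $\langle\phi_{-1},(-\Delta)^{-2\alpha}\phi_{-1}\rangle$ cannot be controlled by a sup bound on the multiplier. This low-frequency block is precisely where the hypothesis $\alpha<\frac{n+1}{2}$ enters: one must integrate $|\lambda|^{n-2\alpha}$ near the origin, which converges iff $n-2\alpha>-1$. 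As written, your argument never uses $\alpha<\frac{n+1}{2}$, which is the tell; the $k=-1$ term needs the paper's explicit computation (or an equivalent small-frequency estimate). A second, harmless point: $H_{k,e}(q')=\mathcal{F}^{-1}(\tilde{\phi}_k)(q'^{-1})$ rather than $\mathcal{F}^{-1}(\tilde{\phi}_k)(q')$; since the Haar measure is inversion-invariant this does not affect any $L^2$ quantity, and the paper in fact sidesteps the stationarity discussion entirely by applying $|\mathcal{F}f(q\ast\cdot)(\hat{w})|=|\mathcal{F}f(\hat{w})|$ directly to $H_{k,q}$.
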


\begin{proof}
Consider a generic $a\ge1$ and $\vartheta, \gamma$ satisfying the conditions of our lemma. We will first bound the moments of the increments of $\nV$. Namely a simple application of Fubini's identity to \eqref{eq-A-norm} yields
\begin{equation}\label{eq-E-delta-1}
\mathbf{E}\left( \|\delta \nV_{st} \|^{2a}_{\mathfrak{A}_a^{-\gamma}} \right)=\sum_{k=-1}^{\infty}  2^{-2\gamma a k} \int_{\Heis}\mathbf{E}\left( | \sigma_{k} (\delta \nV_{st})(q)|^{2a}\right)\rho_b(q)^{2a}\, d\mu(q).
\end{equation}
We proceed to estimate the expected value in \eqref{eq-E-delta-1}. To this aim recall from \eqref{eq-sigma-W} that 
\[
\sigma_k\,\delta\nV_{st}=\bW^{\zeta,\alpha}(\mathbf{1}_{[s,t]}\otimes H_{k,q}).
\]
Moreover, thanks to the fact that $\bW^{\zeta,\alpha}$ is a Gaussian family, for every $a>1$ there exists a constant $c_a$ such that 
\begin{equation}\label{eq-E-delta}
\mathbf{E}\left( |(\sigma_k\,\delta\nV_{st})(q)|^{2a}\right)=c_a\bigg[ \mathbf{E}\left(|\bW^{\zeta,\alpha}(\mathbf{1}_{[s,t]}\otimes H_{k,q}) |^2 \right)\bigg]^a.
\end{equation}
We are thus reduced to estimate the variance of each random variable $\bW^{\zeta,\alpha}(\mathbf{1}_{[s,t]}\otimes H_{k,q})$. Next we resort to relation \eqref{eq-cov-1}, the bound \eqref{eq-Gamma} and Plancherel's identity \eqref{eq-planch}, which enables to write
\begin{align}\label{eq-E-bd-1}
\mathbf{E}\left(|\bW^{\zeta,\alpha}(\mathbf{1}_{[s,t]}\otimes H_{k,q}) |^2 \right)&=\int_{[s,t]^2}\left(\int_{\Heis} | (-\Delta)^{-\alpha} H_{k,q} |^2\mu(dq) \right)\Gamma(u-v)dudv\notag\\
&\le \frac{2^{n-1}}{\pi^{n+1}}(t-s)^{2-\zeta}\sum_{m,\ell\in \N^n} \int_{\R} \, \vert \widehat{ (-\Delta)^{-\alpha}H_{k,q}}\vert^2 |\lambda |^n d\lambda. 
\end{align}
In addition invoke expression \eqref{eq-H-Fouri} for $H_{k,q}$, relation \eqref{eq-FT-delN} for the Laplace transform of $(-\Delta)^{-\alpha} H_{k,q}$ and the fact that 
$|\left[\cf f(q*\cdot) \right](\hat{w})|=|\cf f(\hat{w})|$ for a generic function $f$. We end up with
\[
| \widehat{ (-\Delta)^{-\alpha}H_{k,q}}(m,\ell,\lambda)|^2=4^{-2\alpha}|\lambda|^{-2\alpha}(2|m|+n)^{-2\alpha}|\tilde{\phi}_k(m,\ell,\lambda)|^2.
\]
Plugging this identity into  \eqref{eq-E-bd-1} and recalling from \eqref{eq-chi-supp} that $\tilde{\phi}_k$ is supported on the diagonal $(m=\ell)$, we thus get the existence of a constant $c_{n,\alpha}$ such that 
\begin{equation}\label{eq-E-bd-2}
\mathbf{E}\left(|\bW^{\zeta,\alpha}(\mathbf{1}_{[s,t]}\otimes H_{k,q}) |^2 \right) 
\le  
c_{n,\alpha}(t-s)^{2-\zeta}
\sum_{m\in \N^n} J_{m,\ell, k} ,
\end{equation}
where we have set
\begin{equation}\label{f1}
J_{m,\ell, k}:= \int_{\R} \,|\lambda|^{-2\alpha}(2|m|+n)^{-2\alpha}|\tilde{\phi}_k(m,m,\lambda)|^2 |\lambda |^n d\lambda .
\end{equation}

Let us now take care of the integral $J_{m,\ell, k}$ defined by \eqref{f1}.
Thanks to the definition \eqref{eqn-b} of $\tilde{\phi}_k$, it is readily checked that 
\[
J_{m,\ell, k}=(2|m|+n)^{-2\alpha} \int_\R \left(\chi({2^{-k}|\lambda|R(m,m)})\right)^2 |\lambda|^{n-2\alpha}d\lambda.
\]
Moreover the function $\chi$ given in Proposition \ref{prop-a} is supported in $\{x\in\R^n:3/4\leq|x|<8/3\}$. Therefore $J_{m,\ell, k}$ is such that 
\[
J_{m,\ell, k}\le (2|m|+n)^{-2\alpha} \int_\R \mathbf{1}_{\lp |\lambda|<\frac{2^{k+2}}{(2|m|+n)}\rp} |\lambda|^{n-2\alpha}d\lambda.
\]
We now integrate over $\lambda$ in an explicit way, under the assumption $n-2\alpha>-1$ (that is $\alpha<\frac{n+1}2$). This yields
\begin{align}\label{eq-J-m-l-k}
J_{m,\ell, k}\le \frac{c_{n,\alpha}}{(2|m|+n)^{2\alpha}} \left( \frac{2^{k+2}}{ 2|m|+n }\right)^{n-2\alpha+1}=c_{n,\alpha} \frac{2^{(k+2)(n+1-2\alpha)}}{(2|m|+n )^{n+1}} .
\end{align}
Gathering \eqref{eq-J-m-l-k} and \eqref{eq-E-bd-2}, we have thus obtained the upper bound
\begin{align}\label{eq-bd-W-zeta}
\mathbf{E}\left(|\bW^{\zeta,\alpha}(\mathbf{1}_{[s,t]}\otimes H_{k,q}) |^2 \right)\le  c_{n,\alpha} \,
2^{k(n+1-2\alpha)} (t-s)^{2-\zeta} \sum_{m\in \N ^n} \frac{1}{(2|m|+n )^{n+1}},
\end{align}
under the assumption $\alpha<\frac{n+1}2$. Eventually observe that the sum over $m$ in \eqref{eq-bd-W-zeta} is finite, by elementary arguments. We have thus achieved the desired bound on the variance of the increments:
\begin{align}\label{eq-bd-increm-var}
\mathbf{E}\left( |(\sigma_k\,\delta\nV_{st})(q)|^{2} \right)\le c_{n,\alpha}  2^{(n+1-2\alpha) k} (t-s)^{(2-\zeta)}, \quad \text{for all }q\in \Heis.
\end{align}
We now conclude the lemma by using relation \eqref{eq-bd-increm-var} in order to bound sums in $\mathfrak{A}_a^{-\gamma}$. That is combine \eqref{eq-bd-increm-var} and \eqref{eq-E-delta} to obtain the following uniform estimate over $q\in\Heis$:
\begin{align}\label{eq-fn-o}
\mathbf{E}\left( |(\sigma_k\,\delta\nV_{st})(q)|^{2a}\right)\le c_{n,\alpha,a} (t-s)^{a(2-\zeta)} 2^{ak(n+1-2\alpha)}.
\end{align}
Now plug this inequality into \eqref{eq-E-delta-1}. If $a$ is chosen large enough (namely such that $ab>n+1$), the function $(\rho_b)^a$ becomes integrable on $\Heis$. Therefore combining \eqref{eq-fn-o} and \eqref{eq-E-delta-1} we get 
\begin{align*}
\mathbf{E}\left( \|\delta \,\nV_{st} \|^{2a}_{\mathfrak{A}_a^{-\gamma}} \right)\le c_{n,a,\alpha}(t-s)^{(2-\zeta)a}\sum_{k=-1}^{\infty}  2^{(n+1-2\alpha-2\gamma) a k} .
\end{align*}
Hence if the coefficient $\gamma$ satisfies our standing assumption $\gamma>\frac{n+1}{2}-\alpha$, the summation above is finite and for every $a>1$ we have 
\begin{align}\label{eq-bd-A}
\mathbf{E}\left( \|\delta \,\nV_{st} \|^{2a}_{\mathfrak{A}_a^{-\gamma}} \right)\le c_{n,a,\alpha,\gamma}(t-s)^{(2-\zeta)a}.
\end{align}
Our proof of \eqref{eq-time-holder} is now finished by standard arguments. Namely one goes from the moment inequality \eqref{eq-bd-A} to the almost sure relation~\eqref{eq-time-holder} resorting to Garsia's lemma
and Fernique's inequality (recall that $\nV$ is a Gaussian process). Details are omitted for the sake of conciseness.
Lastly, combining  \eqref{eq-embed-1} and \eqref{eq-time-holder} we obtain that 
\begin{align}\label{eq-fn-n}
\|\delta\nV_{st}\|_{\mathfrak{B}^{-\gamma-\frac{(n+1)}{2a},\rho_b}_{\infty, \infty}} \le \|\delta\nV_{st}\|_{\mathfrak{A}_a^{-\gamma}}\le Z (t-s)^\vartheta
\end{align}
for all $\gamma+\frac{n+1}{2a}>\frac{n+1}2-\alpha$. Since the parameter $a$ in \eqref{eq-fn-n} can be chosen as arbitrarily large, we get that $\bW^{\zeta,\alpha}$ sits in the space $C_{\infty,\infty}^{\vartheta,-\gamma-\epsilon, \rho_b} $ for any $\gamma>\frac{n+1}2-\alpha$, for any $\epsilon>0$ and $\vartheta\in (0,1-\zeta/2)$. This concludes the proof.
\end{proof}

We are now ready to state the main result of this section, ensuring existence and uniqueness for our stochastic heat equation \eqref{eq-mild-soln} under proper conditions on the noise $\nV$.

\begin{theorem}\label{prop-w-dot}
Let  $\nV$ be the noise given in Definition~\ref{def-cov-noise} and Remark \ref{rmk-V}. We assume that the parameters $\alpha, \zeta$ satisfy the conditions 
\begin{equation}\label{eq-cond-nV}
\zeta\in(0,1),
\quad\text{and}\quad 
\frac{n+1}2-(1-\zeta)<\alpha<\frac{n+1}2.
\end{equation}
Then $\nV$ verifies the conditions of Hypothesis \ref{hypo}. Namely there exists $0<\gamma<1-\zeta$, $\frac{1+\gamma}2<\vartheta<1-\frac{\zeta}2$ and $\rho_b(q)=c(1+|q|_*^b)$ for some $c,b>0$ such that $\nV\in C_{\infty,\infty}^{\vartheta,-\gamma,\rho_b}$.
\end{theorem}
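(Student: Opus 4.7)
The plan is to reduce the statement to a direct arithmetic consequence of Lemma~\ref{lemma-nV}, by carefully selecting the exponents $\gamma$ and $\vartheta$ so that all constraints of Hypothesis~\ref{hypo} are simultaneously satisfiable under~\eqref{eq-cond-nV}.

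First I would recall what Hypothesis~\ref{hypo} requires: the noise $\nV$ must sit in some space $C_{\infty,\beta}^{\vartheta,-\gamma,\rho_b}$ with $\vartheta,\gamma\in (0,1)$ and $\vartheta > \tfrac{1+\gamma}{2}$. Lemma~\ref{lemma-nV} already provides the conclusion $\nV \in C^{\vartheta,-\gamma,\rho_b}_{\infty,\infty}$ for \emph{any} $\gamma > \tfrac{n+1}{2}-\alpha$ and $\vartheta \in (0, 1-\tfrac{\zeta}{2})$, so the remaining task is purely to pick such $\gamma,\vartheta$ that additionally meet Hypothesis~\ref{hypo}.

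The key observation is that the constraints $\vartheta < 1 - \tfrac{\zeta}{2}$ and $\vartheta > \tfrac{1+\gamma}{2}$ force
\[
\frac{1+\gamma}{2} < 1 - \frac{\zeta}{2}, \qquad \text{i.e.,} \qquad \gamma < 1-\zeta.
\]
Combined with Lemma~\ref{lemma-nV}'s requirement $\gamma > \tfrac{n+1}{2}-\alpha$, we need the interval
\[
\left(\frac{n+1}{2}-\alpha,\; 1-\zeta\right)
\]
to be nonempty. This is precisely equivalent to $\alpha > \tfrac{n+1}{2}-(1-\zeta)$, which is exactly the lower bound in~\eqref{eq-cond-nV}. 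Together with $\zeta\in (0,1)$ and $\alpha < \tfrac{n+1}{2}$ (which ensure $1-\zeta \in (0,1)$ and $\tfrac{n+1}{2}-\alpha > 0$), we obtain a nonempty interval of admissible $\gamma$'s lying in $(0,1)$.

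To conclude, I would fix any $\gamma$ in this interval, then pick any $\vartheta\in (\tfrac{1+\gamma}{2}, 1-\tfrac{\zeta}{2})$, which is nonempty by the previous paragraph and automatically lies in $(0,1)$. With these choices, Lemma~\ref{lemma-nV} yields $\nV \in C_{\infty,\infty}^{\vartheta,-\gamma,\rho_b}$ for a suitable weight $\rho_b(q)=c(1+|q|_*^b)$ (with $b$ chosen large enough to ensure integrability in the proof of Lemma~\ref{lemma-nV}), and all the conditions of Hypothesis~\ref{hypo} are met. Since no step beyond elementary interval arithmetic and a citation of Lemma~\ref{lemma-nV} is required, there is no real obstacle here; the nontrivial work has already been carried out in Lemma~\ref{lemma-nV}, and the sharpness of the condition~\eqref{eq-cond-nV} is explained by the tight coupling between the spatial regularity threshold $\tfrac{n+1}{2}-\alpha$ and the time Young-regularity threshold $\tfrac{1+\gamma}{2} < 1-\tfrac{\zeta}{2}$.
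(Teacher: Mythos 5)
Your proposal is correct and follows essentially the same route as the paper: both reduce the theorem to checking that the interval of admissible $\gamma$'s, namely $\bigl(\tfrac{n+1}{2}-\alpha,\,1-\zeta\bigr)$, is nonempty, which is exactly the lower bound in \eqref{eq-cond-nV}, and then invoke Lemma~\ref{lemma-nV}. Your write-up is if anything slightly cleaner, since the paper's displayed compatibility inequality contains a reversed inequality sign (it should read $2-\zeta>1+\tfrac{n+1}{2}-\alpha$) even though its stated conclusion is the correct one.
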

\begin{proof}
In Lemma \ref{lemma-nV} we have considered $\zeta\in (0,1)$, $\alpha\in (0,\frac{n+1}2)$. Then we have proved that $\nV$ almost surely belongs to the space $C^{\vartheta}(\mathfrak{A}_a^{-\gamma})$ for two parameters $\vartheta$, $\gamma$ such that 
\begin{equation}\label{eq-cond-nV-1}
\vartheta<1-\frac{\zeta}2,\quad \mbox{and}\quad  \gamma>\frac{n+1}2-\alpha.
\end{equation}
Now hypothesis \ref{hypo} specifies that we should have $\vartheta>\frac{1+\gamma}2$, which can be recast as 
\begin{equation}\label{eq-cond-nV-2}
2\vartheta>1+ \gamma.
\end{equation}
We can make \eqref{eq-cond-nV-1} and \eqref{eq-cond-nV-2} compatible iff $\alpha$ and $\zeta$ satisfy 
\[
2-\zeta<1+\frac{n+1}2-\alpha,
\]
which is easily seen to be equivalent to 
\[
\alpha>\frac{n+1}2-(1-\zeta).
\]
This finishes our proof.
\end{proof}

\section*{Acknowledgment}
We would like to thank Isabelle Gallagher for some valuable conversation about paraproducts, which have been very useful in order to complete this project.

\end{document}